\def\input@path{{figures/}}
\title{Lattices of acyclic pipe dreams}
\author[N.~Bergeron]{Nantel Bergeron} 
\address[N.~Bergeron]{Department of Mathematics and Statistics, York University, Toronto}
\email{bergeron@yorku.ca}
\urladdr{http://bergeron.mathstats.yorku.ca}
\author[N.~Cartier]{No\'emie Cartier} 
\address[N.~Cartier]{LISN, Université Paris Saclay}
\email{noemie.cartier@lri.fr}
\urladdr{https://www.lisn.upsaclay.fr/~cartier/}
\author[C.~Ceballos]{Cesar Ceballos}
\address[C.~Ceballos]{Institute of Geometry, Technische Universit\"at Graz}
\email{cesar.ceballos@tugraz.at}
\urladdr{http://www.geometrie.tugraz.at/ceballos/}
\author[V.~Pilaud]{Vincent Pilaud}
\address[V.~Pilaud]{Universitat de Barcelona \& Centre de Recerca Matemàtica, Barcelona, Spain}
\email{vincent.pilaud@ub.edu}
\urladdr{https://www.ub.edu/comb/vincentpilaud/}
\thanks{
NB was supported by NSERC and York Research Chair in Applied Algebra.
CC was supported by the Austrian Science Fund FWF (Project P 33278).
CC, NC \& VP were supported by the Austrian\,--\,French project PAGCAP (ANR~21\,CE48\,0020 \& FWF I 5788).
NC \& VP were also supported by the French project CHARMS (ANR~19\,CE40\,0017).
VP was also supported by the Spanish project PID2022-137283NB-C21 of MCIN/AEI/10.13039/501100011033 / FEDER, UE, by the Spanish--German project COMPOTE (AEI PCI2024-155081-2 \& DFG 541393733), by the Severo Ochoa and María de Maeztu Program for Centers and Units of Excellence in R\&D (CEX2020-001084-M), by the Departament de Recerca i Universitats de la Generalitat de Catalunya (2021 SGR 00697).
VP was a CNRS researcher at \'Ecole Polytechnique when this work was done.
\\
\hspace*{.5cm} 
Part of the material of this paper was announced in an extended abstract of the conference FPSAC'23~\cite{Cartier-FPSAC}.
}
\newtheorem{theorem}{Theorem}[section]
\newtheorem{theoremA}{Theorem}
\newtheorem{corollary}[theorem]{Corollary}
\newtheorem{proposition}[theorem]{Proposition}
\newtheorem{lemma}[theorem]{Lemma}
\newtheorem{conjecture}[theorem]{Conjecture}
\crefname{conjecture}{Conjecture}{Conjectures}
\newtheorem{conjectureA}{Conjecture}
\crefname{conjectureA}{Conjecture}{Conjectures}
\theoremstyle{definition}
\newtheorem{definition}[theorem]{Definition}
\newtheorem{example}[theorem]{Example}
\newtheorem{remark}[theorem]{Remark}
\newtheorem{notation}[theorem]{Notation}
\newcommand{\R}{\mathbb{R}} 
\renewcommand{\b}[1]{\boldsymbol{#1}} 
\newcommand{\cal}[1]{\mathcal{#1}} 
\newcommand{\set}[2]{\left\{ #1 \;\middle|\; #2 \right\}} 
\newcommand{\bigset}[2]{\big\{ #1 \;|\; #2 \big\}} 
\newcommand{\ssm}{\smallsetminus} 
\newcommand{\eqdef}{\mbox{\,\raisebox{0.2ex}{\scriptsize\ensuremath{\mathrm:}}\ensuremath{=}\,}} 
\DeclareMathOperator{\cone}{cone} 
\DeclareMathOperator{\Inv}{Inv} 
\DeclareMathOperator{\Ninv}{Ninv} 
\DeclareMathOperator{\DemazureProduct}{Dem} 
\newcommand{\ie}{\textit{i.e.}~} 
\definecolor{PineGreen}{RGB}{2,120,120} 
\definecolor{darkgreen}{RGB}{57,181,74} 
\newcommand{\blue}[1]{{\color{blue} #1}} 
\newcommand{\red}[1]{{\color{red} #1}} 
\newcommand{\defn}[1]{\textbf{\textsf{\color{PineGreen} #1}}} 
\newcommand{\fS}{\mathfrak{S}} 
\newcommand{\fR}{\mathfrak{R}} 
\newcommand{\boxsize}{.35}
\newlength{\verticalOffset}
\newlength{\verticalShift}
\newcounter{length}
\newcommand{\length}[1]{%
	\setcounter{length}{0}%
	\foreach \x in {#1} {%
		\stepcounter{length}%
	}%
}
\newcommand{\pipeDreamMonoColor}[3]{
	\length{#3}%
	\begin{tikzpicture}[baseline = \value{length}*\verticalShift+\verticalOffset, scale=1]
		\coordinate (origin) at (0,0);
		\newcount{\y} \y=0
		\newcount{\x}
		\foreach \line in {#3} {
			\x=0
			\foreach \t in \line {
				\coordinate (W) at ($ (origin) + ( \boxsize * \x , -\boxsize * \y ) + ( 0      , \boxsize / 2 ) $);
				\coordinate (E) at ($ (origin) + ( \boxsize * \x , -\boxsize * \y ) + ( \boxsize     , \boxsize / 2 ) $);
				\coordinate (N) at ($ (origin) + ( \boxsize * \x , -\boxsize * \y ) + ( \boxsize / 2 , \boxsize     ) $);
				\coordinate (S) at ($ (origin) + ( \boxsize * \x , -\boxsize * \y ) + ( \boxsize / 2 , 0 ) $);
				\coordinate (C) at ($ (origin) + ( \boxsize * \x , -\boxsize * \y ) + ( \boxsize / 2 , \boxsize / 2 ) $);
				\ifthenelse{\equal{\t}{e}}{
					\draw[rounded corners=\boxsize * 8, color=#1, thick] (W) -- (C) -- (N);
					\draw[rounded corners=\boxsize * 8, color=#1, thick] (S) -- (C) -- (E);			
				}{
        				\ifthenelse{\equal{\t}{c}}{
        					\draw[color=#1, thick] (W) -- (E);
        					\draw[color=#1, thick] (S) -- (N);
        				}{
        				\ifthenelse{\equal{\t}{t}}{
        					\draw[rounded corners=\boxsize * 8, color=#2] (W) -- (C) -- (N);
        					\draw[rounded corners=\boxsize * 8, color=#1, thick] (S) -- (C) -- (E);			
        				}{
        				\ifthenelse{\equal{\t}{b}}{
        					\draw[rounded corners=\boxsize * 8, color=#1, thick] (W) -- (C) -- (N);
        					\draw[rounded corners=\boxsize * 8, color=#2] (S) -- (C) -- (E);			
        				}{
        				\ifthenelse{\equal{\t}{tb}}{
        					\draw[rounded corners=\boxsize * 8, color=#2] (W) -- (C) -- (N);
        					\draw[rounded corners=\boxsize * 8, color=#2] (S) -- (C) -- (E);			
        				}{
        				\ifthenelse{\equal{\t}{n}}{}{\node at (C) {$\small \t$};}}}}}}
        				\global\advance\x by 1
			}
			\global\advance\y by 1
		}
	\end{tikzpicture}%
}
\newcommand{\pipeDreamBiColor}[4]{
	\length{#4}%
	\begin{tikzpicture}[baseline = \value{length}*\verticalShift+\verticalOffset, scale=1]
		\coordinate (origin) at (0,0);
		\newcount{\y} \y=0
		\newcount{\x}
		\foreach \line in {#4} {
			\x=0
			\foreach \t/\colorW/\colorS in \line {
				\coordinate (W) at ($ (origin) + ( \boxsize * \x , -\boxsize * \y ) + ( 0      , \boxsize / 2 ) $);
				\coordinate (E) at ($ (origin) + ( \boxsize * \x , -\boxsize * \y ) + ( \boxsize     , \boxsize / 2 ) $);
				\coordinate (N) at ($ (origin) + ( \boxsize * \x , -\boxsize * \y ) + ( \boxsize / 2 , \boxsize     ) $);
				\coordinate (S) at ($ (origin) + ( \boxsize * \x , -\boxsize * \y ) + ( \boxsize / 2 , 0 ) $);
				\coordinate (C) at ($ (origin) + ( \boxsize * \x , -\boxsize * \y ) + ( \boxsize / 2 , \boxsize / 2 ) $);
				\ifthenelse{\equal{\t}{e}}{
					\ifthenelse{\equal{\colorW}{l}}{\draw[rounded corners=\boxsize * 8, color=#1, thick] (W) -- (C) -- (N);}{}
					\ifthenelse{\equal{\colorW}{r}}{\draw[rounded corners=\boxsize * 8, color=#2, thick] (W) -- (C) -- (N);}{}
					\ifthenelse{\equal{\colorW}{b}}{\draw[rounded corners=\boxsize * 8, color=#3] (W) -- (C) -- (N);}{}
					\ifthenelse{\equal{\colorS}{l}}{\draw[rounded corners=\boxsize * 8, color=#1, thick] (S) -- (C) -- (E);}{}
					\ifthenelse{\equal{\colorS}{r}}{\draw[rounded corners=\boxsize * 8, color=#2, thick] (S) -- (C) -- (E);}{}
					\ifthenelse{\equal{\colorS}{b}}{\draw[rounded corners=\boxsize * 8, color=#3] (S) -- (C) -- (E);}{}
				}{
				\ifthenelse{\equal{\t}{c}}{
					\ifthenelse{\equal{\colorW}{l}}{\draw[color=#1, thick] (W) -- (E);}{}
					\ifthenelse{\equal{\colorW}{r}}{\draw[color=#2, thick] (W) -- (E);}{}
					\ifthenelse{\equal{\colorS}{l}}{\draw[color=#1, thick] (S) -- (N);}{}
					\ifthenelse{\equal{\colorS}{r}}{\draw[color=#2, thick] (S) -- (N);}{}
				}{
				\ifthenelse{\equal{\t}{n}}{}{\node at (C) {$\small \t$};}}}
				\global\advance\x by 1
			}
			\global\advance\y by 1
		}
	\end{tikzpicture}%
}
\newcommand{\pipeDreamTriColor}[5]{
	\length{#5}%
	\begin{tikzpicture}[baseline = \value{length}*\verticalShift+\verticalOffset, scale=1]
		\coordinate (origin) at (0,0);
		\newcount{\y} \y=0
		\newcount{\x}
		\foreach \line in {#5} {
			\x=0
			\foreach \t/\colorW/\colorS in \line {
				\coordinate (W) at ($ (origin) + ( \boxsize * \x , -\boxsize * \y ) + ( 0      , \boxsize / 2 ) $);
				\coordinate (E) at ($ (origin) + ( \boxsize * \x , -\boxsize * \y ) + ( \boxsize     , \boxsize / 2 ) $);
				\coordinate (N) at ($ (origin) + ( \boxsize * \x , -\boxsize * \y ) + ( \boxsize / 2 , \boxsize     ) $);
				\coordinate (S) at ($ (origin) + ( \boxsize * \x , -\boxsize * \y ) + ( \boxsize / 2 , 0 ) $);
				\coordinate (C) at ($ (origin) + ( \boxsize * \x , -\boxsize * \y ) + ( \boxsize / 2 , \boxsize / 2 ) $);
				\ifthenelse{\equal{\t}{e}}{
					\ifthenelse{\equal{\colorW}{l}}{\draw[rounded corners=\boxsize * 8, color=#1, thick] (W) -- (C) -- (N);}{}
					\ifthenelse{\equal{\colorW}{m}}{\draw[rounded corners=\boxsize * 8, color=#2, thick] (W) -- (C) -- (N);}{}
					\ifthenelse{\equal{\colorW}{r}}{\draw[rounded corners=\boxsize * 8, color=#3, thick] (W) -- (C) -- (N);}{}
					\ifthenelse{\equal{\colorW}{b}}{\draw[rounded corners=\boxsize * 8, color=#4] (W) -- (C) -- (N);}{}
					\ifthenelse{\equal{\colorS}{l}}{\draw[rounded corners=\boxsize * 8, color=#1, thick] (S) -- (C) -- (E);}{}
					\ifthenelse{\equal{\colorS}{m}}{\draw[rounded corners=\boxsize * 8, color=#2, thick] (S) -- (C) -- (E);}{}
					\ifthenelse{\equal{\colorS}{r}}{\draw[rounded corners=\boxsize * 8, color=#3, thick] (S) -- (C) -- (E);}{}
					\ifthenelse{\equal{\colorS}{b}}{\draw[rounded corners=\boxsize * 8, color=#4] (S) -- (C) -- (E);}{}
				}{
				\ifthenelse{\equal{\t}{c}}{
					\ifthenelse{\equal{\colorW}{l}}{\draw[color=#1, thick] (W) -- (E);}{}
					\ifthenelse{\equal{\colorW}{m}}{\draw[color=#2, thick] (W) -- (E);}{}
					\ifthenelse{\equal{\colorW}{r}}{\draw[color=#3, thick] (W) -- (E);}{}
					\ifthenelse{\equal{\colorS}{l}}{\draw[color=#1, thick] (S) -- (N);}{}
					\ifthenelse{\equal{\colorS}{m}}{\draw[color=#2, thick] (S) -- (N);}{}
					\ifthenelse{\equal{\colorS}{r}}{\draw[color=#3, thick] (S) -- (N);}{}
				}{\ifthenelse{\equal{\t}{n}}{}{\node at (C) {$\small \t$};}}}
				\global\advance\x by 1
			}
			\global\advance\y by 1
		}
	\end{tikzpicture}%
}
\newcommand{\cross}[1][black]{\raisebox{-.15cm}{\includegraphics[scale=.9]{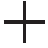}}}
\newcommand{\NScross}[1][black]{\raisebox{-.15cm}{\includegraphics[scale=.9]{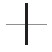}}}
\newcommand{\WEcross}[1][black]{\raisebox{-.15cm}{\includegraphics[scale=.9]{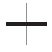}}}
\newcommand{\elbow}[1][black]{\raisebox{-.15cm}{\includegraphics[scale=.9]{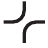}}}
\newcommand{\SEelbow}[1][black]{\raisebox{-.15cm}{\includegraphics[scale=.9]{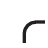}}}
\newcommand{\WNelbow}[1][black]{\raisebox{-.15cm}{\includegraphics[scale=.9]{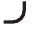}}}
\newcommand{\pipeDreams}{\Pi} 
\newcommand{\reversingPipeDreams}{\Omega} 
\newcommand{\contact}{^\#} 
\newcommand{\duality}{^\star} 
\newcommand{\acyclicPipeDreams}{\Sigma} 
\newcommand{\linearExtensions}{\mathcal{L}} 
\newcommand{\strongLinearExtensions}{\mathcal{L}^\star} 
\newcommand{\noninversions}[2]{\mathsf{ninv}(#1,#2)} 
\newcommand{\acyclicOrientations}{\Omega} 
\newcommand{\insertion}[2]{\mathsf{pd}(#1,#2)} 
\newcommand{\recoils}[2]{\mathsf{rec}(#1,#2)} 
\newcommand{\canopy}[1]{\mathsf{can}(#1)} 
\newcommand{\greedyPipeDream}{P^{\overleftarrow{gr}}} 
\newcommand{\antiGreedyPipeDream}{P^{\overrightarrow{gr}}} 
\newcommand{\wo}{\omega_\circ} 
\newcommand{\subwordComplex}{\mathcal{SC}} 
\newcommand{\Roots}{\mathrm{R}} 
\newcommand{\rootFunction}[2]{\mathrm{r}_{#1}(#2)} 
\newcommand{\subwordFacets}{\mathcal{F}} 
\newcommand{\subwordAcyclicFacets}{\mathcal{F}^\bullet} 
\newcommand{\subwordStronglyAcyclicFacets}{\mathcal{F}^\star} 
\newcommand{\greedyFacet}{I^{\overleftarrow{gr}}} 
\newcommand{\antiGreedyFacet}{I^{\overrightarrow{gr}}} 
\newcommand{\sweepingAlgorithm}{\mathsf{sweep}} 
\newcommand{\brickPolyhedron}{\mathsf{Brick}} 
\newcommand{\meet}{\wedge} 
\newcommand{\join}{\vee} 
\newcommand{\less}{\vartriangleleft} 
\newcommand{\more}{\vartriangleright} 
\newcommand{\contactLess}[1]{\less_{#1}} 
\newcommand{\contactMore}[1]{\more_{#1}} 
\newcommand{\projDown}{\pi_\downarrow} 
\newcommand{\projUp}{\pi^\uparrow} 
\begin{document}

\begin{abstract}
We show that for any permutation~$\omega$, the increasing flip graph on acyclic pipe dreams with exiting permutation~$\omega$ is a lattice quotient of the interval~$[e,\omega]$ of the weak order.
We then discuss conjectural generalizations of this result to acyclic facets of subword complexes on arbitrary finite Coxeter groups.
\end{abstract}


\maketitle

\tableofcontents



\pagebreak
\section{Introduction}
\label{sec:introduction}

The weak order is the lattice on permutations of~$[n]$ whose cover relations correspond to switching pairs of consecutive values in permutations.
The Tamari lattice is the lattice on binary trees with~$n$ internal nodes whose cover relations correspond to right rotations in binary trees.
The Tamari lattice is known to be the lattice quotient of the weak order by the sylvester congruence, defined as the equivalence relation on permutations of~$[n]$ whose equivalence classes are the sets of linear extensions of binary trees (labeled in inorder and oriented towards their leaves).

This paper develops a similar framework for acyclic pipe dreams.
Pipe dreams were introduced by N.~Bergeron and S.~Billey in~\cite{BergeronBilley} to compute Schubert polynomials and later revisited in the context of Gr\"obner geometry by A.~Knutson and E.~Miller~\cite{KnutsonMiller-GroebnerGeometry}, who coined the name \emph{pipe dreams}.
They have important connections and applications to various areas related to Schubert calculus and Schubert varieties~\cite{LascouxSchutzenberger-PolynomesSchubert, LascouxSchutzenberger-SchubertLittlewoudRichardson}. 
A pipe dream is an arrangement of pipes in the triangular shape, each entering along the vertical side and exiting along the horizontal side (see \cref{fig:pipeDreams}).
They are grouped according to their exiting permutation, given by the order in which the pipes appear along the horizontal axis.
The linear extensions of a pipe dream are the permutations of its pipes such that for each contact, the northwest pipe appears before the southeast pipe in the permutation.
The pipe dreams with at least one linear extension are called acyclic and naturally appear in the study of brick polytopes~\cite{PilaudSantos-brickPolytope}.
A flip in a pipe dream exchanges a contact with a crossing between two pipes (see \cref{fig:pipeDreams}), and the flip is increasing when the contact is southwest of the crossing involved in the flip.
A brief recollection on pipe dreams is given in \cref{sec:preliminaries}.

\begin{figure}[b]
	\centerline{
		\includegraphics[scale=.9]{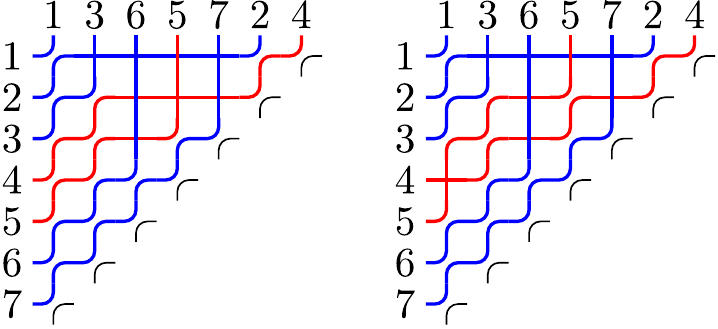}
	}
	\caption{Two pipe dreams of~$\pipeDreams(1365724)$ connected by an increasing flip (exchanging a contact with the crossing on the two red pipes~$4$ and~$5$).}
	\label{fig:pipeDreams}
\end{figure}

In the core \cref{sec:latticeAcyclicPipeDreams} of this paper, we show that for any permutation~$\omega$,
\begin{itemize}
\item the sets of linear extensions of the acyclic pipe dreams with exiting permutation~$\omega$ form a partition of the interval~$[e,\omega]$ of the weak order (\cref{subsec:linearExtensions}),
\item the equivalence relation defined by this partition is a lattice congruence of~$[e, \omega]$, that we call the pipe dream congruence (\cref{subsec:pipeDreamCongruence}),
\item the Hasse diagram of the corresponding lattice quotient is isomorphic to the increasing flip graph on acyclic pipe dreams with exiting permutation~$\omega$ (\cref{subsec:pipeDreamQuotient}).
\end{itemize}
In summary, we obtain the following statement, illustrated in \cref{fig:latticeAcyclicPipeDreams}.

\begin{theoremA}
\label{thm:A}
For any permutation~$\omega$, the Hasse diagram of the lattice quotient of the interval~$[e,\omega]$ of the weak order by the pipe dream congruence of~$\omega$ is isomorphic to the increasing flip graph on acyclic pipe dreams with exiting permutation~$\omega$.
\end{theoremA}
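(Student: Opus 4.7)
The plan is to follow the three-stage decomposition already laid out in the paper: partition $[e,\omega]$ into linear-extension sets of acyclic pipe dreams, verify that the induced equivalence relation is a lattice congruence, and identify the Hasse diagram of the quotient with the increasing flip graph. For the partition step, I would construct an insertion map $\insertion{\omega}{\cdot}$ from $[e,\omega]$ to acyclic pipe dreams with exiting permutation~$\omega$: read the one-line notation of~$\sigma$ from left to right and place each pipe greedily at the westmost legal position compatible with~$\omega$. By construction~$\sigma$ is a linear extension of~$\insertion{\omega}{\sigma}$. One must then show that every acyclic pipe dream~$P$ with exiting permutation~$\omega$ admits at least one linear extension in~$[e,\omega]$ (obtained from any linearization of the contact poset of~$P$), and conversely that $\insertion{\omega}{\tau}=P$ for every linear extension~$\tau$ of~$P$; the second part is proved by induction on the length of~$\tau$, the greedy choice at each step being forced by the local structure of~$P$.

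For the congruence step, I would appeal to Reading's local criterion: a partition of a finite lattice is a congruence if and only if its classes are intervals and the up-and-down projection maps $\projDown,\projUp$ are order-preserving. The interval property amounts to showing that if $\sigma,\tau\in\linearExtensions(P)$ then every permutation weakly between them in $[e,\omega]$ is also in $\linearExtensions(P)$, which reduces to a local lemma on weak-order covers: swapping two consecutive values in a linear extension either leaves the underlying pipe dream unchanged (when the pipes involved meet in a contact that does not constrain their order) or transforms it by a single flip, whose direction matches the direction of the weak-order cover. Monotonicity of the projections then follows by tracking this flip along covers.

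Granting these two steps, the elements of the quotient are in bijection with acyclic pipe dreams of exiting permutation~$\omega$, and its cover relations are induced from weak-order covers bridging distinct classes. The local analysis of the previous paragraph identifies each such cover with an \emph{increasing} flip, because the upward direction in the weak order exactly encodes the condition that the exchanged contact lies southwest of the exchanged crossing. Combined with the fact that flips preserve the exiting permutation, this yields the desired isomorphism with the increasing flip graph.

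The main obstacle is the local lemma of the second step: one needs careful bookkeeping on the contact structure around the two pipes involved in an adjacent transposition to show that a single weak-order cover projects to either a trivial move or to a single flip on pipe dreams, with the matching directionality. A clean way to handle this is probably to describe, for each cover $\sigma \lessdot \sigma s_i$, how the greedy insertion reacts to the exchange of positions~$i$ and~$i+1$ in~$\sigma$, and to compare the resulting pipe placements against the contact poset of~$\insertion{\omega}{\sigma}$. Once this local picture is in place, Reading's framework packages the remainder of the argument with little extra work.
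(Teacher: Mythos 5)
Your three-stage decomposition (partition, congruence, quotient identification) matches the paper's structure, and your idea to use an insertion map for the partition step is legitimate — it corresponds to the paper's \cref{prop:insertionAlgorithm} in \cref{subsec:insertionAlgorithm}, though the paper uses that algorithm only as a later add-on and proves \cref{thm:partitionPipeDreams} more economically via the flip lemma \cref{lem:lowerSetPipeDreams} plus the greedy/antigreedy base cases. However, there are two genuine gaps in your second and third stages.

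The serious gap is in the congruence step. You write that ``the interval property amounts to showing that if $\sigma,\tau\in\linearExtensions(P)$ then every permutation weakly between them in $[e,\omega]$ is also in $\linearExtensions(P)$,'' and that this reduces to the local cover-tracking lemma. This conflates order-convexity with the interval property. Reading's criterion (\cref{prop:characterizationCongruences}) requires each class to be an \emph{interval} $[\min,\max]$, which needs a unique minimum and a unique maximum; order-convexity is strictly weaker, and the set of linear extensions of an arbitrary poset on $[n]$ can be order-convex without being an interval. (For instance, the poset on $\{1,2,3,4\}$ with covers $1\prec 4$ and $2\prec 3$ has both $2143$ and $2314$ as weak-order-maximal linear extensions.) Moreover, the cover-tracking lemma does not by itself establish even order-convexity. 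What actually makes $\linearExtensions(P)$ an interval is a structural property of the contact poset $\contactLess{P}$: the ``semitransitivity'' condition of Björner--Wachs (\cref{prop:WOIP}), which the paper verifies in \cref{prop:intervals} using the rectangle lemma \cref{lem:rectangle} and its consequences \cref{lem:consequenceRectangle1,lem:consequenceRectangle2}. Without this geometric input on the contact graph your argument does not go through.

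The second gap is in the quotient step. You argue that cover relations in the quotient correspond to increasing flips, which handles one direction. But to identify the Hasse diagram of the quotient with the full increasing flip graph you must also show the converse: that every increasing flip between two \emph{acyclic} pipe dreams in $\acyclicPipeDreams(\omega)$ arises as a weak-order cover bridging two classes. This requires exhibiting, for each such flip, a pair of linear extensions differing by an adjacent transposition, and it fails if there is a directed path of length $\geq 2$ between the two flipped pipes in the contact graph. The paper rules that out with \cref{lem:flippable}, which has no counterpart in your sketch. Absent that lemma, you only obtain an embedding of the quotient's Hasse diagram into the increasing flip graph, not an isomorphism.
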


We recover the connection between the weak order and the Tamari lattice by the sylvester congruence for a well-chosen exiting permutation~$\omega$.
Note that \cref{thm:A} is the correct generalization of the Tamari lattice, as neither the increasing flip poset on all pipe dreams, nor its subposet induced by acyclic pipe dreams, are lattices in general (see \cref{rem:increasingFlipPosetNotLattice,rem:acyclicIncreasingFlipPosetNotLattice}).
Other conjectural lattice structures on pipe dreams are considered in~\cite[Conj.~2.8]{Rubey}.

We then explore in \cref{sec:furtherTopics} some natural further topics on pipe dreams.
We first present two algorithms to compute the unique acyclic pipe dream whose linear extensions contain a given permutation generalizing the binary tree insertion map on permutations (\cref{subsec:sweepingAlgorithm,subsec:insertionAlgorithm}).
We then describe the pipe dream congruence as the transitive closure of a local rewriting rule generalizing that of the sylvester congruence (\cref{subsec:rewritingRule}).
We then present a natural commutative diagram of lattice morphisms generalizing the connection between the recoil map and the binary tree insertion map on permutations and the canopy on binary trees (\cref{subsec:canopy}).
Finally, we show that all pipe dreams with exiting permutation $0\omega$ are acyclic whenever $\omega$ is dominant. Therefore, \cref{thm:A} implies that the $\nu$-Tamari lattice is a quotient of the interval~$[e,\omega]$ for a well-chosen dominant permutation $\omega$ (\cref{subsec:nuTamari}).

Finally, we discuss in \cref{sec:subwordComplexes} (partly conjectural) extensions of our results to subword complexes in finite Coxeter groups~\cite{KnutsonMiller-subwordComplex}.
Given a finite Coxeter group~$W$ with simple reflections~$S$, a word~$Q$ on~$S$ and an element~$\omega$ of~$W$, the subword complex~$\subwordComplex(Q,\omega)$ is a simplicial complex whose facets are the complements of the reduced expressions of~$\omega$ inside the word~$Q$.
Pipe dreams can be seen as facets of subword complexes for special words~$Q$ on the simple transpositions of the symmetric groups.
In general, there is again a natural increasing flip graph on the facets of a subword complex, which was studied in particular in~\cite{PilaudStump-ELlabelings}.
An important tool to understand this flip is the root function introduced in~\cite{CeballosLabbeStump}, which associates a root~$\rootFunction{I}{i}$ to each position~$i$ and each facet~$I$, and the root configuration~$\Roots(I)$ of a facet~$I$, which collects all roots~$\rootFunction{I}{i}$ at positions~$i$ in~$I$.
A brief recollection on finite Coxeter systems and subword complexes is given in \cref{subsec:finiteCoxeterGroups,subsec:subwordComplexes}.

We consider the set~$\linearExtensions(I)$ of linear extensions of a facet~$I$ of~$\subwordComplex(Q,\omega)$, that is the set of elements~$\pi$ of~$W$ such that~${\Roots(I) \subseteq \pi(\Phi^+)}$.
We prove the following statement in \cref{subsec:twoTheorems}.

\begin{theoremA}
\label{thm:B}
For any non-empty subword complex~$\subwordComplex(Q, \omega)$, the sets~$\linearExtensions(I)$ for all facets~$I$ of~$\subwordComplex(Q, \omega)$ are order convex and form a partition of a lower set of the weak order that contains the interval~$[e, \omega]$.
\end{theoremA}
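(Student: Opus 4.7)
The plan is to translate the condition $\Roots(I) \subseteq \pi(\Phi^+)$ into one on the inversion set $N(\pi) = \Phi^+ \cap \pi(\Phi^-)$, and then use the fact that the weak order coincides with inclusion of inversion sets. Splitting $\Roots(I)$ into its positive and negative parts $\Roots^+(I)$ and $\Roots^-(I)$, the membership $\pi \in \linearExtensions(I)$ rewrites as the sandwich
\[
-\Roots^-(I) \;\subseteq\; N(\pi) \;\subseteq\; \Phi^+ \setminus \Roots^+(I),
\]
from which order convexity of each $\linearExtensions(I)$ is immediate: the upper and lower inclusions are each preserved along any weak order path $\pi_1 \leq \pi \leq \pi_2$.

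The heart of the argument is a greedy insertion procedure $\pi \mapsto \insertion{Q}{\pi}$ that, given $\pi \in W$, scans $Q = q_1 \cdots q_N$ from left to right while maintaining a partial product $\tau_j \in W$; at each step the algorithm either uses $q_j$ to extend $\tau_j$ (when $\tau_j q_j > \tau_j$ and the choice is compatible with building $\omega$ while keeping the intermediate inversions inside those prescribed by $\pi$) or skips $j$ and places it in a candidate facet $I$. Using the recursion that defines $\rootFunction{I}{i}$, one verifies that the resulting $I$ (when insertion succeeds) satisfies the sandwich above, so $\pi \in \linearExtensions(\insertion{Q}{\pi})$. For uniqueness, hence pairwise disjointness of the $\linearExtensions(I)$, I would compare any competing facet $I'$ with $\insertion{Q}{\pi}$ at the leftmost position of disagreement and derive a contradiction from the sandwich combined with the recursion for $\rootFunction{I'}{\cdot}$.

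To show that $\bigcup_I \linearExtensions(I)$ is a lower set containing $[e, \omega]$, I would verify two claims. First, insertion succeeds on every $\pi \in [e, \omega]$: since $\subwordComplex(Q, \omega)$ is non-empty, some reduced word of $\omega$ embeds in $Q$, and one can apply strong-exchange moves to bring to the front a reduced word of $\pi$, feeding the greedy run the desired prefix and then completing to $\omega$ in the remaining suffix of $Q$. Second, if insertion succeeds on $\pi$ and $\pi' \leq \pi$, then insertion succeeds on $\pi'$: the run on $\pi'$ is forced to skip at least as many positions as the run on $\pi$, and an inductive comparison of the two runs shows that the produced subword still spells out $\omega$.

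The main obstacle will be this last monotonicity claim. A priori the greedy choices could depend delicately on $\pi$ in a way that breaks the implication ``insertion succeeds on $\pi$ $\Rightarrow$ insertion succeeds on every $\pi' \leq \pi$''. Making this rigorous requires a careful exchange-style argument controlling how the used positions vary as $\pi$ decreases along a weak order cover, rooted in the strong exchange condition for Coxeter groups and in the structure of the Demazure product underlying $\subwordComplex(Q, \omega)$. This is the analogue at the subword complex level of what we will establish combinatorially for pipe dreams in the preceding section, and is the step where the assumption that we sit below $\omega$ (rather than below the full Demazure product of $Q$) plays its role.
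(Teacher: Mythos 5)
Your argument for order convexity is essentially the same as the paper's: rewriting $\Roots(I)\subseteq\pi(\Phi^+)$ as the two one-sided inclusions $\Roots(I)\cap\Phi^-\subseteq -\Inv(\pi)$ and $\Roots(I)\cap\Phi^+\subseteq\Ninv(\pi)$, and observing that each is monotone in $\pi$ in the right direction.

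For the lower-set, covering, and partition claims, however, your route diverges substantially from the paper's, and in a direction that creates a real gap. You propose to build a global left-to-right insertion procedure $\pi\mapsto\insertion{Q}{\pi}$ (which is, in effect, the sweeping algorithm the paper develops only \emph{afterwards}, in \cref{subsec:sweepingAlgorithmSubwordComplexes}) and deduce everything from it. The paper instead proves the lower-set property by a purely \emph{local} flip move: given $\pi\in\linearExtensions(I)$ and a cover $\pi' = \pi s < \pi$ with $\beta\eqdef\pi'(\alpha_s)$, it either keeps $I$ unchanged (when $-\beta\notin\Roots(I)$), or flips the last position carrying $-\beta$ to the unique position carrying $\beta$ (when $-\beta\in\Roots(I)$), using only \cref{lem:rootFunctionFlips} and the fact that $s_\beta$ fixes $\pi(\Phi^+)\cap\pi'(\Phi^+)$ pointwise. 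Disjointness then follows by a short induction on $\ell(\pi)$ tracking which of the two cases occurs, and the cover of $[e,\omega]$ is immediate from the lower-set property together with $\omega\in\linearExtensions(\antiGreedyFacet)$ (\cref{lem:linearExtensionsGreedyFacets}). That entire argument needs no algorithm and no global monotonicity statement.

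The concrete gap in your proposal is exactly the monotonicity claim you flag, and it is not a small one. Observe that the lower-set statement must hold for $\pi\in\linearExtensions(Q,\omega)$ that lie \emph{outside} $[e,\omega]$ (\cref{rem:linearExtensionsPartitionSubwordComplexA} shows such $\pi$ exist), so you cannot restrict attention to $[e,\omega]$ when proving it. Claiming ``insertion succeeds on $\pi'$ whenever it succeeds on $\pi\geq\pi'$'' is, after unwinding the uniqueness claim, logically equivalent to the lower-set property itself; proving it directly by comparing the two greedy runs would require exactly the technical input the paper imports from Jahn--Stump to prove \cref{prop:sweeping} (namely \cref{lem:sweeping3}, which rests on \cref{prop:JSDemazureCone}). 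Likewise, your ``leftmost disagreement'' uniqueness argument is the content of \cref{lem:sweeping3}; the statement ``derive a contradiction from the sandwich and the recursion'' hides a nontrivial cone-membership fact, whereas the paper's induction-on-length proof of disjointness sidesteps this entirely. In short: your outline would essentially force you to prove \cref{prop:sweeping} before \cref{thm:linearExtensionsPartitionSubwordComplexA}, whereas the paper does the opposite for good reason, and the local flip argument you have bypassed is the key missing idea.
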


In contrast to the case of pipe dreams, there are some subword complexes and some facets for which the interval~$[e, \omega]$ does not contain (even sometimes does not intersect) the set of linear extensions~$\linearExtensions(I)$.
However, there is a large family of subword complexes for which this cannot happen.
We say that~$Q$ is sorting if it contains a reduced expression for~$\wo$.

\begin{theoremA}
\label{thm:C}
If~$Q$ is sorting, then the sets~$\linearExtensions(I)$ for all facets~$I$ of~$\subwordComplex(Q, \omega)$ form a partition of the interval~$[e, \omega]$.
\end{theoremA}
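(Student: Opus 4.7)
By \cref{thm:B}, the sets $\linearExtensions(I)$ for facets $I$ of $\subwordComplex(Q, \omega)$ form a partition of a lower set $L$ of the weak order that contains $[e,\omega]$. Proving \cref{thm:C} therefore reduces to verifying the reverse inclusion $L \subseteq [e,\omega]$, i.e.\ that every linear extension $\pi$ of every facet $I$ of $\subwordComplex(Q, \omega)$ satisfies $\pi \leq \omega$.

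Fix such an $I$ with complement $J := [|Q|] \setminus I$, a reduced expression for $\omega$ carried by $I$, and a linear extension $\pi \in \linearExtensions(I)$, so that $\Roots(I) \subseteq \pi(\Phi^+)$. The idea is to use the sorting hypothesis to extend $J$ all the way to $\wo$: since $Q$ contains a reduced expression for $\wo$, the Demazure product along $Q$ equals $\wo$, and associativity of the Demazure product yields a subset $K \subseteq I$ such that $J \sqcup K$, ordered by position in $Q$, is a reduced expression for $\wo$. Setting $I^\circ := I \setminus K \subseteq I$, we obtain a facet of $\subwordComplex(Q, \wo)$. We then compare the two root configurations: for each $i \in I^\circ$, the root $\rootFunction{I^\circ}{i}$ differs from $\rootFunction{I}{i}$ only by the product of the reflections at positions of $K$ lying before $i$, which gives an explicit relationship between $\Roots(I^\circ)$ and $\Roots(I)$ mediated by prefixes of the extension.

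Applying \cref{thm:B} to $\wo$ partitions a lower set containing $[e,\wo] = W$, hence all of $W$, by the sets $\linearExtensions(I^\circ)$ for facets $I^\circ$ of $\subwordComplex(Q, \wo)$. Combining this with the root-configuration comparison, one identifies the unique class containing $\pi$ and reads off that $\Inv(\pi) \subseteq \Inv(\omega)$, so $\pi \leq \omega$ in the weak order. The main obstacle will be this final comparison: one must check not only that $J$ extends to a reduced expression $J^\circ$ for $\wo$ inside $Q$ (which should follow cleanly from properties of the Demazure product and existing results on subword complexes), but also that the induced change of root configurations is strong enough to force $\Inv(\pi) \subseteq \Inv(\omega)$ rather than merely $\Inv(\pi) \subseteq \Inv(\wo)$, which is automatic.
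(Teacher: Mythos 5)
Your first paragraph correctly reduces the problem to showing $\pi \le \omega$ for every facet $I$ of $\subwordComplex(Q,\omega)$ and every $\pi \in \linearExtensions(I)$, which matches the paper's setup via \cref{thm:B}. After that, the proposal follows a genuinely different route and stalls exactly where you flag the obstacle: you never explain how the passage to a facet $I^\circ \subseteq I$ of $\subwordComplex(Q,\wo)$ forces $\Inv(\pi) \subseteq \Inv(\omega)$, and this is the entire substance of the theorem, not a wrap-up step. Applying \cref{thm:B} to $\wo$ only says that $\pi$ lies in $\linearExtensions(I^\circ)$ for some facet $I^\circ$ of $\subwordComplex(Q,\wo)$, which is vacuous since $[e,\wo]=W$. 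Moreover, the claimed ``explicit relationship'' between $\Roots(I^\circ)$ and $\Roots(I)$ is not what you state: $[i-1]\ssm I^\circ$ interleaves the positions of $[i-1]\ssm I$ with those of $[i-1]\cap K$, so $\prod Q_{[i-1]\ssm I^\circ}$ does not factor as $\prod Q_{[i-1]\ssm I}$ times the product of the letters at positions of $K$ before $i$; hence $\rootFunction{I^\circ}{i}$ is not related to $\rootFunction{I}{i}$ in the simple way you describe. Finally, the existence of $K$ making $J \sqcup K$ a reduced expression for $\wo$ inside $Q$ is itself a nontrivial extension lemma rather than a direct consequence of ``associativity of the Demazure product.''

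The paper's proof is short and rests on a single geometric fact, \cref{prop:intersectionRootConfigurations} (drawn from Jahn--Stump): when $Q$ is sorting, $\Ninv(\omega) \subseteq \cone \Roots(I)$ for every facet $I$ of $\subwordComplex(Q,\omega)$. Combined with $\Roots(I) \subseteq \pi(\Phi^+)$ this immediately gives
\[
\Ninv(\omega) \;\subseteq\; \cone \Roots(I) \cap \Phi^+ \;\subseteq\; \cone \pi(\Phi^+) \cap \Phi^+ \;=\; \Ninv(\pi),
\]
so $\pi \le \omega$. Any completion of your plan would, in effect, have to re-derive the inclusion $\Ninv(\omega) \subseteq \cone\Roots(I)$ or something of equivalent strength, and the detour through $\subwordComplex(Q,\wo)$ does not visibly accomplish that.
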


In the case when~$[e, \omega]$ does not contain all sets of linear extensions, it is natural to consider the restriction of this partition to~$[e, \omega]$ by the sets~$[e, \omega] \cap \linearExtensions(\omega)$.
This defines an equivalence relation~$\equiv_{Q, \omega}$ on~$[e, \omega]$ that we call the subword complex equivalence.
As the sets of linear extensions are not always intervals of the weak order, this equivalence relation is not always a lattice congruence.
This seems to be fixed by an additional assumption on~$Q$.
We say that~$Q$ is alternating when all non-commuting pairs $s, t\in S$ alternate within $Q$ (this notion was already considered in \cite{PilaudSantos-brickPolytope, CeballosLabbeStump}).

\begin{conjectureA}
\label{conj:A}
If~$Q$ is alternating, then the subword complex equivalence~$\equiv_{Q, \omega}$ is a lattice congruence of the interval~$[e, \omega]$ of the weak order.
\end{conjectureA}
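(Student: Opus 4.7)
The plan is to mimic the strategy used for \cref{thm:A} in the subword complex setting, using the root function machinery of~\cite{CeballosLabbeStump} and the alternating hypothesis to obtain the required rigidity. The goal is to verify the standard characterization of lattice congruences: each class of $\equiv_{Q,\omega}$ should be an interval of~$[e,\omega]$ with well-defined minimum and maximum, and the two projection maps $\projDown$ and $\projUp$ sending a permutation to the minimum, resp.\ maximum, of its class should both be order-preserving.

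First, I would strengthen \cref{thm:B} under the alternating assumption and show that for every facet~$I$ of~$\subwordComplex(Q,\omega)$, the set $[e,\omega] \cap \linearExtensions(I)$, whenever non-empty, is in fact an interval of the weak order, not merely an order-convex subset. The alternating property governs the way consecutive non-commuting letters in~$Q$ interact, and thus controls the evolution of the root function $\rootFunction{I}{\cdot}$ under a flip. I would exploit this to mimic the construction of the greedy and antigreedy pipe dreams $\greedyPipeDream$ and $\antiGreedyPipeDream$ of \cref{sec:latticeAcyclicPipeDreams}, producing canonical minimum and maximum linear extensions $\projDown(I)$ and $\projUp(I)$ associated to~$I$. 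Concretely, I would read $Q$ from left to right and use the alternating hypothesis to pin down a unique locally greedy (resp.\ antigreedy) choice of prefix extending to a linear extension of~$I$, analogous to the sweeping algorithm of \cref{subsec:sweepingAlgorithm}.

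Second, I would prove the congruence property, either directly via Reading's criterion (for each cover $\pi \lessdot \pi s$ in~$[e,\omega]$, the classes of~$\pi$ and of~$\pi s$ are compatible in the sense that $\projDown$ and $\projUp$ preserve this cover or collapse it), or indirectly by exhibiting~$\equiv_{Q,\omega}$ as the transitive closure of a family of local rewriting rules generalizing the sylvester rule of \cref{subsec:rewritingRule}. In the latter approach, the rules would involve two adjacent positions in~$Q$ labelled by non-commuting simple reflections~$s,t$, and the alternating hypothesis would be exactly what ensures that these rewriting rules detect all flips and respect the weak order cover structure.

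The main obstacle is the first step: translating the alternating condition, which is a local combinatorial property of the word~$Q$, into the global existence of extremal linear extensions inside~$[e,\omega]$. A subtlety is that, without alternation, the set $[e,\omega] \cap \linearExtensions(I)$ may genuinely fail to be an interval, so the hypothesis must be used essentially, presumably through an induction on the length of~$Q$ that peels off either the first or the last letter and reduces to a smaller alternating subword complex. A secondary difficulty is that subword complexes lack the planar structure of pipe dreams, so the geometric arguments based on the contact/crossing duality, which are central to \cref{sec:latticeAcyclicPipeDreams}, must be replaced by purely root-theoretic arguments in terms of the root configuration~$\Roots(I)$ and its relation to the inversion sets of elements of $[e,\omega]$; verifying the congruence property in this more rigid language is likely where most of the technical work will concentrate.
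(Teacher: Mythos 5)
This statement is labeled \cref{conj:A} in the paper and is genuinely \emph{open}: the authors do not prove it, but only report that it holds in type~$A_n$ (citing an extension of their \cref{thm:pipeDreamCongruence,thm:pipeDreamQuotient} argument in~\cite{Cartier}) and that it has been verified computationally in low-rank types. There is therefore no proof in the paper to compare your proposal against.

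As a strategy outline, your proposal is sensible and aligned with how the type~$A$ case is handled: you correctly isolate the two conditions of \cref{prop:characterizationCongruences} (classes are intervals; the two projections are order-preserving), and you correctly identify the locus of the difficulty, namely upgrading order-convexity from \cref{thm:linearExtensionsPartitionSubwordComplexA}~\eqref{item:convex} to interval-ness under the alternating hypothesis. You are also right that the planar contact/crossing arguments of \cref{subsec:contactGraph} have no direct analogue and must be replaced by root-configuration reasoning. However, as a proof the proposal has two genuine gaps. First, every technically hard claim is stated as an intention rather than established: you never actually construct the extremal elements of a class, never show they lie in $[e,\omega]$, and never reduce interval-ness to a checkable criterion (note that the criterion of \cref{prop:WOIP}, which does all the work in \cref{prop:intervals}, is special to type~$A$ and its linear-extensions-of-posets formulation; for an arbitrary Coxeter group you would need a different characterization of when a collection of elements is a weak-order interval, phrased via biclosed sets of roots, and the proposal does not supply one). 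Second, the proposed induction ``peeling off the first or last letter of $Q$'' is not developed: truncating $Q$ changes both the subword complex and the target element, and there is no stated lemma relating facets of $\subwordComplex(Q,\omega)$ to facets of the truncated complex, so the induction has no inductive step. These are not cosmetic omissions --- they coincide precisely with the points where, so far as is known, the general argument has not yet been found, which is why the statement is still a conjecture.
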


When~$Q$ is alternating, we can thus consider the quotient~$[e, \omega]/{\equiv_{Q, \omega}}$.
In contrast to the case of pipe dreams, the Hasse diagram of the quotient~$[e, \omega]/{\equiv_{Q, \omega}}$ is not always isomorphic to the increasing flip graph on acyclic facets of~$\subwordComplex(Q, \omega)$ for two reasons:
\begin{itemize}
\item First, not all acyclic facets of~$\subwordComplex(Q, \omega)$ appear as elements of~$[e, \omega]/{\equiv_{Q, \omega}}$ (see \cref{rem:linearExtensionsPartitionSubwordComplexA}). We say that a facet is strongly acyclic if~$[e, \omega] \cap \linearExtensions(\omega) \ne \varnothing$.
\item Second, not all flips between two strongly acyclic facets of~$\subwordComplex(Q, \omega)$ define a cover relation of~$[e, \omega]/{\equiv_{Q, \omega}}$ (see \cref{exm:acyclicFlipvsWeakOrder}). We say that the flip of a position~$i$ in a facet~$I$ is external if the root~$\rootFunction{I}{i}$ directing the flip is a ray of the root configuration~$\Roots(I)$.
\end{itemize}
This leads us to the following conjecture.

\begin{conjectureA}
\label{conj:B}
If~$Q$ is alternating, then the Hasse diagram of the quotient~$[e, \omega]/\equiv_{Q, \omega}$ is isomorphic to the graph of extremal increasing flips between strongly acyclic facets of~$\subwordComplex(Q, \omega)$.
\end{conjectureA}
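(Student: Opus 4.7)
The plan is to work under \cref{conj:A}, so that $\equiv_{Q,\omega}$ is a lattice congruence of~$[e,\omega]$ and the Hasse diagram of the quotient~$[e,\omega]/{\equiv_{Q,\omega}}$ is described by those covers of~$[e,\omega]$ that cross between two distinct congruence classes. The non-empty classes are the sets~$[e,\omega]\cap\linearExtensions(I)$, and by definition these are non-empty precisely when~$I$ is strongly acyclic. This yields a canonical bijection between the elements of~$[e,\omega]/{\equiv_{Q,\omega}}$ and the strongly acyclic facets of~$\subwordComplex(Q,\omega)$, so what remains is to match cover relations with extremal increasing flips.

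First, I would show that every cover relation of the quotient comes from an extremal increasing flip. Consider a cover~$\pi \lessdot \pi s$ in~$[e,\omega]$ with~$\pi \in \linearExtensions(I)$ and~$\pi s \in \linearExtensions(I')$ for distinct facets~$I \ne I'$, and set~$\beta \eqdef \pi(\alpha_s)$. Since~$s$ permutes~$\Phi^+ \smallsetminus \{\alpha_s\}$ and sends~$\alpha_s$ to~$-\alpha_s$, the root~$\beta$ is the unique element of~$\pi(\Phi^+) \smallsetminus (\pi s)(\Phi^+)$, hence~$\beta \in \Roots(I) \smallsetminus \Roots(I')$. Combined with the flip structure of subword complexes and the root function properties from~\cite{CeballosLabbeStump}, this forces~$\beta = \rootFunction{I}{i}$ for some~$i \in I$ with~$I' = I \smallsetminus \{i\} \cup \{j\}$. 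The flip is increasing because~$\beta \in \Phi^+$, and it is extremal because~$\pi^{-1}(\beta) = \alpha_s$ is simple: any decomposition~$\beta = \sum_k c_k \gamma_k$ with~$c_k > 0$ and~$\gamma_k \in \Roots(I) \smallsetminus \{\beta\}$ would yield~$\alpha_s = \sum_k c_k \pi^{-1}(\gamma_k)$ as a strictly positive combination of positive roots, which is impossible for a simple root.

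Conversely, given an extremal increasing flip between strongly acyclic facets~$I$ and~$I'$ at position~$i$ with~$\beta \eqdef \rootFunction{I}{i}$ an extremal ray of~$\Roots(I)$, I would construct~$\pi\in[e,\omega]\cap\linearExtensions(I)$ such that~$\pi^{-1}(\beta)$ is a simple root~$\alpha_s$. Then~$\pi s\in\linearExtensions(I')$ follows from the description of the root configurations of adjacent facets, and it remains to verify that~$\pi s\in[e,\omega]$ in order to exhibit the desired cover in the quotient. A natural source of candidates is the minimum of the class~$[e,\omega]\cap\linearExtensions(I)$, which should be described by a greedy algorithm analogous to the greedy pipe dream~$\greedyPipeDream$ of \cref{sec:furtherTopics}; the extremality of~$\beta$ in~$\Roots(I)$ should translate into~$\pi^{-1}(\beta)$ being simple for this distinguished~$\pi$.

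The main obstacle will be making this converse direction precise: producing a linear extension of~$[e,\omega]\cap\linearExtensions(I)$ that realises the extremal ray~$\beta$ as a simple inversion while keeping~$\pi s$ inside the interval~$[e,\omega]$. This requires a delicate combinatorial control of the root function similar to that developed in~\cite{PilaudSantos-brickPolytope, CeballosLabbeStump}, and the alternating hypothesis on~$Q$ should play the crucial role, ensuring enough rigidity of the root function for an analogue of the sweeping and insertion algorithms of \cref{subsec:sweepingAlgorithm,subsec:insertionAlgorithm} to return an element of~$[e,\omega]$ at every step.
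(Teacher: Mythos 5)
This statement is Conjecture~B in the paper (restated in Section~5.5 as \cref{conj:latticeQuotientAlternating}); the paper does \emph{not} prove it. It is posed as an open conjecture, supported only by computer verification in types~$B_2$, $B_3$, $D_4$, $H_3$ for alternating words of length at most~$\ell(\wo)$, and by a reference to~\cite{Cartier} for type~$A$. So there is no proof of record to compare against; the relevant question is whether your sketch settles the open problem, which it does not.

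Your forward direction is a sound sketch and can essentially be assembled from the proof of \cref{thm:linearExtensionsPartitionSubwordComplexA}~\eqref{item:lowerSet}. Given a cover $\pi \lessdot \pi s$ in~$[e,\omega]$ with $\pi \in \linearExtensions(I)$ and~$\pi s \in \linearExtensions(I')$, $I \neq I'$, set $\beta \eqdef \pi(\alpha_s) \in \Phi^+$. That $\beta \in \Roots(I)$ follows from the uniqueness in \cref{thm:linearExtensionsPartitionSubwordComplexA}~\eqref{item:partition} (else $\pi s$ would also be a linear extension of~$I$), and the fact that $I'$ is obtained from~$I$ by flipping a position~$i$ with $\rootFunction{I}{i} = \beta$ is precisely what the case analysis in the proof of part~\eqref{item:lowerSet} shows (applied with $\pi s$ in the role of the larger element and $\pi$ in the role of the smaller). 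Your extremality argument is also correct: since $\pi^{-1}(\beta) = \alpha_s$ is simple and all $\pi^{-1}(\gamma)$ for $\gamma \in \Roots(I) \ssm \{\beta\}$ are positive roots distinct from~$\alpha_s$, linear independence of $\Delta$ rules out any nontrivial nonnegative combination, so $\beta$ generates a ray of $\cone \Roots(I)$.

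The genuine gap is the converse, which you acknowledge, and it is worth being explicit about why it is hard. First, for each extremal increasing flip between strongly acyclic facets $I$ and $I'$ with directing root $\beta = \rootFunction{I}{i}$, you must produce $\pi \in \strongLinearExtensions(I)$ with $\pi^{-1}(\beta) \in \Delta$. In the pipe dream setting this is exactly what \cref{lem:flippable} delivers, by a geometric rectangle argument specific to the grid; no analogue is known for general finite Coxeter groups, and this is the core of the open problem. Second, even if such a $\pi$ exists, you need $\pi s \in [e,\omega]$; the sets $\strongLinearExtensions(I')$ are merely order convex, not intervals (\cref{rem:linearExtensionsPartitionSubwordComplexA}), and the paper's results do not control their boundary well enough to guarantee this. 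Finally, your entire argument assumes \cref{conj:A}, which is itself open, and the alternating hypothesis on~$Q$, which must enter somewhere, appears nowhere in your sketch except as a vague appeal to ``rigidity.'' Until these are addressed, the proposal is a plan, not a proof.
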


Combining the sorting and alternating conditions of \cref{thm:B,conj:A,conj:B}, we thus obtain the following conjecture, which can be seen as the natural extension of~\cref{thm:A}.

\begin{conjectureA}
\label{conj:C}
If~$Q$ is sorting and alternating, then the Hasse diagram of the quotient~$[e, \omega]/\equiv_{Q, \omega}$ is isomorphic to the graph of extremal increasing flips between acyclic facets of~$\subwordComplex(Q, \omega)$.
\end{conjectureA}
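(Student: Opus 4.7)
The plan is to deduce Conjecture C by combining the three earlier statements, using the sorting hypothesis to reconcile the gap between acyclic and strongly acyclic facets that Conjecture B leaves open. More precisely, I would argue that under the sorting assumption the two notions coincide, so that Conjecture B applied under both hypotheses yields exactly the statement of Conjecture C.

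First, I would show that if $Q$ is sorting, then every acyclic facet of $\subwordComplex(Q,\omega)$ is strongly acyclic. By \cref{thm:C}, the sets $\linearExtensions(I)$ indexed by facets $I$ of $\subwordComplex(Q,\omega)$ partition the interval $[e,\omega]$. In particular, for any facet $I$, the inclusion $\linearExtensions(I) \subseteq [e,\omega]$ holds, so that
\[
[e,\omega] \cap \linearExtensions(I) \;=\; \linearExtensions(I).
\]
Hence $\linearExtensions(I) \neq \varnothing$ if and only if $[e,\omega] \cap \linearExtensions(I) \neq \varnothing$, which is the desired equivalence between acyclic and strongly acyclic facets.

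Next, I would invoke \cref{conj:A}: since $Q$ is alternating, the subword complex equivalence $\equiv_{Q,\omega}$ is a lattice congruence of $[e,\omega]$, so the quotient $[e,\omega]/{\equiv_{Q,\omega}}$ is a well-defined lattice. Its elements are the equivalence classes $[e,\omega]\cap \linearExtensions(I)$, and by the first step these are in bijection with the acyclic facets of~$\subwordComplex(Q,\omega)$. Finally, \cref{conj:B} identifies the Hasse diagram of $[e,\omega]/{\equiv_{Q,\omega}}$ with the graph of extremal increasing flips between strongly acyclic facets; using again that ``strongly acyclic'' collapses to ``acyclic'' under the sorting assumption, this graph is precisely the graph of extremal increasing flips between acyclic facets of $\subwordComplex(Q,\omega)$, as claimed.

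The main obstacle, of course, is that this reduction is only as strong as \cref{conj:A,conj:B}: all the genuine content lies in proving that the alternating hypothesis forces the subword complex equivalence to be a lattice congruence, and that the resulting cover relations correspond exactly to external flips. Once those are established, the sorting-plus-alternating case of \cref{conj:C} is a clean corollary via the identification of acyclic and strongly acyclic facets explained above; no further structural analysis of the root configurations or of the extremal flip graph is required beyond what is already needed for \cref{conj:B}.
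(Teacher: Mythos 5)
Your reduction is exactly the one the paper itself gives for \cref{conj:C}: the sorting hypothesis together with \cref{thm:C} identifies acyclic and strongly acyclic facets (since $\linearExtensions(I) \subseteq [e,\omega]$ forces $\strongLinearExtensions(I) = \linearExtensions(I)$), and then \cref{conj:A,conj:B} under the alternating hypothesis immediately specialize to give the stated isomorphism. You also correctly flag that this is only a conditional deduction whose substance lies entirely in the still-open \cref{conj:A,conj:B}, which matches the paper's framing of \cref{conj:C} as a specialization rather than a theorem.
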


This last conjecture has a strong connection to the geometry of subword complexes given by brick polytopes~\cite{PilaudSantos-brickPolytope, PilaudStump-brickPolytope} and brick polyhedra~\cite{JahnStump}.
Namely, the extremal increasing flips between acyclic facets of~$\subwordComplex(Q, \omega)$ are precisely the bounded edges (meaning forgetting the unbounded rays) of the brick polyhedron~$\brickPolyhedron(Q, \omega)$ oriented by a natural linear functional.
\cref{conj:C} thus translates geometrically to the following.

\begin{conjectureA}
\label{conj:D}
If~$Q$ is sorting and alternating, then the bounded oriented graph of the brick polyhedron~$\brickPolyhedron(Q, \omega)$ is isomorphic to the Hasse diagram of the lattice quotient~$[e, \omega]/\equiv_{Q, \omega}$.
\end{conjectureA}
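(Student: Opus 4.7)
The plan is to deduce \cref{conj:D} from \cref{conj:C} by identifying the graph of extremal increasing flips between acyclic facets of~$\subwordComplex(Q,\omega)$ with the bounded oriented graph of the brick polyhedron~$\brickPolyhedron(Q,\omega)$. Under the sorting and alternating hypotheses, \cref{conj:C} provides a poset isomorphism between the Hasse diagram of~$[e,\omega]/\equiv_{Q,\omega}$ and the graph of extremal increasing flips between acyclic facets. It thus suffices to show that this oriented flip graph coincides, as an oriented graph, with the bounded part of the 1-skeleton of~$\brickPolyhedron(Q,\omega)$ oriented by a suitable linear functional.

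To carry this out, I would first invoke the structural description of brick polyhedra from~\cite{PilaudSantos-brickPolytope, PilaudStump-brickPolytope, JahnStump}: the vertices of~$\brickPolyhedron(Q,\omega)$ are in bijection with the acyclic facets of~$\subwordComplex(Q,\omega)$, and each edge of~$\brickPolyhedron(Q,\omega)$ corresponds to a flip between two such facets. The key geometric point is that an edge is bounded in~$\brickPolyhedron(Q,\omega)$ if and only if the corresponding flip is extremal, \ie the root~$\rootFunction{I}{i}$ directing the flip is a ray of the root configuration~$\Roots(I)$; unbounded edges are exactly those rays that escape to infinity along non-extremal directions. This equivalence is the geometric counterpart of the definition of extremal flip recalled before~\cref{conj:B}, and should follow from tracing through the construction of~$\brickPolyhedron(Q,\omega)$ as a weighted Minkowski sum of root cones.

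Next, I would check that the natural linear functional (the sum of weights used to define the brick vector) orients each bounded edge in the direction of its increasing flip. This is already implicit in the ``increasing flip = increasing brick vector'' principle established for brick polytopes in~\cite{PilaudSantos-brickPolytope, PilaudStump-brickPolytope}, and its extension to brick polyhedra in~\cite{JahnStump}; the verification amounts to comparing the difference of brick vectors of two flip-adjacent facets with the sign convention for increasing flips in the subword complex. Combining this orientation statement with the bijection between bounded edges and extremal flips yields an isomorphism of oriented graphs between the bounded oriented 1-skeleton of~$\brickPolyhedron(Q,\omega)$ and the graph of extremal increasing flips, and composing with the isomorphism from~\cref{conj:C} gives the desired conclusion.

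The main obstacle is clearly that the argument is entirely conditional on~\cref{conj:C}, and hence ultimately on \cref{conj:A,conj:B}; among these, \cref{conj:A} (that~$\equiv_{Q,\omega}$ is a lattice congruence in the alternating case) is the most delicate step, since it requires controlling how the root configurations interact with meets and joins in~$[e,\omega]$ beyond the pipe dream case treated in \cref{sec:latticeAcyclicPipeDreams}. A secondary difficulty is to make the equivalence ``bounded edge~$\Leftrightarrow$ extremal flip'' completely precise in the polyhedral (as opposed to polytopal) setting of~\cite{JahnStump}, in particular ensuring that no extremal flip is accidentally realized by an unbounded edge when~$Q$ fails to be sorting. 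Under the sorting assumption, however, this degeneracy should be ruled out by \cref{thm:C}, which keeps the relevant combinatorics confined to~$[e,\omega]$.
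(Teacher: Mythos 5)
Your proposal takes essentially the same route as the paper: since \cref{conj:D} is a conjecture, the paper does not prove it but introduces it as a geometric translation of \cref{conj:C}, observing that vertices of~$\brickPolyhedron(Q,\omega)$ correspond to acyclic facets, bounded edges correspond to extremal flips, and the orientation by a suitable linear functional~$\delta$ makes the bounded $1$-skeleton coincide with the graph of extremal increasing flips, citing \cite[Thm.~4.4]{JahnStump} for this identification. You make this reduction explicit with some extra (and slightly speculative) polyhedral detail about Minkowski sums and the role of rays, but the logical backbone---deduce \cref{conj:D} from \cref{conj:C} via the bounded-edge/extremal-flip correspondence---matches the paper exactly.
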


In particular, when~$\omega = \wo$, we obtain the following conjecture, extending results from~\cite{Pilaud-brickAlgebra}.

\begin{conjectureA}
\label{conj:E}
If~$Q$ is sorting and alternating, then the oriented graph of the brick polytope $\brickPolyhedron(Q, \wo)$ is isomorphic to the Hasse diagram of the lattice quotient of the weak order by~$\equiv_{Q, \omega}$.
\end{conjectureA}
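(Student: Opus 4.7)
The plan is to derive \cref{conj:E} directly from \cref{conj:D} by specializing to~$\omega = \wo$. Two observations make this reduction immediate. First, the interval~$[e, \wo]$ of the weak order is the whole group~$W$, so the lattice quotient~$[e, \wo]/{\equiv_{Q, \wo}}$ of \cref{conj:D} coincides with the lattice quotient of the weak order appearing in \cref{conj:E}. Second, when~$\omega = \wo$ the brick polyhedron~$\brickPolyhedron(Q, \wo)$ is a genuine polytope (this is the original brick polytope of~\cite{PilaudStump-brickPolytope}), so it has no unbounded rays and its bounded oriented graph coincides with its full oriented graph. Granting \cref{conj:D}, these two observations yield \cref{conj:E} at once.

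It is worth unpacking how the hypotheses of \cref{conj:A,conj:B,conj:C,conj:D} simplify when~$\omega = \wo$. By \cref{thm:C}, the sets~$\linearExtensions(I)$ over the facets~$I$ of~$\subwordComplex(Q, \wo)$ partition~$W = [e, \wo]$, so every facet is automatically strongly acyclic and the distinction between acyclic and strongly acyclic facets disappears. Since~$\brickPolyhedron(Q, \wo)$ is bounded, every increasing flip between two facets corresponds to a bounded edge of the polytope, so the extremality hypothesis also becomes vacuous. Under \cref{conj:A}, the equivalence~${\equiv_{Q, \wo}}$ is a lattice congruence of the weak order, and the required isomorphism pairs each cover relation of the quotient with the edge of~$\brickPolyhedron(Q, \wo)$ produced by the corresponding increasing flip, oriented by the canonical linear functional.

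The main obstacle is \cref{conj:A} itself: without a lattice congruence there is simply no quotient to compare with the brick polytope. Once \cref{conj:A} is established, the geometric comparison should proceed by combining the root-configuration description of flips from~\cite{CeballosLabbeStump} with the known face structure of the brick polytope from~\cite{PilaudStump-brickPolytope}, following the template developed in \cref{sec:latticeAcyclicPipeDreams} for pipe dreams and in~\cite{Pilaud-brickAlgebra} for specific sorting alternating families such as those giving rise to Cambrian lattices. I do not expect any additional input beyond \cref{conj:A,conj:D} to be needed in the~$\omega = \wo$ case, so that the entire difficulty is concentrated in the lattice-congruence step rather than in the passage from the general~$\omega$ statement to the longest-element one.
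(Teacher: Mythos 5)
Your main derivation matches the paper's approach exactly: Conjecture~\ref{conj:E} is introduced in the paper precisely as the specialization of Conjecture~\ref{conj:D} to $\omega = \wo$, using the two observations you list (that $[e,\wo]$ is the full weak order on $W$, and that $\brickPolyhedron(Q,\wo)$ is a genuine polytope whose bounded oriented graph is therefore its full oriented graph).

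However, your "unpacking" paragraph contains a factual error. You claim that since $\brickPolyhedron(Q,\wo)$ is bounded, "every increasing flip between two facets corresponds to a bounded edge of the polytope, so the extremality hypothesis also becomes vacuous." This is false: even for $\omega = \wo$ with $Q$ sorting and alternating, not every flip between acyclic facets is extremal, and non-extremal flips do not correspond to edges of the brick polytope. \cref{exm:acyclicFlipvsWeakOrder} gives a concrete counterexample in type~$A_2$ with $Q = \tau_1\tau_2\tau_1\tau_2\tau_1\tau_2$ (which is sorting and alternating) and $\omega = \wo$: the acyclic facets $\{1,3,4\}$ and $\{3,4,6\}$ are connected by a flip that is not a cover relation of the quotient and not an edge of the brick polytope. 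The correct point is only that, when $\omega = \wo$, all edges of the brick polyhedron are bounded, so the bounded graph equals the full graph; the edges are still exactly the \emph{extremal} flips, and that restriction remains part of the content of the conjecture. This error does not affect the validity of your reduction in the first paragraph, which never actually invokes the vacuousness claim, but it would mislead a reader about what the conjecture asserts.
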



\section{Preliminaries on pipe dreams}
\label{sec:preliminaries}


\subsection{Pipe dreams}
\label{subsec:pipeDreams}

A \defn{pipe dream}~$P$ is a filling of a triangular shape with crossings~\cross{} and contacts~\elbow{} so that all pipes entering on the left side exit on the top side.
We only consider \defn{reduced} pipe dreams, where two pipes have at most one crossing.
We label the pipes with~$1, 2, \dots, n$ in the order of their entry points from top to bottom.
We denote by~$\omega_P \in \fS_n$ the order of the exit points of the pipes of~$P$ from left to right.
In other words, the pipe entering at row~$i$ exits at column~$\omega^{-1}_P(i)$.
For a fixed permutation~$\omega \in \fS_n$, we denote by~$\pipeDreams(\omega)$ the set of reduced pipe dreams~$P$ such that~$\omega_P = \omega$.

A contact~$c$ is \defn{flippable} if the two pipes passing through contact~$c$ have a crossing~$x$.
The \defn{flip} exchanges the contact~$c$ with the crossing~$x$.
The flip is \defn{increasing} if the contact~$c$ is weakly southwest of the crossing~$x$.
For example, \cref{fig:pipeDreams} illustrates an increasing flip from the left pipe dream to the right pipe dream.
The \defn{increasing flip graph} is the graph of increasing flips on~$\pipeDreams(\omega)$.
It is clearly a directed acyclic graph, and it has a unique source and a unique sink \cite{PilaudPocchiola}, called the \defn{greedy} and \defn{antigreedy} pipe dreams, and denoted~$\greedyPipeDream$ and~$\antiGreedyPipeDream$.
The \defn{increasing flip poset} is the reflexive and transitive closure of the increasing flip graph on~$\pipeDreams(\omega)$.

The \defn{contact graph} of a pipe dream~$P$ is the directed graph~$P\contact$ with one node for each pipe of~$P$ and one arc for each contact of~$P$ connecting the northwest pipe to the southeast pipe of the contact\footnote{We have reversed the usual orientation conventions of \cite{PilaudSantos-brickPolytope, PilaudPocchiola, Pilaud-brickAlgebra} to suit better our purposes, in particular in \cref{subsec:insertionAlgorithm}.}.
We see equivalently the contact graph~$P\contact$ as a (multi)graph on the pipes of~$P$ or on the integers~$[n]$.
We say that a pipe dream~$P$ is \defn{acyclic} if its contact graph~$P\contact$ has no oriented cycle.
For an acyclic pipe dream~$P$, we denote by~$\contactLess{P}$ the transitive closure of the contact graph of~$P$.
For~$\omega \in \fS_n$, we denote by~$\acyclicPipeDreams(\omega)$ the set of acyclic pipe dreams of~$\pipeDreams(\omega)$.

\begin{example}
\label{exm:Tamari1}
We say that a pipe dream is \defn{reversing} if it fixes the first and last pipes and reverses the order of the remaining pipes.
In this case, it is natural to label the pipes from~$0$ to~$n+1$, so that the permutation of the pipes is~$\rho_n \eqdef [0, n, n-1, \dots, 2, 1, n+1]$.
As observed in~\cite{Woo, PilaudPocchiola, Pilaud-these, Stump}, the family of reversing pipe dreams belong to the Catalan families, meaning that it is counted by the famous Catalan numbers.
\cref{fig:bijection} illustrates explicit bijections between reversing pipe dreams of~$\pipeDreams(\rho_n)$, binary trees with $n$ internal nodes, and the triangulations of a convex $(n+2)$-gon.
More precisely, the map which sends a contact in row~$i$ and column~$j$ of the triangular shape (indexed from~$0$ to~$n+1$) to the diagonal~$[i,n+1-j]$ of the~$(n+2)$-gon provides the following correspondence: \\[.3cm]
\centerline{
\begin{tabular}{r@{$\quad\longleftrightarrow\quad$}l}
pipe dream~$P \in \reversingPipeDreams_n$ & triangulation~$P\duality$ of the $(n+2)$-gon, \\
pipe $i$ of~$P$ & triangle $i\duality$ of~$P\duality$ (with central vertex~$i$), \\
contact between pipes~$i$ and~$j$ of~$P$ & common side of triangles~$i\duality$ and~$j\duality$ of~$P\duality$, \\
crossing between pipes~$i$ and~$j$ of~$P$ & common bisector of triangles~$i\duality$ and~$j\duality$ of~$P\duality$, \\
contact graph of~$P$ & dual binary tree of~$P\duality$, \\
contact flips in~$P$ & diagonal flips in~$P\duality$. \\
\end{tabular}} \\[.3cm]
\begin{figure}[h]
	\centerline{\includegraphics[scale=1.23]{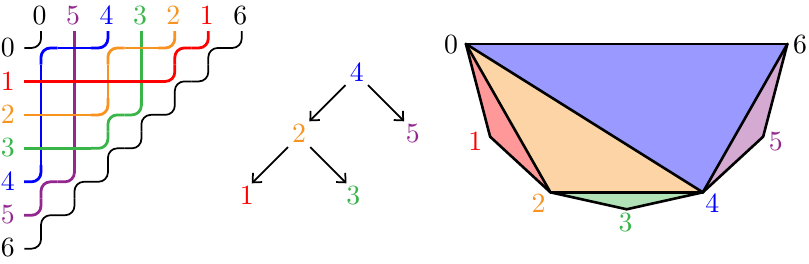}}
	\caption{The bijection between reversing pipe dreams (left), binary trees (middle) and triangulations (right).}
	\label{fig:bijection}
\end{figure}

Hence, this map sends the increasing flip poset on reversing pipe dreams to the \defn{Tamari lattice} on binary trees.
This lattice is defined as the transitive closure of right rotations on binary trees, and is obtained as the quotient of the weak order by the \defn{sylvester congruence}~\cite{Tonks,HivertNovelliThibon-algebraBinarySearchTrees}.
An important point here is that all reversing pipe dreams are acyclic, since their contact graphs are (oriented)~binary~trees.
\end{example}

\begin{remark}
\label{rem:increasingFlipPosetNotLattice}
In contrast to \cref{exm:Tamari1}, the increasing flip poset on all pipe dreams of~$\pipeDreams(\omega)$ is not always a lattice.
The first counter-example happens for the exiting permutation~${\omega = 12543}$.
\end{remark}


\subsection{Crossing and contact properties}
\label{subsec:crossingsContacts}

We now gather some elementary properties of crossings and contacts in pipe dreams that will be needed later to construct pipe dreams from permutations.
For a pipe dream~$P \in \pipeDreams(\omega)$, we call pipe~$j$ the pipe which enters at row~$j$ and exits at column~$\omega^{-1}(j)$.

\begin{lemma}
\label{lem:horiVertCrossings}
For any pipe dream~$P \in \pipeDreams(\omega)$, the pipe~$j$ of~$P$ crosses precisely
\begin{itemize}
\item vertically the pipes~$i$ such that~$i < j$ while~$\omega^{-1}(i) > \omega^{-1}(j)$,
\item horizontally the pipes~$k$ such that~$j < k$ while~$\omega^{-1}(j) > \omega^{-1}(k)$.
\end{itemize}
\end{lemma}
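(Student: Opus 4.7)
The plan is to deduce the lemma from the following local observation about any reduced pipe dream: at each crossing tile, the horizontal pipe (entering from the west and exiting to the east) carries a strictly smaller label than the vertical pipe (entering from the south and exiting to the north). Combined with the classical fact that two pipes $i<j$ of a reduced pipe dream cross precisely when $\omega^{-1}(i) > \omega^{-1}(j)$ (a standard Jordan-curve argument on the monotone paths from the prescribed entry rows to the prescribed exit columns, with reducedness preventing multiple crossings), the lemma follows immediately: the crossings at which pipe $j$ plays the vertical role are those in which it is the larger-indexed pipe, hence those with some $i<j$ and $\omega^{-1}(i) > \omega^{-1}(j)$; symmetrically, pipe $j$ plays the horizontal role at its crossings with pipes $k>j$ satisfying $\omega^{-1}(j) > \omega^{-1}(k)$.

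To prove the local observation, I would sweep through the tiles in any order compatible with their west/south dependencies (for instance, processing rows from bottom to top, and left to right within each row) and track the current position of each pipe. On the left boundary, pipe $i$ enters at row $i$, so the pipes start in the order $1,2,\dots,n$ from top to bottom. A direct local case analysis of the two tile types yields two invariants: at a contact tile, the west-entering pipe (just before, at row $r$) exits north to row $r-1$ while the south-entering pipe (just before, at row $r+1$) exits east to row $r$, preserving their relative vertical order; at a crossing tile, the west-entering pipe continues east (staying at row $r$) while the south-entering pipe continues north (jumping to row $r-1$), swapping their relative order. Reducedness then ensures that any pair of pipes $p<q$ remains with $p$ strictly above $q$ until their unique crossing tile (if any), after which the order is flipped.

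Now let $(r,c)$ be an arbitrary crossing tile with horizontal pipe $H$ and vertical pipe $V$. The local geometry places $H$ at $(r, c-1)$ and $V$ at $(r+1, c)$ immediately before the tile, so $H$ is strictly above $V$ at this moment. If one had $V<H$, then the preserved-until-crossing invariant applied to the pair $V<H$ would force $V$ above $H$ just before their first crossing, contradicting what we just read off from the local geometry. Hence $H<V$, as required. The main technical obstacle is making precise the ``relative vertical order'' of two pipes that occupy different tiles at any given moment; the cleanest route is to fix the sweep order and update positions one tile at a time, verifying the two invariants locally, which avoids any ambiguity in comparing positions across distinct snapshots.
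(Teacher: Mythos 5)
Your proposal is correct and follows essentially the same route as the paper's (very terse) proof: two pipes $i<j$ cross exactly once iff $\omega^{-1}(i)>\omega^{-1}(j)$, and at that crossing the larger-labelled pipe is the vertical one, which immediately yields both bullet points. The paper simply asserts the second fact, whereas you unpack it with a sweep/invariant argument; the unpacking is sound (and you correctly flag the only delicate point, namely making ``relative vertical order'' precise across tiles), but it is extra detail rather than a different method.
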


\begin{proof}
For~$i < j$, if~$\omega^{-1}(i) > \omega^{-1}(j)$, then the pipes~$i$ and~$j$ have to cross exactly once (and $j$ must be the vertical pipe at that crossing), while if~$\omega^{-1}(i) < \omega^{-1}(j)$ the pipes~$i$ and~$j$ cannot cross.
The same argument applies for~${k > j}$.
\end{proof}

\begin{lemma}
\label{lem:countingElbows}
For any pipe dream~$P \in \pipeDreams(\omega)$, the pipe~$j$ has precisely
\begin{itemize}
\item $\noninversions{\omega}{j}$ many southeast elbows\!\SEelbow{}
\item $1+\noninversions{\omega}{j}$ many northwest elbows~\WNelbow{}
\item $j-1-\noninversions{\omega}{j}$ vertical crossings~\NScross
\item $\omega^{-1}(j)-1-\noninversions{\omega}{j}$ horizontal crossings~\WEcross
\end{itemize}
where~$\noninversions{\omega}{j} \eqdef \#\set{i \in [n]}{i < j \text{ and } \omega^{-1}(i) < \omega^{-1}(j)}$ is the number of \defn{non-inversions} of~$j$ in~$\omega$.
\end{lemma}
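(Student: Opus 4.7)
The plan is to derive the four counts from \cref{lem:horiVertCrossings} together with two elementary identities about the monotone path traced by pipe~$j$. The crossing counts follow directly from \cref{lem:horiVertCrossings}: among the $j-1$ pipes $i < j$, exactly $\noninversions{\omega}{j}$ satisfy $\omega^{-1}(i) < \omega^{-1}(j)$ and hence do not cross pipe $j$, leaving $j-1-\noninversions{\omega}{j}$ vertical crossings; a symmetric count among the $\omega^{-1}(j)-1$ pipes $k$ with $\omega^{-1}(k) < \omega^{-1}(j)$ yields $\omega^{-1}(j)-1-\noninversions{\omega}{j}$ horizontal crossings.

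For the elbows, I would combine two observations about pipe~$j$'s path in the triangular shape. First, pipe~$j$ enters at the west edge of cell $(j, 1)$, exits at the north edge of cell $(1, \omega^{-1}(j))$, and moves only east or north from cell to cell, so its path visits exactly $j + \omega^{-1}(j) - 1$ cells; subtracting the crossing counts yields that pipe~$j$ has $1 + 2\noninversions{\omega}{j}$ elbows in total. Second, the pipe begins going east and ends going north; each \WNelbow{} turns east motion into north motion while each \SEelbow{} does the reverse, so there must be exactly one more \WNelbow{} than \SEelbow{}. Solving these two linear equations yields $\noninversions{\omega}{j}$ elbows \SEelbow{} and $1 + \noninversions{\omega}{j}$ elbows \WNelbow{}, as claimed.

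The only substantive ingredient is the cell-count identity, which reduces to observing that pipe~$j$ traces a monotone east-north lattice path from $(j,1)$ to $(1, \omega^{-1}(j))$ inside the triangular shape. This is immediate from the definition of a pipe dream (each cell is a crossing or an elbow, and pipes never move west or south), so I do not foresee any serious obstacle. Note in particular that one does not try to establish a direct bijection between \SEelbow{} elbows on pipe~$j$ and non-inversions of $j$: the northwest pipe at an \SEelbow{} elbow of pipe~$j$ need not be a non-inversion of $j$ (for instance in $\omega = 132$ pipe~$2$ has its unique \SEelbow{} shared with pipe~$3$), so the indirect counting argument is the correct approach.
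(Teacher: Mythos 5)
Your proof is correct and follows essentially the same route as the paper's: derive the crossing counts from \cref{lem:horiVertCrossings}, subtract from the total path length $j+\omega^{-1}(j)-1$ to obtain $1+2\,\noninversions{\omega}{j}$ elbows, and then split them. The only cosmetic difference is in the final split: the paper observes that the remaining grid points are \emph{alternating} northwest and southeast elbows, while you phrase the same fact as a direction-balance argument (the pipe enters going east and exits going north, so northwest elbows exceed southeast elbows by exactly one); these are equivalent and both yield the stated counts.
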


\begin{proof}
Pipe~$j$ enters at row~$j$ and exits at column~$\omega^{-1}(j)$, so that it passes through~$j+\omega^{-1}(j)-1$ grid points.
By \cref{lem:horiVertCrossings}, it has~$\#\set{i \in [n]}{i < j \text{ and } \omega^{-1}(i) > \omega^{-1}(j)} = j-1-\noninversions{\omega}{j}$ vertical crossings and~$\#\set{k \in [n]}{j < k \text{ and } \omega^{-1}(j) > \omega^{-1}(k)} = \omega^{-1}(j)-1-\noninversions{\omega}{j}$ horizontal crossings.
The~$1+2\,\noninversions{\omega}{j}$ remaining grid points along the pipe~$j$ are thus alternating northwest elbows and southeast elbows.
\end{proof}

\begin{lemma}
\label{lem:characterizationPipeDreams}
A collection~$P$ of~$n$ pipes pairwise disjoint except at crossing and contacts and such that for each~$j \in [n]$, the pipe~$j$ enters at row~$j$, exits at column~$\omega^{-1}(j)$, and has~$\noninversions{\omega}{j}$ southeast contacts is a pipe dream of~$\pipeDreams(\omega)$.
\end{lemma}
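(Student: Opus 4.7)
The plan is to reduce the statement to a reducedness claim: the entry and exit data of the pipes force the exiting permutation of~$P$ to be~$\omega$, so the only content is that no two pipes of~$P$ cross more than once. I will establish this by a double-counting argument that matches the total number of crossing cells of~$P$ against~$\Inv(\omega)$.

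First, I count the crossings along a single pipe. On each pipe~$j$, turns must alternate between NW and SE elbows. Since pipe~$j$ enters row~$j$ going East and exits column~$\omega^{-1}(j)$ going North, both the first and the last turn must be NW elbows, so the number of NW elbows on pipe~$j$ is exactly one more than its number of SE elbows, namely $1 + \noninversions{\omega}{j}$. Since pipe~$j$ traverses $j + \omega^{-1}(j) - 1$ cells in total, the number of crossings along pipe~$j$ equals
\[
(j + \omega^{-1}(j) - 1) - (1 + 2\,\noninversions{\omega}{j}) = j + \omega^{-1}(j) - 2 - 2\,\noninversions{\omega}{j}.
\]
Summing over~$j$ and dividing by~$2$ (each crossing cell is shared by two pipes), the total number of crossings in~$P$ equals $\binom{n}{2} - \sum_j \noninversions{\omega}{j} = \Inv(\omega)$.

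Second, I bound this total from below via a topological parity argument: pipes~$i$ and~$j$ with $i < j$ must cross an odd number of times if the pair $(i,j)$ is an inversion of~$\omega$ (\ie $\omega^{-1}(i) > \omega^{-1}(j)$) and an even number of times otherwise, since their four entry/exit endpoints interleave on the boundary of the triangular region precisely in the inversion case. In particular each inversion pair contributes at least one crossing to~$P$, so~$P$ has at least~$\Inv(\omega)$ crossings. Combining the two counts forces equality throughout: every inversion pair crosses exactly once and every non-inversion pair does not cross at all, which is precisely the reducedness of~$P$. The main technical point is the topological parity statement, a standard Jordan-type fact about monotone lattice paths in a disk that requires careful bookkeeping of the cyclic order of entries and exits on the boundary of the triangular shape.
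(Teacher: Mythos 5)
Your proof is correct, and it rests on the same two ingredients as the paper's: the alternation of NW and SE elbows along each pipe, which pins down the number of crossing cells on pipe~$j$ as $j + \omega^{-1}(j) - 2 - 2\,\noninversions{\omega}{j}$, and a topological fact forcing inversion pairs to cross. The difference is in where the counting is done. The paper performs the matching \emph{per pipe}: pipe~$j$ must cross the $j-1-\noninversions{\omega}{j}$ pipes $i<j$ with $\omega^{-1}(i)>\omega^{-1}(j)$ vertically and the $\omega^{-1}(j)-1-\noninversions{\omega}{j}$ pipes $k>j$ with $\omega^{-1}(j)>\omega^{-1}(k)$ horizontally, and these exhaust the available crossing cells of pipe~$j$, so each such pipe is crossed exactly once and no others are crossed. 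You instead aggregate \emph{globally}, summing over all pipes, dividing by two, and recognizing the result as $\Inv(\omega)$, then comparing against the lower bound supplied by the inversion pairs. Both arguments are valid and are really the same count viewed at different resolutions; the paper's local version has the small advantage of not needing the sum-and-halve step and of directly identifying which pipes each pipe crosses, while your global version makes the connection to the length of~$\omega$ explicit. One minor point worth being explicit about in your write-up: the parity statement (odd/even crossings according to whether the endpoints interleave on the boundary) is stronger than what you use — only the lower bound of one crossing per inversion pair is needed, exactly as in the paper's ``must cross'' step — so it would be cleaner to state just that.
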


\begin{proof}
The argument is similar to the previous lemma.
Observe first that the pipe~$j$ must cross the paths~$i$ such that~$i < j$ and~$\omega^{-1}(i) > \omega^{-1}(j)$ and the paths~$k$ such that~$j < k$ and~$\omega^{-1}(j) > \omega^{-1}(k)$.
Moreover, it has~$\noninversions{\omega}{j}$ southeast elbows and thus~$1+\noninversions{\omega}{j}$ northwest elbows.
This already exhausts all~$j+\omega^{-1}(j)-1$ grid points of~$j$.
Therefore, the pipe~$j$ can only cross at most once any other pipe.
\end{proof}


\subsection{Contact graph properties}
\label{subsec:contactGraph}

We now state a simple observation about the poset~$\contactLess{P}$, and two of its consequences, that will play essential roles in the proofs in \cref{sec:latticeAcyclicPipeDreams}.

\begin{lemma}
\label{lem:rectangle}
Let~$P$ be a pipe dream and~$i,j$ be pipes of~$P$.
If there is an elbow of pipe~$i$ weakly northwest of an elbow of pipe~$j$, then~$i \contactLess{P} j$.
\end{lemma}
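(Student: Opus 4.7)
The plan is to induct on the Manhattan distance $d = (r_2 - r_1) + (c_2 - c_1)$ between the cells of $e_i = (r_1, c_1)$ and $e_j = (r_2, c_2)$. In the base case $d = 0$, both elbows lie in the same cell; the convention that a northwest elbow occupies the northwest corner while a southeast elbow occupies the southeast corner forces $i$ to be the northwest pipe and $j$ the southeast pipe, so $i \contactLess{P} j$ by definition of the contact graph.

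For the inductive step I first reduce to the case where $e_i$ is a southeast elbow: if instead $e_i$ is a northwest elbow, then the southeast pipe $k$ at the contact $(r_1, c_1)$ satisfies $i \contactLess{P} k$ and carries a southeast elbow at the same cell, so I may replace $i$ by $k$. With $e_i$ now southeast, pipe $i$ has two adjacent northwest elbows of interest: its next one in row $r_1$ at some $(r_1, c^*)$ with $c^* > c_1$, and its previous one in column $c_1$ at some $(r_i^*, c_1)$ with $r_i^* > r_1$. Whenever $c^* \leq c_2$ or $r_i^* \leq r_2$, the southeast pipe $k'$ at the corresponding contact lies inside the rectangle, satisfies $i \contactLess{P} k'$, and has a southeast elbow at strictly smaller Manhattan distance to $e_j$, so the induction hypothesis applied to $(k', j)$ closes the chain.

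The delicate case is when both $c^* > c_2$ and $r_i^* > r_2$ (which in particular forces $r_1 < r_2$ and $c_1 < c_2$). Then $(r_1, c_2)$ is a crossing where $i$ runs horizontally and some pipe $m$ runs vertically. Examining $m$'s northwest elbow at $(r_m^*, c_2)$ in column $c_2$, I will show that $r_1 < r_m^* \leq r_2$: the alternative $r_m^* > r_2$ would place $e_j$ strictly inside $m$'s vertical segment as a crossing, contradicting that $e_j$ is a contact. Either $r_m^* = r_2$ identifies $m$ as the northwest pipe at $e_j$, or $r_m^* < r_2$ lets induction applied to the southeast pipe at $(r_m^*, c_2)$ and $j$ yield $m \contactLess{P} j$ (or $m = j$).

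The hard part is chaining $i$ to $m$, and this is where I invoke the reduced pipe dream hypothesis: since $i$ and $m$ already cross at $(r_1, c_2)$, they cannot cross again. Let $(r_m^*, c_m^\dagger)$ be $m$'s southeast elbow at the west end of its horizontal segment in row $r_m^*$. If $c_m^\dagger < c_1$, then $m$ runs horizontally through $(r_m^*, c_1)$ while $i$ runs vertically through that same cell, since the inequalities $r_1 < r_m^* \leq r_2 < r_i^*$ place $(r_m^*, c_1)$ strictly inside $i$'s vertical segment in column $c_1$, forcing a forbidden second crossing. Hence $c_m^\dagger \geq c_1$, the elbow $(r_m^*, c_m^\dagger)$ lies weakly southeast of $e_i$ at strictly smaller Manhattan distance, and induction applied to $(i, m)$ gives $i \contactLess{P} m$. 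Concatenating with the previous step produces $i \contactLess{P} j$.
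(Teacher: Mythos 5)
Your proof is correct and shares the paper's overall skeleton: induction on the Manhattan distance, the axis-parallel rectangle $R$, normalization to a southeast elbow at $e_i$, and an appeal to the reduced-pipe-dream property. The inductive step is decomposed differently, though. The paper tracks both $k$, the southeast pipe at $x$, and $\ell$, the northwest pipe at $y$, and argues directly that since $k$ and $\ell$ cross at most once, one of them must have an additional elbow along the sides of $R$; a single application of the inductive hypothesis to the resulting closer pair of elbows then yields $k \contactLess{P} \ell$. You follow $i$ alone, and in the hard case (when $i$ runs straight through both the northeast and southwest corners of $R$) you introduce the vertical pipe $m$ at the northeast corner, show that $m$ must turn inside $R$ because $e_j$ is a contact rather than a crossing, and invoke reducedness for the pair $(i,m)$ rather than for $(k,\ell)$ to pull $m$'s southeast elbow into $R$; you then apply the inductive hypothesis twice, once for $i \contactLess{P} m$ and once for $m \contactLess{P} j$. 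The paper's version is more symmetric and a little shorter; yours isolates more explicitly which pipe carries the intermediate elbow and why, which can be easier to follow geometrically. Both proofs leave a few degenerate cases (coincident elbows, an auxiliary pipe equal to $j$) slightly implicit, but the substance is sound in each.
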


\begin{proof}
Let~$x$ (resp.~$y$) be the location of an elbow of pipe~$i$ (resp.~$j$) such that~$x$ is weakly northwest of~$y$.
We proceed by induction on the grid distance from~$x$ to~$y$.
If they coincide, then pipes~$i$ and~$j$ share a contact, and $i$ is northwest of~$j$ at this contact by assumption, so that there is an edge from~$i$ to~$j$ in~$P\contact$.
Otherwise, let~$k$ be the pipe of~$P$ with a southeast elbow at~$x$ ($k$ is either the pipe~$i$ itself, or there is an edge from~$i$ to~$k$ in~$P\contact$) and~$\ell$ be the pipe of~$P$ with a northwest elbow at~$y$ ($\ell$ is either the pipe~$j$ itself, or there is an edge from~$\ell$ to~$j$ in~$P\contact$).
Let~$R$ be the axis-parallel rectangle with corners~$x$ and~$y$.
Since pipes~$k$ and~$\ell$ cross at most once, at least one of them has an additional elbow along the sides of~$R$.
Assume for instance that~$k$ has an elbow at~$x'$.
Then~$x'$ is still weakly northwest of~$y$ and $x',y$ are strictly closer than~$x,y$.
By induction, there is a directed path from~$k$ to~$\ell$ in~$P\contact$, and thus a path directed from~$i$ to~$j$.
\end{proof}

Note that the reciprocal assertion of \cref{lem:rectangle} is false.
We conclude this section by two consequences of \cref{lem:rectangle}.

\begin{lemma}
\label{lem:consequenceRectangle1}
If~$i < j$ and~$\omega^{-1}(i) < \omega^{-1}(j)$, then $i \contactLess{P} j$ for any~$P \in \pipeDreams(\omega)$.
\end{lemma}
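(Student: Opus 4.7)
The plan is to apply \cref{lem:rectangle} (rectangle), which reduces the statement to locating an elbow of pipe $i$ weakly northwest of an elbow of pipe $j$. By \cref{lem:horiVertCrossings}, pipes $i$ and $j$ are non-crossing; by \cref{lem:countingElbows}, pipe $i$ has at least one $\WNelbow$ (whose first one I denote $(i,c_1^i)$) and pipe $j$ has at least $1+\noninversions{\omega}{j}\geq 2$ $\WNelbow$s since $i$ itself contributes a non-inversion of $j$. List pipe $j$'s $\WNelbow$s along its trajectory as $(r^{(1)},c^{(1)}),(r^{(2)},c^{(2)}),\dots$, so that $r^{(1)}=j$, the rows strictly decrease and the columns strictly increase up to the last one $(a_j,c_j)$, with $c_j=\omega^{-1}(j)>\omega^{-1}(i)=c_i\geq c_1^i$.

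I would choose the right $\WNelbow$ of pipe $j$ to compare with. Let $k$ be the smallest index with $c^{(k)}\geq c_1^i$; such a $k$ exists because $c_j\geq c_1^i$. Then $(i,c_1^i)$ has column $c_1^i\leq c^{(k)}$ by the choice of $k$, so the only thing to check is $r^{(k)}>i$: once this is established, $(i,c_1^i)$ is weakly northwest of $(r^{(k)},c^{(k)})$ and \cref{lem:rectangle} concludes the proof.

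I prove $r^{(k)}>i$ by induction on $k$. The base case $k=1$ is trivial because $r^{(1)}=j>i$. For $k\geq 2$, minimality of $k$ gives $c^{(k-1)}<c_1^i$, so on row $i$ pipe $i$ has not yet turned north at column $c^{(k-1)}$: the cell $(i,c^{(k-1)})$ lies on pipe~$i$'s initial horizontal segment and is therefore a horizontal crossing with pipe $i$ as the horizontal pipe. If pipe $j$ visited this cell, it would be forced to be the vertical pipe there, producing a crossing between $i$ and $j$ that \cref{lem:horiVertCrossings} forbids (any elbow role for pipe $j$ is likewise impossible, as the cell is already a crossing). Hence pipe $j$ avoids row $i$ in column $c^{(k-1)}$; since pipe $j$ occupies column $c^{(k-1)}$ in the contiguous row-interval $[r^{(k)},r^{(k-1)}]$ and $r^{(k-1)}>i$ by induction, the excluded row $i$ forces $r^{(k)}>i$.

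The main obstacle to overcome is the case where pipe $i$'s first $\WNelbow$ is strictly east of pipe $j$'s first $\WNelbow$, because then pipe $i$'s first turn is not northwest of pipe $j$'s first turn; the device of climbing through pipe $j$'s successive $\WNelbow$s until one reaches column $\geq c_1^i$, combined with the inductive non-crossing argument above, is exactly what handles this case.
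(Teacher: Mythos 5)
Your proof is correct and follows the same route as the paper's: both reduce to \cref{lem:rectangle} by exhibiting an elbow of pipe $i$ weakly northwest of an elbow of pipe $j$, after noting via \cref{lem:horiVertCrossings} that the two pipes do not cross. The paper dispatches the existence of such an elbow of pipe $j$ in a single sentence (``since pipe $j$ passes southeast of $e$, it has an elbow southeast of $e$''), whereas you supply the explicit inductive climb along pipe $j$'s northwest elbows that makes this step rigorous.
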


\begin{proof}
If~$i < j$ and~$\omega^{-1}(i) < \omega^{-1}(j)$, then the pipes~$i$ and~$j$ do not cross.
Consider an elbow~$e$ of the pipe~$i$.
Since the pipe~$j$ passes southeast of~$e$, it has an elbow southeast of~$e$.
We conclude that~$i \contactLess{P} j$ by \cref{lem:rectangle}.
\end{proof}

\begin{lemma}
\label{lem:consequenceRectangle2}
Let~$P$ be a pipe dream and let~$i,j,k$ be three pipes of~$P$ such that~$i < j < k$ and~$\omega^{-1}(i) > \omega^{-1}(j) > \omega^{-1}(k)$.
If~$i \to k$ in~$P\contact$, then either~$i \contactLess{P} j \contactMore{P} k$ or~$i \contactMore{P} j \contactLess{P} k$.
\end{lemma}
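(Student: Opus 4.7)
The plan is to combine Lemma~\ref{lem:rectangle} with a geometric analysis of pipe $j$'s trajectory around the contact cell of $i$ and $k$. Let $C = (r,c)$ denote this contact cell, where pipe $i$ has the northwest elbow and pipe $k$ has the southeast elbow. If pipe $j$ has any elbow weakly northwest of $C$, then two applications of Lemma~\ref{lem:rectangle} give $j \contactLess{P} i$ and $j \contactLess{P} k$, hence $i \contactMore{P} j \contactLess{P} k$; symmetrically, any elbow of pipe $j$ weakly southeast of $C$ yields $i \contactLess{P} j \contactMore{P} k$. It therefore suffices to locate such an elbow of pipe $j$.

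I would first record the geometric constraints on $C$. Since pipes $i$ and $k$ both pass through $C$, we have $r \le i$ and $c \le \omega^{-1}(k)$; moreover, since pipe $i$ turns north at $C$ but must exit at column $\omega^{-1}(i) > c$, we obtain $r \ge 2$. Combined with the hypotheses $i < j < k$ and $\omega^{-1}(i) > \omega^{-1}(j) > \omega^{-1}(k)$, the entry $(j,1)$ of pipe $j$ lies strictly south-west of $C$ (since $j > r$) and its exit $(1, \omega^{-1}(j))$ lies strictly north-east of $C$ (since $\omega^{-1}(j) > c$). Since pipe $j$ traces a monotone lattice path (only east and north steps) that must avoid the occupied cell~$C$, its trajectory either first crosses row $r$ going north (call this the \emph{NW route}) or first crosses column $c$ going east (call this the \emph{SE route}).

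In the NW route, pipe $j$'s unique north-step from row $r$ to row $r-1$ occurs at a column $c^\sharp < c$. Pipe $j$ then continues north in column $c^\sharp$, and since $\omega^{-1}(j) > c > c^\sharp$, pipe $j$ cannot exit the triangle at column $c^\sharp$ and must eventually turn east, producing a southeast elbow at some $(r_\alpha, c^\sharp)$ with $r_\alpha \le r-1$; this cell is weakly northwest of $C$. In the SE route, pipe $j$'s unique north-step over row $r$ occurs at a column $c^\sharp > c$, and monotonicity forces pipe $j$ to enter its column-$c^\sharp$ segment at the bottom-most cell $(r_\beta, c^\sharp)$ with $r_\beta \ge r$, via an east step from $(r_\beta, c^\sharp - 1)$ followed by a northwards turn; this produces a northwest elbow at $(r_\beta, c^\sharp)$, weakly southeast of $C$. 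Applying Lemma~\ref{lem:rectangle} in each case then yields the desired alternative.

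The main technical subtlety is verifying that the claimed turns (rather than straight vertical or horizontal passages) really occur at the extremities of pipe $j$'s column-$c^\sharp$ segment. This follows from the monotonicity of the path together with the bounds $\omega^{-1}(j) > c^\sharp$ in the NW route and $r_\beta \ge r \ge 2$ in the SE route, which prevent pipe $j$ from simply exiting the triangle along column $c^\sharp$; ruling out the edge case $r = 1$ at the very beginning is what makes this possible.
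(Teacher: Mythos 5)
Your proof is correct and takes essentially the same approach as the paper: it locates an elbow of pipe~$j$ weakly northwest or weakly southeast of the contact cell between~$i$ and~$k$, and then applies \cref{lem:rectangle} twice to deduce the required chain of $\contactLess{P}$-relations. The paper obtains the needed elbow via a three-region decomposition of the triangular shape (strictly southwest of the contact, strictly northeast of it, and the rest), whereas you carry out the equivalent coordinate-by-coordinate analysis of pipe~$j$'s two possible monotone routes around the contact, also supplying the useful observation that the contact cannot lie in the first row.
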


\begin{proof}
Let~$c$ denote the contact of pipes~$i$ and~$k$ in~$P$.
Decompose the triangular shape into three regions: the region~$A$ of all points located southwest of~$c$, the region~$B$ of all points located northwest or southeast of~$c$, and the region~$C$ of all points located northeast of~$c$.
Since~$i < j$ and~$\omega^{-1}(j) > \omega^{-1}(k)$, the pipe~$j$ starts in region~$A$ and ends in region~$C$.
Hence, the pipe~$j$ has an elbow~$e$ in region~$B$.
We thus obtain that~$i \contactLess{P} j \contactMore{P} k$ if this $e$ is southeast of~$c$, and that~$i \contactMore{P} j \contactLess{P} k$ if $e$ is northwest of~$c$.
\end{proof}


\section{Lattice of acyclic pipe dreams}
\label{sec:latticeAcyclicPipeDreams}

As already mentioned, the increasing flip poset on reversing pipe dreams is isomorphic to the Tamari lattice, which is a lattice quotient of the weak order.
In this section, we extend this result to any permutation~$\omega$ by showing that the sets of linear extensions of pipe dreams of~$\acyclicPipeDreams(\omega)$ partition the interval~$[e,w]$ (\cref{subsec:linearExtensions}), that this partition actually defines a lattice congruence of the weak order on~$[e,w]$ (\cref{subsec:pipeDreamCongruence}), and that the Hasse diagram of the quotient by this congruence is the increasing flip graph on acyclic pipe dreams of~$\acyclicPipeDreams(\omega)$ (\cref{subsec:pipeDreamQuotient}).
This section goes straight to the proof of this property, and leaves alternative perspectives on this quotient to \cref{sec:furtherTopics}.


\subsection{Linear extensions of pipe dreams}
\label{subsec:linearExtensions}

The main characters in this section are the following sets of permutations.

\begin{definition}
\label{def:linearExtensions}
We say that a permutation~$\pi$ is a \defn{linear extension} of a pipe dream~$P \in \pipeDreams(\omega)$ if~$\pi^{-1}(i) < \pi^{-1}(j)$ for every arc~$i \to j$ in~$P\contact$ (we should say linear extension of~$\contactLess{P}$, but prefer to simplify notation).
We denote by~$\linearExtensions(P)$ the set of linear extensions of~$P$.
\end{definition}

In this section, we prove the following structural property of~$\linearExtensions(P)$, illustrated in \cref{fig:latticeAcyclicPipeDreams}.

\begin{theorem}
\label{thm:partitionPipeDreams}
The set~$\set{\linearExtensions(P)}{P \in \acyclicPipeDreams(\omega)}$ partitions the weak order interval~$[e,\omega]$.
\end{theorem}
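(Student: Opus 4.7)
The plan is to split the theorem into three claims: (a) every $\linearExtensions(P)$ with $P \in \acyclicPipeDreams(\omega)$ is a non-empty subset of $[e, \omega]$, (b) every $\pi \in [e, \omega]$ lies in some $\linearExtensions(P)$ with $P$ acyclic, and (c) if $P, Q \in \pipeDreams(\omega)$ are distinct, then $\linearExtensions(P) \cap \linearExtensions(Q) = \varnothing$.

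Claim (a) is the easy one. Non-emptiness is immediate since the acyclicity of $P\contact$ makes $\contactLess{P}$ a finite partial order, which therefore admits a topological sort. For containment, pick $\pi \in \linearExtensions(P)$ and any pair $i < j$ with $\omega^{-1}(i) < \omega^{-1}(j)$: by \cref{lem:consequenceRectangle1} this forces $i \contactLess{P} j$, hence $\pi^{-1}(i) < \pi^{-1}(j)$ by the linear-extension property. Contrapositively, $\Inv(\pi) \subseteq \Inv(\omega)$, which is the characterization of $\pi \leq \omega$ in the weak order.

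For claims (b) and (c) together, I would construct a map $\Phi \colon [e, \omega] \to \pipeDreams(\omega)$ such that $\pi \in \linearExtensions(\Phi(\pi))$ for all $\pi$, and argue it is the only such map. The construction fills the triangular shape cell by cell in some fixed order (say along diagonals from the NW corner), choosing at each cell between a crossing \cross{} and a contact \elbow{}. At a cell visited by two pipes $a$ and $b$, the choice is forced by two constraints: by \cref{lem:horiVertCrossings} the pair $\{a,b\}$ crosses exactly once iff $(\min(a,b), \max(a,b)) \in \Inv(\omega)$ and zero times otherwise, and to preserve $\pi$ as a potential linear extension any contact placed here must respect $\pi$'s order at its induced NW-to-SE arc. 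Using \cref{lem:countingElbows,lem:characterizationPipeDreams}, one verifies that the resulting filling is a valid pipe dream in $\pipeDreams(\omega)$; the facts $\pi \in \linearExtensions(\Phi(\pi))$ and acyclicity of $\Phi(\pi)$ are automatic from the construction. Uniqueness of the local choice at each cell then shows that any $P \in \pipeDreams(\omega)$ with $\pi \in \linearExtensions(P)$ must coincide with $\Phi(\pi)$, yielding (c).

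The main obstacle will be executing the construction rigorously: one must show that at every cell the two local constraints admit exactly one consistent choice, and that the global filling indeed has exiting permutation $\omega$. The key tool here is likely \cref{lem:consequenceRectangle2}, which controls how the comparison $i \contactLess{P} k$ propagates through an intermediate pipe $j$ with $i < j < k$ and $\omega^{-1}(i) > \omega^{-1}(j) > \omega^{-1}(k)$; this is precisely what rules out inconsistent combinations of local choices, and thus guarantees that the cell-by-cell construction is both well-defined and globally consistent.
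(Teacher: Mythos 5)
Your decomposition into (a), (b), (c) is sound, with (a) matching the paper's \cref{lem:interval}. But the paper proves (b) and (c) by a different, lighter argument: \cref{lem:lowerSetPipeDreams} shows that both existence and uniqueness of a pipe dream whose linear extensions contain~$\pi$ propagate along cover relations of the weak order, anchored by the greedy pipe dream (the unique one containing~$e$) at the bottom and the antigreedy pipe dream at the top. Your cell-by-cell construction is essentially the paper's sweeping algorithm (\cref{prop:sweepingAlgorithm} in \cref{subsec:sweepingAlgorithm}), which the paper states \emph{after} and \emph{because of} \cref{thm:partitionPipeDreams}: the proof of \cref{prop:sweepingAlgorithm} only argues that each local choice is forced \emph{assuming} a valid pipe dream exists, so if you take this route you must separately verify that the resulting filling is a well-formed element of~$\pipeDreams(\omega)$. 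The paper carries out such a verification via \cref{lem:characterizationPipeDreams}, but for the insertion algorithm (\cref{prop:insertionAlgorithm}), not the sweep.

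More concretely, the two constraints you state do not determine the filling. If pipe~$i$ arrives from the west and pipe~$j$ from the south at cell~$v$ with $i < j$, $\omega^{-1}(i) > \omega^{-1}(j)$ (so they must cross once somewhere), and $\pi^{-1}(i) < \pi^{-1}(j)$ (so a contact here, with arc $i \to j$, is consistent with~$\pi$), then both a crossing and a contact are compatible with your rules. The paper's sweeping rule places a crossing only when forced, namely when~$v$ lies in column~$\omega^{-1}(j)$ (pipe~$j$ must go straight north to reach its exit), and places a contact otherwise. The justification that this contact is forced is \cref{lem:rectangle}, not \cref{lem:consequenceRectangle2} as you guess: a premature crossing would leave pipe~$j$ with an elbow weakly northwest of a later elbow of pipe~$i$, giving $j \contactLess{P} i$ and contradicting $\pi \in \linearExtensions(P)$. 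You also need the sweep order to be northeast compatible (each cell processed before all cells weakly northeast of it), so that the two pipes arriving at a given cell are already determined by the cells already filled; ``along diagonals from the NW corner'' does not satisfy this.
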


\begin{example}
\label{exm:Tamari2}
Following \cref{exm:Tamari1}, observe that the permutations of~$\{0, 1, \dots, n, n+1\}$ below~$\rho_n$ in weak order are precisely the permutations of the form~$[0, \pi, n+1]$ for some~$\pi \in \fS_n$.
It is well-known that any permutation~$\pi \in \fS_n$ is a linear extension of a unique binary tree~\cite{Tonks}.
This binary tree can be obtained by inserting~$\pi$ from right to left in a binary search tree~\cite{HivertNovelliThibon-algebraBinarySearchTrees}.
Hence any permutation of~$\{0, 1, \dots, n, n+1\}$ below~$\rho_n$ is a linear extension of a unique reversing pipe dream on~$n+2$ pipes.
For instance, the pipe dream of \cref{fig:bijection} has linear extensions~$0421356$, $0423156$, $0421536$, $0423516$, $0425136$, $0425316$, $0452136$, $0452316$.
\end{example}

We will see in \cref{sec:furtherTopics} insertion algorithms to compute the pipe dream~$P \in \acyclicPipeDreams(\omega)$ such that~$\pi \in \linearExtensions(P)$ for given permutations~$\pi \le \omega$.
These algorithms are however not needed for the proof of \cref{thm:partitionPipeDreams}, which we break into the following three lemmas.

\begin{lemma}
\label{lem:lowerSetPipeDreams}
If~$\pi \eqdef UjiV$ covers $\pi' \eqdef UijV$ in weak order, and ${\pi \in \linearExtensions(P)}$ for some~${P \in \acyclicPipeDreams(\omega)}$,~then
\begin{itemize}
\item if~$P\contact$ has no arc~$j \to i$, then~$\pi' \in \linearExtensions(P)$,
\item otherwise, $\pi' \in \linearExtensions(P')$ where~$P'$ denotes the pipe dream obtained from~$P$ by flipping the furthest northeast contact between pipes~$i$ and~$j$ in~$P$.
\end{itemize}
\end{lemma}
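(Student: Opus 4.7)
I handle the two cases of the statement separately.

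\textbf{Case 1.} Suppose $P\contact$ has no arc $j \to i$. Since $\pi = UjiV$ is a linear extension of $P\contact$ and has $j$ before $i$, the existence of an arc $i \to j$ would contradict the linear extension property; hence $P\contact$ contains no arc between $i$ and $j$ in either direction. To conclude $\pi' \in \linearExtensions(P)$, I verify that every arc condition of $P\contact$ is preserved when passing from $\pi$ to $\pi'$. For arcs with both endpoints outside $\{i, j\}$, the relative positions are unchanged. For arcs with exactly one endpoint in $\{i, j\}$, the conclusion follows from the adjacency of $j$ and $i$ in both $\pi$ and $\pi'$: any third value $k \ne i, j$ has the same relative order with respect to both $i$ and $j$ in $\pi$ and $\pi'$.

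\textbf{Case 2.} Suppose $P\contact$ contains an arc $j \to i$. Since $P$ is acyclic and $i < j$, \cref{lem:consequenceRectangle1} rules out $\omega^{-1}(i) < \omega^{-1}(j)$ (which would give $i \contactLess{P} j$ and form a cycle with $j \to i$). Hence $\omega^{-1}(i) > \omega^{-1}(j)$, and \cref{lem:horiVertCrossings} produces a unique crossing $x$ of pipes $i$ and $j$. Pipe $i$ is NW of pipe $j$ in the region SW of $x$, while pipe $j$ is NW of pipe $i$ in the region NE of $x$, so every $i$-$j$ contact with arc $j \to i$ lies NE of $x$. In particular, the furthest NE such contact $c$ lies NE of $x$, and the flip exchanging $c$ with $x$ is well-defined. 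Let $P'$ denote the resulting pipe dream.

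The core geometric fact is that the flip alters the local configuration only at positions $c$ and $x$, from which I extract: any pipe $k \ne i, j$ has identical routes in $P$ and $P'$; the routes of $i$ and $j$ coincide outside the closed rectangle with corners $c$ and $x$; and inside this rectangle, the routes of $i$ and $j$ interchange, so the labels $i$ and $j$ swap at every elbow position. Consequently, in the contact graph of $P'$: (a) arcs between pipes neither of which is $i$ or $j$ are unchanged; (b) all arcs between $i$ and $j$ point $i \to j$, since $c$ is the only $i$-$j$ crossing in $P'$ and all remaining $i$-$j$ contacts lie SW of $c$; and (c) each arc with exactly one endpoint in $\{i, j\}$ is the same as in $P\contact$ when its underlying contact lies outside the rectangle, and is obtained from $P\contact$ by swapping $i \leftrightarrow j$ when it lies inside. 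Verifying $\pi' \in \linearExtensions(P')$ is then a short case check: (a) is immediate since $\pi$ and $\pi'$ agree off the positions of $i$ and $j$; (b) holds because $\pi' = UijV$ has $i$ just before $j$; and (c) follows by inspecting the four possible orientations of the arc, using adjacency of $j$ and $i$ in $\pi$ for the outside case, and the identities $(\pi')^{-1}(i) = \pi^{-1}(j)$, $(\pi')^{-1}(j) = \pi^{-1}(i)$ for the inside case.

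The main obstacle is establishing the label-swap geometric fact at the heart of Case 2: one must carefully trace how the routes of pipes $i$ and $j$ traverse the rectangle between $x$ and $c$, and verify that only these two labels are affected and only inside the rectangle. Once this is in hand, the linear extension verification is routine.
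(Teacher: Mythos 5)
Your proof is correct and follows the same strategy as the paper's, which handles Case~1 by noting there are then no arcs between $i$ and $j$, and Case~2 by observing that the flip reverses all $j\to i$ arcs and exchanges the labels $i$ and $j$ at some arc-endpoints; the difference is that the paper's argument is a terse two-sentence observation, whereas you supply the underlying geometric justification (the segments of pipes $i$ and $j$ between the crossing $x$ and the flipped contact $c$ interchange, while all routes outside the rectangle spanned by $x$ and $c$, and all pipes other than $i$ and $j$, are unaffected) and then carry out the explicit linear-extension verification. This is a faithful filling-in of details rather than a genuinely different route.
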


\begin{proof}
The first point is obvious.
For the second point, observe that the flip of the furthest contact just reverses all arcs~$j \to i$ and exchanges~$i$ and~$j$ at some extremities of the arcs of the contact graph.
\end{proof}

\begin{lemma}
\label{lem:partition}
If~$\pi \le \omega$ in weak order, then~$\pi$ is a linear extension of a unique pipe dream~$P \in \acyclicPipeDreams(\omega)$.
\end{lemma}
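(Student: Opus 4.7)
The plan is to separately prove existence of a pipe dream $P \in \acyclicPipeDreams(\omega)$ with $\pi \in \linearExtensions(P)$ and its uniqueness. Existence will proceed by a downward induction from $\omega$ via \cref{lem:lowerSetPipeDreams}, while uniqueness requires a more careful analysis that I expect to be the harder part.

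For existence, I would induct on $\ell(\omega) - \ell(\pi)$. In the base case $\pi = \omega$, I construct an explicit pipe dream $P_\omega \in \pipeDreams(\omega)$ with $\omega \in \linearExtensions(P_\omega)$ using \cref{lem:characterizationPipeDreams}: for each pipe $j$, place its $\noninversions{\omega}{j}$ southeast elbows so that every resulting contact has as its northwest pipe the one with smaller $\omega^{-1}$-value. The contact graph $P_\omega\contact$ then has every arc respecting $\omega$, so $\omega \in \linearExtensions(P_\omega)$ and $P_\omega$ is automatically acyclic. In the inductive step with $\pi < \omega$, pick any cover $\pi''$ of $\pi$ in $[e, \omega]$; by the inductive hypothesis, $\pi'' \in \linearExtensions(P'')$ for some $P'' \in \acyclicPipeDreams(\omega)$, and \cref{lem:lowerSetPipeDreams} yields $P \in \acyclicPipeDreams(\omega)$ (either $P''$ itself or the pipe dream obtained from $P''$ by flipping the furthest northeast contact between the two transposed values) with $\pi \in \linearExtensions(P)$.

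For uniqueness, I plan to use path-independence of the descent. Observe that \cref{lem:lowerSetPipeDreams} makes the rule $P'' \rightsquigarrow P$ deterministic once the cover $\pi \lessdot \pi''$ and the pipe dream $P''$ are fixed, so given any saturated chain $\omega = \pi_0 \gtrdot \pi_1 \gtrdot \cdots \gtrdot \pi_k = \pi$, iterating the descent starting from $P_\omega$ produces a well-defined pipe dream at $\pi$. To conclude uniqueness of $P$, I need to check (i) uniqueness of $P_\omega$, which should follow from the fact that the orientation of each contact is forced by $\omega$ together with the elbow count given by \cref{lem:countingElbows}, and (ii) path-independence, i.e., that the output pipe dream at $\pi$ does not depend on the chain. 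By the diamond property of the weak order, (ii) reduces to a case analysis on length-two intervals (commuting cover pairs and braid cover pairs), where iterating \cref{lem:lowerSetPipeDreams} in the two orders must yield the same pipe dream.

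The main obstacle is precisely this uniqueness step: both the base-case rigidity of $P_\omega$ and the diamond verification of path-independence rely on the geometric control provided by \cref{lem:rectangle} and its consequences \cref{lem:consequenceRectangle1,lem:consequenceRectangle2}, which pin down how the contact graph constrains the geometric placement of elbows. An alternative route would be to invoke the insertion or sweeping algorithms of \cref{subsec:sweepingAlgorithm,subsec:insertionAlgorithm}, which construct $P$ directly from $\pi$ and $\omega$ and would give uniqueness immediately, but these tools are only developed later in the paper.
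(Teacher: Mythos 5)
Your existence argument follows the same downward induction via \cref{lem:lowerSetPipeDreams} as the paper, but your base case is incomplete. The paper takes the antigreedy pipe dream $\antiGreedyPipeDream$ of \cite{PilaudPocchiola}, already known to be the unique sink of the increasing flip graph, and checks $\omega \in \linearExtensions(\antiGreedyPipeDream)$. You instead posit placing the $\noninversions{\omega}{j}$ southeast elbows ``so that every contact is oriented toward larger $\omega^{-1}$'' and invoke \cref{lem:characterizationPipeDreams} --- but that lemma only certifies an \emph{already given} collection of pipes that are pairwise disjoint away from crossings and contacts, and you have not shown that any placement exists which simultaneously achieves the disjointness, the prescribed elbow counts, and your orientation constraint. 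This is not a small detail; it is essentially the nontrivial content of the existence of the antigreedy facet.

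The uniqueness plan has a genuine gap. Granting (i) uniqueness of $P_\omega$ and (ii) path-independence, you would only conclude that the descent from $(\omega,P_\omega)$ assigns a single well-defined pipe dream to each $\pi$; you would \emph{not} exclude some other $P'\in \acyclicPipeDreams(\omega)$ with $\pi \in \linearExtensions(P')$ that is never reached by any descent from $P_\omega$, since \cref{lem:lowerSetPipeDreams} gives you no way to ascend from $(\pi,P')$ back toward $\omega$ to make the comparison. The paper instead sets up a \emph{second} base case at the bottom: it shows $\greedyPipeDream$ is the unique pipe dream with $e$ among its linear extensions (all arcs of such a pipe dream are increasing, hence all flips are increasing, hence it is the unique source of the increasing flip graph), and then propagates uniqueness \emph{upward}, using the fact that the descent of \cref{lem:lowerSetPipeDreams} at a fixed cover $\pi'\lessdot\pi$ is injective on pipe dreams admitting $\pi$ as a linear extension: two distinct pipe dreams at $\pi$ would descend to two distinct pipe dreams at $e$, contradicting the base case. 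Once you have this upward argument, the path-independence you set out to prove --- which is itself more delicate than you suggest, since two distinct lower covers of a permutation in weak order meet only two or three steps below, so the ``diamond'' to verify is a square or hexagon, not a length-two interval --- becomes entirely unnecessary.
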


\begin{proof}
Consider the greedy and antigreedy pipe dreams~$\greedyPipeDream$ and~$\antiGreedyPipeDream$ of \cite{PilaudPocchiola}.
For any contact between the pipes~$i$ and~$j$ in~$\greedyPipeDream$, with~$i < j$, we have
\begin{itemize}
\item if the pipes~$i$ and~$j$ never cross, then~$\omega^{-1}(i) < \omega^{-1}(j)$,
\item if the pipes~$i$ and~$j$ cross, then~$\omega^{-1}(i) > \omega^{-1}(j)$ and the contact in~$\greedyPipeDream$ must be from~$i$ to~$j$ (since all flips in~$\greedyPipeDream$ are increasing by definition).
\end{itemize}
We conclude that~$e \in \linearExtensions(\greedyPipeDream)$.
Conversely, if~$e \in \linearExtensions(P)$, all arcs of~$P\contact$ are increasing, so that all flips in~$P$ are increasing.
We conclude that~$\greedyPipeDream$ is the unique pipe dream with~$e \in \linearExtensions(\greedyPipeDream)$.
Similar arguments show that~$\antiGreedyPipeDream$ is the unique pipe dream with~$\omega \in \linearExtensions(\antiGreedyPipeDream)$.
The result thus follows from~\cref{lem:lowerSetPipeDreams}, since it shows that the existence (resp.~uniqueness) of a pipe dream~$P$ such that~$\pi \in \linearExtensions(P)$ is preserved when going down (resp.~up) in weak order.
\end{proof}

\begin{lemma}
\label{lem:interval}
If~$\pi$ is a linear extension of a pipe dream~$P \in \acyclicPipeDreams(\omega)$, then~$\pi \le \omega$ in weak order.
\end{lemma}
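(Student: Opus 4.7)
The plan is to prove the contrapositive statement at the level of the inversion sets, using the conventions that $\pi \le \omega$ in weak order if and only if $\Inv(\pi) \subseteq \Inv(\omega)$, equivalently $\Ninv(\omega) \subseteq \Ninv(\pi)$, where $\Ninv(\tau) \eqdef \set{(i,j)}{i < j \text{ and } \tau^{-1}(i) < \tau^{-1}(j)}$.

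First I would fix a non-inversion $(i,j)$ of $\omega$, that is, a pair $i < j$ with $\omega^{-1}(i) < \omega^{-1}(j)$, and show that $(i,j)$ is also a non-inversion of $\pi$. The key observation is that \cref{lem:consequenceRectangle1} directly delivers $i \contactLess{P} j$ in the acyclic pipe dream $P$. By definition of the transitive closure of the contact graph, there exists a directed path $i = k_0 \to k_1 \to \dots \to k_r = j$ in $P\contact$.

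Next I would invoke the defining property of a linear extension: since $\pi \in \linearExtensions(P)$, every arc $k_s \to k_{s+1}$ of $P\contact$ satisfies $\pi^{-1}(k_s) < \pi^{-1}(k_{s+1})$. Chaining these inequalities along the path yields $\pi^{-1}(i) < \pi^{-1}(j)$, so $(i,j) \in \Ninv(\pi)$. Since this holds for every non-inversion of $\omega$, we obtain $\Ninv(\omega) \subseteq \Ninv(\pi)$, that is, $\pi \le \omega$ in weak order.

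There is essentially no obstacle here beyond correctly invoking the machinery already set up in \cref{subsec:contactGraph}: the structural content has been absorbed into \cref{lem:rectangle} and its consequence \cref{lem:consequenceRectangle1}, so the proof reduces to combining these with the definition of a linear extension through a simple transitivity argument. Note that acyclicity of $P$ is not even needed for this implication, only the existence of a directed path from $i$ to $j$ in $P\contact$, which is guaranteed unconditionally by~\cref{lem:consequenceRectangle1}.
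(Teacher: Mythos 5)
Your proof is correct and follows exactly the paper's argument: combine \cref{lem:consequenceRectangle1} to get $i \contactLess{P} j$ for every non-inversion $(i,j)$ of $\omega$, then use the definition of linear extension to conclude $\Ninv(\omega) \subseteq \Ninv(\pi)$. Your spelled-out transitivity chain and the remark that acyclicity of $P$ is not actually needed for this direction are both accurate but add nothing beyond what the paper's terser version already contains.
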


\begin{proof}
For any~$i < j$ with~$\omega^{-1}(i) < \omega^{-1}(j)$, we have~$i \contactLess{P} j$ by \cref{lem:consequenceRectangle1}, thus ${\pi^{-1}(i) < \pi^{-1}(j)}$ since~$\pi \in \linearExtensions(P)$.
In other words, any non-inversion of~$\omega$ is a non-inversion of~$\pi$, so that~${\pi \le \omega}$.
\end{proof}

\begin{proof}[Proof of \cref{thm:partitionPipeDreams}]
This is a direct consequence of \cref{lem:partition,lem:interval}.
\end{proof}

\begin{notation}
For~$\pi \in [e, \omega]$, we denote by~$\insertion{\pi}{\omega}$ the pipe dream of~$\acyclicPipeDreams(\omega)$ such that~${\pi \in \linearExtensions(\insertion{\pi}{\omega})}$.
\end{notation}

\begin{figure}
	\centerline{\includegraphics[scale=1]{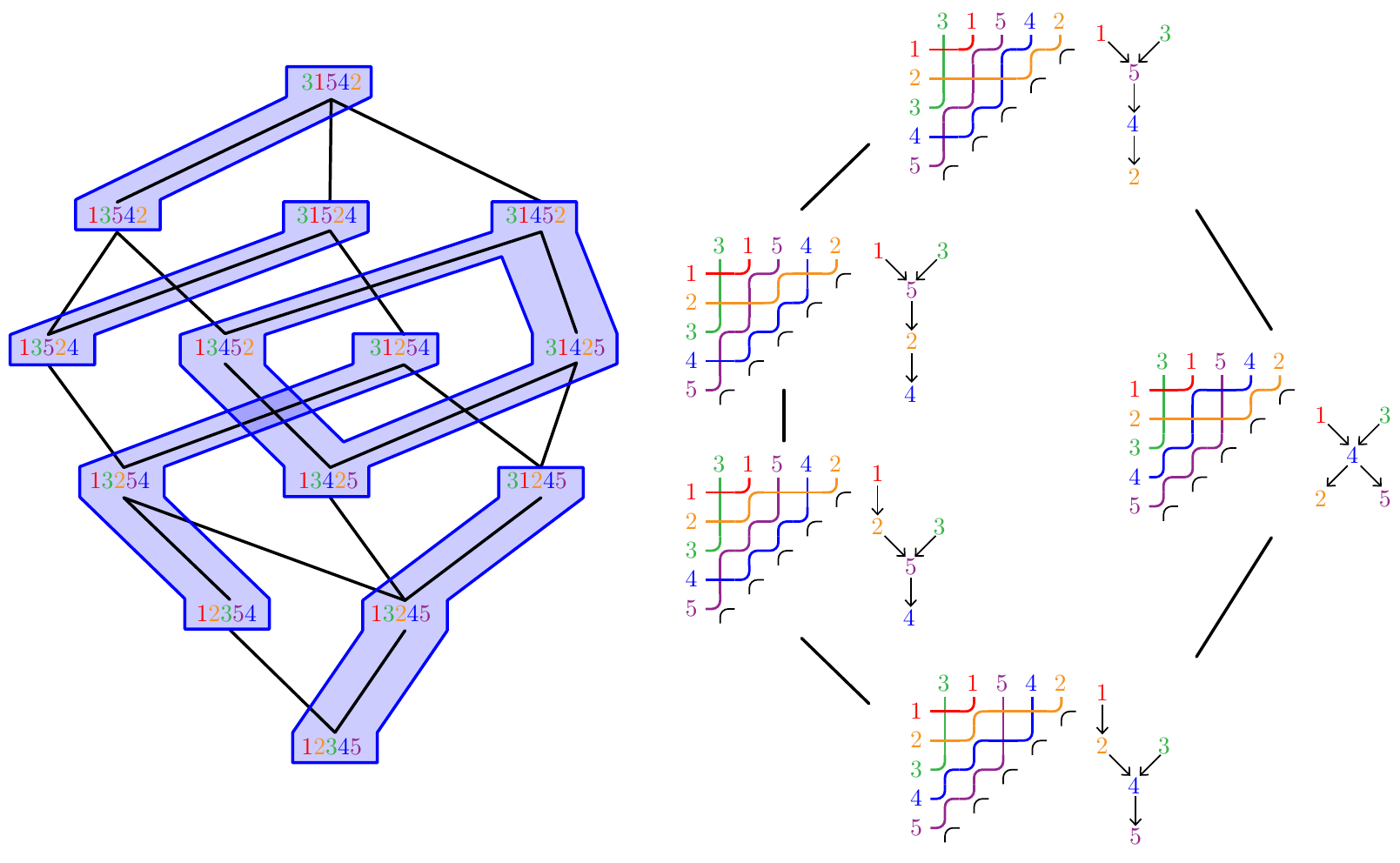}}
	\caption{The pipe dream congruence on the weak order interval~$[12345, 31542]$ (left) and the increasing flip graph on acyclic pipe dreams (right). The blue bubbles represent the classes of the pipe dream congruence. The acyclic pipe dreams are represented with their contact graphs.}
	\label{fig:latticeAcyclicPipeDreams}
\end{figure}


\subsection{Pipe dream congruence}
\label{subsec:pipeDreamCongruence}

A \defn{congruence} of a lattice~$(L, \le, \meet, \join)$ is an equivalence relation~$\equiv$ on~$L$ which respects meets and joins: $x \equiv x'$ and~$y \equiv y'$ implies $x \meet y \equiv x' \meet y'$ and~$x \join y \equiv x' \join y'$.
We will use the following classical characterization of lattice congruences, see~\cite{Reading-PosetRegionsChapter}.

\begin{proposition}
\label{prop:characterizationCongruences}
An equivalence relation~$\equiv$ on a lattice~$L$ is a congruence if and only if
\begin{enumerate}[(i)]
\item every equivalence class of~$\equiv$ is an interval of~$L$,
\item the projections~$\projDown : L \to L$ and~$\projUp : L \to L$, which maps an element of~$L$ to the minimal and maximal elements of its equivalence class respectively, are order preserving.
\end{enumerate}
\end{proposition}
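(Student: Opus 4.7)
The plan is to prove the two implications separately; the converse requires almost all of the work.

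For the direct implication, assume $\equiv$ is a lattice congruence. For (i), fix a class $C$. The congruence property applied with reflexive pairs gives $x \meet y \equiv x \meet x = x$ and $x \join y \equiv x$ for $x, y \in C$, showing that $C$ is closed under $\meet$ and $\join$; in the finite lattice $L$ this forces $C$ to contain its infimum $m$ and supremum $M$. For any $z \in [m, M]$, the identity $z = z \join m \equiv z \join M = M$ puts $z$ in $C$, so $C = [m, M]$ is an interval. For (ii), let $x \le y$ and set $a \eqdef \projDown(x)$, $b \eqdef \projDown(y)$; by the congruence property, $a \meet b \equiv x \meet y = x$, placing $a \meet b$ in the class of $x$. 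Since $a$ is the minimum of that class and $a \meet b \le a$ holds trivially, we conclude $a \le b$. The argument for $\projUp$ is dual, using joins in place of meets.

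For the converse, assume (i) and (ii). By (i), the relation $\equiv$ is equivalent to ``same $(\projDown, \projUp)$-pair''. To show that $\equiv$ respects meets and joins, given $x \equiv x'$ and $y \equiv y'$ I would deduce $x \meet y \equiv x' \meet y'$ by passing through $x' \meet y$, and so reduce to the key one-variable claim: \emph{if $u \equiv u'$, then $u \meet y \equiv u' \meet y$ for every $y$} (the join case being dual). Using (i) again to realize the common class of $u$ and $u'$ as an interval, I would connect $u$ to $u'$ by a saturated chain in this class, reducing further to the case of a single covering $u \lessdot u'$ with $u \equiv u'$.

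This single-cover step is the main obstacle of the plan. Here (ii) applied to $u \meet y \le u' \meet y$ already yields $\projDown(u \meet y) \le \projDown(u' \meet y)$ and $\projUp(u \meet y) \le \projUp(u' \meet y)$, and one must promote these to equalities. I would do so by a diamond argument: either $u \meet y = u' \meet y$ is trivial, or $u \meet y \lessdot u' \meet y$ is itself a cover in $L$ parallel to the cover $u \lessdot u'$. In the non-trivial case, (ii) gives $\projUp(u \meet y) \le \projUp(u') = \projUp(u)$, so the element $\projUp(u \meet y) \join u'$ lives in the class of $u'$ and dominates $u' \meet y$; meeting back with $y$ and using (i) to collapse the resulting sub-interval pins down $u' \meet y$ as still lying in the class of $u \meet y$. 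Iterating along the chain and invoking transitivity then delivers the converse in full.
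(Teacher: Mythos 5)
Your forward implication is correct (the implicit finiteness of $L$ is harmless in this paper's setting), but the converse has a genuine gap at the ``diamond argument.'' The claim that a cover $u \lessdot u'$ transports under meeting with $y$ to either equality $u \meet y = u' \meet y$ or a cover $u \meet y \lessdot u' \meet y$ is a modularity statement and is false in general lattices. In the pentagon $N_5$, with elements $0 < a < 1$ and $0 < b < c < 1$ where $a$ is incomparable to $b$ and $c$, one has $a \lessdot 1$, yet $a \meet c = 0$ and $1 \meet c = c$ are separated by $b$: neither equal nor a cover. Since the lattices this proposition is applied to (weak-order intervals) are not modular, this step would fail exactly where it is needed; and the remainder of that paragraph (``meeting back with $y$ and using (i) to collapse\ldots'') is left as an unverified sketch that leans on the false cover claim.

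The detour through saturated chains and covers is in fact unnecessary, and a direct argument avoids the gap entirely. For the one-variable claim, take $u \le u'$ with $u \equiv u'$ (as you note, the general case reduces to this via $u \meet u'$, which lies in the common class because the class is an interval). Since $u' \meet y \le u'$ and $u' \meet y \le y$, condition~(ii) gives $\projDown(u' \meet y) \le \projDown(u') = \projDown(u) \le u$ and $\projDown(u' \meet y) \le \projDown(y) \le y$, whence $\projDown(u' \meet y) \le u \meet y \le u' \meet y$. By~(i) the class of $u' \meet y$ is the interval $[\projDown(u' \meet y), \projUp(u' \meet y)]$, which therefore contains $u \meet y$, so $u \meet y \equiv u' \meet y$. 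The join case is dual, and the two-variable statement follows by transitivity as you describe. (The paper does not supply its own proof here; it cites the result as classical, so this is the comparison target rather than a line-by-line match.)
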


We now focus on the following congruence, illustrated in \cref{fig:latticeAcyclicPipeDreams}.

\begin{definition}
\label{def:pipeDreamCongruence}
The \defn{pipe dream congruence} is the equivalence relation~$\equiv_\omega$ on the weak order interval~$[e,\omega]$ whose equivalence classes are the sets~$\linearExtensions(P)$ of linear extensions of the pipe dreams~$P$ of~$\acyclicPipeDreams(\omega)$.
In other words, $\pi \equiv_\omega \pi'$ if and only if~$\insertion{\pi}{\omega} = \insertion{\pi'}{\omega}$.
\end{definition}

\pagebreak
Note that the pipe dream congruence indeed defines an equivalence relation by \cref{thm:partitionPipeDreams}.
In this section, we prove that it is a lattice congruence.

\begin{theorem}
\label{thm:pipeDreamCongruence}
The pipe dream congruence~$\equiv_\omega$ is a congruence of the weak order interval~$[e,\omega]$.
\end{theorem}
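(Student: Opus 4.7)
The plan is to apply \cref{prop:characterizationCongruences}, verifying that (i) every equivalence class of $\equiv_\omega$ is an interval of $[e,\omega]$ and (ii) the projections $\projDown$ and $\projUp$ are order preserving. As a preliminary step, I would derive an upward counterpart of \cref{lem:lowerSetPipeDreams}: if $\pi = UijV \lessdot UjiV = \pi''$ is a cover in $[e,\omega]$ with $i < j$ and $\pi \in \linearExtensions(P)$, then either $P\contact$ has no arc $i \to j$ (and $\pi'' \in \linearExtensions(P)$) or such an arc exists (and $\pi'' \in \linearExtensions(P'')$ for some pipe dream $P''$ obtained from $P$ by an increasing flip). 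This follows by applying \cref{lem:lowerSetPipeDreams} to $\pi'' \gtrdot \pi$ together with \cref{thm:partitionPipeDreams}.

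For~(i), convexity of $\linearExtensions(P)$ in $[e,\omega]$ is immediate from the upward lemma by induction on weak-order rank: given $\pi \le \pi'$ in $\linearExtensions(P)$, any cover $\pi \lessdot \tau \le \pi'$ must also lie in $\linearExtensions(P)$, since otherwise some arc $i \to j$ of $P\contact$ would be violated at $\tau$ and hence at $\pi' \ge \tau$. For the existence of a unique minimum and maximum, I would describe them as the permutations $\pi_P^\downarrow$ and $\pi_P^\uparrow$ with respective inversion sets $\{(i,j) \in \Inv(\omega) : j \contactLess{P} i\}$ and $\Inv(\omega) \setminus \{(i,j) \in \Inv(\omega) : i \contactLess{P} j\}$; once shown to be valid inversion sets of permutations in $\fS_n$, they are clearly the extremes of $\linearExtensions(P)$ in weak order. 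Transitivity of these inversion sets is immediate from the transitivity of $\contactLess{P}$ and of $\Inv(\omega)$, while co-transitivity reduces to the structural claim that \emph{if $a \contactLess{P} b$ and $c$ is an integer with $\min(a,b) < c < \max(a,b)$, then $a \contactLess{P} c$ or $c \contactLess{P} b$.} I would prove this by induction on the length of a chain in $P\contact$ realizing $a \contactLess{P} b$: the base case (a direct arc) follows by a case split combining \cref{lem:consequenceRectangle1,lem:consequenceRectangle2} (noting that the proof of \cref{lem:consequenceRectangle2} is orientation-agnostic, relying only on the existence of a shared contact between the two pipes), and the inductive step splits the chain at its second vertex to invoke either the base case or the induction hypothesis.

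For~(ii), I check monotonicity on covers $\pi \lessdot \pi'$ in $[e,\omega]$. Within a class the projections agree; otherwise, \cref{lem:lowerSetPipeDreams} identifies $P \eqdef \insertion{\pi'}{\omega}$ and $P' \eqdef \insertion{\pi}{\omega}$ as related by a single flip that reverses one arc of the contact graph. Using the explicit formulas for $\pi_P^\downarrow$ and $\pi_{P'}^\downarrow$ from step~(i) and tracking how this arc reversal propagates through the transitive closure, I would verify $\Inv(\pi_{P'}^\downarrow) \subseteq \Inv(\pi_P^\downarrow)$, so that $\projDown(\pi) \le \projDown(\pi')$, with a dual argument for $\projUp$. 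The main obstacle is the structural claim underlying step~(i): the induction that extends \cref{lem:consequenceRectangle2} to arbitrary chains requires a delicate case split depending on where the intermediate chain vertex sits relative to the integer triple $(a,c,b)$ and to the columns $\omega^{-1}(a), \omega^{-1}(b)$.
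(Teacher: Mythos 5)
Your overall strategy — verify both conditions of \cref{prop:characterizationCongruences} — matches the paper. The divergence is in how each condition is established, and there is a genuine gap in your treatment of condition~(ii).

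For condition~(i), your route is different from the paper's but sound. The paper checks the Bj\"orner--Wachs criterion (\cref{prop:WOIP}) directly via a single geometric argument: given $i < j < k$ with $i \contactLess{P} k$ and $\omega^{-1}(i) > \omega^{-1}(j) > \omega^{-1}(k)$, it traces the entire directed path from $i$ to $k$ through the pipe dream and shows it must cross the ``band'' around pipe~$j$, then invokes \cref{lem:rectangle}. You instead prove convexity, then describe the endpoints $\pi_P^\downarrow, \pi_P^\uparrow$ explicitly and reduce to your ``structural claim,'' which on inspection is exactly the \cref{prop:WOIP} condition restated. Your induction on chain length is correct: the base case combines \cref{lem:consequenceRectangle1} with the (indeed orientation-agnostic) proof of \cref{lem:consequenceRectangle2}, and the inductive step using transitivity through the chain's second vertex closes cleanly. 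This works, but it is a longer detour than the paper's one-shot geometric argument, and your worry in the last sentence is slightly misplaced — the delicate case split by $\omega^{-1}$-values lives only in the base case, not the inductive step.

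The real gap is in condition~(ii). You propose to show directly that $\Inv(\pi_{P'}^\downarrow) \subseteq \Inv(\pi_P^\downarrow)$ by ``tracking how this arc reversal propagates through the transitive closure,'' but this is a plan, not a proof, and the plan is harder than your phrasing suggests. The flip from $P$ to $P'$ does not ``reverse one arc'': it removes one contact and adds another, it reverses \emph{all} arcs between pipes $i$ and $j$, and — crucially, as the proof of \cref{lem:lowerSetPipeDreams} already warns — it re-routes pipes $i$ and $j$ between the two flipped cells, so that contacts of pipe $i$ with third pipes~$k$ in that region become contacts of pipe $j$ (and vice versa). Consequently a directed path in $P'{}\contact$ realizing $b \contactLess{P'} a$ may use the new arc $i \to j$ and swapped arcs such as $k \to j$ that have no direct analogue in $P\contact$ (which has $j \to i$ and $k \to i$ instead), so the path does not transport. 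Making your comparison of the two transitive closures work would require a careful structural analysis — likely something in the spirit of \cref{lem:flippable} — which you have not supplied. The paper sidesteps all of this: its proof of \cref{prop:orderPreserving} never compares $\contactLess{P}$ and $\contactLess{P'}$ globally; instead it inducts on the weak-order distance from $\sigma$ to $\max(C)$, and in the inductive step performs a five-way case split on the relative positions of the two simple transpositions $s_p$ (taking $\sigma$ out of its class) and $s_q$ (taking $\sigma$ up inside its class), producing a permutation $\tau'$ above $\tau$ in $C'$ purely by local reasoning with \cref{lem:lowerSetPipeDreams,lem:consequenceRectangle2}. That argument is both more robust and more elementary than the global contact-graph comparison you are aiming for, and it is the part of the proof your proposal leaves open.
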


\begin{example}
\label{exm:Tamari3}
Following \cref{exm:Tamari1,exm:Tamari2}, observe that the congruence~$\equiv_{\rho_n}$ on the permutations of~$\{0, 1, \dots, n, n+1\}$ below~$\rho_n$ corresponds to the sylvester congruence on~$\fS_n$ defined in~\cite{HivertNovelliThibon-algebraBinarySearchTrees}.
The classes of this congruence are the sets of linear extensions of the binary trees (considered as posets, labeled in inorder, and oriented towards their roots).
It can also be defined by the classical rewriting rule~$UjVikW \equiv UjVkiW$ where~$i < j < k$ are elements of~$[n]$ while~$U, V, W$ are (possibly empty) words on~$[n]$.
\end{example}

We will discuss in \cref{subsec:rewritingRule} rewriting rules for the pipe dream congruence~$\equiv_\omega$, for any permutation~$\omega$.
These rewriting rules are however not needed for the proof of \cref{thm:pipeDreamCongruence}.
We prove it by checking both conditions of \cref{prop:characterizationCongruences}.
For the first condition, we need the following classical characterization of weak order intervals, see \cite{BjornerWachs} or~\cite{ChatelPilaudPons}.

\begin{proposition}[{\cite[Thm.~6.8]{BjornerWachs}}]
\label{prop:WOIP}
The set~$\linearExtensions(\less)$ of linear extensions of a poset~$\less$ on~$[n]$ forms an interval~$I \eqdef [\min(I), \max(I)]$ of the weak order if and only if for every~$i < j < k$,
\[
i \less k \implies i \less j \text{ or } j \less k
\qquad\text{and}\qquad
i \more k \implies i \more j \text{ or } j \more k.
\]
Moreover, the inversions of~$\min(I)$ are the pairs~$i,j \in [n]$ with $i < j$ and $i \more j$, and the non-inversions of~$\max(I)$ are the pairs~$i,j \in [n]$ with $i < j$ and $i \less j$.
\end{proposition}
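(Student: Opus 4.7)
The plan is to construct candidate extrema directly from the hypothesis: let $\pi_{\min}$ be the permutation whose inversion set is $\{(i,j) : i<j \text{ and } i \more j\}$ and $\pi_{\max}$ be the permutation whose non-inversion set is $\{(i,j) : i<j \text{ and } i \less j\}$. Proving that these prescriptions define genuine permutations of $[n]$ will immediately yield the ``moreover'' clause, and the remaining work is to show $\linearExtensions(\less) = [\pi_{\min},\pi_{\max}]$ when the triple conditions hold, and conversely that the triple conditions are forced when $\linearExtensions(\less)$ is a weak order interval.

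The first step uses the standard characterization of inversion sets: a subset $A \subseteq \{(i,j) : 1 \leq i < j \leq n\}$ equals $\Inv(\pi)$ for some $\pi \in \fS_n$ if and only if, for every $i<j<k$, one has (a) $\{(i,j),(j,k)\} \subseteq A \Rightarrow (i,k) \in A$ and (b) $(i,k) \in A \Rightarrow (i,j) \in A$ or $(j,k) \in A$. For the prospective $\Inv(\pi_{\min})$, condition (a) is the transitivity of $\more$ (hence automatic from the poset axioms), and condition (b) is precisely the second triple condition of the proposition. Dually, for $\pi_{\max}$, condition (a) translates into the first triple condition and (b) into transitivity of $\less$. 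Hence the two triple conditions are exactly equivalent to the well-definedness of $\pi_{\min}$ and $\pi_{\max}$, which takes care of the ``moreover'' clause.

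For the backward direction, assuming the triple conditions, I would verify $\pi_{\min}, \pi_{\max} \in \linearExtensions(\less)$ by a direct case analysis on whether $a < b$ or $a > b$ inside a relation $a \less b$, and then establish both inclusions $\linearExtensions(\less) \subseteq [\pi_{\min},\pi_{\max}]$ and $[\pi_{\min},\pi_{\max}] \subseteq \linearExtensions(\less)$ by translating ``$a$ precedes $b$ in $\pi$'' into the corresponding (non-)inversion condition and comparing inversion sets. For the forward direction, I would argue by contradiction: if the first triple condition fails for some $i < j < k$ with $i \less k$ but $i \not\less j$ and $j \not\less k$, then transitivity of $\less$ rules out $j \less i$ and $k \less j$, so the pairs $\{i,j\}$ and $\{j,k\}$ are both incomparable in $\less$. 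Using that incomparable pairs can be reordered either way within some linear extension, I would produce $\sigma \in \linearExtensions(\less)$ with $j$ before $i$ and $\tau \in \linearExtensions(\less)$ with $k$ before $j$; the interval hypothesis yields $\sigma, \tau \leq \pi_{\max}$, hence $(i,j),(j,k) \in \Inv(\pi_{\max})$, and triangle condition (a) then forces $(i,k) \in \Inv(\pi_{\max})$, contradicting $\pi_{\max} \in \linearExtensions(\less)$ together with $i \less k$. The second triple condition follows from the dual argument involving $\pi_{\min}$.

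The main obstacle is the forward direction, specifically guaranteeing the existence of the linear extensions $\sigma$ and $\tau$ with one prescribed reversal of an incomparable pair while still respecting all of $\less$. The cleanest way is to apply Szpilrajn's theorem to the partial order obtained from $\less$ by imposing the additional relation $j < i$ (resp.~$k < j$), which remains a partial order precisely because the relevant pair was incomparable in $\less$; any linear extension of this enlarged order is then a linear extension of $\less$ with the desired reversal.
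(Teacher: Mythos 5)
Your argument is correct and complete in outline. Note, however, that the paper does not prove this statement at all: it is quoted as a classical result with a citation to Bj\"orner--Wachs (Thm.~6.8), so there is no internal proof to compare against. Your route is the standard self-contained one: use the characterization of inversion sets of permutations (closure under the two ``triangle'' conditions, i.e.\ the set and its complement are transitively closed) to see that the two triple conditions are exactly what makes the prescribed sets $\{(i,j): i<j,\ i\more j\}$ and its counterpart for $\less$ into genuine inversion/non-inversion sets, then check directly that $\pi_{\min},\pi_{\max}$ are linear extensions and that $\Inv(\pi_{\min})\subseteq\Inv(\pi)\subseteq\Inv(\pi_{\max})$ characterizes membership in $\linearExtensions(\less)$; for the converse, Szpilrajn (elementary for finite posets) supplies linear extensions reversing each of the two incomparable pairs, and transitivity of $\Inv(\max(I))$ (resp.\ co-transitivity of $\Inv(\min(I))$) gives the contradiction. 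Two small points of care: the phrase that well-definedness of $\pi_{\min},\pi_{\max}$ ``immediately'' yields the moreover clause is an overstatement --- you also need the identification $\min(I)=\pi_{\min}$, $\max(I)=\pi_{\max}$, which your two inclusions do provide; and in the forward direction you should write $\max(I)$ rather than $\pi_{\max}$ (the latter is not yet known to be well defined there), using only that $\Inv(\max(I))$ is the inversion set of an actual permutation and that $\max(I)\in\linearExtensions(\less)$. With these cosmetic fixes the proof is sound.
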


\begin{proposition}
\label{prop:intervals}
For any pipe dream~$P \in \acyclicPipeDreams(\omega)$, the set~$\linearExtensions(P)$ is an interval of the weak order.
\end{proposition}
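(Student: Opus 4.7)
The plan is to apply the characterization of weak order intervals given by \cref{prop:WOIP} to the contact partial order $\contactLess{P}$, which is a genuine partial order precisely because $P$ is acyclic. Thus, it suffices to verify that for every $i < j < k$ in $[n]$:
\begin{itemize}
\item[(1)] $i \contactLess{P} k \implies i \contactLess{P} j$ or $j \contactLess{P} k$;
\item[(2)] $k \contactLess{P} i \implies k \contactLess{P} j$ or $j \contactLess{P} i$.
\end{itemize}
I focus on (1); condition (2) follows by a symmetric argument.

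I first split on the relative position of $i,j,k$ under $\omega^{-1}$. If $\omega^{-1}(i) < \omega^{-1}(j)$, then \cref{lem:consequenceRectangle1} yields $i \contactLess{P} j$. Analogously, if $\omega^{-1}(j) < \omega^{-1}(k)$, then \cref{lem:consequenceRectangle1} yields $j \contactLess{P} k$. The only remaining case is $\omega^{-1}(i) > \omega^{-1}(j) > \omega^{-1}(k)$, i.e., $i,j,k$ form a $321$-pattern in $\omega$. In this $321$-pattern case, I aim to prove the strictly stronger assertion that $j$ is $\contactLess{P}$-comparable with at least one of $i$ or $k$; this suffices, because combined with the hypothesis $i \contactLess{P} k$ and the transitivity of $\contactLess{P}$, any of the four possible comparisons ($i \contactLess{P} j$, $j \contactLess{P} i$, $j \contactLess{P} k$, or $k \contactLess{P} j$) yields either $i \contactLess{P} j$ or $j \contactLess{P} k$.

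To prove this stronger assertion, I induct on the length of a shortest chain witnessing $i \contactLess{P} k$ in $P\contact$. The base case of a direct arc $i \to k$ is precisely \cref{lem:consequenceRectangle2}, which even provides the dichotomy $i \contactLess{P} j \contactMore{P} k$ or $i \contactMore{P} j \contactLess{P} k$. For the inductive step, given a chain $i \to i_1 \to \cdots \to i_m = k$ with $m \ge 2$, I analyze the first arc $i \to i_1$ together with the sub-triples formed from $\{i, i_1, j, k\}$ via \cref{lem:consequenceRectangle1,lem:consequenceRectangle2} and the induction hypothesis applied to the shorter chain $i_1 \to \cdots \to k$, so as to propagate a comparability of $j$ with some vertex of the chain back to a comparability of $j$ with $i$ or $k$.

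The main obstacle is this inductive step: the intermediate vertex $i_1$ may lie on either side of $j$ in both the natural order and under $\omega^{-1}$, yielding a proliferation of sub-cases that each require a careful combination of the two rectangle lemmas to extract the desired comparability. Once (1) is settled, (2) follows by the symmetric argument (essentially reversing the roles of northwest and southeast in the contact graph), and \cref{prop:WOIP} then concludes that $\linearExtensions(P)$ is an interval of the weak order.
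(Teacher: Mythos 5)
Your opening — reducing to the case of a $321$-pattern $\omega^{-1}(i) > \omega^{-1}(j) > \omega^{-1}(k)$ via \cref{lem:consequenceRectangle1} — matches the paper exactly, but the two arguments then diverge, and yours has a gap precisely where they diverge. The paper handles the $321$ case with a single direct geometric argument: it realizes the chain witnessing $i \contactLess{P} k$ as a concrete lattice path in the triangular shape (travelling along pipes and jumping across contacts), decomposes the shape into three regions around pipe~$j$, observes this path must cross the ``middle'' region, and applies \cref{lem:rectangle} once at the crossing — no induction. You instead propose an induction on the length of a chain $i \to i_1 \to \cdots \to i_m = k$ in $P\contact$, with base case \cref{lem:consequenceRectangle2}. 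The gap is that you never actually carry out the inductive step: you describe the obstruction (``$i_1$ may lie on either side of $j$ in both orders'') but do not resolve it, so what you have is an outline rather than a proof.

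That said, the outline is salvageable, and the case analysis is tamer than the phrase ``proliferation of sub-cases'' suggests: setting aside the trivial case $i_1 = j$, there are exactly four sub-cases according to the signs of $i_1 - j$ and of $\omega^{-1}(i_1) - \omega^{-1}(j)$. If $i_1 < j$ and $\omega^{-1}(i_1) < \omega^{-1}(j)$, then \cref{lem:consequenceRectangle1} gives $i_1 \contactLess{P} j$, hence $i \contactLess{P} i_1 \contactLess{P} j$. If $i_1 > j$ and $\omega^{-1}(i_1) > \omega^{-1}(j)$, then \cref{lem:consequenceRectangle1} gives $j \contactLess{P} i_1 \contactLess{P} k$. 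If $i_1 < j$ and $\omega^{-1}(i_1) > \omega^{-1}(j)$, then $(i_1,j,k)$ is a $321$-pattern with $i_1 \contactLess{P} k$ via a strictly shorter chain, so the induction hypothesis gives $j$ comparable with $i_1$ or $k$, and transitivity through $i \to i_1$ or $i \contactLess{P} k$ promotes this to a comparability of $j$ with $i$ or $k$. If $i_1 > j$ and $\omega^{-1}(i_1) < \omega^{-1}(j)$, then $(i,j,i_1)$ is a $321$-pattern with the arc $i \to i_1$, so \cref{lem:consequenceRectangle2} gives $j$ comparable with $i$ and $i_1$, and again transitivity closes the case. Once these four lines are written down, your argument is complete and roughly as short as the paper's. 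One last point to flag for the ``symmetric'' direction $i \contactMore{P} k$: as stated, \cref{lem:consequenceRectangle2} assumes the arc $i \to k$, so you should note that its proof only uses the existence of a contact between pipes $i$ and $k$, not the orientation of the arc.
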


\begin{proof}
We just need to show that the poset~$\contactLess{P}$ satisfies the conditions of \cref{prop:WOIP}.
Consider~$i < j < k$ such that~$i \contactLess{P} k$.
If~$\omega^{-1}(i) < \omega^{-1}(j)$, then~$i \contactLess{P} j$ by \cref{lem:consequenceRectangle1}.
Similarly, if~$\omega^{-1}(j) < \omega^{-1}(k)$, then~$j \contactLess{P} k$ by \cref{lem:consequenceRectangle1}.
We can thus assume that~${\omega^{-1}(i) > \omega^{-1}(j) > \omega^{-1}(k)}$.
Decompose the triangular shape into three regions: the region~$A$ of all points located northeast of the last elbow of the pipe~$j$ of~$P$, the region~$B$ of all points located northwest or southeast of an elbow of the pipe~$j$ of~$P$, and the region~$C$ of all points located southwest of the first elbow of the pipe~$j$ of~$P$.
Since~$i \contactLess{P} k$, there is a path~$\pi$ from the exiting point of the pipe~$i$ of~$P$ to the entering point of the pipe~$j$ of~$P$ which travels along the pipes of~$P$, possibly jumping from the northwest pipe to the southeast pipe of a contact it encounters.
Since~$i < j$ and~$\omega^{-1}(j) > \omega^{-1}(k)$, the path~$\pi$ starts in region~$A$ and ends in region~$C$, so that it necessarily passes from region~$A$ to region~$C$.
Since the southwest corner of~$A$ is located northeast of the northeast corner of~$C$, this forces an elbow~$e$ of~$\pi$ to lie in region~$B$.
\cref{lem:rectangle} then ensures that either~$i \contactLess{P} j$ (if~$e$ is north of pipe~$j$), or~$j \contactLess{P} k$ (if~$e$ is south of pipe~$j$).
The proof is similar if~$i \contactMore{P} k$.
\end{proof}

\begin{proposition}
\label{prop:orderPreserving}
Let $\sigma, \sigma'$ be two permutations of~$[e, \omega]$ and~$C, C'$ denote their $\equiv_\omega$-congruence classes.
Then $\sigma \le \sigma'$ implies~$\min(C) \le \min(C')$ and $\max(C) \le \max(C')$ in weak order.
\end{proposition}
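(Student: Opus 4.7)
The plan is to reduce to a single cover of the weak order and then analyse how one flip of pipe dreams affects the transitive closure of the contact graph. Since the conclusion is preserved along saturated chains, it suffices to treat a cover~${\sigma \lessdot \sigma'}$ in~$[e,\omega]$. Writing~${\sigma = UijV}$ and~${\sigma' = UjiV}$ with~${i < j}$, I set~${P \eqdef \insertion{\sigma}{\omega}}$ and~${P' \eqdef \insertion{\sigma'}{\omega}}$, so that~${C = \linearExtensions(P)}$ and~${C' = \linearExtensions(P')}$. If~${P = P'}$ the statement is immediate, so assume~${P \ne P'}$. Then \cref{lem:lowerSetPipeDreams} applied to the reverse cover~${\sigma' \gtrdot \sigma}$ with~${\sigma' \in \linearExtensions(P')}$ forces~$P'\contact$ to contain an arc~$j \to i$ and identifies~$P$ as~$P'$ flipped at the northeast-most such contact. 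Equivalently, the flip~${P \to P'}$ is the increasing flip that exchanges a contact~$c_P$ of~$P$ between pipes~$i$ and~$j$ (necessarily southwest of their unique crossing~$x_P$ in~$P$, hence carrying the arc~${i \to j}$) with~$x_P$, and that reroutes pipes~$i$ and~$j$ along the boundary of the lens delimited by~$c_P$ and~$x_P$.

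Applying the WOIP characterisation (\cref{prop:WOIP}, whose hypotheses are verified by \cref{prop:intervals}) to the posets~$\contactLess{P}$ and~$\contactLess{P'}$, the two required inequalities translate into the following comparisons of transitive closures:
\begin{enumerate}[(i)]
\item if~${a < b}$ and~${b \contactLess{P} a}$, then~${b \contactLess{P'} a}$;
\item if~${a < b}$ and~${a \contactLess{P'} b}$, then~${a \contactLess{P} b}$.
\end{enumerate}
Indeed, $\min(C) \le \min(C')$ amounts to an inclusion of inversions, read off as~(i) from the description of the inversions of~$\min(\cdot)$ in \cref{prop:WOIP}; and dually $\max(C) \le \max(C')$ amounts to the reverse inclusion of non-inversions, which reads as~(ii).

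The main work, and main obstacle, is to establish~(i) and~(ii). I would proceed by an edge-by-edge comparison of~$P\contact$ and~$P'\contact$ using the local nature of the flip: arcs at cells outside the lens are identical in the two graphs, arcs between a pipe~${k \notin \{i, j\}}$ and one of~$\{i, j\}$ at cells on the lens boundary have their~$i/j$ label toggled by the involution~$\phi$ swapping~$i$ and~$j$, and the single arc~${i \to j}$ of~$P\contact$ located at~$c_P$ is replaced by the single arc~${j \to i}$ of~$P'\contact$ located at~$x_P$. Given a directed path witnessing the hypothesis of~(i) in~$P\contact$, respectively of~(ii) in~$P'\contact$, I would construct a directed path with the same endpoints in the other contact graph by copying outside-lens arcs, applying~$\phi$ to boundary arcs, and detouring around the flipped arc when it is used. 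The delicate point will be gluing these pieces at vertices labelled~$i$ or~$j$, where a label mismatch can arise between an outside-lens and an inside-lens segment; I would handle this by invoking the rectangle lemma~\cref{lem:rectangle} together with its consequences~\cref{lem:consequenceRectangle1,lem:consequenceRectangle2} to produce the missing connecting arcs on the correct side of the lens. Finally, the acyclicity of~$P\contact$ and~$P'\contact$, inherited from~${P, P' \in \acyclicPipeDreams(\omega)}$, ensures that no such detour introduces a spurious cycle.
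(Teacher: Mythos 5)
Your reduction to a single cover $\sigma \lessdot \sigma'$ and your translation of the two target inequalities into the conditions (i) and (ii) on the transitive closures $\contactLess{P}$ and $\contactLess{P'}$ via \cref{prop:WOIP} are both correct, and this is a genuinely different route from the paper. The paper does \emph{not} compare the contact graphs of $P$ and $P'$ directly; instead it runs an induction on the weak order distance from $\sigma$ to $\max(C)$, using \cref{lem:lowerSetPipeDreams,lem:consequenceRectangle2} to show that if $\tau = \sigma s_q \in C$ covers $\sigma$, then some $\tau' > \tau$ lies in $C'$, and then invokes the inductive hypothesis on $\tau, \tau'$. That argument never needs to trace paths in $P\contact$ or $P'\contact$ at all; it only manipulates adjacent transpositions in permutations, which keeps the case analysis local and finite (five cases in $p$ versus $q$).

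The problem with your version is that the central step — establishing (i) and (ii) — is not actually proved; it is a plan with two acknowledged holes. First, the ``gluing'' of outside-lens and inside-lens arcs at a vertex labelled $i$ or $j$ can genuinely mismatch (incoming arc ends at $i$, outgoing arc starts at $j$ after the toggle, or vice versa), and saying you would ``handle this by invoking \cref{lem:rectangle} together with \cref{lem:consequenceRectangle1,lem:consequenceRectangle2}'' does not specify which pair of elbows you would apply \cref{lem:rectangle} to, nor in which direction; depending on where the two contacts sit relative to the lens, the needed connector is sometimes $i \contactLess{P} j$ (which you have from $c_P$) but sometimes $j \contactLess{P} i$ (which you do not, and which would make $P$ cyclic). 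Second, your set-up assumes a single contact $c_P$ between pipes $i$ and $j$, but \cref{lem:lowerSetPipeDreams} explicitly flips ``the furthest northeast contact'' precisely because there may be several arcs between $i$ and $j$; those extra arcs all survive into $P'\contact$ with the same orientation, and a path in $P'\contact$ may use one of them rather than the flipped one, which your ``detour around the flipped arc'' does not address. Until you produce an explicit path-surgery argument that handles these two situations, (i) and (ii) remain unproved, so the proof is incomplete as written. If you want to salvage this route, I would suggest first proving (ii) only for the direct arc $j \to i$ and an arbitrary next vertex, using \cref{lem:consequenceRectangle2} to show the third pipe must connect to both $i$ and $j$ on the correct side, and then inducting on path length; but you will likely find this reproduces the work of the paper's five-case analysis in a more awkward form.
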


\begin{proof}
We prove the statement for the maximums, the proof for the minimums is symmetrical.
Observe first that we can assume that~$\sigma$ is covered by~$\sigma'$ in weak order, so that we write~${\sigma' = \sigma s_p}$ for some simple transposition~$s_p \eqdef (p \; p+1)$.
The proof now works by induction on the weak order distance between~$\sigma$ and~$\max(C)$.
If~$\sigma = \max(C)$, the result is immediate as~${\max(C) = \sigma < \sigma' \le \max(C')}$.
Otherwise, $\sigma$ is covered by a permutation~$\tau$ in the class~$C$, and we write~$\tau = \sigma s_q$ for some simple transposition~$s_q \eqdef (q \; q+1)$.
Let~$P,P' \in \acyclicPipeDreams(\omega)$ be such that~$C = \linearExtensions(P)$ and~$C' = \linearExtensions(P')$.
We now distinguish five cases, according to the relative positions of~$p$ and~$q$:
\begin{enumerate}[(1)]
\item If~$p > q+1$, then~$\sigma = UijVk\ell W$, $\sigma' = UijV\ell kW$ and~$\tau = UjiVk\ell W$ for some~$i < j$~and~${k < \ell}$. Define~$\tau' \eqdef \sigma s_p s_q = \sigma s_q s_p = UjiV\ell kW$. By \cref{lem:lowerSetPipeDreams}, there is no arc~$i \to j$ in~$P\contact$ (since~$\sigma$ and~$\tau$ both belong to~$C$), and~$P\contact$ and~$P'{}\contact$ can only differ by arcs incident~to~$k$~or~$\ell$. Hence, there is no arc~$i \to j$ in~$P'{}\contact$. We thus obtain again by \cref{lem:lowerSetPipeDreams} that~$\tau' \in \linearExtensions(P') = C'$.
\item If~$p = q+1$, then~$\sigma = UijkV$, $\sigma' = UikjV$ and~$\tau = UjikV$ for some~$i < j < k$. Define~$\tau' \eqdef \sigma s_p s_q s_p = \sigma s_q s_p s_q = UkjiV$. Since~$\sigma \in \linearExtensions(P)$, we have~$i \not\contactMore{P} j$ and~$j \not\contactMore{P} k$, so that there is no arc~$i \to k$ in~$P\contact$ by \cref{lem:consequenceRectangle2}. By \cref{lem:lowerSetPipeDreams}, there is no arc~$i \to j$ in~$P\contact$, and~$P\contact$ and~$P'{}\contact$ can only differ by arcs incident to~$j$ or~$k$. We thus obtain that there is no arc~$i \to j$ nor~$i \to k$ in~$P'{}\contact$. Consequently, again by \cref{lem:lowerSetPipeDreams}, both~$\sigma' s_q$ and~$\tau' = \sigma' s_q s_p$ belong to~$\linearExtensions(P') = C'$.
\item If~$p = q$, then~$\sigma' = \tau$ is in~$C$, so that~$C = C'$ and there is nothing to prove.
\item If~$p = q-1$, we proceed similarly as in Situation~(2).
\item If~$p < q-1$, we proceed similarly as in Situation~(1).
\end{enumerate}
In all cases, we found~$\tau' > \tau$ with~$\tau' \in C'$.
Since~$\tau < \tau'$ with~$\tau \in C$ and~$\tau' \in C'$, and since~$\tau$ is closer to~$\max(C)$ than~$\sigma$, we obtain that ${\max(C) < \max(C')}$ by induction hypothesis.
\end{proof}

\begin{proof}[Proof of \cref{thm:pipeDreamCongruence}]
Follows from \cref{prop:characterizationCongruences}, whose conditions are guaranteed by \cref{prop:intervals,prop:orderPreserving}.
\end{proof}


\subsection{Pipe dream quotient}
\label{subsec:pipeDreamQuotient}

For a congruence~$\equiv$ of a lattice~$L$, the \defn{lattice quotient}~$L/{\equiv}$ is the lattice on the classes of~$\equiv$ where for any two congruence classes~$X$ and~$Y$, 
\begin{itemize}
\item $X \le Y$ in~$L/{\equiv}$ if and only if there exist representatives~$x \in X$ and~$y \in Y$ such that~$x \le y$ in~$L$, or equivalently $\min(X) \le \min(Y)$, or equivalently $\max(X) \le \max(Y)$,
\item $X \meet Y$ (resp.~$X \join Y$) is the congruence class of~$x \meet y$ (resp.~of~$x \join y$) for arbitrary representatives~$x \in X$ and~$y \in Y$.
\end{itemize}
In this section, we aim at the following statement, illustrated in \cref{fig:latticeAcyclicPipeDreams}.

\begin{theorem}
\label{thm:pipeDreamQuotient}
The Hasse diagram of the lattice quotient~$[e,\omega]/{\equiv_\omega}$ is isomorphic to the increasing flip graph on~$\acyclicPipeDreams(\omega)$.
Hence, the transitive closure of the increasing flip graph on~$\acyclicPipeDreams(\omega)$ is a lattice.
\end{theorem}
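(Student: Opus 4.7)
The plan is to combine Theorems~\ref{thm:partitionPipeDreams} and~\ref{thm:pipeDreamCongruence} to identify the vertices of the Hasse diagram of $[e,\omega]/{\equiv_\omega}$ with acyclic pipe dreams via $P \leftrightarrow \linearExtensions(P)$, and match its cover relations with increasing flips using Lemma~\ref{lem:lowerSetPipeDreams}. By the standard theory of lattice congruences, cover relations $\linearExtensions(P) \lessdot \linearExtensions(P')$ in the quotient are equivalent to the existence of some cover $\sigma \lessdot \sigma'$ in $[e,\omega]$ with $\sigma \in \linearExtensions(P)$ and $\sigma' \in \linearExtensions(P')$. The key structural observation is that for any two pipes $i < j$ that cross in an acyclic pipe dream, pipe~$i$ starts above pipe~$j$ at entry and pipes can only swap their relative above/below status at a crossing; consequently all shared contacts lie on one side of the unique crossing, with pipe~$i$ northwest (arc $i \to j$) at every contact southwest of the crossing and pipe~$j$ northwest (arc $j \to i$) at every contact northeast.

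For the forward direction (cover $\Rightarrow$ increasing flip), start with a witness cover $\sigma = UijV \lessdot UjiV = \sigma'$ with $\sigma \in \linearExtensions(P)$ and $\sigma' \in \linearExtensions(P')$, and apply Lemma~\ref{lem:lowerSetPipeDreams} to $\sigma' \gtrdot \sigma$. Since the classes differ, the first case is ruled out, and the second case gives: $P'{}\contact$ has an arc $j \to i$ at the furthest northeast contact between $i$ and $j$ in $P'$, and $P$ is obtained from $P'$ by flipping this contact. The structural observation places this contact northeast of the crossing in $P'$, hence in $P$ the corresponding contact is southwest of the crossing in $P$; thus $P \to P'$ is an increasing flip.

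For the backward direction (increasing flip $\Rightarrow$ cover), start with an increasing flip $P \to P'$ involving pipes $i < j$. The structural observation together with acyclicity forces $P$ to have all contacts between $i$ and $j$ southwest of the crossing, $P'$ to have them all northeast, and the new contact $A$ in $P'$ (at the position of the old crossing) to be the furthest NE between $i$ and $j$. To produce a witnessing cover, it suffices to find $\sigma' \in \linearExtensions(P')$ of the form $UjiV$ with $j$ immediately before $i$: setting $\sigma \eqdef UijV$, Lemma~\ref{lem:lowerSetPipeDreams} combined with this ``furthest NE'' identification shows $\sigma \in \linearExtensions(P)$, giving the desired cover $\sigma \lessdot \sigma'$.

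The main obstacle is establishing the existence of such $\sigma'$, which is equivalent to showing that the arc $j \to i$ at $A$ is a covering relation of the poset $\contactLess{P'}$, i.e., that no pipe $k$ satisfies $j \contactLess{P'} k \contactLess{P'} i$. I plan to prove this by contradiction: tracing directed paths from $j$ to $k$ and from $k$ to $i$ in $P'{}\contact$, and invoking Lemma~\ref{lem:rectangle} along with the rectangular arguments of \cref{subsec:contactGraph}, should locate elbows of $k$ in positions inconsistent with $A$ being the furthest NE contact between pipes $i$ and $j$. This rectangular-elbow analysis is the principal combinatorial step of the proof.
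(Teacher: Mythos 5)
Your overall strategy matches the paper's: identify the quotient's elements with acyclic pipe dreams via Theorems~\ref{thm:partitionPipeDreams} and~\ref{thm:pipeDreamCongruence}, and characterize cover relations in the quotient as those classes $\linearExtensions(P), \linearExtensions(P')$ that contain a weak-order cover $\sigma \lessdot \sigma'$. The forward direction (cover implies increasing flip) is essentially what the paper does: apply Lemma~\ref{lem:lowerSetPipeDreams} to the witness cover and use the uniqueness from Theorem~\ref{thm:partitionPipeDreams}. Your ``structural observation'' that contacts between two crossing pipes in an \emph{acyclic} pipe dream all lie on one side of their crossing is correct and easy (contacts on both sides produce a $2$-cycle $i \to j \to i$), and it does let you read off the direction of the flip.

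The genuine gap is in the backward direction, and you have correctly identified it yourself: to exhibit a linear extension $\sigma' = UjiV \in \linearExtensions(P')$ with $j$ immediately preceding $i$, you must prove that the arc $j \to i$ created by the flip is a \emph{cover relation} of the transitive poset $\contactLess{P'}$, i.e.\ that there is no pipe $k$ with $j \contactLess{P'} k \contactLess{P'} i$. This is not a routine consequence of the ingredients you cite. It is the technical core of the theorem, and the paper isolates it as a standalone result (Lemma~\ref{lem:flippable}: any directed path between $i$ and $j$ in $P\contact$ or $P'{}\contact$ is a single arc), whose proof is a nontrivial geometric argument — it assumes a path $i \to k_1 \to \dots \to k_p \to j$ in $P\contact$, deduces the reversed path $j \to k_1 \to \dots \to k_p \to i$ in $P'{}\contact$ from acyclicity and the fact that only arc extremities equal to $i$ or $j$ change under a flip, and then derives a contradiction from Lemma~\ref{lem:consequenceRectangle1} by confining the pipe $k_1$ near the rectangle spanned by the two flipped positions. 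Your proposal stops at ``I plan to prove this by contradiction\dots should locate elbows of~$k$ in positions inconsistent with $A$ being the furthest NE contact''; this is a plausible plan in the right direction, but the argument has not been carried out, and without it the theorem is not proved. As a secondary remark, your claim that the new contact~$A$ in~$P'$ is automatically the furthest northeast contact between~$i$ and~$j$ also deserves a short justification (all contacts of~$P$ between~$i$ and~$j$ lie strictly southwest of the old crossing, and they persist in~$P'$, so~$A$ is indeed the northeasternmost), but that one is easy once the structural observation is in place.
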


\begin{example}
\label{exm:Tamari4}
Following \cref{exm:Tamari1,exm:Tamari2,exm:Tamari3}, observe that the increasing flip poset on reversing pipe dreams is isomorphic to the Tamari lattice, which is the quotient of the weak order by the sylvester congruence.
\end{example}

\begin{remark}
\label{rem:acyclicIncreasingFlipPosetNotLattice}
The increasing flip poset on~$\pipeDreams(\omega)$ is the transitive closure of the increasing flip graph on~$\pipeDreams(\omega)$.
Observe that the two natural ways to restrict to the acyclic pipe dreams of~$\acyclicPipeDreams(\omega)$ (restrict either the flip graph or the flip poset) do not coincide in general.
Namely, the transitive closure of the subgraph of the increasing flip graph induced by~$\acyclicPipeDreams(\omega)$ may have strictly less relations than the subposet of the increasing flip poset induced by~$\acyclicPipeDreams(\omega)$.
\cref{fig:counterExampleRestrictionIncreasingFlipPoset1} illustrates two acyclic pipe dreams of~$\acyclicPipeDreams(126543)$ connected by a sequence of increasing flips in~$\pipeDreams(126543)$ but by no sequence of increasing flips in~$\acyclicPipeDreams(126543)$.
In fact, the subposet of the increasing flip poset induced by~$\acyclicPipeDreams(126543)$ is not even a lattice, as illustrated in \cref{fig:counterExampleRestrictionIncreasingFlipPoset2}.
This example is minimal.
%
%
\begin{figure}[ht]
	\centerline{\includegraphics[scale=1.1]{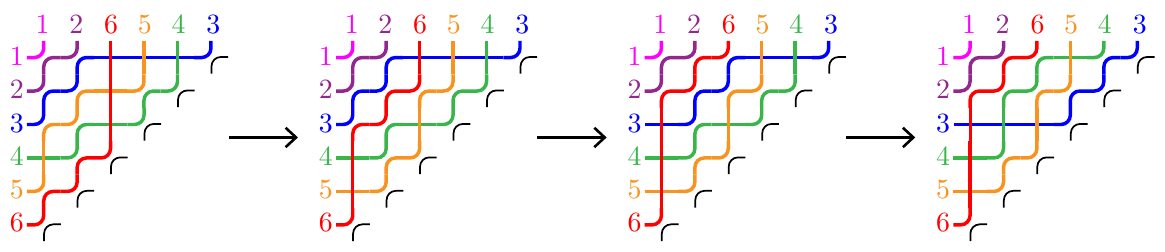}}
	\caption{Two acyclic pipe dreams in~$\acyclicPipeDreams(126543)$ connected by a sequence of increasing flips in~$\pipeDreams(126543)$ but by no sequence of increasing flips in~$\acyclicPipeDreams(126543)$.}
	\label{fig:counterExampleRestrictionIncreasingFlipPoset1}
\end{figure}
\begin{figure}[ht]
	\centerline{\includegraphics[scale=1.1]{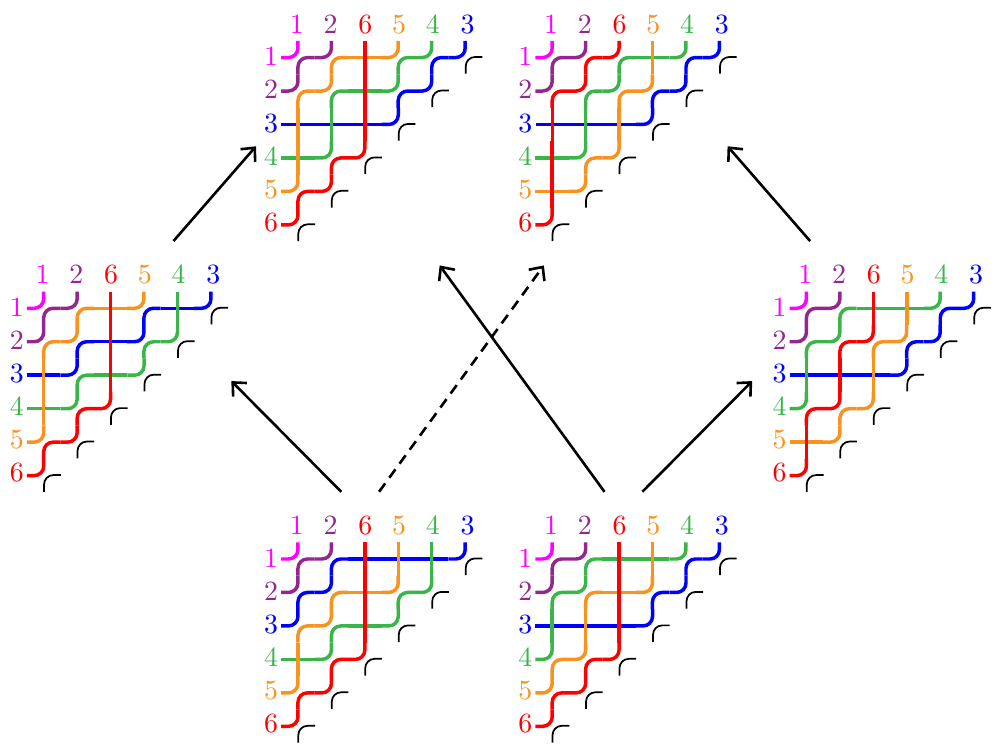}}
	\caption{Some acyclic pipe dreams of~$\acyclicPipeDreams(126543)$, connected by some strong arrows representing increasing flips, and by a dotted arrow representing the relation of \cref{fig:counterExampleRestrictionIncreasingFlipPoset1}. The subposet of the increasing flip poset on~$\pipeDreams(126543)$ induced by~$\acyclicPipeDreams(126543)$ contains the dotted arrow and is thus not a lattice (the two bottom elements of the picture have no join while the two top elements of the picture have no meet). In contrast, the transitive closure of the increasing flip graph on~$\acyclicPipeDreams(126543)$ does not contain the dotted arrow and is a lattice (\cref{thm:pipeDreamQuotient}).}
	\label{fig:counterExampleRestrictionIncreasingFlipPoset2}
\end{figure}
\end{remark}

To prove \cref{thm:pipeDreamQuotient}, we need the following auxiliary statement.

\begin{lemma}
\label{lem:flippable}
Consider two acyclic pipe dreams~$P,P' \in \acyclicPipeDreams(\omega)$ connected by the flip of a contact between their pipes~$i$ and~$j$.
Then any directed path in~$P\contact$ or~$P'{}\contact$ between~$i$ and~$j$ is an arc.
\end{lemma}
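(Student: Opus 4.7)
The plan is to analyze how the flip modifies the contact graph locally and then extract a cycle from any overly long directed path, contradicting the acyclicity of either~$P$ or~$P'$. Let the flipped contact in~$P$ be $c = (r_1, c_1)$ and the paired crossing be $x = (r_2, c_2)$, with $r_1 > r_2$ and $c_1 < c_2$. The rectangle with opposite corners~$c$ and~$x$ has all four corners as elbows, while its remaining boundary cells are crossings through which pipes~$i$ and~$j$ travel straight. In~$P$, pipe~$i$ goes north from~$c$, turns east at~$(r_2, c_1)$, and arrives horizontally at~$x$, while pipe~$j$ goes east from~$c$, turns north at~$(r_1, c_2)$, and arrives vertically at~$x$; in~$P'$ the two pipes swap sides of the rectangle. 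Assume without loss of generality that~$i$ is the NW pipe at~$c$ in~$P$, so the corresponding arc in~$P\contact$ is $i \to j$; let $\gamma_1$ (resp.~$\gamma_2$) denote the SE pipe at $(r_1, c_2)$ (resp.~the NW pipe at $(r_2, c_1)$) in~$P$, when such a pipe exists (otherwise the associated arc is simply absent).

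Chasing the local flip shows that the arcs of the contact graph differing between $P\contact$ and $P'\contact$ are precisely: the arc $i \to j$ at~$c$ in~$P\contact$ is replaced by the \emph{reversed} arc $j \to i$ at~$x$ in~$P'\contact$; the arc $j \to \gamma_1$ at $(r_1, c_2)$ in $P\contact$ becomes $i \to \gamma_1$ in $P'\contact$; and the arc $\gamma_2 \to i$ at $(r_2, c_1)$ in $P\contact$ becomes $\gamma_2 \to j$ in $P'\contact$. All other arcs, including any further arcs between $i$ and $j$ arising from contacts located outside the flip rectangle, coincide in $P\contact$ and $P'\contact$. Joint acyclicity of~$P$ and~$P'$ moreover forces no such further $i$-$j$ contact to exist, since any persistent arc $i \to j$ (respectively $j \to i$) would close a 2-cycle with the new $j \to i$ in $P'\contact$ (respectively with $i \to j$ in~$P\contact$).

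Suppose now, for contradiction, that a directed path $\pi \colon i = k_0 \to k_1 \to \cdots \to k_m = j$ of length $m \geq 2$ exists in~$P\contact$. Since $P\contact$ is a DAG, the vertices $k_0, \ldots, k_m$ are pairwise distinct, so $k_\ell \neq i$ for $\ell > 0$ and $k_\ell \neq j$ for $\ell < m$. None of the three arcs removed by the flip can occur in~$\pi$: the arc $i \to j$ would force $k_1 = j$; the arc $j \to \gamma_1$ would force $k_\ell = j$ for some $\ell < m$; and the arc $\gamma_2 \to i$ would force $k_\ell = i$ for some $\ell > 0$. Hence every arc of~$\pi$ survives in~$P'\contact$, and combining it with the new arc $j \to i$ at~$x$ produces the directed cycle $j \to i = k_0 \to \cdots \to k_m = j$ in $P'\contact$, contradicting acyclicity of~$P'$. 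The symmetric argument (exchanging the roles of $P$ and $P'$) rules out any directed path from~$j$ to~$i$ of length~$\geq 2$ in $P'\contact$, and any directed path in the opposite direction within either contact graph is immediately ruled out by acyclicity, since it would close a cycle with the existing $i$-$j$ arc. The main obstacle is the geometric bookkeeping identifying exactly which arcs change under the flip; once that list is established, the cyclicity argument is routine.
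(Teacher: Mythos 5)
There is a genuine gap. Your first paragraph asserts, without proof, that between the flipped contact $c$ and the paired crossing $x$, pipes $i$ and $j$ travel straight along the boundary of the rectangle spanned by $c$ and $x$, so that the only elbows of $i$ or $j$ inside the rectangle occur at its four corners. This is not forced: the two pipes can zigzag between $c$ and $x$, and each intermediate elbow of pipe $i$ (resp.\ $j$) is a contact with some third pipe $k$. If pipe $i$ has a northwest elbow at a box strictly between $c$ and $x$, the arc $i \to k$ there in $P\contact$ becomes $j \to k$ in $P'{}\contact$ (and similarly for southeast elbows and for pipe $j$). These extra arc changes are absent from your list of three, so the central step ``every arc of $\pi$ survives in $P'{}\contact$'' can fail: the first arc $i \to k_1$ of the path may itself be one of these hidden changes and become $j \to k_1$ after the flip, in which case no cycle is produced in $P'{}\contact$.

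The paper avoids this trap by only using the weaker but always-true observation that in passing from $P\contact$ to $P'{}\contact$, arcs can only have an extremity $i$ or $j$ toggled to the other. This leaves exactly one non-contradictory configuration, precisely the one you overlook, in which $i \to k_1$ becomes $j \to k_1$ and $k_p \to j$ becomes $k_p \to i$, so that $i \contactLess{P} k_1 \contactLess{P} j$ while $i \contactMore{P'} k_1 \contactMore{P'} j$. It is then ruled out geometrically: the contact with $k_1$ lies inside the rectangle $R$ spanned by $c$ and $x$, and by \cref{lem:rectangle} the four order relations force $k_1$ to have no elbow north, south, east or west of $R$; it must therefore enter and leave $R$ along straight segments, giving $k_1 < i < j$ and $\omega^{-1}(k_1) < \omega^{-1}(j) < \omega^{-1}(i)$, which contradicts $i \contactLess{P} k_1$ via \cref{lem:consequenceRectangle1}. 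To repair your argument you would need either to prove the rectangle-boundary claim, which fails in general, or to replace your enumeration of changed arcs with the safer ``extremities toggle between $i$ and $j$'' characterization and then deal with the extra case geometrically, as the paper does.
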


\begin{proof}
Say that~$i < j$ while~$\omega^{-1}(i) > \omega^{-1}(j)$ and that~$i \to j$ is an arc of~$P\contact$ while~$j \to i$ is an arc of~$P'{}\contact$.
Since $P\contact$ is acyclic, there is no path from~$j$ to~$i$ in~$P\contact$.
Assume by means of contradiction that there is a path~$i \to k_1 \to \dots \to k_p \to j$ in~$P\contact$ with~$p \ge 1$.
Since the arcs of~$P'{}\contact$ are the arcs of~$P\contact$ where only extremities~$i$ and~$j$ can be changed, $P'{}\contact$ contains the path~$k_1 \to \dots \to k_p$ and at least one of the arcs~$i \to k_1$ or~$j \to k_1$, and at least one of the arcs~$k_p \to j$ or~$k_p \to i$.
Consequently, since~$P'{}\contact$ contains the arc~$j \to i$ and is acyclic, it must contain the path~$j \to k_1 \to \dots \to k_p \to i$.
We thus obtained that~$i \contactLess{P} k_1 \contactLess{P} j$ while~$i \contactMore{P'} k_1 \contactMore{P'} j$, and~$k_1$ has a contact with~$i$ in~$P$ that becomes a contact with~$j$ in~$P'$.

Consider now the contact~$c$ of~$P$ which is a crossing in~$P'$ and the contact~$c'$ of~$P'$ which is a crossing of~$P$.
Let~$R$ the rectangle with corners~$c$ and~$c'$.
Since~$k_1$ has a contact with~$i$ in~$P$ and with~$j$ in~$P'$, it must pass inside~$R$.
Since~$i \contactLess{P} k_1 \contactLess{P} j$ and~$i \contactMore{P'} k_1 \contactMore{P'} j$, the pipe~$k$ has no elbow located northwest or southeast of~$c$ or~$c'$, hence no elbow located north, south, west or east of~$R$.
We thus obtain that~$k$ must be straight before it reaches~$R$, and after it leaves~$R$.
Hence $k_1 < i < j$ and~$\omega^{-1}(k_1) < \omega^{-1}(j) < \omega^{-1}(i)$.
By \cref{lem:consequenceRectangle1}, this contradicts~$i \contactLess{P} k_1$~and~$k_1 \contactMore{P'} j$.
\end{proof}

\begin{proof}[Proof of \cref{thm:pipeDreamQuotient}]
We need to prove that the following conditions are equivalent for two distinct pipe dreams~$P,P' \in \acyclicPipeDreams(\omega)$:
\begin{enumerate}[(i)]
\item there is an increasing flip from $P$ to~$P'$,
\item there exist linear extensions~$\pi$ of~$P$ and~$\pi'$ of~$P'$ such that~$\pi'$ covers~$\pi$ in weak order.
\end{enumerate}
\cref{lem:lowerSetPipeDreams,thm:partitionPipeDreams} directly imply that (ii) $\Rightarrow$ (i).
For (i) $\Rightarrow$ (ii), let~$i < j$ be the two pipes involved in the flip between~$P$ and~$P'$.
Hence, $i \to j$ is an arc of~$P\contact$ while~$j \to i$ is an arc of~$P'{}\contact$.
By \cref{lem:flippable}, there is no directed path from~$i$ to~$j$ in~$P\contact$ besides the arcs~$i \to j$ (there might be more than one such arc).
Hence, there exists a linear extension~$\pi$ of~$P$ where~$i$ and~$j$ are consecutive.
Write~$\pi \eqdef UijV$ and define~$\pi' \eqdef UjiV$.
Since~$i \to j$ is an arc of~$P\contact$, $\pi'$ is not a linear extension of~$P$.
Hence, by \cref{lem:lowerSetPipeDreams}, $\pi'$ is a linear extension of~$P'$.
\end{proof}

Let us conclude by providing more equivalent characterizations of the increasing flip lattice on~$\acyclicPipeDreams(\omega)$.

\begin{proposition}
For any pipe dreams~$P, P' \in \acyclicPipeDreams(\omega)$, the following assertions are equivalent:
\begin{enumerate}[(i)]
\item there is a path from $P$ to~$P'$ in the increasing flip graph on~$\acyclicPipeDreams(\omega)$,
\item there exist linear extensions~$\pi$ of~$P$ and~$\pi'$ of~$P'$ such that~$\pi < \pi'$ in weak order,
\item the minimal (resp.~max) linear extensions~$\pi$ of~$P$ and~$\pi'$ of~$P'$ satisfy~$\pi < \pi'$ in weak order.
\item there is no~$i < j$ such that~$i \contactMore{P} j$ and~$i \contactLess{P'} j$,
\item for all~$i < j$, if~$i \contactMore{P} j$, then~$i \contactMore{P'} j$,
\item for all~$i < j$, if~$i \contactLess{P'} j$, then~$i \contactLess{P} j$.
\end{enumerate}
\end{proposition}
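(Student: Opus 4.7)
The plan is to collapse all six assertions onto the single relation $P \le_\omega P'$ in the quotient lattice~$[e,\omega]/{\equiv_\omega}$, by unpacking that relation through the extremal elements of the intervals~$\linearExtensions(P)$ and~$\linearExtensions(P')$. Throughout I write $\Inv(P) \eqdef \set{(i,j)}{i<j \text{ and } i \contactMore{P} j}$ and $\Ninv(P) \eqdef \set{(i,j)}{i<j \text{ and } i \contactLess{P} j}$. As already invoked in the proof of \cref{prop:intervals}, \cref{prop:WOIP} then yields
\[
\Inv(\min \linearExtensions(P)) = \Inv(P) \quad \text{and} \quad \Ninv(\max \linearExtensions(P)) = \Ninv(P),
\]
so in particular $\Inv(\max \linearExtensions(P))$ is the complement of $\Ninv(P)$ in $\set{(i,j)}{i<j}$.

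The block (i) $\Leftrightarrow$ (ii) $\Leftrightarrow$ (iii) is essentially formal. By \cref{thm:pipeDreamQuotient}, the increasing flip graph on~$\acyclicPipeDreams(\omega)$ is the Hasse diagram of~$[e,\omega]/{\equiv_\omega}$, so (i) is simply the relation $P \le_\omega P'$ in this quotient. The characterization of the quotient order recalled at the start of \cref{subsec:pipeDreamQuotient} then immediately gives (ii) (some representatives satisfy~$\pi \le \pi'$) and (iii) (the minimum, resp.\ maximum, representatives satisfy~$\pi \le \pi'$).

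For (iii) $\Leftrightarrow$ (v) and (iii) $\Leftrightarrow$ (vi), I rewrite the comparisons of extremal representatives as inclusions of (non-)inversion sets. Namely, $\min \linearExtensions(P) \le \min \linearExtensions(P')$ in weak order is equivalent to $\Inv(\min \linearExtensions(P)) \subseteq \Inv(\min \linearExtensions(P'))$, hence to $\Inv(P) \subseteq \Inv(P')$, which is exactly~(v). Symmetrically, $\max \linearExtensions(P) \le \max \linearExtensions(P')$ is equivalent to $\Ninv(\max \linearExtensions(P)) \supseteq \Ninv(\max \linearExtensions(P'))$, hence to $\Ninv(P) \supseteq \Ninv(P')$, which is exactly~(vi).

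Finally, for (ii) $\Leftrightarrow$ (iv), the plan is to show that the existence of any~$\pi \le \pi'$ with~$\pi \in \linearExtensions(P)$ and~$\pi' \in \linearExtensions(P')$ is equivalent to the weakest inequality~$\min \linearExtensions(P) \le \max \linearExtensions(P')$: one direction is immediate by choosing the extremal representatives, and the converse follows from \cref{prop:characterizationCongruences}, since applying the order-preserving projection~$\pi_\downarrow$ to this last inequality yields $\min \linearExtensions(P) \le \min \linearExtensions(P')$. Unpacking~$\min \linearExtensions(P) \le \max \linearExtensions(P')$ through inversion sets gives $\Inv(P) \cap \Ninv(P') = \varnothing$, which says exactly that no pair $i<j$ satisfies $i \contactMore{P} j$ and $i \contactLess{P'} j$, i.e.~(iv). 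The only point that requires a bit of care is recognising and bookkeeping the four equivalent inequalities $\exists\, \pi \le \pi'$, $\min \le \min$, $\max \le \max$ and $\min \le \max$ as all encoding~$P \le_\omega P'$; once that is in place, the proof reduces to a direct reading of \cref{prop:WOIP}, with no combinatorial input beyond the tools already used for~\cref{thm:pipeDreamCongruence,thm:pipeDreamQuotient}.
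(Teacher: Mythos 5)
Your proof is correct and follows essentially the same route as the paper: (i)\,$\Leftrightarrow$\,(ii)\,$\Leftrightarrow$\,(iii) via \cref{thm:pipeDreamQuotient} and the general description of the quotient order, and (iii)/(ii)\,$\Leftrightarrow$\,(iv)\,$\Leftrightarrow$\,(v)\,$\Leftrightarrow$\,(vi) by reading off the inversions of the minimum and non-inversions of the maximum of each interval~$\linearExtensions(P)$ from \cref{prop:WOIP}. The only addition is your explicit use of~$\projDown$ to show that the a priori weaker condition $\min\linearExtensions(P) \le \max\linearExtensions(P')$ encoding (iv) implies $\min \le \min$, a detail the paper leaves implicit but which your bookkeeping handles correctly.
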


\begin{proof}
We already proved the equivalence (i) $\Leftrightarrow$ (ii) in \cref{thm:pipeDreamQuotient}.
The equivalence \mbox{(ii) $\Leftrightarrow$ (iii)} is valid for any lattice quotient.
The equivalences (iii) $\Leftrightarrow$ (iv) $\Leftrightarrow$ (v) $\Leftrightarrow$ (vi) follow from the descriptions of \cref{prop:WOIP} of the inversions of the minimum and the non-inversions of the maximum of a weak order interval.
\end{proof}


\section{Further topics on pipe dreams}
\label{sec:furtherTopics}

In this section, we discuss five further topics on the pipe dream congruence.
We first present two algorithms to construct the pipe dream~$\insertion{\pi}{\omega}$ of~$\acyclicPipeDreams(\omega)$ of which a given permutation~$\pi$ is a linear extension (\cref{subsec:sweepingAlgorithm,subsec:insertionAlgorithm}).
We then describe the pipe dream congruence~$\equiv_\omega$ as the transitive closure of a rewriting rule on permutations of~$[e, \omega]$ (\cref{subsec:rewritingRule}).
We then present the natural coarsening of the pipe dream congruence~$\equiv_\omega$ by the recoil congruence~$\cong_\omega$ (\cref{subsec:canopy}).
Finally, we discuss a specific family of pipe dreams in connection to the $\nu$-Tamari lattices (\cref{subsec:nuTamari}).


\subsection{Sweeping algorithm}
\label{subsec:sweepingAlgorithm}

Our first algorithm to construct~$\insertion{\pi}{\omega}$ is a \defn{sweeping algorithm}, inspired by the algorithm to compute greedy pipe dreams~\cite{PilaudPocchiola, PilaudStump-ELlabelings}.
An extension of this algorithm to subword complexes will be discussed in \cref{subsec:sweepingAlgorithmSubwordComplexes}, and a related algorithm appeared independently in~\cite{JahnStump}.
We say that an order on the boxes of the triangular shape is \defn{northeast compatible} if each box~$b$ is before all boxes which are located weakly northeast of~$b$.

\begin{proposition}
\label{prop:sweepingAlgorithm}
For any permutations~$\pi,\omega \in \fS_n$ such that~$\pi \le \omega$ in weak order, the unique pipe dream~$P \in \acyclicPipeDreams(\omega)$ such that~$\pi \in \linearExtensions(P)$ can be constructed by sweeping the triangular shape in any northeast compatible order and placing a crossing when sweeping a vertex~$v$ of the grid where pipe~$i$ arrives horizontally and pipe~$j$ arrives vertically if and only if
\begin{itemize}
\item $i < j$ and~$\omega^{-1}(i) > \omega^{-1}(j)$, and 
\item $\pi^{-1}(i) > \pi^{-1}(j)$ or vertex~$v$ lies in column~$\omega^{-1}(j)$.
\end{itemize}
See \cref{fig:sweepingAlgorithm1,fig:sweepingAlgorithm2}.
\end{proposition}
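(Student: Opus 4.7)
Let $P$ denote the filling produced by the algorithm. The plan is to verify three properties of $P$: \textbf{(a)} $P \in \pipeDreams(\omega)$; \textbf{(b)} $P$ is acyclic; \textbf{(c)} $\pi \in \linearExtensions(P)$. Together with the uniqueness part of \cref{thm:partitionPipeDreams}, these will identify $P$ as $\insertion{\pi}{\omega}$. The northeast-compatibility of the sweep ensures that when the algorithm processes box $v$, the boxes directly south and west of $v$ have already been processed, so the pipes arriving at $v$ from the south and from the west are uniquely determined and the algorithm is well-defined.

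I would first aim at (c). At each elbow placed by the algorithm, with west pipe $i$ and south pipe $j$ (hence an arc $i \to j$ in $P\contact$), it suffices to show $\pi^{-1}(i) < \pi^{-1}(j)$. I proceed by case analysis on why the algorithm places an elbow at this box: \emph{(i)} the first condition fails with $i < j$ and $\omega^{-1}(i) < \omega^{-1}(j)$, so $(i, j)$ is a non-inversion of $\omega$ and hence, since $\pi \le \omega$, also of $\pi$, giving $\pi^{-1}(i) < \pi^{-1}(j)$; \emph{(ii)} the first condition holds but the second fails, in which case $\pi^{-1}(i) < \pi^{-1}(j)$ appears explicitly in the failure of the second clause; \emph{(iii)} the first condition fails because $i > j$. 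Case (iii) requires a tracing argument: the pipes $i, j$ (with labels reversed from their left-boundary order) must have crossed at an earlier box $v_0$, at which $j$ was the west pipe and $i$ was the south pipe; I then argue that this earlier crossing was triggered by the $\pi$-subclause rather than by the column-forcing clause, since after a column-forced crossing the south pipe would continue straight up to its exit column and could not reappear as a west pipe at a subsequent box. The $\pi$-subclause at $v_0$ then supplies $\pi^{-1}(j) > \pi^{-1}(i)$, as required. Once (c) is established, $P\contact$ admits the linear extension $\pi$, so (b) follows.

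For (a), I would apply \cref{lem:characterizationPipeDreams}: each pipe $j$ enters at row $j$ by construction, and it remains to show pipe $j$ exits at column $\omega^{-1}(j)$ with exactly $\noninversions{\omega}{j}$ southeast contacts. The first condition of the algorithm exactly matches the crossing condition of \cref{lem:horiVertCrossings}, so the candidate crossing boxes correspond to the pairs of pipes that must cross in any pipe dream of $\pipeDreams(\omega)$. The second condition then selects, among the candidates, which become actual crossings; the ``column $\omega^{-1}(j)$'' safety clause forces every required crossing to occur no later than pipe $j$'s exit column. Combining this with \cref{lem:countingElbows} gives the correct number of southeast contacts along each pipe and forces the exit columns to agree with $\omega$, yielding $P \in \pipeDreams(\omega)$.

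The main obstacle is the combination of case (iii) of (c) with the crossing-placement verification in (a). Concretely, one has to argue by a careful monotonicity and bookkeeping analysis that each inversion pair of $\omega$ produces exactly one crossing in $P$, either through a $\pi$-triggered instance at an earlier box or through the safety clause in the exit column, and that the intermediate non-forced elbows transmit the $\pi$-inversion data consistently to every subsequent labeling swap. Once these subtleties are settled, the three properties (a), (b), (c) hold simultaneously, and \cref{thm:partitionPipeDreams} identifies $P$ with $\insertion{\pi}{\omega}$.
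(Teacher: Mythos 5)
Your plan takes a genuinely different route from the paper's proof, and it has a real gap which you yourself flag but do not close. You propose to verify directly that the algorithm's output is a valid pipe dream of~$\pipeDreams(\omega)$, that it is acyclic, and that~$\pi$ is one of its linear extensions, and only then invoke uniqueness. The paper instead starts from the existence and uniqueness of $P\in\acyclicPipeDreams(\omega)$ with $\pi\in\linearExtensions(P)$, already established in \cref{thm:partitionPipeDreams}, and argues by sweeping in northeast-compatible order that the constraints ``$P$ is a reduced pipe dream, $\pi$ extends~$P\contact$'' \emph{force} the tile at each box to be exactly what the two clauses prescribe: contact when $i>j$ or $\omega^{-1}(i)<\omega^{-1}(j)$; crossing when $\pi^{-1}(i)>\pi^{-1}(j)$ (else an arc $i\to j$ would violate the linear extension); crossing when $v$ is in column $\omega^{-1}(j)$ (else pipe~$j$ would overshoot its exit); contact otherwise (else \cref{lem:rectangle} gives $j\contactLess{P}i$, again violating the linear extension). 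Since every decision is determined, the output must coincide with~$P$.

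The gap in your version is the circularity between your steps~(a) and~(c). In case~(iii) of~(c) you argue that the earlier crossing of pipes $i$ and $j$ at $v_0$ must have been $\pi$-triggered, ``since after a column-forced crossing the south pipe would continue straight up to its exit column and could not reappear as a west pipe at a subsequent box.'' That is precisely a statement about the algorithm's output respecting exit columns, which is part of~(a) — yet in your ordering you need~(c) first to get~(b) and you lean on~(a)-type facts inside~(c). Likewise, your sketch of~(a) appeals to \cref{lem:characterizationPipeDreams} and \cref{lem:countingElbows} but does not actually supply the count of southeast contacts for each pipe produced by the algorithm; this is exactly the ``monotonicity and bookkeeping analysis'' you defer at the end. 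The paper's approach is immune to this circularity because once $P$ is known to exist, facts such as ``pipe~$j$ goes straight north in column~$\omega^{-1}(j)$'' are properties of~$P$ itself, not of the algorithm's partially-built output, and can be cited freely to force each choice. If you wish to rescue the direct verification, you would need to run~(a), (b), (c) as a simultaneous induction along the sweep with an invariant strong enough to control exit columns and contact counts at every partial stage — doable, and similar in spirit to what the paper does for the insertion algorithm in \cref{subsec:insertionAlgorithm}, but considerably heavier than the one-paragraph forced-choice argument once \cref{thm:partitionPipeDreams} is in hand.
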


\begin{example}
\label{exm:sweepingAlgorithm}
\cref{fig:sweepingAlgorithm1,fig:sweepingAlgorithm2} illustrate the sweeping algorithm for the exiting permutation ${\omega = 561324}$ and the input permutations~${\pi = 513264}$ and~${\pi = 512364}$ respectively.
The algorithm has 15 steps, but we have grouped together steps $2$ to $9$ (second arrow) and steps $12$ to $13$ (fifth arrow) as they have the same justification.
For~${\pi = 513264}$ in \cref{fig:sweepingAlgorithm1}, we place 
\begin{itemize}
\item contacts at steps $1$, $10$, $12$, $13$ and~$15$ since~$\omega^{-1}(i) < \omega^{-1}(j)$, and at step~$14$ since~$i > j$,
\item crossings at steps $2$ to $9$ since~$i < j$, $\omega^{-1}(i) > \omega^{-1}(j)$ and we are in the column~$\omega^{-1}(j)$, and at step $11$ since~$i < j$, $\omega^{-1}(i) > \omega^{-1}(j)$ and~$\pi^{-1}(i) > \pi^{-1}(j)$.
\end{itemize}
For~${\pi = 512364}$ in \cref{fig:sweepingAlgorithm2}, we make the same choices, except that we place 
\begin{itemize}
\item a contact at step~$11$ since~$\pi^{-1}(i) < \pi^{-1}(j)$ and we are not in column~$\omega^{-1}(j)$, 
\item a crossing at step~$14$ since~$i < j$, $\omega^{-1}(i) > \omega^{-1}(j)$ and we are in the column~$\omega^{-1}(j)$.
\end{itemize}
\begin{figure}[t]
	\centerline{\includegraphics[scale=.9]{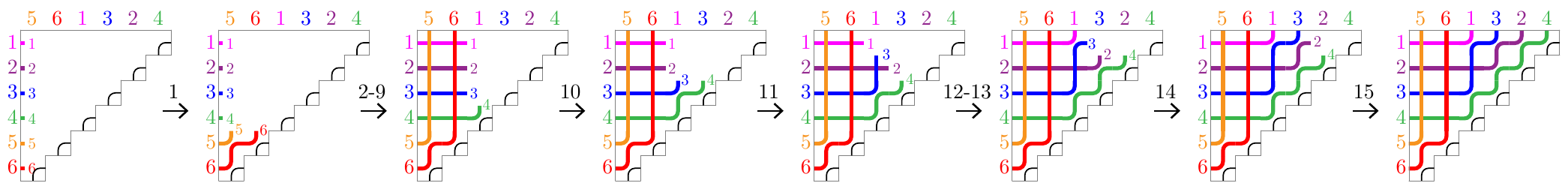}}
	\caption{Sweeping algorithm for the permutations~$\pi = 513264$ and~$\omega = 561324$.}
	\label{fig:sweepingAlgorithm1}
\end{figure}
\begin{figure}[t]
	\centerline{\includegraphics[scale=.9]{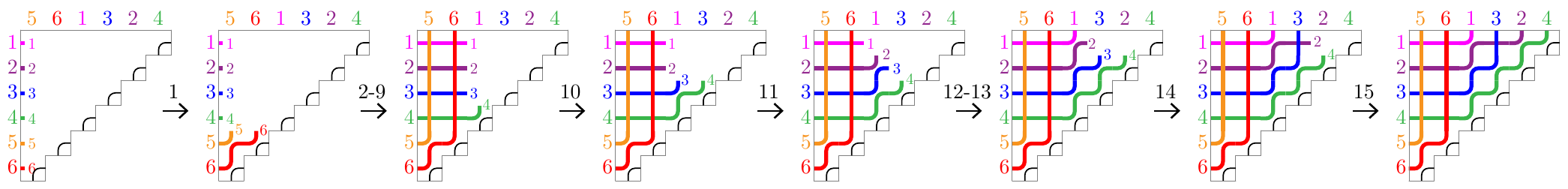}}
	\caption{Sweeping algorithm for the permutations~$\pi = 512364$ and~$\omega = 561324$.}
	\label{fig:sweepingAlgorithm2}
\end{figure}
\end{example}

\begin{proof}[Proof of \cref{prop:sweepingAlgorithm}]
When we sweep vertex~$v$ where pipe~$i$ arrives horizontally and pipe~$j$ arrives vertically,
\begin{itemize}
\item we have no choice but imposing a contact at~$v$ if~$i > j$ (since pipes $i$ and~$j$ already crossed before) or~$\omega^{-1}(i) < \omega^{-1}(j)$ (since pipes~$i$ and~$j$ do not cross at all),
\item if~$i < j$ and~$\omega^{-1}(i) > \omega^{-1}(j)$, then
\begin{itemize}
\item if~$\pi^{-1}(i) > \pi^{-1}(j)$ then pipes~$i$ and~$j$ cannot touch (otherwise $\pi$ would not be a linear extension of~$P$), so that we have no choice but imposing a crossing~at~$v$,
\item if~$\pi^{-1}(p) < \pi^{-1}(q)$ then
	\begin{itemize}
	\item if $v$ lies in column~$\omega^{-1}(j)$, then pipe $j$ needs to go straight north, and we have no choice but imposing a crossing at~$v$,
	\item otherwise, we have no choice but imposing a contact at~$v$ (otherwise \cref{lem:rectangle} ensures that~$j \contactLess{P} i$, so that~$\pi$ would not be a linear extension of~$P$).
	\qedhere
	\end{itemize}
\end{itemize}
\end{itemize}
\end{proof}


\subsection{Insertion algorithm}
\label{subsec:insertionAlgorithm}

Our second algorithm to construct~$\insertion{\pi}{\omega}$ is an \defn{insertion algorithm} inspired from~\cite{Pilaud-brickAlgebra} and similar to the insertion in binary search trees.

We call \defn{staircase} of length~$k$ a sequence~$e_1, \dots, e_k$ of southeast elbows such that~$e_i$ is located strictly southwest of~$e_{i+1}$ for each~$i \in [k-1]$. In other words, $r_1 > \dots > r_k$ and~$c_1 < \dots < c_k$ where~$e_i$ is located in row~$r_i$ and column~$c_i$.
For~$j \in [n]$ such that~$j > r_1$ and~$c_k < \omega^{-1}(j)$, we can uniquely define a pipe which enters at row~$j$, exits at column~$\omega^{-1}(j)$, and whose northeast elbows are precisely covering the southeast elbows~$e_1, \dots, e_k$.
Namely, it has a northeast elbow at row~$r_i$ and column~$c_i$ for each~$i \in [k]$, and a southeast elbow at row~$r_{i-1}$ and column~$c_i$ for each~$i \in [k+1]$, where by convention~$r_0 \eqdef j$ and~$c_{k+1} \eqdef \omega^{-1}(j)$.

\begin{proposition}
\label{prop:insertionAlgorithm}
For any permutations~$\pi,\omega \in \fS_n$ such that~$\pi \le \omega$ in weak order, the unique pipe dream~$P \in \acyclicPipeDreams(\omega)$ such that~$\pi \in \linearExtensions(P)$ can be constructed starting from the empty triangular shape and inserting the pipes~$\pi(1), \dots, \pi(n)$ one by one in the order of the permutation~$\pi$ as northwest as possible.
More precisely, at step~$t$, we insert a pipe starting at row~$\pi(t)$, ending at column~$\omega^{-1}(\pi(t))$, and whose northwest elbows are precisely covering the staircase of currently free southeast elbows in the rectangle~$[\pi(t)] \times [\omega^{-1}(\pi(t))]$.
See \cref{fig:insertionAlgorithm1,fig:insertionAlgorithm2}.
\end{proposition}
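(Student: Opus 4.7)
The plan is to show that the pipe dream $P$ produced by the algorithm satisfies $P \in \pipeDreams(\omega)$ and $\pi \in \linearExtensions(P)$, whereupon \cref{thm:partitionPipeDreams} forces $P = \insertion{\pi}{\omega}$.

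The first step is to verify, by induction on the insertion index $t = 1, \dots, n$, that the algorithm is well defined. This requires two checks: (i) the free southeast elbows of the partial pipe dream in the rectangle $[\pi(t)] \times [\omega^{-1}(\pi(t))]$ form an honest staircase $e_1, \dots, e_k$ with $r_1 > \dots > r_k$ and $c_1 < \dots < c_k$, and (ii) the prescribed path for the new pipe $\pi(t)$ can be placed without colliding with existing pipes. Point (i) hinges on the pipe dream geometry: any two free southeast elbows violating the staircase condition would, via \cref{lem:rectangle}, force an unwanted contact relation in the partial dream. Point (ii) amounts to checking that between consecutive staircase cells the new pipe either crosses existing pipes transversally (as dictated by \cref{lem:horiVertCrossings}) or passes through empty cells. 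After all $n$ insertions, \cref{lem:characterizationPipeDreams} certifies $P \in \pipeDreams(\omega)$ once we confirm that pipe $\pi(t)$ has exactly $\noninversions{\omega}{\pi(t)}$ southeast elbows, which the inductive count of the staircase length delivers.

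Next, I would verify $\pi \in \linearExtensions(P)$ and conclude. Every contact of $P$ is created at some insertion step $t$, when pipe $\pi(t)$ meets an earlier-inserted pipe's elbow at a shared cell. The ``as northwest as possible'' principle forces the new pipe to play the southeast role at the newly formed contact (its curve occupying the southeast quadrant of the cell) while the previously placed pipe plays the northwest role; hence the arc of $P\contact$ points from an index $i$ with $\pi^{-1}(i) < t$ to $\pi(t)$, so every arc $i \to j$ of $P\contact$ satisfies $\pi^{-1}(i) < \pi^{-1}(j)$. Combining $P \in \pipeDreams(\omega)$, $\pi \in \linearExtensions(P)$, and the hypothesis $\pi \le \omega$, \cref{thm:partitionPipeDreams} yields $P \in \acyclicPipeDreams(\omega)$ and $P = \insertion{\pi}{\omega}$. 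The main obstacle is the bookkeeping of the first step: proving that the free southeast elbows assemble into a staircase of length $\noninversions{\omega}{\pi(t)}$ and that the new pipe's routing passes through every required crossing without spurious collision. Once this geometric verification is settled, the linear extension property follows almost automatically from the insertion order.
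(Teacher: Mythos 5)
Your high-level architecture coincides with the paper's: build $P$ by the insertion rule, check $P \in \pipeDreams(\omega)$ via \cref{lem:characterizationPipeDreams}, read off that every contact arc goes from an earlier-inserted pipe to a later one so that $\pi \in \linearExtensions(P)$, and then invoke \cref{thm:partitionPipeDreams} to force $P = \insertion{\pi}{\omega}$. The linear-extension step and the final deduction are correct and match the paper. The gap lies exactly where you flag ``the main obstacle'': the bookkeeping is not merely deferred, the tools you point to do not do the job. \cref{lem:horiVertCrossings} and \cref{lem:rectangle} are statements about genuine pipe dreams in $\pipeDreams(\omega)$; they do not apply to the partial configuration of pipes present during an intermediate step of the algorithm, and ``violating the staircase condition would force an unwanted contact relation'' is not an argument one can extract from \cref{lem:rectangle}. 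You also omit a necessary check that does not even appear on your list: the pipe inserted at step $t$ could a priori wander out of the triangular shape, and this must be ruled out.

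The paper supplies the missing machinery with two algorithm-specific inductions. First, it shows (\cref{lem:disjointPipesInsertionAlgorithm}) that at every moment the union of the inserted pipes' supports equals the union of the hooks (west segment plus north segment) of the currently free southeast elbows; this invariant is what makes the disjointness of the new pipe from the old ones (\cref{coro:disjointPipesInsertionAlgorithm}) a clean consequence, rather than an appeal to crossing lemmas that presuppose a finished pipe dream. Second, it proves a quantitative staircase lemma (\cref{lem:rectangleInsertionAlgorithm}) giving, for every rectangle $[r]\times[c]$ and every step $t$, the exact length of the free staircase as a max of a signed count; specializing with $r = \pi(t)$, $c = \omega^{-1}(\pi(t))$ and using the hypothesis $\pi \le \omega$ (which you only invoke at the very end, but which is needed here) yields the count $\noninversions{\omega}{\pi(t)}$ (\cref{coro:rectangleInsertionAlgorithm1}), and a second specialization gives that the pipe stays inside the triangle (\cref{coro:rectangleInsertionAlgorithm2}). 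Without something playing the role of these two lemmas, the claim that the algorithm is well defined and produces a reduced pipe dream of $\pipeDreams(\omega)$ is unsupported.
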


\begin{example}
\label{exm:insertionAlgorithm}
\cref{fig:insertionAlgorithm1,fig:insertionAlgorithm2} illustrate the insertion algorithm for the exiting permutation ${\omega = 561324}$ and the input permutations~${\pi = 513264}$ and~${\pi = 512364}$ respectively.
For instance, when inserting the (green) pipe~$4$ in the last step, the currently free southeast elbows are the first southeast elbow of the (blue) pipe~$3$ and the two southeast elbows of the (purple) pipe~$2$.
\begin{figure}[t]
	\centerline{\includegraphics[scale=.9]{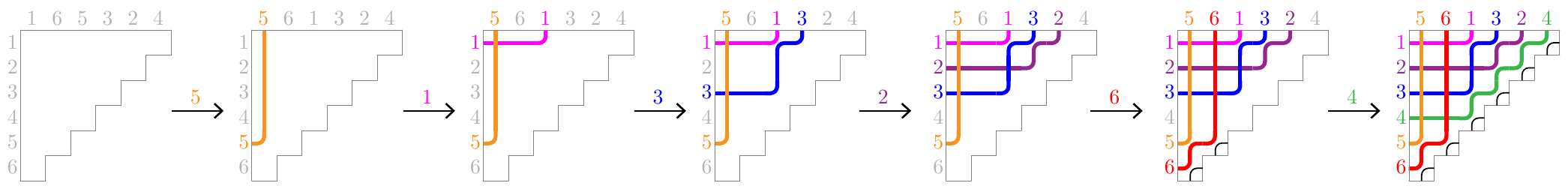}}
	\caption{Insertion algorithm for the permutations~$\pi = 513264$ and~$\omega = 561324$.}
	\label{fig:insertionAlgorithm1}
\end{figure}
\begin{figure}[t]
	\centerline{\includegraphics[scale=.9]{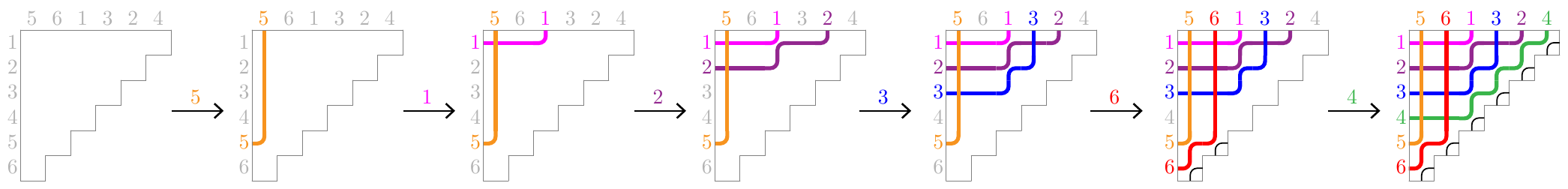}}
	\caption{Insertion algorithm for the permutations~$\pi = 512364$ and~$\omega = 561324$.}
	\label{fig:insertionAlgorithm2}
\end{figure}
\end{example}

We need to argue that this algorithm indeed creates a pipe dream of~$\pipeDreams(\omega)$.
To see it, we observe that the following two invariants are maintained throughout the algorithm.
We call \defn{hook} of a southeast elbow~$e$ the union of the horizontal segment west of~$e$ and the vertical segment north of~$e$.

\begin{lemma}
\label{lem:disjointPipesInsertionAlgorithm}
At any time during the insertion algorithm, the support of the pipes already inserted is precisely the union of the hooks of the currently free southeast elbows.
\end{lemma}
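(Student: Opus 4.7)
The plan is to argue by induction on $t$, the number of pipes already inserted, maintaining the invariant $\mathrm{supp}(t) = \bigcup_{e \in F(t)} H(e)$, where $F(t)$ denotes the set of currently free southeast elbows after step $t$ and $H(e)$ denotes the hook of $e$. The base case $t=0$ is immediate: nothing has been inserted and no free southeast elbows are present in the initial configuration, so both sides of the claim reduce to the empty set.

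For the inductive step, assume the invariant after step $t$, and consider inserting pipe $p = \pi(t+1)$ with entering row $r$ and exiting column $c$. By the construction of the insertion algorithm in \cref{prop:insertionAlgorithm}, the northwest elbows of $p$ are placed exactly at the currently free southeast elbows forming the staircase $e_1, \ldots, e_k$ inside the rectangle $[r] \times [c]$, and $p$ introduces its own new southeast elbows $f_1, \ldots, f_{\ell}$ at the positions specified by the staircase construction (with possible degenerations on the boundary, when the first or last northwest elbow of $p$ sits on its entering row $r$ or exiting column $c$). Using the inductive hypothesis, the invariant at step $t+1$ reduces to the local geometric identity
\[
\mathrm{supp}(p) \,\cup\, \bigcup_{i=1}^{k} H(e_i) \;=\; \bigcup_{j=1}^{\ell} H(f_j),
\]
where $\mathrm{supp}(p)$ is the staircase trajectory of the newly inserted pipe.

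To verify this identity I would decompose the trajectory of $p$ into its horizontal and vertical segments and examine how each interacts with the adjacent hooks: each vertical segment (resp.\ horizontal segment) of $p$ between two consecutive elbows of the staircase extends the northern arm (resp.\ western arm) of a covered hook $H(e_i)$ into the corresponding arm of an adjacent new hook $H(f_j)$, so that the union of the covered hooks together with the pipe's trajectory reassembles precisely into the union of the new hooks. Summing these local transformations along the zigzag of $p$ yields the identity, and the degenerate cases are handled by observing that a nominal new southeast elbow which collapses onto a northwest elbow contributes an empty hook. The main obstacle is the careful geometric bookkeeping of this segment-by-segment rearrangement, in particular verifying consistency in the degenerate boundary cases and in the trivial case $k=0$ where $p$ becomes a single northwest L-turn.
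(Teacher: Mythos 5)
Your proof is correct and is the same induction-on-$t$ argument as the paper's, which compresses the inductive step into the single observation that the support of a newly inserted pipe, combined with the hooks of the staircase elbows it covers, reassembles into the union of the hooks of that pipe's own southeast elbows. Your local identity and segment-by-segment verification simply spell out what the paper declares ``immediate by induction.''
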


\begin{proof}
Immediate by induction, as the support of a pipe is precisely the union of the hooks of its southeast elbows minus the union of the pipes of its northeast elbows.
\end{proof}

\begin{corollary}
\label{coro:disjointPipesInsertionAlgorithm}
The pipes constructed by the insertion algorithm are disjoint, except at crossings and contacts.
\end{corollary}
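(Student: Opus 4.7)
The plan is to deduce this corollary directly from \cref{lem:disjointPipesInsertionAlgorithm}: that lemma already describes the support of the previously inserted pipes as the union of the hooks of the currently free southeast elbows, and the insertion step is designed so that the new pipe covers exactly those hooks' corners. The heart of the argument is a cell-by-cell check that in every cell the new pipe visits, its local shape is compatible with whatever existing pipe (if any) occupies that cell, so that the meeting is a crossing or a contact rather than a forbidden overlap.

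I would begin by tracing the new pipe cell by cell using the explicit description from the staircase definition: it consists of horizontal segments at rows $\pi(t), r_1, \dots, r_k$ linked by vertical segments at columns $c_1, \dots, c_k, \omega^{-1}(\pi(t))$, with elbows of the two kinds alternating at the staircase corners. At each covered elbow $(r_i, c_i)$, the existing pipe uses one pair of opposite sides and the newly placed elbow uses the complementary pair, so the cell has all four sides occupied, which is exactly what a crossing or a contact looks like. This handles every cell where both the new pipe and an existing pipe have an elbow.

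It then remains to check the remaining cells on the new pipe's path. By \cref{lem:disjointPipesInsertionAlgorithm}, the existing support is the union of horizontal and vertical legs of hooks, so at any cell of that support the existing pipe either passes straight through or has an elbow at a free southeast corner. The corners are exactly the $(r_i, c_i)$ handled in the previous paragraph, so any other cell the new pipe visits is either empty of existing pipes, in which case nothing is to check, or contains an existing pipe going straight through in one direction while the new pipe's straight segment runs in the perpendicular direction, producing a legitimate crossing.

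The main obstacle I expect is verifying that the new pipe never runs into an existing elbow which is not on the chosen staircase. This must be prevented by the selection rule: the staircase collects the free southeast elbows in the rectangle $[\pi(t)] \times [\omega^{-1}(\pi(t))]$, so any free southeast elbow inside the region traversed by the new pipe is among the $e_1, \dots, e_k$ and is therefore covered. I would check this by a careful inspection of the rows and columns touched by the new pipe, relying on the monotonicity of the staircase coordinates and on the fact that each free southeast elbow contributes a horizontal leg to its west and a vertical leg to its north in the hook decomposition given by \cref{lem:disjointPipesInsertionAlgorithm}.
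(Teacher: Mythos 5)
Your overall plan is the right one and matches the paper's intent: \cref{coro:disjointPipesInsertionAlgorithm} is stated as an immediate consequence of \cref{lem:disjointPipesInsertionAlgorithm}, and a cell-by-cell comparison of the new pipe's path against the hook decomposition of the existing support is exactly the kind of argument the paper leaves to the reader. The handling of the staircase corners themselves is also fine.

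However, there is a genuine gap in the third paragraph, precisely at the point you flag as the main obstacle. You assert that at any non-corner cell of the existing support that the new pipe visits, it meets an existing pipe \emph{perpendicularly}, so the result is a crossing. This is the crux and it is never actually established. Your justification only rules out the new pipe passing \emph{through a free elbow} outside the staircase: since the path lies inside the rectangle~$[\pi(t)] \times [\omega^{-1}(\pi(t))]$ and the staircase collects all free elbows in that rectangle, any free elbow the path visits is one of the~$e_1, \dots, e_k$. But a hook of a free elbow located \emph{outside} the rectangle can still have a horizontal or vertical leg extending back into the region the new pipe traverses, and nothing in your argument prevents the new pipe from running \emph{parallel} along such a leg. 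Concretely: if there were a free elbow in row~$\pi(t)$ to the east of column~$\omega^{-1}(\pi(t))$, its hook's horizontal leg would cover row~$\pi(t)$ from the west boundary, and the new pipe's initial horizontal run along row~$\pi(t)$ would lie inside it; similarly if two free elbows shared a row or a column, the new pipe's run along one of~$r_1, \dots, r_k$ or~$c_1, \dots, c_k$ could lie along another hook's leg in the same direction. Ruling these out requires two further facts that do not follow formally from \cref{lem:disjointPipesInsertionAlgorithm} alone: the free southeast elbows across the whole triangular shape form a single staircase (so there is at most one per row and one per column), and when pipe~$\pi(t)$ is inserted there is no free elbow in row~$\pi(t)$ nor in column~$\omega^{-1}(\pi(t))$. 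Both of these can be derived (indeed, they are essentially what \cref{lem:rectangleInsertionAlgorithm} later makes precise, and their proof uses the hypothesis~$\pi \le \omega$), but you only gesture at them with ``monotonicity of the staircase coordinates'' without carrying out the check, and without them the claimed perpendicularity is unsupported.
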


\begin{lemma}
\label{lem:rectangleInsertionAlgorithm}
For any~$t, r, c \in [n]$, the free southeast elbows in the rectangle~$[r] \times [c]$ just before step~$t$ of the insertion algorithm form a staircase of length
\[
\max \big( 0, \# \set{s < t}{\pi(s) \le r \text{ and } \omega^{-1}(\pi(s)) \le c} - \# \set{s < t}{\pi(s) > r \text{ and } \omega^{-1}(\pi(s)) > c} \! \big).
\]
\end{lemma}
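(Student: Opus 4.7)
The plan is to proceed by induction on $t$, tracking not just the number of free SE elbows in each rectangle but also the stronger property that the entire set $E_{t-1}$ of free SE elbows forms a \emph{global} staircase (with strictly decreasing rows and strictly increasing columns); since the intersection of a staircase with any axis-parallel rectangle $[r]\times[c]$ is again a staircase, this immediately yields the staircase part of the claim and it remains only to control the length. The base case $t=1$ is immediate, since no pipes are yet inserted and the formula gives $\max(0,0)=0$.

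For the inductive step, let $\rho=\pi(t)$, $\gamma=\omega^{-1}(\rho)$, and let $(r_1,c_1),\dots,(r_k,c_k)$ denote the staircase of free SE elbows in $R=[\rho]\times[\gamma]$ before step $t$, which by the induction hypothesis is the contiguous subrange of $E_{t-1}$ lying in $R$. By the canonical pipe-from-staircase construction given just before the proposition, inserting pipe $p$ at step $t$ removes these $k$ elbows from the global list (they become the contacts of pipe $p$) and introduces $k+1$ new free SE elbows at positions $(r_{i-1},c_i)$ for $i\in[k+1]$, with conventions $r_0:=\rho$ and $c_{k+1}:=\gamma$. To see that $E_t$ is still a global staircase, I would observe that the inserted block $(\rho,c_1),(r_1,c_2),\dots,(r_k,\gamma)$ has strictly decreasing rows and strictly increasing columns, and that the elements of $E_{t-1}$ immediately adjacent to the removed block satisfy row $>\rho$ and column $<c_1$ on the SW side (by the definition of the contiguous subrange), with symmetric inequalities on the NE side, so the new block fits correctly.

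The length formula would then follow by a case analysis on how $(r,c)$ relates to $(\rho,\gamma)$. If $\rho\le r$ and $\gamma\le c$, all $k$ removed and all $k+1$ added positions lie in $[r]\times[c]$, giving net $+1$, matching $\Delta g=+1$. If $\rho>r$ and $\gamma>c$, introducing $i^\star=\min\{i\in[k]:r_i\le r\}$ and $i^{\star\star}=\max\{i\in[k]:c_i\le c\}$, the removed count in $[r]\times[c]$ is $\max(0,i^{\star\star}-i^\star+1)$ while the added count is $\max(0,i^{\star\star}-i^\star)$ because the endpoints $(\rho,c_1)$ and $(r_k,\gamma)$ now fall outside $[r]\times[c]$; this gives net $-1$ when the sub-staircase was non-empty and $0$ otherwise, which is exactly $\max(0,g-1)-\max(0,g)$. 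In the two mixed cases ($\rho\le r$ with $\gamma>c$, and $\rho>r$ with $\gamma\le c$), parallel counting shows removed and added contributions cancel, matching $\Delta g=0$.

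The main obstacle, and the only step requiring real care, is the structural bookkeeping of a single insertion: confirming that inserting pipe $p$ really does remove exactly the $k$ staircase positions of $R$ and add exactly the $k+1$ advertised new free SE elbows, with no spurious interference from previously inserted pipes. This rests on the canonical construction of a pipe from a staircase stated just before the proposition, combined with \cref{lem:disjointPipesInsertionAlgorithm} ensuring the newly drawn pipe is compatible with the already built partial pipe dream.
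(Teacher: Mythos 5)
Your proposal is correct in outline and takes a genuinely different route from the paper's own proof, though both are inductions on $t$. The paper's inductive step fixes $R=[r]\times[c]$ and reasons locally: it locates the NW and SE elbows of the newly inserted pipe that lie inside $R$, uses the fact that they alternate along the pipe, and determines the count $\ell$ from whether the pipe enters $R$ with a horizontal or vertical step and exits with a vertical or horizontal step (equivalently, from the two bits $\pi(t)\le r$ and $\omega^{-1}(\pi(t))\le c$). You instead strengthen the inductive hypothesis to assert that the \emph{entire} set of free SE elbows is a global staircase, read off the $k$ removed and $k+1$ added elbows inside the pipe's own rectangle $[\pi(t)]\times[\omega^{-1}(\pi(t))]$, and then project this local change to an arbitrary $[r]\times[c]$ by intersection, with a four-way case split on the relative position of $(\pi(t),\omega^{-1}(\pi(t)))$ and $(r,c)$. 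Your framing dispenses with tracking the pipe's crossings of the boundary of $R$; the paper's framing dispenses with the global invariant. The two are of comparable length and difficulty, and I verified your counting in cases 2--4 closes correctly (with $i^\star$, $i^{\star\star}$, and the boundary-adjacency argument for the global staircase).

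One spot to be careful about: in your case 1 ($\pi(t)\le r$ and $\omega^{-1}(\pi(t))\le c$), ``net $+1$, matching $\Delta g=+1$'' is not by itself enough, since $\max(0,g)+1=\max(0,g+1)$ requires $g\ge 0$. You should add that in this case $g\ge 0$ holds automatically: any $s<t$ with $\pi(s)>r\ge\pi(t)$ and $\omega^{-1}(\pi(s))>c\ge\omega^{-1}(\pi(t))$ would make $(\pi(t),\pi(s))$ a non-inversion of $\omega$ that is an inversion of $\pi$, contradicting $\pi\le\omega$; so the negative term in $g$ vanishes. The paper's own sub-case with $\ell=j-i$ has the same implicit step, so this is a point of terseness shared by both proofs rather than a defect unique to yours.
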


\begin{proof}
Denote by~$R$ the rectangle~$[r] \times [c]$.
The proof works by induction on~$t$.
Before step~$1$, there is no southeast elbow in~$R$.
Assume now that just before step~$t$, the free southeast elbows in~$R$ form a staircase~$e_1, \dots, e_k$ with~$k$ given by the formula of the statement.
At step~$t$, we insert a new pipe~$\pi(t)$ which enters at~$\pi(t)$ and exits at~$\omega^{-1}(\pi(t))$.

Assume first that the new pipe~$\pi(t)$ intersects the rectangle~$R$.
Let~${0 \le i < j \le k+1}$ be such that~$e_{i+1}, \dots, e_{j-1}$ are the northwest elbows of pipe~$\pi(t)$ in~$R$, and let~$e'_1, \dots, e'_\ell$ be the southeast elbows of pipe~$\pi(t)$ in~$R$.
Hence, the free southeast elbows in~$R$ after step~$t$ form the sequence~$e_1, \dots, e_i, e'_1, \dots, e'_\ell, e_j, \dots, e_k$.
We thus just need to see that $e_i$ is southwest of~$e'_1$ and~$e'_\ell$ is southwest of~$e_j$, and that
\[
\ell = \begin{cases} j-i  & \text{if } \pi(s) \le r \text{ and } \omega^{-1}(\pi(s)) \le c, \\  j-i-2  & \text{if } \pi(s) > r \text{ and } \omega^{-1}(\pi(s)) > c, \\  j-i-1 & \text{otherwise.} \end{cases}
\]
Since the northwest and southeast elbows alternate along pipe~$\pi(t)$, this follows from the fact that the pipe~$\pi(t)$ enters~$R$ with an horizontal step if~$\pi(t) \le r$ and with a vertical step if~$\pi(s) > r$, and exits~$R$ with a vertical step if~$\omega^{-1}(\pi(t)) \le c$ and with an horizontal step if~$\omega^{-1}(\pi(s)) > c$.

Finally, assume that the new pipe~$\pi(t)$ does not intersect the rectangle~$R$.
This implies that~$\pi(t) > r$ and~$\omega^{-1}(\pi(t)) > c$, and there are no free southeast elbows in~$R$ just before step~$t$.
Moreover, there is no~$s > t$ such that~$\pi(s) \le r$ and~$\omega^{-1}(\pi(s)) \le c$ (since any inversion of~$\pi$ is an inversion of~$\omega$), and there are no free southeast elbows in~$R$ at any step after~$t$.
We conclude that the formula still holds in this case.
\end{proof}

We now derive two properties of the insertion algorithm from \cref{lem:rectangleInsertionAlgorithm}.
Recall that we denote by~$\noninversions{\omega}{j} \eqdef \#\set{i \in [n]}{i < j \text{ and } \omega^{-1}(i) < \omega^{-1}(j)}$ the number of non-inversions of~$j$ in a permutation~$\omega$.

\begin{corollary}
\label{coro:rectangleInsertionAlgorithm1}
For any~$t \in [n]$, the free southeast elbows in the rectangle~$[\pi(t)] \times [\omega^{-1}(\pi(t))]$ just before step~$t$ of the insertion algorithm form a staircase of length~$\noninversions{\omega}{\pi(t)}$.
\end{corollary}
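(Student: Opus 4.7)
The plan is to specialize Lemma \ref{lem:rectangleInsertionAlgorithm} to the rectangle $r = \pi(t)$, $c = \omega^{-1}(\pi(t))$, and then to use the weak order inequality $\pi \le \omega$ to simplify the resulting expression into $\noninversions{\omega}{\pi(t)}$. Writing $j = \pi(t)$ and $i = \pi(s)$, the lemma expresses the staircase length as $\max(0, |A| - |B|)$, where
\[
A \eqdef \{i : i < j,\ \omega^{-1}(i) < \omega^{-1}(j),\ \pi^{-1}(i) < \pi^{-1}(j)\}
\]
\[
\text{and} \qquad B \eqdef \{i : i > j,\ \omega^{-1}(i) > \omega^{-1}(j),\ \pi^{-1}(i) < \pi^{-1}(j)\}.
\]
(The inequalities become strict since $\pi$ and $\omega^{-1}$ are permutations and $s \ne t$.) The goal is then to show that $|A| = \noninversions{\omega}{j}$ and $|B| = 0$.

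The substantive step is to translate $\pi \le \omega$ into $\Inv(\pi) \subseteq \Inv(\omega)$, or contrapositively, into the statement that every non-inversion of $\omega$ is a non-inversion of $\pi$. For $A$, the first two defining conditions say that $(i,j)$ is a non-inversion of $\omega$, hence of $\pi$, so the third condition is automatic and $|A|$ is exactly $\#\{i < j : \omega^{-1}(i) < \omega^{-1}(j)\} = \noninversions{\omega}{j}$. For $B$, the conditions $i > j$ and $\pi^{-1}(i) < \pi^{-1}(j)$ say that $(j, i)$ is an inversion of $\pi$, hence of $\omega$, which forces $\omega^{-1}(i) < \omega^{-1}(j)$ and contradicts the middle condition; so $B = \varnothing$.

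Combining the two simplifications yields the desired equality and makes the outer $\max$ redundant. I do not expect any real obstacle here: the entire argument is an immediate unpacking of Lemma \ref{lem:rectangleInsertionAlgorithm} together with a routine bookkeeping of inversions and non-inversions under the weak order. The only mild subtlety worth flagging is the asymmetric way in which the weak order hypothesis intervenes, namely that inversion containment makes the $\pi$-condition redundant in $A$ but makes $B$ entirely empty.
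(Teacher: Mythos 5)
Your proof is correct and takes essentially the same route as the paper: you specialize Lemma~\ref{lem:rectangleInsertionAlgorithm} to the parameters $(t,\pi(t),\omega^{-1}(\pi(t)))$ and use $\Inv(\pi)\subseteq\Inv(\omega)$ to make the $\pi$-condition redundant in one set and empty out the other. The paper packages both simplifications as consequences of a single observation (no $r<s$ with $\pi(r)>\pi(s)$ and $\omega^{-1}(\pi(r))>\omega^{-1}(\pi(s))$), but the underlying bookkeeping is identical to yours.
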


\begin{proof}
Observe first that if there is~$r < s$ such that~$\pi(r) > \pi(s)$ and~$\omega^{-1}(\pi(r)) > \omega^{-1}(\pi(s))$, then setting~$i \eqdef \pi(s)$ and~$j \eqdef \pi(r)$, we have~${i < j}$ and~$\pi^{-1}(i) > \pi^{-1}(j)$ while~$\omega^{-1}(i) < \omega^{-1}(j)$, which contradicts our assumption that~$\pi \le \omega$.
This implies that
\begin{align*}
\# \set{s < t}{\pi(s) < \pi(t) \text{ and } \omega^{-1}(\pi(s)) < \omega^{-1}(\pi(t))} & = \noninversions{\omega}{\pi(t)} \\
\text{and}\qquad \# \set{s < t}{\pi(s) > \pi(t) \text{ and } \omega^{-1}(\pi(s)) > \omega^{-1}(\pi(t))} & = 0.
\end{align*}
Hence, the result follows from~\cref{lem:rectangleInsertionAlgorithm} applied to the parameters~$t, \pi(t), \omega^{-1}(\pi(t))$.
\end{proof}

\begin{corollary}
\label{coro:rectangleInsertionAlgorithm2}
All pipes constructed by the insertion algorithm remain in the triangular shape.
\end{corollary}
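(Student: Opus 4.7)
The proof proceeds by induction on~$t$: assuming all pipes inserted in steps $1,\ldots,t-1$ lie in the triangular shape, I show that pipe~$\pi(t)$ does too. By \cref{lem:disjointPipesInsertionAlgorithm}, the support of the already-inserted pipes equals the union of the hooks of the currently free southeast elbows. By the induction hypothesis, each such elbow lies at a cell $(r,c)$ with $r+c\le n+1$; its hook---the horizontal row segment west of~$(r,c)$ together with the vertical column segment north of~$(r,c)$---therefore sits entirely inside the triangle. By \cref{coro:rectangleInsertionAlgorithm1}, the free southeast elbows in the rectangle $[\pi(t)]\times[\omega^{-1}(\pi(t))]$ form a staircase $(r_1,c_1),\ldots,(r_k,c_k)$ of length $k=\noninversions{\omega}{\pi(t)}$ with $r_1>\cdots>r_k$ and $c_1<\cdots<c_k$. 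Setting $r_0:=\pi(t)$ and $c_{k+1}:=\omega^{-1}(\pi(t))$, the new pipe follows an east--north monotone path from $(\pi(t),1)$ to $(1,\omega^{-1}(\pi(t)))$ through these staircase points; the cells at which it turns from east-going to north-going---the positions $(r_{i-1},c_i)$ for $i=1,\ldots,k+1$---maximize $r+c$ along the path, so the corollary reduces to the inequality $r_{i-1}+c_i\le n+1$ for every such~$i$.

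This corner bound is where I expect the main work to lie. The induction hypothesis gives only $r_{i-1}+c_{i-1}\le n+1$ and $r_i+c_i\le n+1$, which is insufficient because $r_{i-1}>r_i$ and $c_i>c_{i-1}$; a more global argument is needed. The strategy I would pursue generalizes the base case $k=0$: when $\noninversions{\omega}{\pi(t)}=0$, the $\pi(t)$ distinct values $\omega^{-1}(1),\ldots,\omega^{-1}(\pi(t))$ must all lie in $\{\omega^{-1}(\pi(t)),\ldots,n\}$, so a pigeonhole argument immediately forces $\pi(t)+\omega^{-1}(\pi(t))\le n+1$. For general~$i$, the precise staircase length $\noninversions{\omega}{\pi(t)}$ from \cref{coro:rectangleInsertionAlgorithm1}, together with the hypothesis $\pi\le\omega$ (which guarantees that every inversion of $\pi$ is an inversion of~$\omega$), should enable an analogous localized count at the corner $(r_{i-1},c_i)$, identifying enough pipes entering above row $r_{i-1}$ and exiting to the right of column $c_i$ to conclude $r_{i-1}+c_i\le n+1$. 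Once the corner bound is established, every cell of pipe~$\pi(t)$ lies in the triangular shape and the induction closes.
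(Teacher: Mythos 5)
Your reduction to the corner cells $(r_{i-1},c_i)$ is correct, and so is the observation that these cells maximize $r+c$ along the pipe; the pigeonhole for the base case $k=0$ is also right. But the general corner bound $r_{i-1}+c_i\le n+1$ for $k\ge 1$ is exactly the content of the corollary, and you leave it as an open task (``this is where I expect the main work to lie'') with only a vague sketch of a localized count. That is a genuine gap, not a routine verification.

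The gap is in fact cheap to close, and notably the closing argument does not use your induction hypothesis or \cref{lem:disjointPipesInsertionAlgorithm} at all, so the inductive framing and the opening paragraph about hooks are red herrings. Since $r_0=\pi(t)>r_1>\cdots>r_k$ are $k+1$ distinct integers, $r_{i-1}\le \pi(t)-(i-1)$; since $c_1<\cdots<c_k<c_{k+1}=\omega^{-1}(\pi(t))$ are $k+1$ distinct integers, $c_i\le \omega^{-1}(\pi(t))-(k+1-i)$. Adding these yields $r_{i-1}+c_i\le \pi(t)+\omega^{-1}(\pi(t))-\noninversions{\omega}{\pi(t)}$, which is $\le n+1$ by precisely the pigeonhole you ran for $k=0$: the $\pi(t)-1-\noninversions{\omega}{\pi(t)}$ values $\omega^{-1}(h)$ with $h<\pi(t)$ and $\omega^{-1}(h)>\omega^{-1}(\pi(t))$ are distinct and all exceed $\omega^{-1}(\pi(t))$, so there are at most $n-\omega^{-1}(\pi(t))$ of them. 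For comparison, the paper's proof bypasses both induction and corners: for an arbitrary row $i$ of pipe $j$ it uses the total count $\noninversions{\omega}{j}$ of southeast elbows from \cref{lem:rectangleInsertionAlgorithm} to show that at least $\noninversions{\omega}{j}-j+i$ of them lie strictly north of row $i$, whence the eastmost column of pipe $j$ in row $i$ is at most $\omega^{-1}(j)-\noninversions{\omega}{j}+j-i\le n-i+1$, via the same key inequality $\omega^{-1}(j)+j-\noninversions{\omega}{j}\le n+1$.
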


\begin{proof}
Let~$1 \le i \le j \le n$.
Note that
\[
j - \noninversions{\omega}{j} = \#\set{i \in [n]}{i < j \text{ and } \omega^{-1}(i) > \omega^{-1}(j)} + 1 \le n - \omega^{-1}(j) + 1,
\]
so that~$\omega^{-1}(j) - \noninversions{\omega}{j} + j \ge n + 1$.
By~\cref{lem:rectangleInsertionAlgorithm}, the pipe~$j$ has~$\noninversions{\omega}{j}$ southeast elbows in total, at most~$j-i-1$ of which are strictly south of row~$i$, so that at least~$\noninversions{\omega}{j} - j + i$ of which are strictly north of row~$i$.
Hence, the eastmost point of pipe~$j$ in row~$i$ is at most in column~$\omega^{-1}(j) - \noninversions{\omega}{j} + j - i \le n - i + 1$.
We conclude that pipe~$j$ indeed remains in the triangular shape.
\end{proof}

\begin{proof}[Proof of \cref{prop:insertionAlgorithm}]
The insertion algorithm constructs a collection~$P$ of $n$ pipes in the triangular shape (by \cref{coro:rectangleInsertionAlgorithm2}), which are pairwise disjoint except at crossings and contacts (by \cref{coro:disjointPipesInsertionAlgorithm}).
For each~$t \in [n]$, the pipe~$\pi(t)$ enters at row~$\pi(t)$, exits at column~$\omega^{-1}(\pi(t))$, and has~$\noninversions{\omega}{\pi(t)}$ many southeast contacts (by \cref{coro:rectangleInsertionAlgorithm1}).
We thus conclude that~$P$ is a pipe dream of~$\pipeDreams(\omega)$ by a direct application of \cref{lem:characterizationPipeDreams}.

By construction, all southeast contacts of the pipe~$\pi(t)$ inserted at step~$t$ are in contact with northwest contacts of pipes~$\pi(s)$ inserted at steps~$s < t$.
In other words, all edges in the contact graph of~$\insertion{\pi}{\omega}$ are of the form~$\pi(s) \to \pi(t)$ for some~$s < t$.
It follows that the permutation~$\pi$ is a linear extension of the pipe dream~$P$.
\end{proof}


\subsection{Rewriting rule}
\label{subsec:rewritingRule}

Recall from \cref{exm:Tamari3} that the sylvester congruence can be defined as the transitive closure of the classical rewriting rule~$UjVikW \equiv UjVkiW$ where~$i < j < k$ are elements of~$[n]$ while~$U, V, W$ are (possibly empty) words on~$[n]$ (as usual, we write the permutations of~$\fS_n$ as words in one-line notation).
We now describe a similar rewriting rule for the pipe dream congruence~$\equiv_\omega$.

\begin{proposition}
\label{prop:rewritingRule}
On the interval~$[e,\omega]$ of the weak order, the pipe dream congruence~$\equiv_\omega$ coincides with the transitive closure of the rewriting rule~$U ij V \equiv_\omega U ji V$ where~$1 \le i < j \le n$ are elements of~$[n]$ while~$U, V$ are (possibly empty) words on~$[n]$ such that
\[
\# \set{k \in U}{k > i} \ge \# \set{k \in U}{\omega^{-1}(k) < \omega^{-1}(j)}.
\]
\end{proposition}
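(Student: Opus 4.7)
The plan is to characterize, inside $[e, \omega]$, the weak order cover relations lying within a single $\equiv_\omega$-class, and then match this characterization with the numerical condition. Since each $\equiv_\omega$-class is an interval of the weak order by \cref{prop:intervals}, the congruence $\equiv_\omega$ on $[e, \omega]$ is generated by these within-class covers. Every such cover has the form $\pi' \eqdef UijV \lessdot UjiV \eqdef \pi$ with $i < j$; both endpoints lying in $[e, \omega]$ forces the pair $(j, i)$ to be an inversion of $\omega$, i.e.~$\omega^{-1}(j) < \omega^{-1}(i)$, so pipes~$i$ and~$j$ cross in every pipe dream of $\pipeDreams(\omega)$.

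By \cref{lem:lowerSetPipeDreams}, $\pi \equiv_\omega \pi'$ if and only if the common insertion pipe dream $P \eqdef \insertion{\pi}{\omega}$ has no arc $j \to i$ in its contact graph $P\contact$ (no arc $i \to j$ is possible since $\pi = UjiV$ has $j$ before $i$). I claim that the number of arcs $j \to i$ in $P\contact$ equals $\max(0, P_U - Q_U)$, where
\[
P_U \eqdef \#\{k \in U : k \le i \text{ and } \omega^{-1}(k) \le \omega^{-1}(j)\}, \qquad Q_U \eqdef \#\{k \in U : k > i \text{ and } \omega^{-1}(k) > \omega^{-1}(j)\}.
\]
Indeed, by the insertion algorithm (\cref{prop:insertionAlgorithm}) applied to $\pi$, pipe~$j$ is inserted at step $|U|+1$ and pipe~$i$ at step $|U|+2$. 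The last paragraph of the proof of \cref{prop:insertionAlgorithm} shows that every arc of $P\contact$ points from an earlier-inserted pipe to a later one, so every arc between $i$ and $j$ is $j \to i$ and arises as a northwest elbow of pipe~$i$ covering a southeast elbow of pipe~$j$. Since pipe~$j$'s southeast elbows all have column strictly less than $\omega^{-1}(j) < \omega^{-1}(i)$, these are precisely pipe~$j$'s southeast elbows lying in the subrectangle $[i] \times [\omega^{-1}(j)]$. Now \cref{lem:rectangleInsertionAlgorithm} applied to this subrectangle yields the same value $\max(0, P_U - Q_U)$ both just before step $|U|+1$ and just before step $|U|+2$, so pipe~$j$ must contribute exactly $\max(0, P_U - Q_U)$ new southeast elbows there (matching the number of free southeast elbows its own northwest elbows cover), establishing the claim.

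The condition ``no arc $j \to i$'' thus reads $P_U \le Q_U$. Setting $A \eqdef \#\{k \in U : k > i\}$ and $B \eqdef \#\{k \in U : \omega^{-1}(k) < \omega^{-1}(j)\}$, a short computation using that $k \ne i, j$ for $k \in U$ (so that strict and non-strict inequalities coincide) yields $A - B = Q_U - P_U$, showing that $P_U \le Q_U$ is equivalent to the numerical condition $A \ge B$ of the statement. The main technical step is the counting of pipe~$j$'s southeast elbow contribution to the subrectangle $[i] \times [\omega^{-1}(j)]$ via \cref{lem:rectangleInsertionAlgorithm} applied at two consecutive insertion steps.
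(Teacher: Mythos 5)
Your proof is correct and closely parallels the paper's, though it reaches the key numerical characterization by a slightly different intermediate step. Both proofs reduce the statement to a single claim about adjacent covers $UijV \lessdot UjiV$ (you via \cref{prop:intervals} and the generated congruence, the paper via connectedness of linear extensions of a poset), and both make \cref{lem:rectangleInsertionAlgorithm} on the rectangle $[i] \times [\omega^{-1}(j)]$ do the real work, followed by the identical arithmetic manipulation. Where you differ: the paper observes directly that the insertions of $i$ and $j$ into the partial diagram commute if and only if the rectangle $[i]\times[\omega^{-1}(j)]$ currently contains no free elbow (one application of \cref{lem:rectangleInsertionAlgorithm}, at $t = |U|+1$), whereas you instead appeal to \cref{lem:lowerSetPipeDreams} to translate $\pi \equiv_\omega \pi'$ into the absence of an arc $j \to i$, and then compute the exact number of such arcs by applying \cref{lem:rectangleInsertionAlgorithm} at both $t=|U|+1$ and $t=|U|+2$ and comparing. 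Your route is slightly longer but yields a sharper byproduct (the exact arc count $\max(0, P_U - Q_U)$ rather than just its vanishing), and it makes the ``commuting insertions'' claim, which the paper states without elaboration, fully explicit. One small imprecision: you assert that pipe $j$'s relevant elbows all have column strictly less than $\omega^{-1}(j)$, but the exit elbow of pipe $j$ sits at column exactly $\omega^{-1}(j)$; this is harmless since the subrectangle $[i]\times[\omega^{-1}(j)]$ already includes that column, so the set you count is unchanged. You should also make explicit (it is implicit in your ``matching'' parenthetical) that pipe $j$ covers \emph{all} free elbows currently in $[i]\times[\omega^{-1}(j)]$, which holds because it covers the entire staircase in the larger rectangle $[j]\times[\omega^{-1}(j)]$; this is what lets you pass from ``the free-elbow count is unchanged'' to ``pipe $j$ contributes exactly $\max(0,P_U-Q_U)$ new ones.''
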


\begin{proof}
As they are linear extensions of posets, the congruence classes of $\equiv_\omega$ are connected by simple transpositions.
We thus just need to show that any two permutations~$\pi \eqdef UijV$ and~$\pi' \eqdef UjiV$ of~$[e, \omega]$ which differ by the inversion of two consecutive values are equivalent for~$\equiv_\omega$ if and only if~$\# \set{k \in U}{k > i} \ge \# \set{k \in U}{\omega^{-1}(k) < \omega^{-1}(j)}$.
Moreover, by \cref{prop:insertionAlgorithm}, $\pi \equiv_\omega \pi'$ if and only if they are sent to the same pipe dream by the insertion algorithm.
Let~$t \eqdef \pi^{-1}(i) = \pi'^{-1}(j)$.
Before step~$t$ of the insertion algorithm, we insert the word~$U$ both for~$\pi$ and for~$\pi'$.
The insertion of~$i$ and~$j$ then commute if and only if there is no currently free elbow in the rectangle~$[i] \times [\omega^{-1}(j)]$.
By \cref{lem:rectangleInsertionAlgorithm} applied to the parameters~$t, i, \omega^{-1}(j)$, this is equivalent to
\[
\# \set{s < t}{\pi(s) \le i \text{ and } \omega^{-1}(\pi(s)) \le \omega^{-1}(j)} \le \# \set{s < t}{\pi(s) > i \text{ and } \omega^{-1}(\pi(s)) > \omega^{-1}(j)}.
\]
or written differently,
\[
\# \set{k \in U}{k < i \text{ and } \omega^{-1}(k) < \omega^{-1}(j)} \le \# \set{k \in U}{k > i \text{ and } \omega^{-1}(k) > \omega^{-1}(j)}.
\]
(note that  to replace large by strict inequalities in the first set, we used that~$k \ne i$ and ${\omega^{-1}(k) \ne \omega^{-1}(j)}$ since~$i,j \notin U$ while~$k \in U$).
Finally, observe that adding~$\# \set{k \in U}{k > i \text{ and } \omega^{-1}(k) < \omega^{-1}(j)}$ from both side, we obtain the equivalent condition
\[
\# \set{k \in U}{\omega^{-1}(k) < \omega^{-1}(j)} \le \# \set{k \in U}{k > i}.
\qedhere
\]
\end{proof}

\begin{remark}
\label{rem:rewritingRule}
Observe from the proof that the condition of \cref{prop:rewritingRule} is equivalent to
\[
\# \set{k \in U}{k < i \text{ and } \omega^{-1}(k) < \omega^{-1}(j)} \le \# \set{k \in U}{k > i \text{ and } \omega^{-1}(k) > \omega^{-1}(j)}.
\]
Observe moreover that since~$\pi \le \omega$ and~$i < j$, we have
\begin{align*}
\set{k \in U}{k < i \text{ and } \omega^{-1}(k) < \omega^{-1}(j)} & = \set{k \in [i]}{\omega^{-1}(k) < \omega^{-1}(j)} \qquad \text{and}\\
\set{k \in U}{k > i \text{ and } \omega^{-1}(k) > \omega^{-1}(j)} & = \set{k \in [n]}{i < k < j \text{ and } \omega^{-1}(i) > \omega^{-1}(k) > \omega^{-1}(j)}.
\end{align*}
\end{remark}

We close this section by two immediate consequences of \cref{prop:rewritingRule,rem:rewritingRule}.

\begin{corollary}
\label{coro:patternAvoiding}
A permutation is minimal (resp.~maximal) in its pipe dream congruence class if and only if it avoids the patterns~$k_1 - \cdots - k_p - ji$ (resp.~$k_1 - \cdots - k_p - ij$) where~$k_q > i$ and~$\omega^{-1}(k_q) > \omega^{-1}(j)$ for~$q \in [p]$ and~$p = \# \set{k \in [i]}{\omega^{-1}(k) < \omega^{-1}(j)}$.
\end{corollary}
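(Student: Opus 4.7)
The plan is to derive this corollary directly from the rewriting description of~$\equiv_\omega$ supplied by \cref{prop:rewritingRule,rem:rewritingRule}, combined with the interval structure of the congruence classes established in \cref{prop:intervals}. I will carry out only the minimal case; the maximal one follows by the symmetric argument where descents are replaced by ascents and the rewriting step is read in the reverse direction.

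The first step is to use the interval structure to reduce the global property ``$\pi$ is the minimum of its class'' to the local condition that no weak-order cover $\pi s_q \lessdot \pi$ inside~$[e,\omega]$ satisfies $\pi s_q \equiv_\omega \pi$. Each such cover corresponds to a descent~$q$ of~$\pi$; writing $\pi = UjiV$ with $j = \pi(q)$, $i = \pi(q+1)$, and $i < j$, the hypothesis $\pi \le \omega$ forces $\omega^{-1}(i) > \omega^{-1}(j)$, since otherwise $(i,j)$ would be a non-inversion of~$\omega$ but an inversion of~$\pi$. The second step is to invoke \cref{prop:rewritingRule} together with \cref{rem:rewritingRule} to rephrase $\pi \equiv_\omega \pi s_q$ as
\[
\#\set{k \in U}{k < i \text{ and } \omega^{-1}(k) < \omega^{-1}(j)} \le \#\set{k \in U}{k > i \text{ and } \omega^{-1}(k) > \omega^{-1}(j)},
\]
and to identify the left-hand side with $p = \#\set{k \in [i]}{\omega^{-1}(k) < \omega^{-1}(j)}$ via the first equality in \cref{rem:rewritingRule} (which applies precisely because $\omega^{-1}(i) > \omega^{-1}(j)$ excludes $k=i$ from the right-hand set).

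The third and final step is the translation to pattern avoidance, which is then essentially tautological: the displayed inequality says that the prefix~$U$ contains at least $p$ distinct values from $\set{k}{k > i \text{ and } \omega^{-1}(k) > \omega^{-1}(j)}$, which is precisely the statement that $\pi$ exhibits an occurrence of the pattern $k_1 - \cdots - k_p - ji$ with the factor $ji$ occupying positions $q, q+1$. Quantifying over all descents~$q$ and all pairs $i < j$ yields the minimal case. For the maximal case, the same chain of equivalences applies to every ascent position~$q$ with $\omega^{-1}(\pi(q)) > \omega^{-1}(\pi(q+1))$ (the additional inequality ensuring that $\pi s_q$ still lies in~$[e,\omega]$), producing the patterns $k_1 - \cdots - k_p - ij$. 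Since \cref{prop:rewritingRule,rem:rewritingRule} have already absorbed the combinatorial work, no serious obstacle is anticipated; the only delicate point is the correct identification of the constant~$p$ with the left-hand count in the displayed inequality, which is exactly what the first equality of \cref{rem:rewritingRule} delivers.
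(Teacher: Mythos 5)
Your proof is correct and follows essentially the same route as the paper's (which is a one-sentence reduction to \cref{prop:rewritingRule}): use \cref{prop:intervals} to reduce minimality to a local exchangeability condition on descents, then translate exchangeability via \cref{prop:rewritingRule} and \cref{rem:rewritingRule} into the presence or absence of the stated pattern. Your filling-in of the details — in particular the identification of the left-hand count with $p$ via \cref{rem:rewritingRule}, observing that $\omega^{-1}(i)>\omega^{-1}(j)$ is forced by $\pi\le\omega$ at a descent — is accurate.

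One small point worth spelling out in the maximal case: you restrict attention to ascents $q$ with $\omega^{-1}(\pi(q))>\omega^{-1}(\pi(q+1))$ so that $\pi s_q$ stays in $[e,\omega]$, but the corollary's pattern condition makes no explicit reference to this inequality. The characterization still holds because when $\omega^{-1}(i)<\omega^{-1}(j)$ and $i<j$ one has $p\ge1$ (since $i$ itself lies in $\set{k\in[i]}{\omega^{-1}(k)<\omega^{-1}(j)}$), while no $k>i$ with $\omega^{-1}(k)>\omega^{-1}(j)$ can appear before $i$ in any $\pi\le\omega$ (such a $k$ would produce an inversion $(i,k)$ of $\pi$ that is a non-inversion of $\omega$); hence those patterns are vacuously avoided and discarding those ascents loses nothing. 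This remark is also implicit in the paper, so it is a refinement rather than a gap, but it would make the ``if and only if'' airtight.
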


\begin{proof}
This is an immediate consequence of \cref{prop:rewritingRule}: a permutation~$\pi$ is minimal (resp.~maximal) in its $\equiv_\omega$-congruence class if and only if it contains no consecutive exchangeable entries~$ji$ (resp.~$ij$) with~$i < j$.
\end{proof}

\begin{corollary}
\label{coro:canopy}
If~$1 \le i < j \le n$ and~$j-i \le \#\set{k \in [i]}{\omega^{-1}(k) < \omega^{-1}(j)}$, then the pipes~$i$ and~$j$ are comparable for $\contactLess{P}$ in any acyclic pipe dream~$P \in \acyclicPipeDreams(\omega)$.
\end{corollary}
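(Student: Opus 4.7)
The plan is to combine the rewriting-rule description of~$\equiv_\omega$ in \cref{prop:rewritingRule} with the observation that two elements are incomparable in the poset~$\contactLess{P}$ of an acyclic pipe dream~$P$ if and only if $\linearExtensions(P)$ contains permutations in which these two elements appear in opposite relative orders. More precisely, I will show that under our hypothesis no elementary swap exchanging~$i$ and~$j$ is ever admissible inside~$[e,\omega]$; consequently, within every $\equiv_\omega$-class the relative order of~$i$ and~$j$ is constant, which by \cref{thm:partitionPipeDreams} forces~$i$ and~$j$ to be comparable in~$\contactLess{P}$ for every~$P \in \acyclicPipeDreams(\omega)$.

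The case~$\omega^{-1}(i) < \omega^{-1}(j)$ is immediate from \cref{lem:consequenceRectangle1}, so I may assume~$\omega^{-1}(i) > \omega^{-1}(j)$. Now fix any~$\pi = UijV \in [e,\omega]$. By the equivalent formulation in \cref{rem:rewritingRule}, the swap~$UijV \equiv_\omega UjiV$ is admissible iff~$|A| \le |B|$, where
\[
A \eqdef \set{k \in U}{k < i \text{ and } \omega^{-1}(k) < \omega^{-1}(j)} \quad \text{and} \quad B \eqdef \set{k \in U}{k > i \text{ and } \omega^{-1}(k) > \omega^{-1}(j)}.
\]
The two identities of \cref{rem:rewritingRule} (which use~$\pi \le \omega$ and, in the present case, $\omega^{-1}(i) > \omega^{-1}(j)$) evaluate~$|A|$ as~$\#\set{k \in [i]}{\omega^{-1}(k) < \omega^{-1}(j)}$ and realize~$B$ as a subset of~$\{i+1, \ldots, j-1\}$. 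The hypothesis then yields~$|A| \ge j-i > j-i-1 \ge |B|$, so the admissibility inequality fails and the swap is forbidden.

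Finally, suppose for contradiction that~$i$ and~$j$ are incomparable in~$\contactLess{P}$ for some~$P \in \acyclicPipeDreams(\omega)$. Then~$\linearExtensions(P)$ contains permutations~$\pi, \pi'$ with~$\pi^{-1}(i) < \pi^{-1}(j)$ and~$\pi'^{-1}(j) < \pi'^{-1}(i)$. Since~$\pi \equiv_\omega \pi'$, \cref{prop:rewritingRule} connects them by a chain of admissible elementary swaps inside~$[e,\omega]$. As the relative order of~$i$ and~$j$ changes along this chain, at least one step must directly swap~$i$ and~$j$, contradicting the second paragraph. The only subtle point in the whole argument is extracting the bound~$|B| \le j-i-1$ from \cref{rem:rewritingRule}, which amounts to noting that~$k \in U$ with~$k > i$ and~$\omega^{-1}(k) > \omega^{-1}(j)$ is forced to satisfy~$k < j$ (otherwise~$(j,k)$ would be a non-inversion of~$\omega$, hence of~$\pi$, placing~$k$ after~$j$ and thus outside~$U$); everything else is bookkeeping already encapsulated in \cref{rem:rewritingRule}.
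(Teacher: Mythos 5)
Your proof is correct and follows essentially the same route as the paper: assume incomparability, extract two adjacent linear extensions $UijV \equiv_\omega UjiV$ in $[e,\omega]$, and show via \cref{prop:rewritingRule} and \cref{rem:rewritingRule} that the admissibility inequality $|A|\le|B|$ fails because $|A|\ge j-i > j-i-1 \ge |B|$. You merely spell out the chain argument and the bound on $|B|$ in more detail than the paper does, and separate off the case $\omega^{-1}(i)<\omega^{-1}(j)$ (which the paper leaves implicit since $UjiV\notin[e,\omega]$ then).
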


\begin{proof}
If pipes~$i$ and~$j$ were incomparable in~$\contactLess{P}$, there would be two permutations~$UijV \equiv_\omega UjiV$.
However, $j-i \le \#\set{k \in [i]}{\omega^{-1}(k) < \omega^{-1}(j)}$ implies that the condition of \cref{prop:rewritingRule} cannot hold, by \cref{rem:rewritingRule}
\end{proof}


\subsection{Recoils and canopy}
\label{subsec:canopy}

We now generalize the notion of recoils of permutations and of canopy of a binary trees to show a natural commutative diagram of lattice homomorphisms.
We start with the easy generalization of recoils of a permutation.
See \cref{exm:canopy} for an illustration.

\begin{definition}
\label{def:recoils}
Consider the graph~$G(\omega)$ with vertex set~$[n]$ and edge set
\[
\set{ij}{i < j \text{ and } j-i, \; \omega^{-1}(i) > \omega^{-1}(j) \le \#\set{k < i}{\omega^{-1}(k) < \omega^{-1}(j)}}.
\]
Let~$\acyclicOrientations(\omega)$ denote the set of acyclic orientations of~$G(\omega)$.
The \defn{$\omega$-recoils} of a permutation~$\pi \in [e,\omega]$ is the acyclic orientation~${\recoils{\pi}{\omega} \in \acyclicOrientations(\omega)}$ such that~$\pi$ is a linear extension of~$\recoils{\pi}{\omega}$.
\end{definition}

We now generalize the canopy of a binary tree.
Recall that the \defn{canopy} of a binary tree~$T$ with~$n$ internal nodes is the sign sequence~${\canopy{T} \in \{{-},{+}\}^{n-1}}$ defined by~$\canopy{T}_i = {-}$ if the $(i+1)$-th leaf of~$T$ is a left leaf and~$\canopy{T}_i = {+}$ if the $(i+1)$-th leaf of~$T$ is a right leaf.
Equivalently, $\canopy{T}_i = -$ if the node~$i$ of~$T$ is above the node~$i+1$ of~$T$ and~$\canopy{T}_i = +$ otherwise.
This map was already used by J.-L.~Loday in~\cite{LodayRonco, Loday}, but the name ``canopy'' was coined by X.~Viennot~\cite{Viennot}.
We now define a generalization of the canopy map for pipe dreams in~$\acyclicPipeDreams(\omega)$, using \cref{coro:canopy}.
See \cref{exm:canopy} for an illustration.

\begin{definition}
\label{def:canopy}
The \defn{canopy} of a pipe dream~$P \in \acyclicPipeDreams(\omega)$ is the orientation~$\canopy{P} \in \acyclicOrientations(\omega)$ where each edge~$ij$ is oriented~$i \to j$ if~$i \contactLess{P} j$ and~$j \to i$ if~$j \contactLess{P} i$.
\end{definition}

\begin{proposition}
\label{prop:latticeHomomorphisms}
The maps~$\recoils{\cdot}{\omega}$, $\insertion{\cdot}{\omega}$, and~$\canopy{\cdot}$ define the following commutative diagram of surjective lattice homomorphisms:
\[
\begin{tikzpicture}
  \matrix (m) [matrix of math nodes,row sep=1.2em,column sep=5em,minimum width=2em]
  {
     [e,\omega]  	&								& \acyclicOrientations(\omega)	\\
					& \acyclicPipeDreams(\omega) 	&								\\
  };
  \path[->] (m-1-1) edge node [above] {$\recoils{\cdot}{\omega}$} (m-1-3);
  \path[->>] (m-1-1) edge node [below] {$\insertion{\cdot}{\omega}\qquad$} (m-2-2.west);
  \path[->] (m-2-2.east) edge node [below] {$\quad\canopy{\cdot}$} (m-1-3);
\end{tikzpicture}
\]
\end{proposition}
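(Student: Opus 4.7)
The plan is to verify, in order: well-definedness of the three maps, commutativity of the diagram, the lattice-homomorphism property, and surjectivity of each arrow. Well-definedness is brief: $\recoils{\cdot}{\omega}$ is the orientation of $G(\omega)$ induced by comparing positions in $\pi$, hence automatically acyclic; $\insertion{\cdot}{\omega}$ is well-defined by \cref{thm:partitionPipeDreams}; and $\canopy{P}$ is well-defined because \cref{coro:canopy} guarantees that the pipes $i$ and $j$ are comparable for $\contactLess{P}$ whenever $\{i,j\}$ is an edge of $G(\omega)$, while acyclicity of the resulting orientation follows from that of $\contactLess{P}$.

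The commutativity $\canopy{\insertion{\pi}{\omega}} = \recoils{\pi}{\omega}$ is then nearly immediate. Fix $\pi \in [e,\omega]$, set $P \eqdef \insertion{\pi}{\omega}$, and pick any edge $\{i,j\}$ of $G(\omega)$ with $i<j$. By \cref{coro:canopy} the pipes $i$ and $j$ are $\contactLess{P}$-comparable, and since $\pi \in \linearExtensions(P)$ we have $i \contactLess{P} j$ if and only if $\pi^{-1}(i) < \pi^{-1}(j)$; this is exactly the common condition for $\canopy{P}$ and $\recoils{\pi}{\omega}$ to orient the edge as $i \to j$.

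Next I turn to the lattice-theoretic properties. That $\insertion{\cdot}{\omega}$ is a surjective lattice homomorphism from $[e,\omega]$ onto $\acyclicPipeDreams(\omega)$ endowed with the quotient structure of \cref{thm:pipeDreamQuotient} combines \cref{thm:partitionPipeDreams,thm:pipeDreamCongruence,thm:pipeDreamQuotient}. For $\recoils{\cdot}{\omega}$ I apply \cref{prop:characterizationCongruences}: its fibers are sets of linear extensions of the poset on $[n]$ generated by the chosen orientation of $G(\omega)$ together with the forced orientations $i \to j$ for non-inversions $(i,j)$ of $\omega$, and these are weak-order intervals by the WOIP criterion of \cref{prop:WOIP}; monotonicity of the projections $\projDown$ and $\projUp$ follows from a case analysis on simple transpositions in the spirit of \cref{prop:orderPreserving}. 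With the diagram commuting and $\insertion{\cdot}{\omega}$ surjective, $\canopy{\cdot}$ is the induced factorization of $\recoils{\cdot}{\omega}$ through the quotient $[e,\omega]/{\equiv_\omega} \cong \acyclicPipeDreams(\omega)$ and is therefore automatically a lattice homomorphism.

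Finally, surjectivity of $\insertion{\cdot}{\omega}$ is \cref{thm:partitionPipeDreams}, and surjectivity of $\canopy{\cdot}$ will follow from commutativity together with the surjectivity of $\insertion{\cdot}{\omega}$ and $\recoils{\cdot}{\omega}$. Thus the main obstacle is surjectivity of $\recoils{\cdot}{\omega}$: given $O \in \acyclicOrientations(\omega)$, the plan is to take any linear extension $\pi$ of the relation on $[n]$ generated by $O$ together with the non-inversions of $\omega$, so that $\pi \le \omega$ automatically and $\recoils{\pi}{\omega} = O$. The technical heart is verifying that this combined relation is acyclic, a property I expect to derive by tracing a hypothetical cycle through the point configuration $\{(i,\omega^{-1}(i))\}_{i \in [n]}$ and contradicting the defining condition $j-i \le \#\{k<i : \omega^{-1}(k) < \omega^{-1}(j)\}$ of the edges of $G(\omega)$, which is precisely the comparability range appearing in \cref{coro:canopy}.
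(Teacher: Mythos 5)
Your commutativity argument is exactly the paper's proof: via \cref{coro:canopy} the two pipes of any $G(\omega)$-edge are $\contactLess{P}$-comparable in $P = \insertion{\pi}{\omega}$, and the direction of comparability is forced by the insertion order of $\pi$, so it coincides with the orientation assigned by $\recoils{\pi}{\omega}$. In fact the paper's proof of \cref{prop:latticeHomomorphisms} records \emph{only} this computation; it leaves the lattice-homomorphism and surjectivity claims implicit. You correctly notice that these claims require justification, and your reduction is the right one: once $\insertion{\cdot}{\omega}$ is the surjective quotient map of \cref{thm:pipeDreamQuotient} and the diagram commutes, it is enough to show that the fibers of $\recoils{\cdot}{\omega}$ form a lattice congruence of $[e,\omega]$ with image all of $\acyclicOrientations(\omega)$, and then $\canopy{\cdot}$ inherits everything by factorization.

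Where your write-up falls short is precisely where you flag it. You assert that the fibers of $\recoils{\cdot}{\omega}$ are weak-order intervals ``by the WOIP criterion,'' but you do not verify the $i<j<k$ condition of \cref{prop:WOIP} for the transitive closure of the chosen orientation of $G(\omega)$ together with the non-inversions of $\omega$; and you only ``expect'' to prove that this combined relation is acyclic, which is the crux of the surjectivity of $\recoils{\cdot}{\omega}$ and hence of $\canopy{\cdot}$. Neither step is routine: both depend on exploiting the range condition $j - i \le \#\set{k < i}{\omega^{-1}(k) < \omega^{-1}(j)}$ in the definition of $G(\omega)$, in the spirit of \cref{lem:consequenceRectangle1,lem:consequenceRectangle2}. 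Until those verifications are carried out, your proposal establishes the commutativity (matching the paper) but not the remaining assertions of the proposition.
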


\begin{proof}
Consider a permutation~$\pi$ and let~$i < j \in [n]$ be such that~$j-i \le \#\set{k < i}{\omega^{-1}(k) < \omega^{-1}(j)}$.
Assume that the edge~$ij$ is oriented from~$i$ to~$j$.
Then~$\pi^{-1}(i) < \pi^{-1}(j)$, thus the pipe~$i$ is inserted before the pipe~$j$ in~$\insertion{\pi}{\omega}$, so that~$i \contactLess{\insertion{\pi}{\omega}} j$ and there is also an arc from~$i$ to~$j$ in~$\canopy{\insertion{\pi}{\omega}}$.
\end{proof}

\begin{example}
For the permutation~$\omega = 1 n \cdots 2$, the graph~$G(\omega)$ has an isolated vertex~$1$ and a path~$2-3-\dots -n$. The orientations of this path can be seen as sign sequences, corresponding to the recoils of permutations or the canopies of binary trees.
More generally, for the permutation~${\omega = 1 \cdots k n \dots (k+1)}$, the graph~$G(\omega)$ consist of all edges~$ij$ for~$k+1 \le i < j \le n$ such that~$|i-j| \le k$, and the $\omega$-recoil map on permutations and canopy on acyclic pipe dreams were already considered in~\cite{Pilaud-brickAlgebra}.
\end{example}

\begin{example}
\label{exm:canopy}
For~$\omega = 31542$, the graph~$G(\omega)$ has a single edge~$4-5$.
We have grouped the permutations of~$[e,\omega]$ according to their $\omega$-recoils, and the pipe dreams of~$\acyclicPipeDreams(\omega)$ according to their canopy in \cref{fig:canopy}.
\begin{figure}
	\centerline{\includegraphics[scale=1]{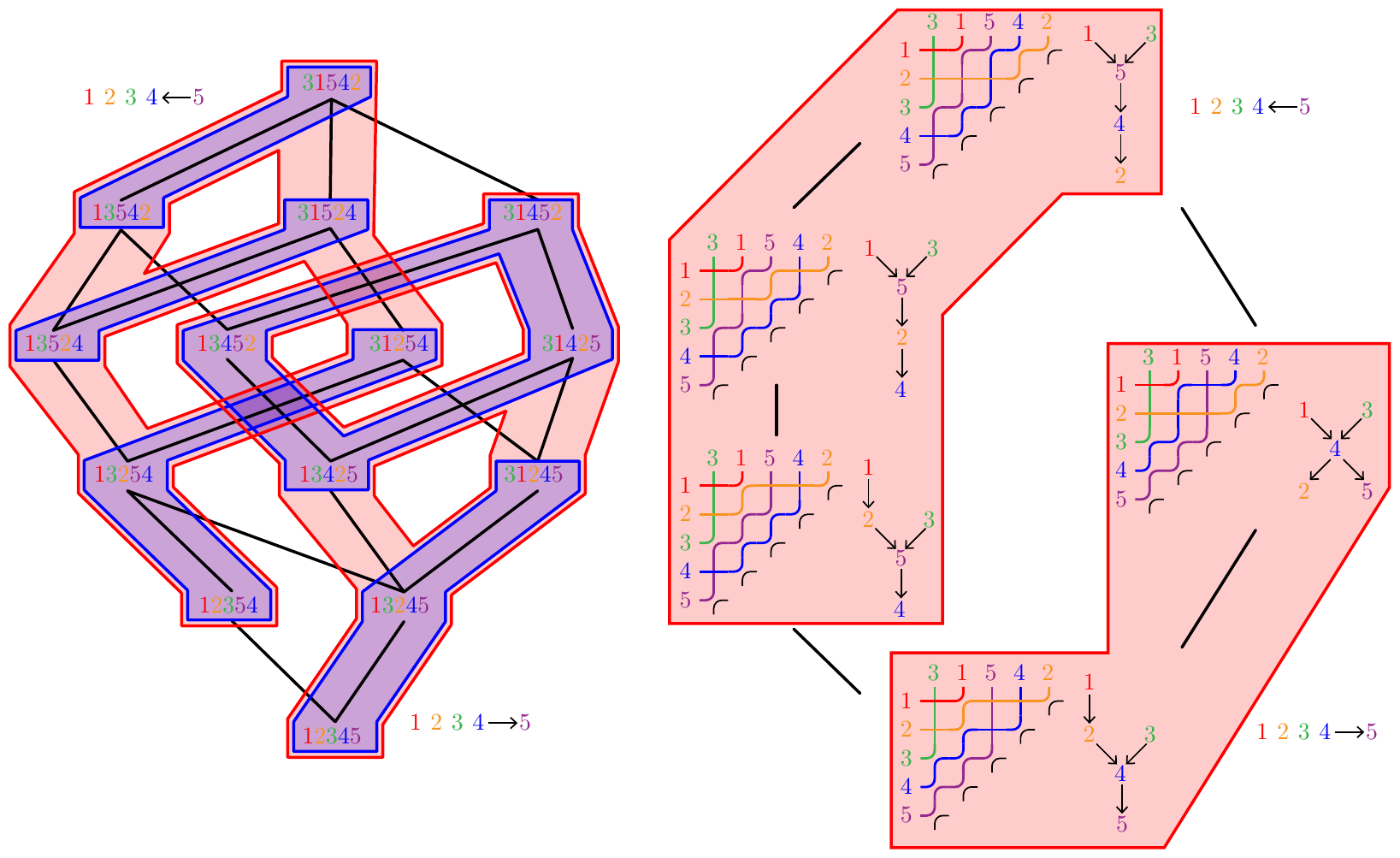}}
	\caption{The pipe dream congruence (blue bubbles) and the $31542$-recoil congruence (red bubbles) on the weak order interval~$[12345, 31542]$ (left) and the canopy congruence (red bubbles) on the increasing flip graph on acyclic pipe dreams (right). Each blue bubble is contained in a red bubble.}
	\label{fig:canopy}
\end{figure}
\end{example}


\subsection{The acyclic property for $\nu$-Tamari lattices} 
\label{subsec:nuTamari} 

We close this section with a theorem that relates our results to the $\nu$-Tamari lattices introduced by L.-F.~Pr\'eville-Ratelle and X.~Viennot in~\cite{PrevilleRatelleViennot}. These posets are indexed by a lattice path $\nu$ consisting of a finite number of north~(N) and east~(E) steps, and coincide with the classical Tamari lattices when $\nu=(NE)^n$.

In~\cite{CeballosPadrolSarmiento}, it was shown that the $\nu$-Tamari lattice can be obtained as the increasing flip poset on pipe dreams~$\pipeDreams(0\omega_\nu)$ for an explicit permutation $\omega_\nu$ associated with $\nu$. The permutations of the form~$\omega_\nu$ can be easily characterized as follows. We refer to~\cite{CeballosPadrolSarmiento} for details.

The \defn{Rothe diagram} of a permutation~$\omega$ is the set $\set{(\omega(j),i)}{i<j \text{ and } \omega(i)>\omega(j)}$ in matrix notation.
A permutation $\omega$ is called \defn{dominant} if its Rothe diagram is the Ferrers diagram of a partition located at the top left corner $(1,1)$. Equivalently, a permutation~$\omega$ is dominant if and only if it is 132-avoiding.
For a permutation~$\omega = \omega_1 \dots \omega_n \in \fS_n$, we denote by~$0\omega$ the permutation~$0 \omega_1 \dots \omega_n$ of~$\{0, 1, \dots, n\}$, and we consider here pipe dreams with pipes indexed by~${0, 1, \dots, n}$.
The $\nu$-Tamari lattice is the increasing flip poset on pipe dreams~$\pipeDreams(0\omega_\nu)$ for some dominant permutation $\omega_\nu$ determined by $\nu$, see~\cite{CeballosPadrolSarmiento}.

The main example is when $\omega \eqdef n \dots 2 1$ is the reverse permutation, see \cref{exm:Tamari1,fig:bijection}.
In this case, the increasing flip poset on~$\pipeDreams(0 \omega)$ is the classical Tamari lattice.
Removing vertex~$0$ from the contact graph of a pipe dream~$ P\in \pipeDreams(0 \omega)$ returns the binary tree corresponding to $P$ (with the leaves removed and the internal nodes labeled in inorder from $1$ to $n$).
The edges of the binary tree are oriented going away from the root of the tree.
In particular, all pipe dreams of~$\pipeDreams(0 \omega)$ are acyclic.
This extends to all dominant permutations as follows.

\begin{theorem}
\label{prob:nuAcyclicProperty}
All pipe dreams of $\pipeDreams(0\omega)$ are acyclic if and only if $\omega$ is a dominant permutation.
\end{theorem}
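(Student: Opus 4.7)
The plan is to prove the two implications separately. For the forward direction, suppose $\omega$ is dominant and assume for contradiction that some $P \in \pipeDreams(0\omega)$ has an oriented cycle in its contact graph $P\contact$. By \cref{lem:consequenceRectangle1}, every non-inversion pair of labels $(i,j)$ with $i<j$ and $\omega^{-1}(i)<\omega^{-1}(j)$ yields an arc $i \to j$ respecting the natural order on pipe labels, so any oriented cycle must contain a ``downward'' arc $j \to i$ with $i < j$. Such an arc arises from a contact at a box $Y = (r_Y, c_Y)$ strictly northeast of the (unique) crossing box $X = (r_X, c_X)$ of pipes $i$ and $j$, with pipe $j$ occupying the northwest position at $Y$. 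Between $X$ and $Y$ the two pipes trace the boundary of an axis-aligned rectangle: pipe $j$ covers the west and north sides (turning east at the northwest corner $(r_Y,c_X)$) and pipe $i$ covers the south and east sides (turning north at the southeast corner $(r_X,c_Y)$).

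The heart of the argument is to extract a $132$ pattern in $\omega$ from this rectangle, contradicting dominance. A short case analysis on triples of values including $i$ and $j$ shows that the only $132$ pattern compatible with the given inversion has the form $\ell < i < j$ with $\omega^{-1}(\ell) < \omega^{-1}(j) < \omega^{-1}(i)$. I would produce such an $\ell$ by examining the northwest corner $(r_Y, c_X)$ of the rectangle, which is an elbow where pipe $j$ is the $S\to E$ strand and some other pipe, call it $\ell$, is the $W \to N$ strand. Using \cref{lem:rectangle} applied to pipes $\ell$ and $i$ at their respective elbows, together with the observation that pipe $\ell$ travels east along row $r_Y$ immediately before $(r_Y, c_X)$ and then north in column $c_X$, one verifies $\ell < i$ and $\omega^{-1}(\ell) < \omega^{-1}(j)$, which gives the required $132$ pattern.

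For the reverse direction, let $v_1 < v_2 < v_3$ witness a $132$ pattern in $\omega$, so that $\omega^{-1}(v_1) < \omega^{-1}(v_3) < \omega^{-1}(v_2)$. The pair $(v_2,v_3)$ is in inversion, hence pipes $v_2$ and $v_3$ cross in every $P \in \pipeDreams(0\omega)$. I would construct explicitly a $P \in \pipeDreams(0\omega)$ in which pipes $v_2$ and $v_3$ have contacts on both sides of their crossing, producing the $2$-cycle $v_2 \leftrightarrows v_3$ in $P\contact$. The room for a second contact on the northeast side of the crossing is provided by pipe $v_1$, which exits strictly to the west of pipe $v_3$ and thus allows $v_3$ to bend back east over $v_2$ after the crossing. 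A concrete realization uses the insertion algorithm of \cref{prop:insertionAlgorithm} applied to a permutation $\pi \in [e, 0\omega]$ chosen so that $v_3$ is inserted northwest of $v_2$ at the relevant stage; the two contacts then yield the claimed $2$-cycle, which one verifies directly.

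The main obstacle is in the forward direction: handling the case where pipe $\ell$ has itself zigzagged (turning north and back east) before arriving at $(r_Y, c_X)$. In that case a direct application of \cref{lem:rectangle} may not immediately give $\ell < i$ and $\omega^{-1}(\ell) < \omega^{-1}(j)$, and one may have to iterate on a sub-rectangle of the original one, or replace $\ell$ by another pipe on the northwest boundary, until the zigzag unwinds.
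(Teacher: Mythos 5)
Your division into two implications is the right start, but the core argument for ``$\omega$ dominant $\Rightarrow$ all of $\pipeDreams(0\omega)$ acyclic'' does not work as stated. You reduce to the existence of a single downward arc $j \to i$ (with $i<j$) in the contact graph, and then try to extract a $132$ pattern from the geometry of that one arc: the rectangle between the crossing $X$ and the contact $Y$, and the pipe $\ell$ at its northwest corner. But the mere presence of a downward arc never forces a $132$ pattern. Consider the reversing case $\omega = n\cdots 21$, which is dominant: the contact graphs of the pipe dreams in $\pipeDreams(0\omega)$ are exactly (directed) binary trees, and as soon as $n\geq 2$ every such tree contains an arc between a node and a child of smaller label, i.e.\ a downward arc. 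So your intermediate claim ``downward arc $\Rightarrow$ 132 pattern'' is simply false, independently of the zigzag issue you flag at the end. The information you would really need is an elbow of $i$ \emph{southwest} of $X$ \emph{and} an elbow of $j$ \emph{northeast} of $X$ — that is essentially a $2$-cycle, not one arc — and this is what the paper's \cref{lem:notDominant} uses. The paper's route supplies exactly this: it works with the weaker relation ``$a \to b$ iff $a$ has an elbow weakly northwest of an elbow of $b$'', takes a reduced cycle for that relation, proves that a reduced cycle cannot have length $\geq 3$ (a short four-way case analysis using \cref{lem:rectangle}), and then applies \cref{lem:notDominant} to the resulting $2$-cycle. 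Nothing in your proposal plays the role of this reduction, and without it the argument cannot close.

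A few smaller remarks. Even if the forward claim were salvageable, the rectangle picture is not literal: the pipes between $X$ and $Y$ can make several intermediate turns, so $(r_Y,c_X)$ need not be an elbow of pipe $j$, and the pipe $\ell$ you want may not sit there; this is not a side issue that ``unwinds'' by shrinking the rectangle, but a symptom of the fact that the single-arc hypothesis is too weak. For the converse (``$\omega$ not dominant $\Rightarrow$ some $P\in\pipeDreams(0\omega)$ is cyclic''), your plan via \cref{prop:insertionAlgorithm} is plausible, though you would have to exhibit a concrete $\pi\le 0\omega$ and verify that two contacts on opposite sides of the crossing of $v_2,v_3$ actually appear. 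The paper instead fixes the $132$ pattern with $k$ maximal, takes any pipe dream obtained by capping a $\pipeDreams(\omega)$ pipe dream with a top row of contacts, and runs a case analysis on the easternmost point of pipe $k$ in row $i$; the maximality of $k$ is used crucially in one of the cases. Your sketch does not engage with these choices, so as written this direction is also incomplete, though unlike the forward direction it is not clearly unfixable.
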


To prove this result, we will need the following observation.
Note that throughout the proof, all pipes, rows, and columns are indexed by indices ranging from $0$ to~$n$.

\begin{lemma}
\label{lem:notDominant}
For a pipe dream $P \in \pipeDreams(0\omega)$ with a crossing~$x$ between two pipes~$i < j$, if $i$ has an elbow southwest of~$x$ and $j$ has an elbow northeast of~$x$, then $\omega$ is not dominant.
\end{lemma}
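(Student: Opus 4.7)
The plan is to exhibit a 132 pattern in $\omega$, thereby concluding that $\omega$ is not dominant. My first step is to translate the geometric hypotheses at $x = (r, c)$ into numerical inequalities. I would argue that pipe $i$ has an elbow strictly southwest of $x$ if and only if $r < i$: if $r = i$, pipe $i$ travelled only east along its entering row up to $x$, so none of its elbows are strictly below row $r$; conversely, if $r < i$, pipe $i$ has a WN elbow at $(i, c')$ with $c' < c$, strictly southwest of $x$. Similarly, pipe $j$ has an elbow strictly northeast of $x$ if and only if the exit column $(0\omega)^{-1}(j)$ is strictly greater than $c$: otherwise pipe $j$ goes straight north from $x$ with no further elbows. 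Combined with the crossing relation $(0\omega)^{-1}(i) > (0\omega)^{-1}(j)$ and the observation that $c = 1$ is impossible (it would force the horizontal pipe at $(r, 1)$ to be the one entering at row $r$, contradicting $r < i$), these conditions yield $c \geq 2$ and $\omega^{-1}(i) > \omega^{-1}(j) \geq c$ in $\omega$'s indexing.

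The candidate 132 pattern then uses pipe $k \eqdef (0\omega)(c) = \omega(c-1)$, the pipe exiting at column $c$, with $\omega^{-1}(k) = c - 1$. The three positions $c - 1 < \omega^{-1}(j) < \omega^{-1}(i)$ carry values $k, j, i$, and since $i < j$ they form a 132 pattern in $\omega$ precisely when $k < i$. To prove $k < i$, I would analyze column $c$ of the pipe dream. Pipe $k$'s column-$c$ vertical segment runs from a WN elbow $(r_k, c)$ (with $r_k \leq k$) up to the exit at $(1, c)$. Pipe $j$'s column-$c$ vertical segment contains $x$ and, thanks to the northeast-elbow hypothesis, ends at an SE elbow $(r'', c)$ with $r'' < r$. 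These two segments are disjoint, with pipe $k$'s lying strictly above pipe $j$'s, giving $r_k < r'' < r$.

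To finish, I would rule out the case $k \geq i$. Since $k \neq i$ (their exit columns differ), this reduces to $k > i$. Here pipes $i < k$ form an inversion of $\omega$ (as $\omega^{-1}(i) > c - 1 = \omega^{-1}(k)$), so they must cross at some cell $y = (r_y, c_y)$; one quickly checks $c_y < c$, because pipe $i$ is at column $c$ only at row $r$. Tracking pipe $k$'s trajectory from its entry at row $k > i > r$ down to its column-$c$ WN elbow at row $r_k < r$, and comparing appropriate elbows of pipes $i$ and $k$ via the rectangle lemma (\cref{lem:rectangle}) applied for instance to pipe $k$'s WN elbow at $(r_k, c)$ together with a WN elbow of pipe $i$ strictly southwest of $x$, would produce forced relations in the contact graph of $P$ that contradict the elbow configuration around $x$. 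The main obstacle is making this contradiction completely rigorous: a pure counting argument on exit columns is inconclusive (the triangular-shape constraints give only $\omega^{-1}(j) \leq n - j + 1$), so the proof must carefully leverage the geometry of pipe $k$'s multi-segment trajectory together with the position of pipe $i$'s southwest elbow to eliminate the case $k > i$.
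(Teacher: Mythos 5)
Your approach diverges from the paper's at the decisive step, and that step is not merely hard to make rigorous --- it is false. You fix $k$ to be the pipe exiting at column $c$ and reduce everything to showing $k < i$; this fails. Take $\omega = 1432$, so $0\omega = 01432$ (pipes $0,1,2,3,4$ exit at columns $0,1,4,3,2$), and consider the pipe dream of $\pipeDreams(01432)$ whose three crossings are at $(3,0)$ (pipes $3$ and $4$), $(2,0)$ (pipes $2$ and $4$), and $x = (1,2)$ (pipes $i=2$ and $j=3$); concretely, pipe $2$ visits the cells $(2,0),(2,1),(1,1),(1,2),(1,3),(0,3),(0,4)$, pipe $3$ visits $(3,0),(3,1),(2,1),(2,2),(1,2),(0,2),(0,3)$, and pipe $4$ visits $(4,0),(3,0),(2,0),(1,0),(1,1),(0,1),(0,2)$. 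At $x$, pipe $2$ has an elbow at $(2,1)$ strictly southwest of $x$ and pipe $3$ has an elbow at $(0,3)$ strictly northeast of $x$, so the lemma's hypotheses hold; yet the pipe exiting at column $c=2$ is $k = 4 > 2 = i$. The contradiction you hope to extract from $k>i$ via \cref{lem:rectangle} simply does not exist, so the obstacle you flag at the end of your proposal is an impossibility, not a gap to be closed.

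The paper avoids this trap by asking for less. Using your first observation $r\le i-1$ together with the fact that pipe $j$'s southeast elbow occupies one of the cells of column $c$ strictly north of $x$, it deduces by a pigeonhole count that at least two of the $i$ pipes $\{0,\dots,i-1\}$ exit at columns $\le c$. Dropping pipe $0$ leaves a pipe $h$ with $1 \le h < i$ and $\omega^{-1}(h) \le c < \omega^{-1}(j) < \omega^{-1}(i)$, which gives a $132$ pattern. The crucial point is that this $h$ need not be the pipe exiting exactly at column $c$: in the example above, the paper's pipe is $h = 1$, whereas the pipe at column $c$ is $4$. To salvage your argument you should likewise look for \emph{some} small pipe exiting weakly west of $c$ rather than trying to control the value of $(0\omega)(c)$.
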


\begin{proof}
Let~$r$ be the row and $c$ be the column of the crossing~$x$. 
Since $i$ has an elbow southwest of~$x$, we have $r \le i-1$.
Since $j$ has an elbow northeast of~$x$, there are at most $r-1 \le i-2$ pipes that cross north of $x$ in column~$c$.
Hence, at least two pipes smaller than~$i$ reach the northern border weakly before column~$c$.
Excluding the pipe~$0$, there is at least one pipe $h > 0$ such that $h < i < j$ and $\omega^{-1}(h) < \omega^{-1}(j) < \omega^{-1}(i)$.
This is a $132$ pattern in the permutation~$\omega$, therefore $\omega$ is not dominant.
\end{proof}

\begin{proof}[Proof of \cref{prob:nuAcyclicProperty}]
For the forward implication, assume that $\omega$ is not dominant, and consider~${i,j,k \in [n]}$ such that~$i < j < k$ while~$\omega^{-1}(i) < \omega^{-1}(k) < \omega^{-1}(j)$, and such that~$k$ is maximal for this property.
Consider any pipe dream~$P \in \pipeDreams(0\omega)$ obtained by capping any pipe dream of~$\pipeDreams(\omega)$ by a top row of contacts.
Because of this top row of contacts, $\omega^{-1}(u) < \omega^{-1}(v)$ implies~$u \contactLess{P} v$.
We now consider the eastmost point~$x$ of pipe~$k$ along row~$i$, and let~$\ell$ denote the other pipe at this point.
We distinguish two cases:
\begin{enumerate}[(i)]
\item Assume first that~$k$ and~$\ell$ cross at~$x$. Note that $k$ is vertical while $\ell$ is horizontal at~$x$. Hence, $\omega^{-1}(k) < \omega^{-1}(\ell)$,  so that~$k \contactLess{P} \ell$. Moreover, $\ell \ne i$ (since~$i$ and~$k$ do not cross), so that~$\ell$ must have an elbow west of~$x$ along row~$i$. As $k$ must have an elbow south of~$x$, we obtain that~$\ell \contactLess{P} k$ by~\cref{lem:rectangle}. Hence, we have both~$k \contactLess{P} \ell$ and~$\ell \contactLess{P} k$, so that~$P$ is cyclic.
\item Assume now that~$k$ and~$\ell$ touch at~$x$. Hence~$k \contactLess{P} \ell$. If~$\ell < k$, then~$k$ and~$\ell$ must cross at~$y$ before~$x$. Flipping~$x$ to~$y$, we obtain a pipe dream~$P'$ with~$\ell \contactLess{P'} k$ (because of the contact~$y$) and~$k \contactLess{P'} \ell$ (because of the last row of elbows which is still in~$P'$), so that~$P'$ would be cyclic. If~$\omega^{-1}(\ell) < \omega^{-1}(k)$, then~$\ell \contactLess{P} k$, so that~$P$ is again cyclic. Hence, we can assume that~$k < \ell$ and~$\omega^{-1}(k) < \omega^{-1}(\ell)$. By maximality of~$k$, we obtain that~$j$ and~$\ell$ do not cross, and thus that~$j$ must have an elbow northwest of~$x$. Since $x$ contain an elbow for~$k$, \cref{lem:rectangle} yields~$j \contactLess{P} k$. Since~$\omega^{-1}(k) < \omega^{-1}(j)$, we also have~$k \contactLess{P} j$, so that~$P$ is cyclic.
\end{enumerate}

For the backward implication, we introduce a local notation.
We write~$i \to j$ if there is an elbow of pipe~$i$ weakly northwest of an elbow of pipe~$j$.
Note that~$i \to j$ implies~$i \contactLess{P} j$ by \cref{lem:rectangle}.
The reverse direction does not necessarily holds, but clearly holds for cover relations of~$\contactLess{P}$ (as in this case, there is an elbow of~$i$ in contact with an elbow of~$j$).
Assume now that there is a cyclic pipe dream~$P \in \pipeDreams(0\omega)$.
We can thus find a cycle~$\b{C}=(i_1, \dots, i_m)$ with~$i_1 \to \dots \to i_m \to i_1$.
We can moreover assume that~$\b{C}$ is reduced in the sense that no subsequence of~$\b{C}$ forms a cycle for~$\to$.

If~$m = 2$, then the pipes~$i_1$ and~$i_2$ must cross at some point~$x$.
Without loss of generality, we can assume $i_1<i_2$. 
The fact $i_1 \to i_2$ implies that $i_1$ has an elbow southwest of~$x$, while the fact that $i_2 \to i_1$ implies that $i_2$ has an elbow northeast of~$x$, so that $\omega$ is not dominant by \cref{lem:notDominant}.

We now show that if~$\b{C}$ is reduced then the case $m\geq 3$ is impossible.
Assume that~$m \ge 3$.
Assume moreover that~$i_1 < i_\ell$ for all~$\ell \in [m]$ (if not, rotate the indices).
Since~$i_m \to i_1$, there is an elbow~$e$ of~$i_m$ weakly northwest of an elbow~$e'$ of~$i_1$.
Since~$i_1 \to i_2$, there is an elbow~$f$ of~$i_1$ weakly northwest of an elbow~$f'$ of~$i_2$.
We now distinguish four cases:
\begin{enumerate}[(i)]
\item If~$e$ is weakly northwest of~$f$, thus of~$f'$, then~$i_m \to i_2$, contradicting the minimality of~$\b{C}$.
\item If~$e$ is northeast of~$f$, since~$i_1 < i_m$, the pipe~$i_m$ starts southwest of~$f$ and ends northeast of~$f$, so that it has an elbow either southeast of~$f$ so that~$i_1 \to i_m$, or northwest of~$f$ thus of~$f'$ so that~$i_m \to i_2$. In both cases, this contradicts the minimality of~$\b{C}$.
\item If~$e$ is southwest of~$f$, as~$i_1$ passes trough~$e'$ and~$f$, and $i_2$ starts south of~$i_1$ (since~$i_1 < i_2$) and passes south of~$f$, we obtain that~$i_2$ must pass south of~$e$ (otherwise $i_2$ would cross $i_1$ twice), so that $i_m \to i_2$, which contradicts the minimality of~$\b{C}$.
\item If~$e$ is weakly southeast of~$f$, then~$e'$ is southeast of~$f$, which is impossible as $i_1$ has to pass through both~$e'$ and~$f$.
\qedhere
\end{enumerate}
\end{proof}

Applying our \cref{thm:pipeDreamQuotient}, and using the isomorphism $[0e,0\omega_\nu]\cong [e,\omega_\nu]$, we get the following consequence.

\begin{corollary}
The $\nu$-Tamari lattice is a lattice quotient of the interval~$[e,\omega_\nu]$.
\end{corollary}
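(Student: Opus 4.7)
The plan is to directly combine the theorem just established (\cref{prob:nuAcyclicProperty}) with the main \cref{thm:pipeDreamQuotient} and the description of the $\nu$-Tamari lattice in terms of pipe dreams from~\cite{CeballosPadrolSarmiento}. Since $\omega_\nu$ is by construction dominant, \cref{prob:nuAcyclicProperty} gives the key equality
\[
\pipeDreams(0\omega_\nu) = \acyclicPipeDreams(0\omega_\nu),
\]
so the increasing flip poset on pipe dreams of $\pipeDreams(0\omega_\nu)$ (which, by~\cite{CeballosPadrolSarmiento}, is precisely the $\nu$-Tamari lattice) coincides with the increasing flip poset on \emph{acyclic} pipe dreams in $\acyclicPipeDreams(0\omega_\nu)$. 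Applying \cref{thm:pipeDreamQuotient} to the exiting permutation $0\omega_\nu$, the Hasse diagram of this flip poset is isomorphic to the Hasse diagram of the lattice quotient $[e,0\omega_\nu]/{\equiv_{0\omega_\nu}}$.

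The only remaining point is to translate this statement from the weak order on $\{0,1,\dots,n\}$ to the weak order on $[n]$. For this I would verify the order isomorphism $[0e,0\omega_\nu] \cong [e,\omega_\nu]$ by the simple map $\pi' \mapsto 0\pi'$: any permutation $\pi \le 0\omega_\nu$ in weak order must start with $0$, because every pair $(0,j)$ with $j \ge 1$ is a non-inversion of $0\omega_\nu$ and hence a non-inversion of $\pi$. Thus every element of $[0e,0\omega_\nu]$ has the form $0\pi'$ with $\pi' \in [e,\omega_\nu]$, and prepending $0$ preserves the inversion order, giving a weak order isomorphism between the two intervals.

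Putting the three ingredients together, the $\nu$-Tamari lattice is isomorphic to $[e,\omega_\nu]/{\equiv_{0\omega_\nu}}$ (up to identifying $[e,\omega_\nu]$ with $[0e,0\omega_\nu]$ via the above isomorphism), hence is a lattice quotient of $[e,\omega_\nu]$. There is essentially no obstacle: all the work has been done in \cref{thm:pipeDreamQuotient} and \cref{prob:nuAcyclicProperty}, and the argument reduces to stringing together these two results with the short weak-order embedding verification above.
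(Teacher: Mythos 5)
Your proof is correct and follows essentially the same approach the paper implies: the paper's own "proof" is the one-line remark preceding the corollary, which just invokes \cref{thm:pipeDreamQuotient} together with the isomorphism $[0e,0\omega_\nu]\cong[e,\omega_\nu]$. You have fleshed out the same chain of implications (including the essential use of \cref{prob:nuAcyclicProperty} to identify $\pipeDreams(0\omega_\nu)$ with $\acyclicPipeDreams(0\omega_\nu)$ and the short inversion-set argument for the interval isomorphism), and there is no gap.
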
  


\section{Subword complexes}
\label{sec:subwordComplexes}

The objective of this section is to partially extend our results about the lattice structure of acyclic pipe dreams (\cref{sec:latticeAcyclicPipeDreams}) to the wider context of subword complexes in finite Coxeter groups.
We start with some basic preliminaries on finite Coxeter groups (\cref{subsec:finiteCoxeterGroups}) and subword complexes (\cref{subsec:subwordComplexes}).
We then define linear extensions of facets of subword complexes (\cref{subsec:linearExtensionsFacets}) and present two theorems (\cref{subsec:twoTheorems}) and five conjectures (\cref{subsec:fiveConjectures}) about them.
To conclude, we present a sweeping algorithm to construct the facet with a given linear extension (\cref{subsec:sweepingAlgorithmSubwordComplexes})


\subsection{Finite coxeter groups} 
\label{subsec:finiteCoxeterGroups}

We refer to~\cite{BjornerBrenti, Humphreys} for detailed references on Coxeter groups. 
We consider a \defn{finite root system} $\Phi$ with \defn{positive roots}~$\Phi^+$, \defn{negative roots}~$\Phi^-$, and \defn{simple roots} $\Delta \subseteq \Phi^+$.
The reflections along the hyperplanes orthogonal to the roots in $\Phi$ generate a \defn{finite Coxeter group} $W$.
We denote by~$s_\alpha \in W$ the reflection orthogonal to a root~$\alpha \in \Phi$, and by~$\alpha_s \in \Phi^+$ the positive root orthogonal to a reflection~$s \in W$.
The group~$W$ is actually generated by the simple reflections~$S = \set{s_\alpha}{\alpha \in \Delta}$, and the pair~$(W,S)$ is a \defn{finite Coxeter system}.
Finite root systems and finite Coxeter systems are classified in terms of Dynkin diagrams, see~\cite{Humphreys}. 

The \defn{inversion set}~$\Inv(\omega)$ and the \defn{non-inversion set}~$\Ninv(\omega)$ of an element~$\omega \in W$ are
\[
\Inv(\omega) \eqdef \Phi^+ \cap \omega(\Phi^-)
\qquad\text{and}\qquad
\Ninv(\omega) \eqdef \Phi^+ \cap \omega(\Phi^+).
\]
Note that
\[
\Phi^+ = \Inv(\omega) \sqcup \Ninv(\omega)
\qquad\text{and}\qquad
\omega(\Phi^+) = -\Inv(\omega) \sqcup \Ninv(\omega).
\]

A \defn{reduced expression} of~$\omega$ is a product~$s_1 s_2 \dots s_\ell = \omega$ of simple reflections~$s_i \in S$ such that~$\ell$ is minimal.
This minimal~$\ell$ is the \defn{length} $\ell(\omega)$ of~$\omega$.
The length of~$\omega$ coincides with the size of its inversion set~$\Inv(\omega)$ since $\Inv(\omega) \eqdef \{\alpha_{s_1}, s_1(\alpha_{s_2}), s_1s_2(\alpha_{s_3}), \dots, s_1s_2 \dots s_{\ell-1}(\alpha_{s_\ell})\}$.

The \defn{weak order} on $W$ is the partial order $\le$ defined by $\sigma \leq \omega$ if there exists $\tau \in W$  such that $\sigma\tau = \omega$ and $\ell(\sigma) + \ell(\tau) = \ell(\omega)$.
In other words, the element $\omega$ has a reduced expression with a prefix which is a reduced expression of $\sigma$.
Equivalently, the weak order corresponds to the inclusion order on inversion sets, that is $\sigma \leq \omega$ if and only if $\Inv(\sigma) \subseteq \Inv(\omega)$.
This order defines a lattice structure on the elements of~$W$.
The minimal element is the identity $e \in W$ and the maximal element is the \defn{unique longest element} $\wo$ of~$W$.
Note that~$\Inv(e) = \varnothing = \Ninv(\wo)$ and~$\Ninv(e) = \Phi^+ = \Inv(\wo)$, so that~$\ell(e) = 0$ while~$\ell(\wo) = |\Phi^+|$.

\begin{example}
\label{exm:typeACoxeterSystem}
The Coxeter system of type~$A_{n-1}$ is the symmetric group~$W = \fS_n$ with generators~$S = \set{\tau_i}{i \in [n-1]}$, where~$\tau_i$ is the simple transposition~$\tau_i = (i \; i+1)$.
It naturally acts on~$\R^n$ by permutations of coordinates.
Denoting by~$(\b{e}_i)_{i \in [n]}$ the canonical basis of~$\R^n$, the type~$A_{n-1}$ roots are all~$\b{r}_{i,j} \eqdef \b{e}_i - \b{e}_j$ for distinct~$i,j \in [n]$, with positive roots~$\b{r}_{i,j}$ for~$1 \le i < j \le n$ and simple roots~$\b{r}_{i,i+1}$ for~$i \in [n-1]$.
The inversion set of~$\pi \in \fS_n$ is the set of roots~$\b{e}_i - \b{e}_j$ for the inversions~$(i,j)$ of the permutation~$\pi$.
The longest element is the reversed permutation~$[n, n-1, \dots, 2, 1]$ (written in one line notation), and it admits the reduced expression~$\tau_1 \cdots \tau_{n-1} \tau_1 \cdots \tau_{n-2} \cdots \tau_1 \tau_2 \tau_1$.
\end{example}


\subsection{Subword complexes}
\label{subsec:subwordComplexes}

Motivated by their study of Gröbner geometry of Schubert varieties~\cite{KnutsonMiller-GroebnerGeometry}, A.~Knutson and E.~Miller introduced in~\cite{KnutsonMiller-subwordComplex} the following remarkable family of simplicial complexes in the context of Coxeter groups.

Let $(W,S)$ be a finite Coxeter system, $Q=(q_1,\dots,q_m)$ be a word in the simple reflections $S$, and $\omega \in W$ be an element of the group.
For $J\subseteq [m]$, we denote by $Q_J$ the subword of~$Q$ consisting of the letters with positions in $J$.
The \defn{subword complex} $\subwordComplex(Q,\omega)$ is the simplicial complex whose facets are subsets $I\subseteq [m]$ such that $Q_{[m]\setminus I}$ is a reduced expression for $\omega$.
We denote by $\subwordFacets(Q,\omega)$ the set of facets of $\subwordComplex(Q,\omega)$.

It is known that~$\subwordComplex(Q,\omega)$ is either a ball or a sphere, in particular it is a pseudomanifold (with or without boundary).
The flip graph of~$\subwordComplex(Q,\omega)$ is the graph whose vertices are the facets of~$\subwordComplex(Q,\omega)$ and whose edges are the ridges of~$\subwordComplex(Q,\omega)$.
In other words, two facets $I,J \in \subwordFacets(Q,\omega)$ are connected by a \defn{flip} if they differ one element: $I \ssm \{i\} = J \ssm \{j\}$ for some $i\in I$ and $j\in J$ with~$i \ne j$.
The flip from $I$ to $J$ is called \defn{increasing} if~$i < j$, and \defn{decreasing} otherwise.
The \defn{increasing flip graph} on the facets of the subword complex is an acyclic graph which has a unique source and a unique sink~\cite{Pilaud-greedyFlipTree, PilaudStump-ELlabelings}.
These two special facets are called the \defn{greedy facet} $\greedyFacet$ and the \defn{antigreedy facet} $\antiGreedyFacet$.
They are the lexicographically smallest and largest facets of $\subwordComplex(Q,\pi)$.
The \defn{increasing flip poset} is the transitive closure of the increasing flip graph.

An important tool to study subword complexes are the root functions introduced in~\cite{CeballosLabbeStump}.
For a facet $I$ of~$\subwordComplex(Q,\omega)$, the \defn{root function} $\rootFunction{I}{\cdot}:[m]\rightarrow \Phi$ sends a position~$k$ in the word $Q$ to the root~$ \rootFunction{I}{k}$ defined by 
\[
\rootFunction{I}{k} \eqdef \prod Q_{[k-1]\ssm I}(\alpha_{q_k}),
\]
where~$\prod Q_{[k-1]\ssm I}$ is the product of the letters~$q_i$ for~$i \in [k-1] \ssm I$ computed in the natural order.
The \defn{root configuration} is the set $\Roots(I)=\set{\rootFunction{I}{i}}{i\in I}$.
The facet $I$ is called \defn{acyclic} if $\cone \Roots(I)$ is a pointed cone.
We denote by $\subwordAcyclicFacets(Q,\omega)$ the set of acyclic facets of the subword complex~$\subwordComplex(Q,\omega)$.
We will use the following statement that enables to perform flips using the root function.

\begin{lemma}[{\cite{CeballosLabbeStump, KnutsonMiller-subwordComplex}}]
\label{lem:rootFunctionFlips}
Let $I$ be a facet of the subword complex~$\subwordComplex(Q,\omega)$.
Then
\begin{enumerate}
\item $i \in I$ is flippable if and only if $\pm \rootFunction{I}{i} \in \Inv(\omega)$.
\item If $i \in I$ is flippable, it can be flipped to the unique $j\in [m]\ssm I$ such that $\rootFunction{I}{j} = \pm\rootFunction{I}{i}$.
The flip is increasing ($i<j$) when $\rootFunction{I}{i}\in \Phi^+$ and decreasing ($i>j$) when $\rootFunction{I}{i}\in \Phi^-$.
\label{lem:rootFunctionFlips2}
\item If $I$ and $J$ are two facets related by a flip, with $I \ssm \{i\} = J \ssm  \{j\}$ and $i < j$, then
\[
\rootFunction{J}{k} = 
\begin{cases}
s_\beta(\rootFunction{I}{k}), & \text{for } i < k \leq j \\
\rootFunction{I}{k}, & \text{otherwise}
\end{cases}
\]
where $\beta \eqdef \rootFunction{I}{i}$ and $s_\beta\in W$ is the reflection orthogonal to the root $\beta$.
\label{lem:rootFunctionFlips3}
\item $i \in I$ is not flippable if and only if $\rootFunction{I}{i}\in \Ninv(\omega)$.
\end{enumerate}
\end{lemma}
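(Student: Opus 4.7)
The plan is to derive all four parts in a single package by analyzing how the reduced expression $Q_{[m]\ssm I}$ for $\omega$ transforms under a single flip, and then reading off the consequences for the root function. Writing $\pi_k^I \eqdef \prod Q_{[k-1]\ssm I}$ so that $\rootFunction{I}{k} = \pi_k^I(\alpha_{q_k})$, the key preliminary observation is that, because $Q_{[m]\ssm I}$ is reduced for $\omega$, every prefix $\pi_k^I$ is itself reduced, so the standard length criterion $\ell(\pi s)>\ell(\pi)\iff \pi(\alpha_s)\in\Phi^+$ is available throughout.

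Consider a flip with $I \ssm \{i\} = J \ssm \{j\}$ and first assume $i<j$. The reduced words $Q_{[m]\ssm I}$ and $Q_{[m]\ssm J}$ agree on positions outside $[i,j]$; inside, $I$ skips $q_i$ and keeps $q_j$, while $J$ keeps $q_i$ and skips $q_j$. Setting $\sigma \eqdef \prod Q_{[i+1,j-1]\ssm I}$, matching the two reduced products for $\omega$ forces $\sigma q_j = q_i \sigma$, equivalently $\sigma(\alpha_{q_j}) = \pm\alpha_{q_i}$. Applying $\pi_i^I$ yields $\rootFunction{I}{j} = \pm\rootFunction{I}{i}$, which is the equality in part~(2). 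For the direction of the sign: reducedness of $Q_{[m]\ssm J}$ at position $i$ (where $J$ now includes the letter $q_i$) forces $\ell(\pi_i^J q_i) = \ell(\pi_i^J)+1$; since $\pi_i^J = \pi_i^I$, this gives $\rootFunction{I}{i}\in\Phi^+$, so increasing flips ($i<j$) are exactly those with $\rootFunction{I}{i}\in\Phi^+$, and decreasing flips those with $\rootFunction{I}{i}\in\Phi^-$ by applying the same argument with the roles of $I$ and $J$ swapped. Part~(1) then follows by combining this with the classical inversion-set description of a reduced word, namely that $\{\rootFunction{I}{k}\,:\,k\in[m]\ssm I\}$ is exactly $\Inv(\omega)$: a witness $j\notin I$ for a flip of $i\in I$ exists precisely when $\pm\rootFunction{I}{i}$ appears in this set, i.e.\ when $\pm\rootFunction{I}{i}\in\Inv(\omega)$.

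For part~(3), split by position $k$. If $k\le i$ then $[k-1]\ssm I = [k-1]\ssm J$, so $\pi_k^J = \pi_k^I$ and the root function is unchanged. If $k>j$, the identity $q_i\sigma=\sigma q_j$ already established makes the two partial products coincide again, so $\rootFunction{J}{k}=\rootFunction{I}{k}$. If $i<k\le j$, writing $\tau_k \eqdef \prod Q_{[i+1,k-1]\ssm I}$, we have $\pi_k^I = \pi_i^I\tau_k$ and $\pi_k^J = \pi_i^I q_i \tau_k = \bigl(\pi_i^I q_i (\pi_i^I)^{-1}\bigr)\,\pi_k^I = s_\beta\, \pi_k^I$ with $\beta = \rootFunction{I}{i}$, whence $\rootFunction{J}{k} = s_\beta(\rootFunction{I}{k})$. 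Finally, part~(4) is the contrapositive of~(1) combined with the partition $\Phi^+ = \Inv(\omega)\sqcup\Ninv(\omega)$ and the fact, extracted from the same sign analysis as above, that $\rootFunction{I}{i}\in\Phi^-$ for $i\in I$ always implies $-\rootFunction{I}{i}\in\Inv(\omega)$ (so that $i$ would be flippable via a decreasing flip); hence a non-flippable $i\in I$ is forced into $\rootFunction{I}{i}\in\Phi^+\ssm\Inv(\omega)=\Ninv(\omega)$.

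The main obstacle is the very first step: deriving the conjugation identity $\sigma q_j = q_i \sigma$ from two distinct reduced expressions for the same $\omega$ differing in the prescribed way, and then tracking the sign carefully so that the equality $\rootFunction{I}{j}=\pm\rootFunction{I}{i}$ is upgraded to the correct increasing/decreasing flip characterization of part~(2). Once this conjugation-by-the-middle-product identity is in hand, parts~(1), (3), and~(4) reduce to bookkeeping with partial products and the length criterion.
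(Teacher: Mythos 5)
The paper does not prove this lemma: it is stated with citations to Knutson--Miller and Ceballos--Labb\'e--Stump and no proof is given, so there is no ``paper's proof'' to compare against. Evaluating your reconstruction on its own merits, the framework is sound: isolating the conjugation identity $\sigma q_j = q_i\sigma$ by matching the two reduced products, the identification $\{\rootFunction{I}{k} : k\notin I\}=\Inv(\omega)$, and the block computation of part~(3) are all correct. However, two steps are asserted rather than proved, and as written they do not follow from what precedes them.

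First, the sign determination for decreasing flips. You prove $\rootFunction{I}{i}\in\Phi^+$ when $i<j$ by observing $\pi_i^I=\pi_i^J$ and $i\notin J$, and then claim the $i>j$ case follows ``by applying the same argument with the roles of $I$ and $J$ swapped.'' But swapping $I\leftrightarrow J$ (and $i\leftrightarrow j$) yields $\rootFunction{J}{j}\in\Phi^+$, i.e.\ $\rootFunction{I}{j}\in\Phi^+$, which you already know from $j\notin I$; it says nothing about $\rootFunction{I}{i}$, because when $j<i$ the prefixes $\pi_i^I$ and $\pi_i^J$ \emph{differ} (one contains the letter $q_j$, the other does not), so the equality of roots at position $i$ across the two facets fails. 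You actually need to pin down which sign occurs in $\sigma(\alpha_{q_i})=\pm\alpha_{q_j}$. One way is to note that $\pi_j^I\sigma q_i$ is a prefix of the reduced word $Q_{[m]\ssm J}$, hence $\sigma q_i$ is reduced and $\sigma(\alpha_{q_i})\in\Phi^+$, forcing $\rootFunction{I}{i}=\pi_j^I q_j(\alpha_{q_j})=-\rootFunction{I}{j}\in\Phi^-$. Another way is to derive part~(3) first (it only uses the conjugation identity, not the sign) and then read off $\rootFunction{I}{i}=s_{\rootFunction{I}{j}}(\rootFunction{J}{i})=-\rootFunction{I}{j}$.

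Second, and more seriously, in part~(4) you claim that for $i\in I$ the implication $\rootFunction{I}{i}\in\Phi^-\Rightarrow -\rootFunction{I}{i}\in\Inv(\omega)$ is ``extracted from the same sign analysis as above.'' It is not: the sign analysis is entirely conditional on $i$ already being flippable and tells you the \emph{direction} of an existing flip; it does not produce a flip partner. The implication you need is a genuine existence statement, and establishing it requires new input. The standard argument: write $\omega=\pi_i^I\tau$ (reduced); $\rootFunction{I}{i}\in\Phi^-$ gives $\ell(\pi_i^I q_i)=\ell(\pi_i^I)-1$, and subadditivity of length applied to $\omega=(\pi_i^I q_i)(q_i\tau)$ forces $\ell(q_i\tau)=\ell(\tau)+1$, which unwinds to $\omega^{-1}(-\rootFunction{I}{i})\in\Phi^-$, i.e.\ $-\rootFunction{I}{i}\in\Inv(\omega)$. (Equivalently one invokes the exchange condition on the word $Q_{[m]\ssm(I\ssm\{i\})}$.) Once this is in place, your derivation of part~(4) from part~(1) and the partition $\Phi^+=\Inv(\omega)\sqcup\Ninv(\omega)$ is correct. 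Without it, the case $\rootFunction{I}{i}\in\Phi^-$ is not ruled out and the argument does not close.
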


\begin{example}
\label{exm:typeARootConfiguration}
Continuing \cref{exm:typeACoxeterSystem}, we consider the type~$A_{n-1}$ Coxeter system, the word $Q \eqdef \tau_{n-1} \cdots \tau_1 \tau_{n-1} \cdots \tau_2 \cdots \tau_{n-1} \tau_{n-2} \tau_{n-1}$, and a permutation~$\omega \in \fS_n = A_{n-1}$.
The word~$Q$ naturally fits on an $n \times n$ triangular grid (place $\tau_k$ in all boxes with row~$i$ and column~$j$ such that~$i+j = k+1$). 
Moreover, each facet~$I$ of the subword complex~$\subwordComplex(Q, \omega)$ corresponds to a pipe dream~$P_I$ of~$\pipeDreams(\omega)$ (replace each position in~$I$ by a contact in~$P_I$, and the other positions by crossings in~$P_I$).
The root function is given~$\rootFunction{I}{i} = \b{r}_{p,q} \eqdef \b{e}_q - \b{e}_p$ where~$p$ is the pipe arriving from the west and $q$ is the pipe arriving from the south at the box of~$P_I$ corresponding to position~$i$ of~$Q$.
Hence, the root configuration is the incidence configuration~$\Roots(I) = \bigset{\b{r}_{p,q}}{(p,q) \in P_I\contact}$ of the contact graph of~$P_I$.
In particular, the acyclic facets of~$\subwordComplex(Q, \omega)$ correspond to the acyclic pipe dreams of~$\acyclicPipeDreams(\omega)$.

For a specific illustration, consider the word~$Q \eqdef \tau_6 \tau_5 \tau_4 \tau_3 \tau_2 \tau_1 \tau_6 \tau_5 \tau_4 \tau_3 \tau_2 \tau_6 \tau_5 \tau_4 \tau_3 \tau_6 \tau_5 \tau_4 \tau_6 \tau_5 \tau_6$ of~${A_6 = \fS_7}$ and the permutation~$\pi \eqdef 1365724$.
Then~$Q$ fits in the triangle of \cref{fig:pipeDreams}.
Moreover, the facets~$I \eqdef \{1,2,3,4,5,6,7,8,9,10,12,16,21\}$ and~$I' \eqdef \{1,2,4,5,6,7,8,9,10,12,16,17,21\}$ \linebreak of~$\subwordComplex(Q, \pi)$ give the pipe dreams of \cref{fig:pipeDreams}.
The roots~$\rootFunction{I}{3} = \alpha_{q_3} = \alpha_4 = \b{e}_5 - \b{e}_4$ and~$\rootFunction{I'}{17} = q_3 q_{11} q_{13} q_{14} q_{15} (\alpha_{q_{17}}) = \tau_4 \tau_2 \tau_5 \tau_4 \tau_3 (\alpha_5) = - \alpha_4 = \b{e}_4 - \b{e}_5$ correspond to the pipes~$4$ and~$5$ at the corresponding crossings.
\end{example}

\begin{example}
\label{exm:sortingNetworks}
More generally, for the type~$A_n$ Coxeter system, the word~$Q$ can be represented by a sorting network~$\cal{N}$, a facet~$I$ of the subword complex~$\subwordComplex(Q,w)$ can be represented by a pseudoline arrangement~$P_I$ on the network~$\cal{N}$, and the root configuration~$\Roots(I)$ is again the incidence configuration of the contact graph of~$P_I$.
We refer to~\cite{PilaudPocchiola, PilaudSantos-brickPolytope, PilaudStump-brickPolytope} for details of this representation and use it in \cref{fig:subwordComplex1,fig:subwordComplex2,fig:subwordComplex3,fig:subwordComplex4,fig:subwordComplex5}.
\end{example}

\begin{example}
For any finite Coxeter group~$W$ and any Coxeter element~$c \in W$ (a product of all generators of~$S$ in a given arbitrary order), let~$\wo(c)$ denote the $c$-sorting word of~$\wo$ (the lexicographically minimal reduced expression for~$\wo$ in~$c^\infty$, see \cite{Reading-CambrianLattices} for details), and consider the concatenation~$c\wo(c)$.
Extending the observation of \cref{exm:Tamari1}, it was shown in~\cite{CeballosLabbeStump} that the subword complex~$\subwordComplex(c\wo(c), \wo)$ is isomorphic to the cluster complex of type~$W$.
In particular, the increasing flip graph is the Hasse diagram of the $c$-Cambrian lattice of~\cite{Reading-CambrianLattices}.
See \cref{fig:increasingFlipPosets}\,(left) and \cref{fig:subwordComplex1} for an example with~$c = \tau_2\tau_1\tau_3$ in type~$A_3$.
\end{example}

\begin{example}
\label{exm:counterExampleLattice}
In contrast to the previous example, we have already seen in \cref{rem:increasingFlipPosetNotLattice,rem:acyclicIncreasingFlipPosetNotLattice} that neither the increasing flip poset on all facets, nor its subposet induced by the acyclic facets, are lattices in general, even in type~$A$ and even for pipe dreams.
Here is another example in type~$A$, but in a case where all facets are actually acyclic.
Consider the word~$Q = \tau_1 \tau_2 \tau_3 \tau_2 \tau_1 \tau_2 \tau_3 \tau_2 \tau_1$ on the simple generators of the symmetric group~$\fS_4$ and the subword complex~$\subwordComplex(Q,\wo)$.
The increasing flip lattice of this subword complex is represented in \cref{fig:increasingFlipPosets}\,(right) and \cref{fig:subwordComplex3}.
In \cref{fig:increasingFlipPosets}\,(right), we have highlighted two blue facets which have no join and two red facets which have no meet, proving that it is not a lattice (this was observed in~\cite[Rem.~5.12]{PilaudStump-brickPolytope}).
\begin{figure}[t]
	\centerline{
	\begin{tikzpicture}[xscale=1.7,yscale=1.25,->]
	\node (123) at (0,-3) {$\{1,2,3\}$};
	\node (139) at (-2,-2) {$\{1,3,9\}$};
	\node (234) at (0,-2) {$\{2,3,4\}$};
	\node (128) at (1.5,-2) {$\{1,2,8\}$};
	\node (345) at (-1,-1) {$\{3,4,5\}$};
	\node (246) at (1,-1) {$\{2,4,6\}$};
	\node (359) at (-2.4,0) {$\{3,5,9\}$};
	\node (189) at (-1.2,0) {$\{1,8,9\}$};
	\node (456) at (0,0) {$\{4,5,6\}$};
	\node (268) at (2,0) {$\{2,6,8\}$};
	\node (567) at (0,1) {$\{5,6,7\}$};
	\node (579) at (-1,2) {$\{5,7,9\}$};
	\node (678) at (1,2) {$\{6,7,8\}$};
	\node (789) at (0,3) {$\{7,8,9\}$};
	\draw (123)--(139);
	\draw (123)--(234);
	\draw (123)--(128);
	\draw (139)--(359);
	\draw (139)--(189);
	\draw (234)--(345);
	\draw (234)--(246);
	\draw (128)--(189);
	\draw (128)--(268);
	\draw (345)--(359);
	\draw (345)--(456);
	\draw (246)--(456);
	\draw (246)--(268);
	\draw (359)--(579);
	\draw (189)--(789);
	\draw (456)--(567);
	\draw (268)--(678);
	\draw (567)--(579);
	\draw (567)--(678);
	\draw (579)--(789);
	\draw (678)--(789);
	\end{tikzpicture}
	\quad
	\begin{tikzpicture}[xscale=2,yscale=1.5,->]
	\node (123) at (0,-2.5) {$\{1,2,3\}$};
	\node (236) at (-1,-1.5) {$\{2,3,6\}$};
	\node (134) at (0,-1.5) {$\blue{\{1,3,4\}}$};
	\node (129) at (1.2,-1.5) {$\blue{\{1,2,9\}}$};
	\node (346) at (-1.8,-.5) {$\{3,4,6\}$};
	\node (148) at (-.3,-.5) {$\{1,4,8\}$};
	\node (269) at (.7,-.5) {$\{2,6,9\}$};
	\node (467) at (-1.8,.5) {$\{4,6,7\}$};
	\node (189) at (1.5,.5) {$\red{\{1,8,9\}}$};
	\node (478) at (-1,1.5) {$\{4,7,8\}$};
	\node (679) at (0,1.5) {$\red{\{6,7,9\}}$};
	\node (789) at (0,2.5) {$\{7,8,9\}$};
	\draw (123)--(236);
	\draw (123)--(134);
	\draw (123)--(129);
	\draw (236)--(346);
	\draw (134)--(346);
	\draw (134)--(148);
	\draw (236)--(269);
	\draw (129)--(269);
	\draw (346)--(467);
	\draw (148)--(189);
	\draw (129)--(189);
	\draw (148)--(478);
	\draw (467)--(478);
	\draw (269)--(679);
	\draw (467)--(679);
	\draw (189)--(789);
	\draw (478)--(789);
	\draw (679)--(789);
	\end{tikzpicture}
	}
	\caption{The increasing flip poset on~$\subwordComplex(Q, \wo)$ where~$Q = \tau_2 \tau_1 \tau_3 \tau_2 \tau_1 \tau_3 \tau_2 \tau_1 \tau_3$ (left) and~$Q = \tau_1 \tau_2 \tau_3 \tau_2 \tau_1 \tau_2 \tau_3 \tau_2 \tau_1$ (right) in type~$A_3$. The left one is the $\tau_2 \tau_1 \tau_3$-Cambrian lattice, while the right one is not a lattice (the two facets highlighted in blue have no join while the two facets highlighted in red have no meet). In both examples, all facets are acyclic.}
	\label{fig:increasingFlipPosets}
\end{figure}
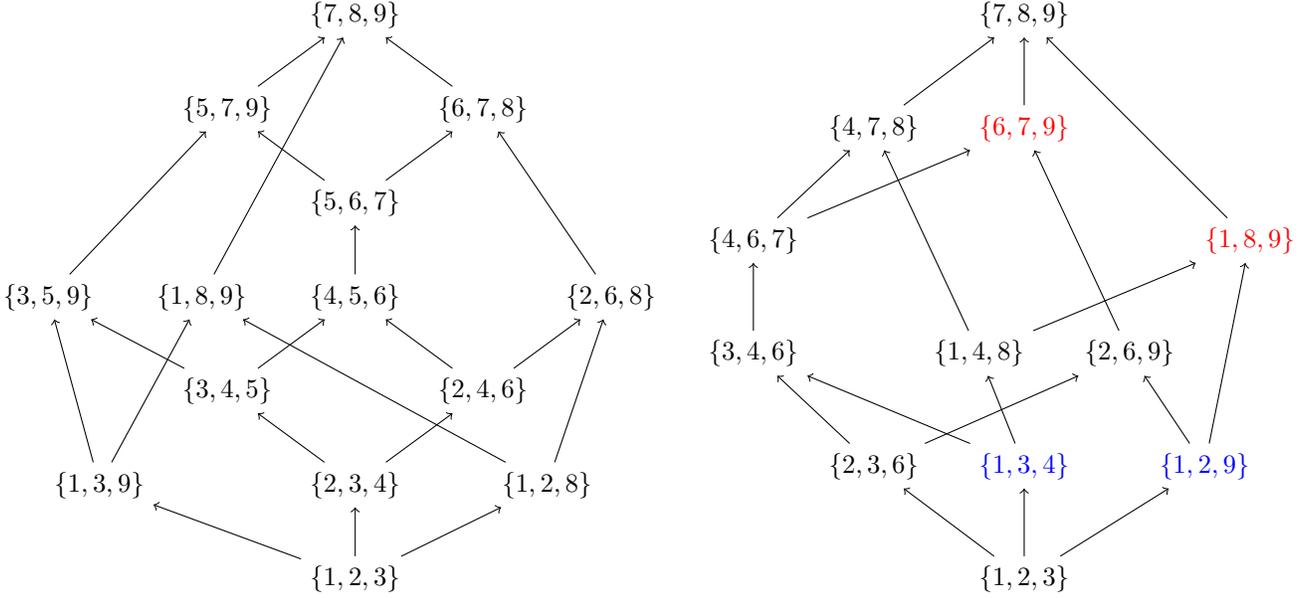
\end{example}


\pagebreak
\subsection{Linear extensions of facets}
\label{subsec:linearExtensionsFacets}

We now introduce the analogue of \cref{def:linearExtensions} for subword complexes.

\begin{definition}
\label{def:linearExtensionsSubwordComplexes}
Let $\subwordComplex(Q,\omega)$ be a non-empty subword complex and $I\in \subwordComplex(Q,\omega)$ be a facet.
A \defn{linear extension} of $I$ is an element $\pi \in W$ such that~$\Roots(I) \subseteq \pi(\Phi^+)$.
We denote by~$\linearExtensions(I)$ the set of linear extensions of $I$, and by
\[
\linearExtensions(Q,\omega) \eqdef \bigcup_{I\in\subwordFacets(Q,\omega)} \linearExtensions(I)
\]
the set of linear extensions of all facets of~$\subwordComplex(Q,\omega)$.
\end{definition}

\begin{example}
In the situation of \cref{exm:typeACoxeterSystem,exm:typeARootConfiguration}, the linear extensions of a facet~$I$ of~$\subwordComplex(Q, \omega)$ are precisely the linear extensions of the pipe dream~$P_I$ of~$\pipeDreams(\omega)$.
\end{example}

\begin{lemma}
\label{lem:linearExtensionsAcyclic}
A facet~$I$ is acyclic if and only if~$\linearExtensions(I)\neq \varnothing$.
\end{lemma}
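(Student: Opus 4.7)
The plan is to unpack both sides of the equivalence into geometric conditions on vectors in the ambient space $V$, and then match them via the standard description of the chambers of the Coxeter arrangement. On the right-hand side, $\linearExtensions(I)\neq\varnothing$ means by definition that there is some $\pi\in W$ with $\Roots(I)\subseteq \pi(\Phi^+)$, and $\pi(\Phi^+)$ is exactly the set of roots taking strictly positive values on the open chamber $C_\pi\eqdef\pi(C_e)$, where $C_e\eqdef\set{v\in V}{\langle v,\alpha\rangle>0 \text{ for all } \alpha\in\Delta}$ is the fundamental chamber. On the left-hand side, ``$I$ acyclic'' means by definition that $\cone\Roots(I)$ is pointed, which by a standard separation argument is equivalent to the existence of a vector $v\in V$ with $\langle v,\alpha\rangle>0$ for every $\alpha\in\Roots(I)$.

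For the forward implication, I would fix any $\pi\in\linearExtensions(I)$ and choose any $v\in C_\pi$. Then $\langle v,\beta\rangle>0$ for every $\beta\in\pi(\Phi^+)$, and a fortiori for every $\alpha\in\Roots(I)\subseteq\pi(\Phi^+)$. This strictly separates $\cone\Roots(I)$ from its opposite, so $\cone\Roots(I)$ is pointed and $I$ is acyclic.

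For the converse, starting from acyclicity I would pick a vector $v\in V$ with $\langle v,\alpha\rangle>0$ for every $\alpha\in\Roots(I)$. The subtle point, and the only step that requires any care, is that such a $v$ need not lie in the interior of any chamber: it could sit on a reflecting hyperplane $H_\beta$ for some $\beta\in\Phi\setminus\Roots(I)$, in which case it does not single out a well-defined element $\pi\in W$. This is handled by a small perturbation: since $\Phi$ is finite, I can move $v$ to a nearby $v'$ that avoids every reflecting hyperplane while preserving the strict signs $\langle v',\alpha\rangle>0$ for all $\alpha\in\Roots(I)$. Such a $v'$ lies in a unique chamber $C_\pi$, and by the characterization above, $\Roots(I)\subseteq\set{\beta\in\Phi}{\langle v',\beta\rangle>0}=\pi(\Phi^+)$, so $\pi\in\linearExtensions(I)$.

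The main (and only) obstacle is thus the perturbation step in the converse, which is standard but worth spelling out; everything else is a direct translation between the combinatorial definition of acyclicity via pointedness of $\cone\Roots(I)$ and the geometric identification of $\pi(\Phi^+)$ with the set of roots positive on the chamber~$C_\pi$.
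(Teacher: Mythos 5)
Your proof is correct and follows essentially the same route as the paper's: the paper compresses the argument into the observation that a pointed cone is contained in a generic halfspace, which in turn singles out a unique $\pi$ with $\Phi\cap H^+=\pi(\Phi^+)$, and you simply unpack the genericity/perturbation step that underlies it. The two are the same proof at the level of ideas.
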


\begin{proof}
A classical result for finite root systems states that for any generic linear halfspace~$H^+$ the intersection~$\Phi \cap H^+$ is of the form $\pi(\Phi^+)$ for some $\pi \in W$.
Since any pointed cone is contained in some generic linear halfspace, we obtain
\[
\begin{array}[b]{rcl}
I \text{ is acyclic}  & \longleftrightarrow  &  \cone \Roots(I) \text{ is pointed} \\
& \longleftrightarrow  & \cone \Roots(I)\subseteq \pi(\Phi^+) \text{ for some } \pi\in W \\
& \longleftrightarrow  &  \Roots(I)\subseteq \pi(\Phi^+) \text{ for some } \pi\in W \\
& \longleftrightarrow  &  \linearExtensions(I) \neq \varnothing 
\end{array}
\qedhere
\]
\end{proof}

The following lemma regards linear extensions of the greedy and antigreedy facets.

\begin{lemma}
\label{lem:linearExtensionsGreedyFacets}
Let $\greedyFacet$ and $\antiGreedyFacet$ be the greedy and antigreedy facets of $\subwordComplex(Q,\omega)$, respectively.
Then 
\begin{enumerate}
\item $e\in \linearExtensions(\greedyFacet)$.
\item $\omega \in \linearExtensions(\antiGreedyFacet)$.
\item If $e\in \linearExtensions(I)$ then $I=\greedyFacet$.
\end{enumerate}
\end{lemma}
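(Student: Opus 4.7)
The plan is to unpack \cref{def:linearExtensionsSubwordComplexes} --- which says that $\pi \in \linearExtensions(I)$ iff $\Roots(I) \subseteq \pi(\Phi^+)$ --- and combine it with the dictionary between the sign of~$\rootFunction{I}{i}$ and the nature of the flip at position~$i$ provided by \cref{lem:rootFunctionFlips}. The whole argument consists of checking, for each of the two distinguished facets, that every root in its root configuration sits in the appropriate positive cone. Throughout, I will repeatedly use the decompositions $\Phi^+ = \Inv(\omega) \sqcup \Ninv(\omega)$ and $\omega(\Phi^+) = -\Inv(\omega) \sqcup \Ninv(\omega)$.

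For (1), I would argue as follows. Since~$\greedyFacet$ is the unique source of the increasing flip graph, every flippable position~$i \in \greedyFacet$ corresponds to an increasing flip. By \cref{lem:rootFunctionFlips}\,(2), this forces $\rootFunction{\greedyFacet}{i} \in \Phi^+$. For every non-flippable~$i \in \greedyFacet$, \cref{lem:rootFunctionFlips}\,(4) gives $\rootFunction{\greedyFacet}{i} \in \Ninv(\omega) \subseteq \Phi^+$. Hence $\Roots(\greedyFacet) \subseteq \Phi^+ = e(\Phi^+)$, so $e \in \linearExtensions(\greedyFacet)$. For (2), I would argue symmetrically. Since~$\antiGreedyFacet$ is the unique sink of the increasing flip graph, every flippable~$i \in \antiGreedyFacet$ gives a decreasing flip, so $\rootFunction{\antiGreedyFacet}{i} \in \Phi^-$ by \cref{lem:rootFunctionFlips}\,(2). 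Combined with \cref{lem:rootFunctionFlips}\,(1), which yields~$-\rootFunction{\antiGreedyFacet}{i} \in \Inv(\omega) = \Phi^+ \cap \omega(\Phi^-)$, we obtain $\rootFunction{\antiGreedyFacet}{i} \in \Phi^- \cap \omega(\Phi^+) \subseteq \omega(\Phi^+)$. For non-flippable~$i$, \cref{lem:rootFunctionFlips}\,(4) gives~$\rootFunction{\antiGreedyFacet}{i} \in \Ninv(\omega) \subseteq \omega(\Phi^+)$. Hence $\Roots(\antiGreedyFacet) \subseteq \omega(\Phi^+)$, so $\omega \in \linearExtensions(\antiGreedyFacet)$.

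For (3), I would simply run the argument of (1) in reverse. If $e \in \linearExtensions(I)$, then $\Roots(I) \subseteq e(\Phi^+) = \Phi^+$, so $\rootFunction{I}{i} \in \Phi^+$ for every $i \in I$. By \cref{lem:rootFunctionFlips}\,(2), every flippable position of~$I$ therefore corresponds to an increasing flip, so~$I$ admits no decreasing flip. Hence $I$ is a source of the increasing flip graph, and by uniqueness of this source (recalled in \cref{subsec:subwordComplexes}), $I = \greedyFacet$. There is no real obstacle here: the only thing to keep straight is which piece of the decomposition of $\Phi^+$ or $\omega(\Phi^+)$ the root~$\rootFunction{I}{i}$ lands in, according to whether the position is flippable and, if so, whether the flip is increasing or decreasing.
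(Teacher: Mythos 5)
Your proposal is correct and follows essentially the same route as the paper's proof: both rely on reading off the sign of $\rootFunction{I}{i}$ at each position via \cref{lem:rootFunctionFlips} and comparing against the decompositions $\Phi^+ = \Inv(\omega) \sqcup \Ninv(\omega)$ and $\omega(\Phi^+) = -\Inv(\omega) \sqcup \Ninv(\omega)$. The only cosmetic difference is that the paper handles (1) and (3) in one stroke by asserting that $\greedyFacet$ is the unique facet with $\Roots(I) \subseteq \Phi^+$, whereas you separate the two directions; the substance is identical.
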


\begin{proof}
The greedy facet $\greedyFacet$ is the unique facet for which every flip (if any) is increasing.
By \cref{lem:rootFunctionFlips}~\eqref{lem:rootFunctionFlips2}, this implies that $\greedyFacet$ is the unique facet $I$ such that $\Roots(I)\subseteq \Phi^+ = e(\Phi^+)$.
This proves parts (1) and (3) of the Lemma.   
    
For part (2) we need to analyze the possibilities for the set $\Roots(\antiGreedyFacet)$.
By \cref{lem:rootFunctionFlips}~\eqref{lem:rootFunctionFlips2}, if $i\in \antiGreedyFacet$ is flippable then $\rootFunction{\antiGreedyFacet}{i}\in -\Inv(\omega)$.
Furthermore, if $i\in \antiGreedyFacet$ is not flippable then $\rootFunction{\antiGreedyFacet}{i}\in \Ninv(\omega)$.
Therefore, $\Roots(\antiGreedyFacet)\subseteq \omega(\Phi^+)=-\Inv(\omega)\sqcup \Ninv(\omega)$.
\end{proof}


\subsection{Two theorems on linear extensions of facets}
\label{subsec:twoTheorems}

In this section, we present our two main results about linear extensions for subword complexes (\cref{thm:linearExtensionsPartitionSubwordComplexA,thm:linearExtensionsPartitionSubwordComplexB}), extending the results of \cref{subsec:linearExtensions}.

A subset $\fR$ of the weak order is 
\begin{itemize}
\item a \defn{lower set} if $\sigma < \tau$ and $\tau \in \fR$ implies~$\sigma \in \fR$,
\item \defn{order convex} if $\sigma < \tau < \rho$ and~$\sigma, \rho \in \fR$ implies~$\tau \in \fR$.
\end{itemize}

\begin{theorem}
\label{thm:linearExtensionsPartitionSubwordComplexA}
Let~$\subwordComplex(Q,\omega)$ be a non-empty subword complex.
Then
\begin{enumerate}
\item For any facet~$I$ of~$\subwordComplex(Q,\omega)$ set $\linearExtensions(I)$ is order convex.
\label{item:convex}
\hfill (convex) 
\item $\linearExtensions(Q,\omega)$ is a lower set of the weak order.
\label{item:lowerSet}
\hfill (lower set)
\item $[e,\omega] \subseteq \linearExtensions(Q,\omega).$
\label{item:cover}
\hfill (cover)
\item If $I_1\neq I_2$ then $\linearExtensions(I_1)\cap \linearExtensions(I_2)=\varnothing$.
\label{item:partition}
\hfill (partition)
\end{enumerate}
\end{theorem}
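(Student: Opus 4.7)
I would prove the four parts in order, with parts (3) and (4) built on top of part (2).

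For part~(1), the argument is direct. Given $\sigma \le \tau \le \rho$ with $\sigma, \rho \in \linearExtensions(I)$ and a root $\beta \in \Roots(I)$, the characterization of weak order by inclusion of inversion sets gives $\Inv(\sigma) \subseteq \Inv(\tau) \subseteq \Inv(\rho)$. A case split on the sign of $\beta$ then shows $\beta \in \tau(\Phi^+)$: if $\beta \in \Phi^+$ then $\beta \notin \Inv(\rho) \supseteq \Inv(\tau)$, while if $\beta \in \Phi^-$ then $-\beta \in \Inv(\sigma) \subseteq \Inv(\tau)$. In either case $\beta \in \tau(\Phi^+)$, so $\tau \in \linearExtensions(I)$.

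The heart of the proof is part~(2). It suffices to handle a cover $\sigma \lessdot \tau = \sigma s$ and a facet $I$ with $\tau \in \linearExtensions(I)$, and to exhibit a facet $I'$ with $\sigma \in \linearExtensions(I')$. Setting $\gamma \eqdef \sigma(\alpha_s) \in \Phi^+$, the positive systems $\sigma(\Phi^+)$ and $\tau(\Phi^+)$ coincide except that $\gamma$ is swapped with $-\gamma$, and the reflection $s_\gamma$ exchanges them. If $-\gamma \notin \Roots(I)$ then $I' = I$ works. Otherwise, I would iteratively flip the largest position $i^*$ of the current facet whose root is $-\gamma$; this is a valid decreasing flip by \cref{lem:rootFunctionFlips}, since $-\gamma \in \Phi^- \subseteq \Phi \setminus \Ninv(\omega)$. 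Using the flip formula in \cref{lem:rootFunctionFlips} together with $s_\gamma(\tau(\Phi^+)) = \sigma(\Phi^+)$ and $s_\gamma(\pm\gamma) = \mp\gamma$, I would establish three invariants: the number of positions with root $-\gamma$ strictly decreases at each step; every $\gamma$ position that is either created by the flip or inherited from previous steps lies strictly above all remaining $-\gamma$ positions, so no subsequent reflection window contains a $\gamma$ that could be converted back to $-\gamma$; and all other roots that get reflected remain in $\sigma(\Phi^+)$. After finitely many steps, the resulting facet $I'$ satisfies $\Roots(I') \subseteq \sigma(\Phi^+)$, yielding $\sigma \in \linearExtensions(I')$.

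Part~(3) is then immediate: by \cref{lem:linearExtensionsGreedyFacets} we have $\omega \in \linearExtensions(\antiGreedyFacet) \subseteq \linearExtensions(Q,\omega)$, and by part~(2) the set $\linearExtensions(Q,\omega)$ is a lower set, so it contains $[e, \omega]$. For part~(4), I would induct on $\ell(\pi)$ for a hypothetical $\pi \in \linearExtensions(I_1) \cap \linearExtensions(I_2)$. The base case $\pi = e$ is forced by \cref{lem:linearExtensionsGreedyFacets}, which gives $I_1 = I_2 = \greedyFacet$. For $\pi \ne e$, I pick a descent $s$ of $\pi$, set $\sigma = \pi s \lessdot \pi$, and apply the procedure of part~(2) to both $I_1$ and $I_2$, producing $I_1^*, I_2^*$ with $\sigma \in \linearExtensions(I_i^*)$. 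The inductive hypothesis yields $I_1^* = I_2^*$, and I would conclude $I_1 = I_2$ by exhibiting an explicit inverse procedure, namely iteratively performing the increasing flip at the smallest $\gamma$ position, which reverses the forward procedure step by step and is therefore canonically determined by the common facet $I_1^* = I_2^*$.

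The main obstacle will be the bookkeeping in part~(2), and specifically the invariant controlling the relative positions of $\gamma$ and $-\gamma$ roots across iterations. This rests on two facts: $\gamma \notin \Roots(I)$ at the outset (because $\tau(\Phi^+)$ excludes $\gamma$), and the sequence of positions $j_r^*$ where new $\gamma$ roots are created is strictly decreasing in $r$. This monotonicity is what makes the procedure terminate cleanly, and it is also what makes the inverse procedure in part~(4) well-defined, so both parts hinge on the same delicate control of the flip dynamics.
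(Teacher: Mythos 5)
Your proposal is correct and follows essentially the same route as the paper: part (1) by the same inversion-set argument, part (2) by descending one cover at a time and repairing the facet via flips guided by \cref{lem:rootFunctionFlips}, part (3) via \cref{lem:linearExtensionsGreedyFacets} and part (2), and part (4) by induction on length with the greedy facet as base case.

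The one place where you are working harder than necessary is part~(2), Case~2. You set up an iterative procedure with invariants controlling how the count of $-\gamma$ positions decreases and how $\gamma$ and $-\gamma$ positions interleave across iterations. In fact a \emph{single} flip suffices, and the invariants become vacuous. Here is why: because $-\gamma$ is a negative root it can never occur as $\rootFunction{I}{k}$ for $k \notin I$ (roots at non-facet positions are exactly the inversions of $\omega$, hence positive); so the unique non-facet position with root $\pm\gamma$, to which your largest $-\gamma$ position must flip by \cref{lem:rootFunctionFlips}~(2), is forced to be the unique $\gamma$ position $i_0$. Moreover, by the monotonicity of $\Inv(w_k)$ for the running prefix, every position with root $\pm\gamma$ has its $\gamma$-occurrences before all $-\gamma$-occurrences, so every $-\gamma$ position lies in the window $(i_0, i^*]$. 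The conjugation by $s_\gamma$ in \cref{lem:rootFunctionFlips}~(3) therefore converts all of them to $\gamma$ simultaneously. The paper exploits exactly this: flip the last $-\gamma$ position to $i_0$, and you are done. This one-flip observation also cleans up your part~(4) reversal: which case ($I' = I$ versus one flip was performed) can be read off the common facet $I'$ from whether $\gamma \in \Roots(I')$, and in the latter case the single flip is canonically undone by flipping the smallest $\gamma$-position of $I'$ (which is $i_0$) back to the unique non-facet $\gamma$-position, so no ``step-by-step reversal of an iterative procedure'' is needed. None of this is a gap in your plan---your iteration would simply terminate after one step---but recognizing it up front removes the delicate bookkeeping you correctly identify as the main obstacle.
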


\begin{example}
We have represented five examples in \cref{fig:subwordComplex1,fig:subwordComplex2,fig:subwordComplex3,fig:subwordComplex4,fig:subwordComplex5}.
The last three use~$\omega = \wo$ and are spherical subword complexes (their brick polytopes are represented in \cref{fig:brickPolytope1,fig:brickPolytope2,fig:brickPolytope3}), while the first two do not.
We use the representation of the facets as pseudoline arrangements on sorting networks, see \cref{exm:sortingNetworks}.
\begin{figure}[p]
	\centerline{\includegraphics[scale=.7]{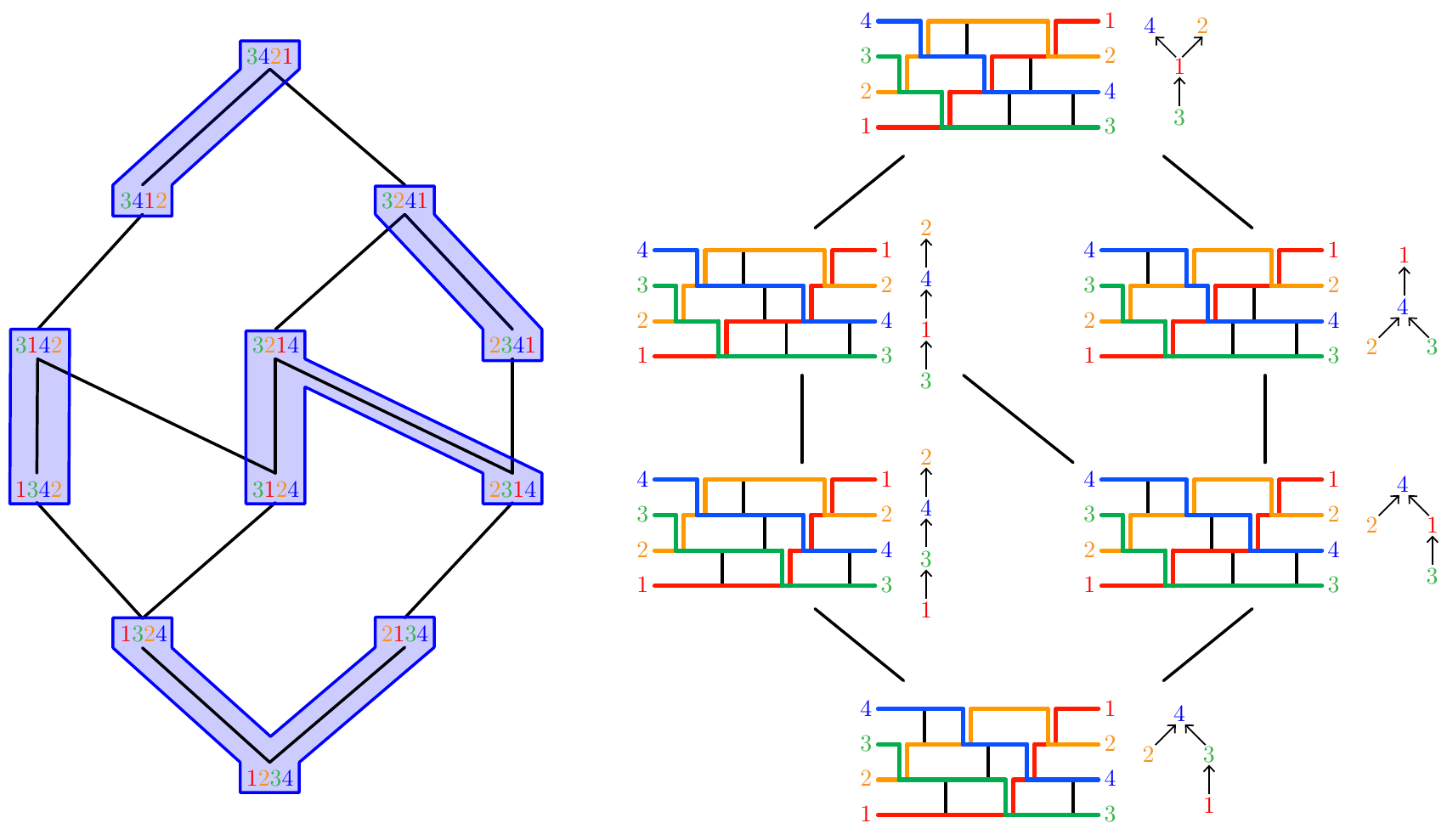}}
	\caption{The subword complex equivalence (left) and the acyclic increasing flip graph (right) for~$Q = \tau_2 \tau_3 \tau_1 \tau_3 \tau_2 \tau_1 \tau_2 \tau_3 \tau_1$ and~$w = \tau_2 \tau_3 \tau_1 \tau_2 \tau_3 = 3421$ in type~$A_3$. Note that the sets~$\linearExtensions(I)$ (blue bulles) are not always intervals.}
	\label{fig:subwordComplex4}
\end{figure}
\begin{figure}[p]
	\centerline{\includegraphics[scale=.7]{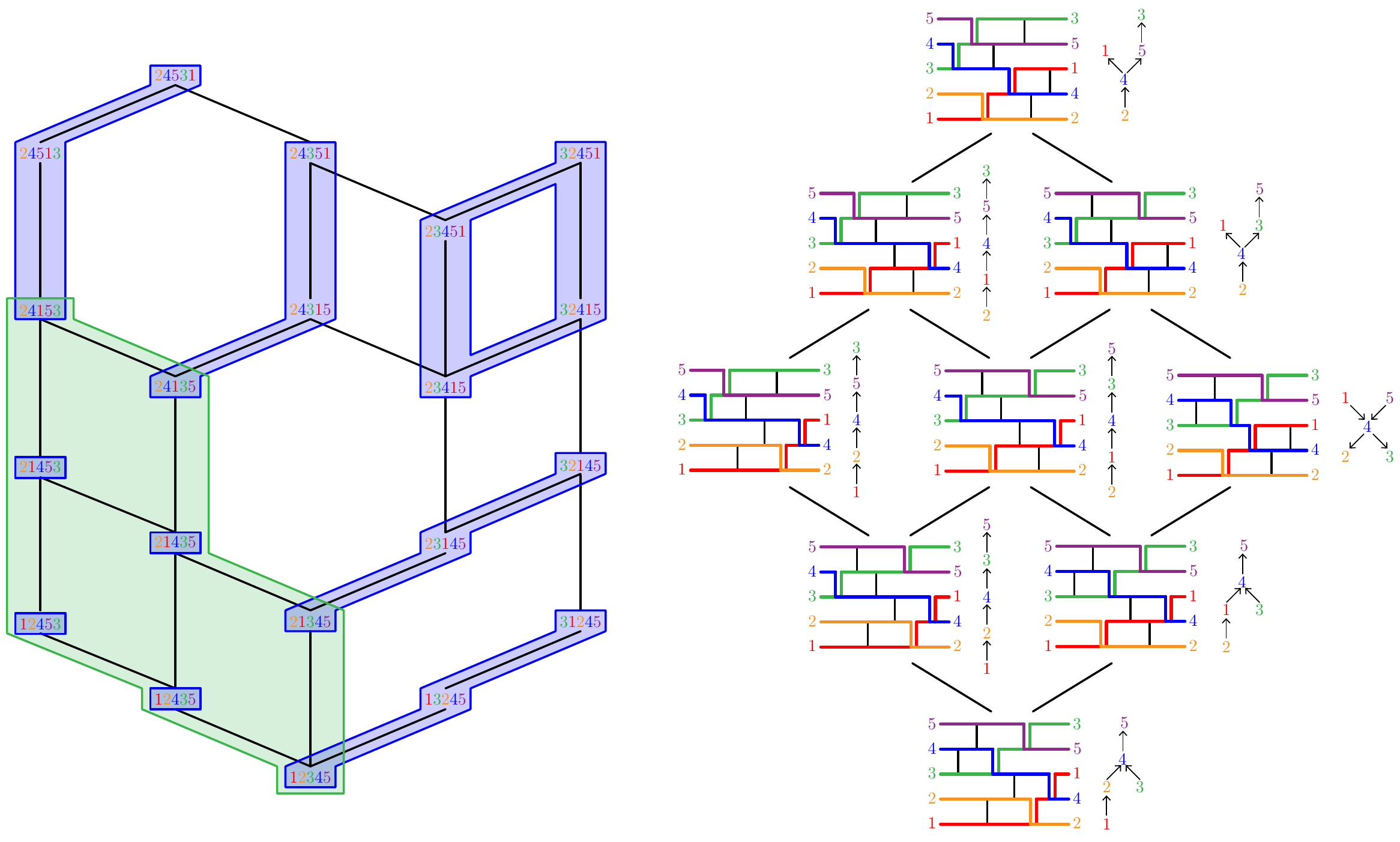}}
	\caption{The subword complex equivalence (left) and the acyclic increasing flip graph (right) for~$Q = \tau_3 \tau_4 \tau_1 \tau_3 \tau_2 \tau_4 \tau_1 \tau_2$ and~$w = \tau_3 \tau_4 \tau_1 \tau_2 = 24153$ in type~$A_4$. Note that the sets~$\linearExtensions(I)$ (blue bubbles) are not always subsets of, and may even not meet~$[e,\omega]$ (green bubble).}
	\label{fig:subwordComplex5}
\end{figure}
\hvFloat[floatPos=p, capWidth=h, capPos=right, capAngle=90, objectAngle=90, capVPos=center, objectPos=center]{figure}
{\includegraphics[scale=.55]{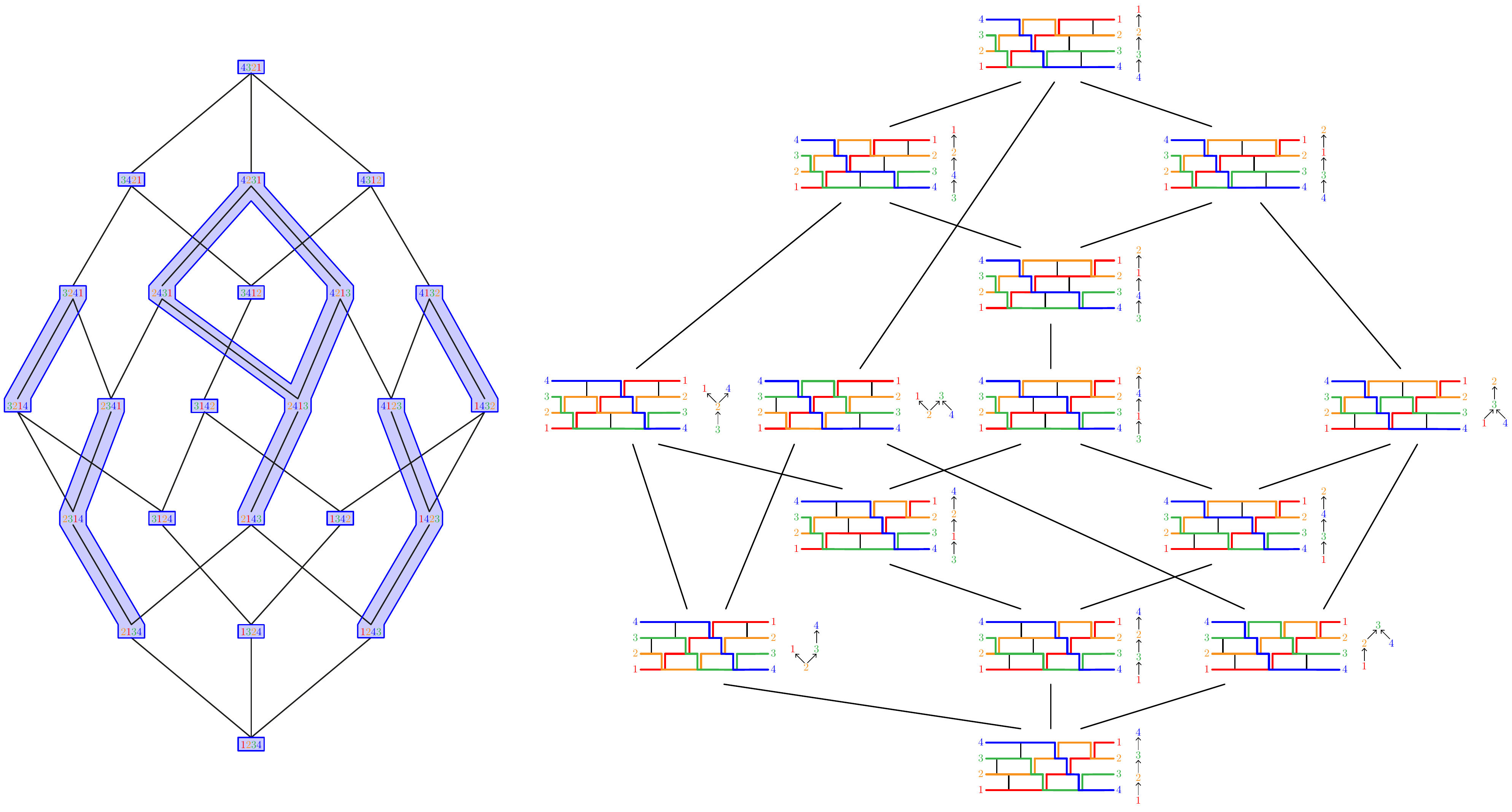}}
{For~$Q = \tau_2 \tau_1 \tau_3 \tau_2 \tau_1 \tau_3 \tau_2 \tau_1 \tau_3$ and~$\omega = \wo = 4321$ in type~$A_3$, the subword complex equivalence is a lattice congruence (left) so that the increasing flip poset is a lattice (right). See also~\cref{fig:increasingFlipPosets}\,(left) It is actually the $\tau_2\tau_1\tau_3$-Cambrian lattice in type~$A_3$. The blue bubbles represent the classes of the pipe dream congruence. For each facet of the subword complex, we have represented the corresponding pseudoline arrangement on the sorting network, and the corresponding contact graph. Note that all facets are acyclic in this case.}
{fig:subwordComplex1}
\hvFloat[floatPos=p, capWidth=h, capPos=right, capAngle=90, objectAngle=90, capVPos=center, objectPos=center]{figure}
{\includegraphics[scale=.65]{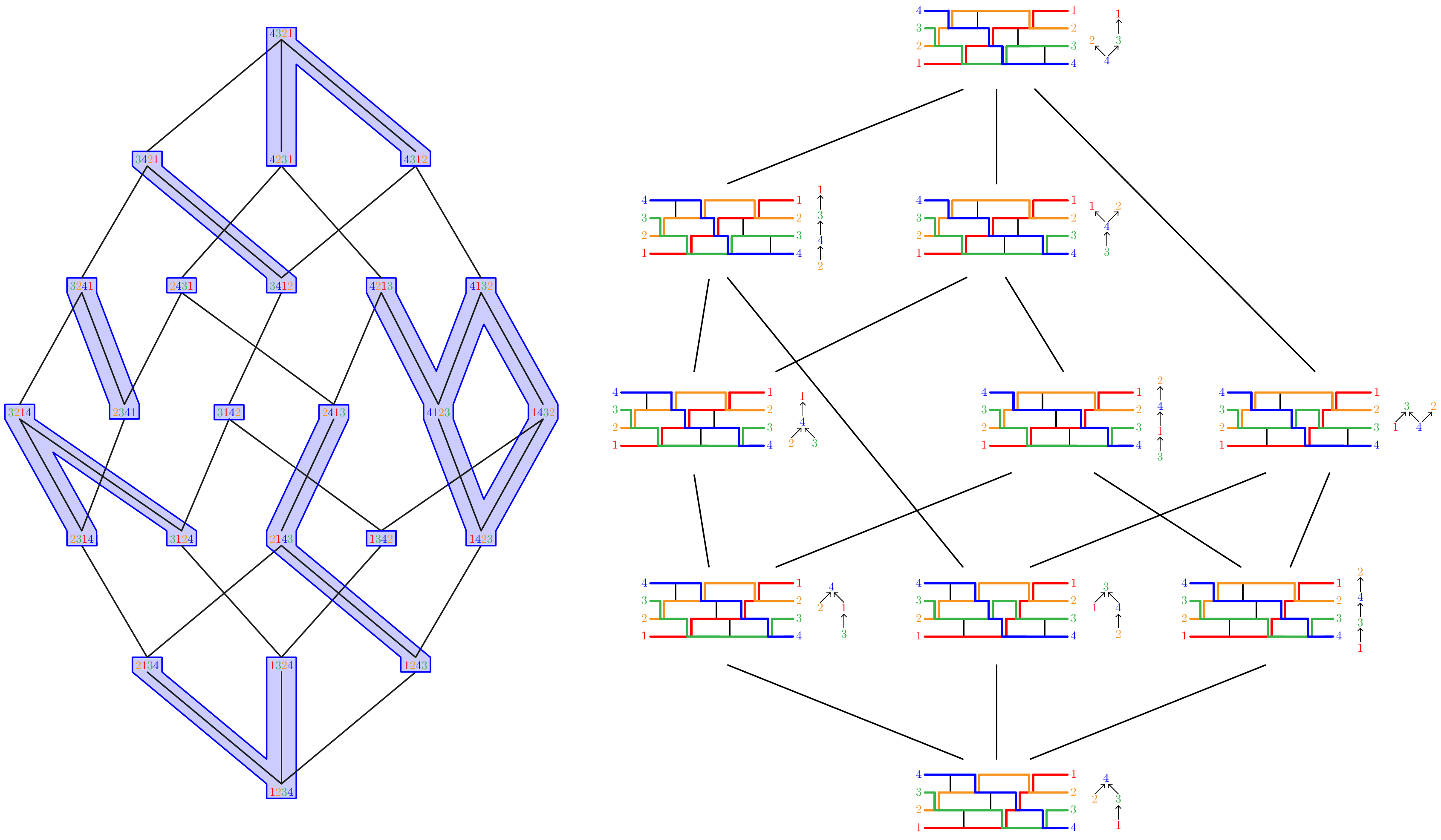}}
{For~$Q = \tau_2 \tau_3 \tau_1 \tau_3 \tau_2 \tau_1 \tau_2 \tau_3 \tau_1$ and~$\omega = \wo = 4321$ in type~$A_3$, the subword complex equivalence is not a lattice congruence (left) but the increasing flip poset is a lattice (right). The blue bubbles represent the classes of the pipe dream congruence. For each facet of the subword complex, we have represented the corresponding pseudoline arrangement on the sorting network, and the corresponding contact graph. Note that all facets are acyclic in this case.}
{fig:subwordComplex2}
\hvFloat[floatPos=p, capWidth=h, capPos=right, capAngle=90, objectAngle=90, capVPos=center, objectPos=center]{figure}
{\includegraphics[scale=.65]{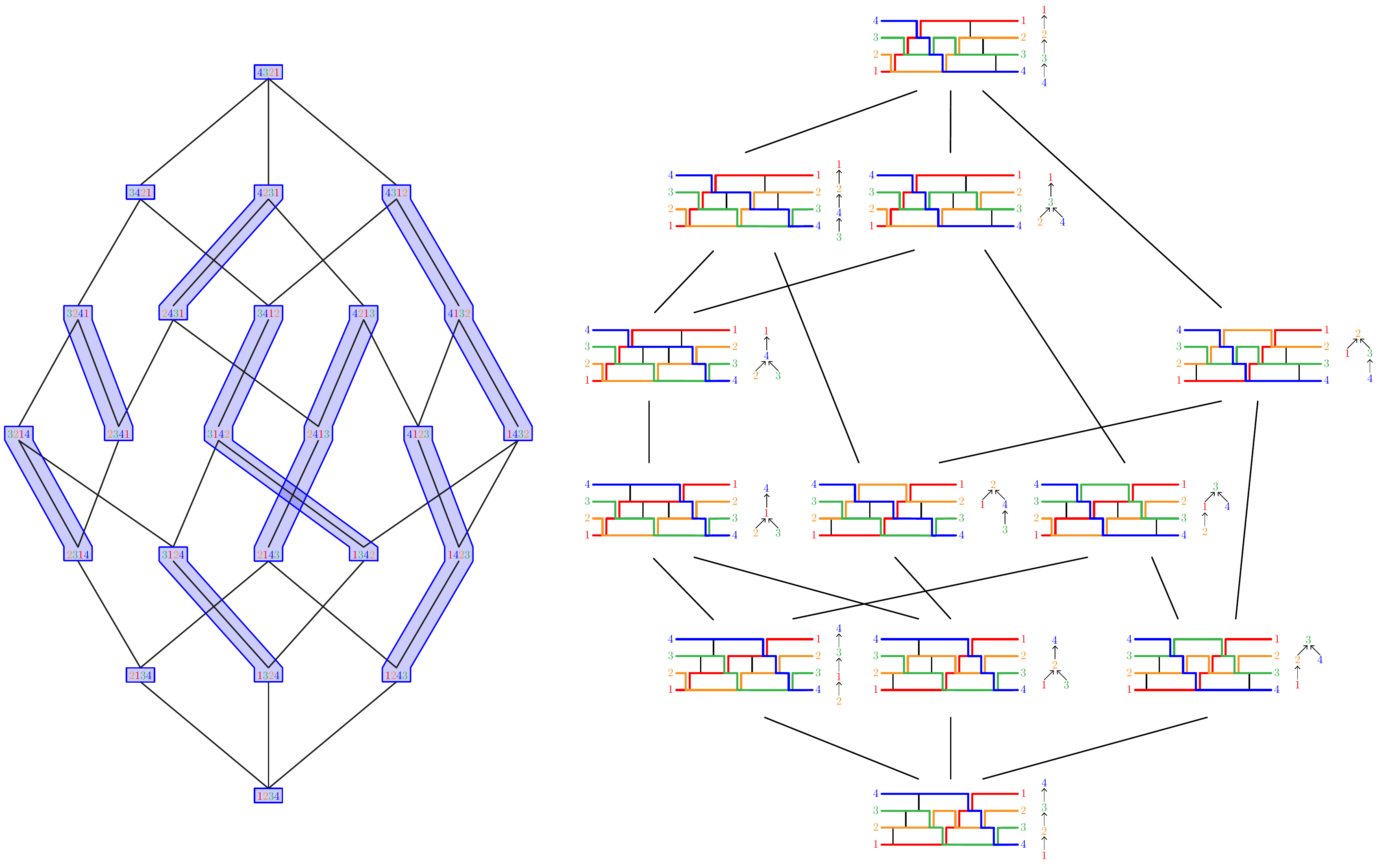}}
{For~$Q = \tau_1 \tau_2 \tau_3 \tau_2 \tau_1 \tau_2 \tau_3 \tau_2 \tau_1$ and~$\omega = \wo = 4321$ in type~$A_3$, the subword complex equivalence is not a lattice congruence (left) and the increasing flip poset is not a lattice (right). See also~\cref{fig:increasingFlipPosets}\,(right). The blue bubbles represent the classes of the pipe dream congruence. For each facet of the subword complex, we have represented the corresponding pseudoline arrangement on the sorting network, and the corresponding contact graph. Note that all facets are acyclic in this case.}
{fig:subwordComplex3}
\end{example}

\begin{remark}
\label{rem:linearExtensionsPartitionSubwordComplexA}
Before proving it, we would like to make the following remarks about theorem.
\begin{enumerate}
\item While the sets~$\linearExtensions(I)$ are order convex, they may not be intervals. See \cref{fig:subwordComplex2,fig:subwordComplex4} (the example of \cref{fig:subwordComplex2} is borrowed from~\cite[Figure~9]{PilaudStump-brickPolytope}).
\item The lower set~$\linearExtensions(Q,\omega)$ may have more than one maximal element, and~$\omega$ is not necessarily maximal in~$\linearExtensions(Q,\omega)$. See \cref{fig:subwordComplex5}.
\item Some of the sets~$\linearExtensions(I)$ may not be included in, nor even meet~$[e,\omega]$. See \cref{fig:subwordComplex5}.
\end{enumerate}
\end{remark}

\begin{remark}
We note that the set~$\linearExtensions(Q,\omega)$ of linear extensions has been considered independently in the work on brick polyhedra by D.~Jahn and C.~Stump in~\cite{JahnStump}. In particular,~$\linearExtensions(Q,\omega)$ is completely characterized using Bruhat cones in~\cite[Prop.~4.12]{JahnStump}, and the convex property~\eqref{item:convex} is equivalent to~\cite[Lem.~4.13]{JahnStump}. The other three properties also follow from their characterization, see~\cite[Sect.~4.2]{JahnStump}.  
Since we found these results independently, and the techniques we use to prove them are rather different, we believe that it is relevant to keep our contributions here. Moreover, this leads to further perspectives and conjectures in connection to brick polyhedra which we present in~\cref{subsec:fiveConjectures}. 
\end{remark}

We will now prove the four points of \cref{thm:linearExtensionsPartitionSubwordComplexA} one by one.

\begin{proof}[Proof of \cref{thm:linearExtensionsPartitionSubwordComplexA}~\eqref{item:convex}]
Let $\sigma < \tau < \rho$ be such that~$\sigma, \rho \in \linearExtensions(I)$.
By definition of linear extensions, we have
\[
\Roots(I) \subseteq \sigma(\Phi^+)=-\Inv(\sigma) \sqcup \Ninv(\sigma)
\quad\text{and}\quad
\Roots(I) \subseteq \rho(\Phi^+)=-\Inv(\rho) \sqcup \Ninv(\rho).
\]
Restricting to the set of negative and positive roots, respectively, we deduce
\[
\Roots(I) \cap \Phi^- \subseteq -\Inv(\sigma) \subseteq -\Inv(\tau) \qquad 
\Roots(I) \cap \Phi^+ \subseteq \Ninv(\rho)  \subseteq \Ninv(\tau)
\]
since~$\sigma < \tau < \rho$. Therefore
\[
\Roots(I) \subseteq -\Inv(\tau) \sqcup \Ninv(\tau) = \tau(\Phi^+)
\]
and so $\tau \in \linearExtensions(I)$.
\end{proof}

\begin{proof}[Proof of \cref{thm:linearExtensionsPartitionSubwordComplexA} \eqref{item:lowerSet}]
Let $\pi \in \linearExtensions(I)$ for some facet $I$.
We need to show that if $\pi' < \pi$ then there exist another facet $I'$ such that $\pi' \in \linearExtensions(I')$.
It is enough to show this when $\pi = \pi's$ for some descent~$s$ of~$\pi$ (\ie some $s\in S$ such that $\ell(\pi') < \ell(\pi)$).
Define~$\beta=\pi'(\alpha_s)$ and observe that~$s_\beta$ preserves the set~$\pi'(\Phi^+) \ssm \{\beta\} = \pi(\Phi^+) \ssm \{-\beta\} = \pi(\Phi^+) \cap \pi'(\Phi^+)$.

By definition, $\pi \in \linearExtensions(I)$ if and only if $\Roots(I) \subseteq \pi(\Phi^+)$.
We now define a new facet $I'$ such that $\pi' \in \linearExtensions(I')$.
We distinguish two cases, depending on whether or not~$-\beta \in \Roots(I)$.

\medskip
\paragraph{\bf Case 1:} $-\beta \notin \Roots(I)$.

In this case, 
$\Roots(I)\subseteq \pi(\Phi^+) \ssm \{-\beta\} \subseteq \pi'(\Phi^+)$.
Taking $I'=I$, we have 
$\pi'\in \linearExtensions(I')$ 
as wanted.

\medskip
\paragraph{\bf Case 2:} $-\beta \in \Roots(I)$.

In this case, we need to remove $-\beta$ from the root configuration.
We will achieve this by flipping the position of the last $-\beta$ in $I$ to create a new facet $I'$.
This position is indeed flippable as we will argue now. 

Given a facet $I$ of a subword complex~$\subwordComplex(Q,\omega)$ and a positive root $\beta\in \Phi^+$, the restriction of the list of roots 
\[
\rootFunction{I}{1}, \rootFunction{I}{2}, \dots , \rootFunction{I}{m}.
\]
to the set $\{\beta,-\beta\}$ is of the form
\[
\begin{array}{cccc}
\beta, \dots , & \beta &, -\beta, \dots, &-\beta.\\
& i && j
\end{array}
\]
The sequence of $-\beta$'s could in principle be empty and so does the sequence of $\beta$'s.
But if there is a $-\beta$ in this list, then there should be at least one $\beta$ preceding it.
The position $i$ of the last $\beta$ is used in the reduced expression of $\omega$ in the complement of $I$, that is $i\notin I$.
The positions of the other $\beta$'s and $-\beta$'s all belong to~$I$, and can all be flipped to $i$ (see \cref{lem:rootFunctionFlips}~\eqref{lem:rootFunctionFlips2}).   
In particular, the position $j$ of the last $-\beta$ belongs to $I$, and it can be flipped to $i$ creating a new facet $I'=I\ssm \{j\} \cap \{i\}$.

Now, since $\beta\notin \pi(\Phi^+)$.
There must be only one $\beta$ in the list.
Otherwise, there would be at least one $\beta$ whose position belongs to the facet $I$.
This would imply that $\beta\in \Roots(I)$ and $\pi$ would not be a linear extension of $I$, which is a contradiction.
So, our restricted list corresponding to $I$ looks like
\[
\begin{array}{ccc}
  \beta &, -\beta, \dots, &-\beta.\\
      i && j
\end{array}
\]
By  \cref{lem:rootFunctionFlips}~\eqref{lem:rootFunctionFlips3}, we obtain that flipping $j$ to $i$ creates the new facet $I' = I \ssm \{j\} \cap \{i\}$ whose corresponding restricted list looks like
\[
\begin{array}{ccc}
  \beta &, \beta, \dots, &\beta.\\
      i && j
\end{array}    
\]

Moreover, since the reflection $s_\beta$ preserves the set 
\[
\pi'(\Phi^+) \ssm \{\beta\} = \pi(\Phi^+) \ssm \{-\beta\} = \pi(\Phi^+) \cap \pi'(\Phi^+).
\]
then $\Roots(I') \subseteq \pi'(\Phi^+)$.
Thus, $\pi'\in \linearExtensions(I')$ as desired.
\end{proof}

\begin{proof}[Proof of \cref{thm:linearExtensionsPartitionSubwordComplexA}~\eqref{item:cover}]
By part \eqref{item:lowerSet}, the union of all linear extensions of facets is a lower set.
So, we just need to show that $\omega$ belongs to this set.
This follows from $\omega\in \linearExtensions(\antiGreedyFacet)$, which was proven in \cref{lem:linearExtensionsGreedyFacets}.
\end{proof}

\begin{proof}[Proof of \cref{thm:linearExtensionsPartitionSubwordComplexA}~\eqref{item:partition}]
We show that if there is two facets~$I_1, I_2$ of~$\subwordComplex(Q, \omega)$ and an element~$\pi \in W$ such that $\pi \in \linearExtensions(I_1) \cap \linearExtensions(I_2)$ then~$I_1= I_2$.
The proof works by induction on the length~$\ell(\pi)$ of~$\pi$.
We already showed this for $\pi = e$ in \cref{lem:linearExtensionsGreedyFacets}.

Let $I_1, I_2$ be two facets such that $e \neq \pi \in \linearExtensions(I_1)\cap \linearExtensions(I_2)$.
As in the proof of part \eqref{item:lowerSet}, let $\pi'=\pi s$ for some $s\in S$ such that $\ell(\pi')<\ell(\pi)$, and let $I_1',I_2'$ be the corresponding facets obtained using the same steps of the proof.
These new facets satisfy $\pi' \in \linearExtensions(I_1')\cap \linearExtensions(I_2')$, so that~$I_1' = I_2'$ by induction.

We now claim that it implies that~$I_1 = I_2$.
We analyze the two cases if the proof of part \eqref{item:lowerSet}.
Note that in Case~1, the resulting facet~$I'$ obtained from $I$ satisfies $\beta\notin \Roots(I')$, while in Case 2 we have $\beta \in \Roots(I')$. 
As $I_1' = I_2'$, this shows that $I_1$ and $I_2$ fall either both into Case 1 or both into Case 2.
If both fall into Case 1, then $I_1'=I_1$ and $I_2'=I_2$ and so $I_1=I_2$ as desired.
If both fall into Case 2, then we just need to flip back the performed flip to obtain $I_1=I_2$.
\end{proof}

Since $\linearExtensions(I)\neq \varnothing$ if and only if $I\in \subwordAcyclicFacets(Q,\omega)$ (\ie $I$ is an acyclic facet) by \cref{lem:linearExtensionsAcyclic}, and also~$\linearExtensions(I) \ne \linearExtensions(J)$ for~$I \ne J$ by \cref{thm:linearExtensionsPartitionSubwordComplexA}~\eqref{item:partition}, we have the following straightforward corollary.

\begin{corollary}
\label{coro:linearExtensionsPartitionSubwordComplexes}
For any word~$Q$ and element~$\omega$,
\[
\linearExtensions(Q,\omega) = \bigsqcup_{I\in\subwordAcyclicFacets(Q,\omega)} \linearExtensions(I).
\]
\end{corollary}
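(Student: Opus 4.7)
The plan is to observe that this corollary is a direct combination of three facts already established in the excerpt, so no substantive new argument is needed; the task is just to assemble them carefully while keeping track of which result supplies which ingredient.

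First, I would unfold the definition: by \cref{def:linearExtensionsSubwordComplexes},
\[
\linearExtensions(Q,\omega) = \bigcup_{I \in \subwordFacets(Q,\omega)} \linearExtensions(I).
\]
Next, I would trim this union to acyclic facets. By \cref{lem:linearExtensionsAcyclic}, a facet $I$ satisfies $\linearExtensions(I) \neq \varnothing$ if and only if $I \in \subwordAcyclicFacets(Q,\omega)$. Consequently, the non-acyclic facets contribute only empty sets to the union above, and dropping them does not change the union. Hence
\[
\linearExtensions(Q,\omega) = \bigcup_{I \in \subwordAcyclicFacets(Q,\omega)} \linearExtensions(I).
\]

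Finally, I would upgrade the union to a disjoint union by invoking \cref{thm:linearExtensionsPartitionSubwordComplexA}~\eqref{item:partition}: for any two distinct facets $I_1, I_2$ of $\subwordComplex(Q,\omega)$, the sets $\linearExtensions(I_1)$ and $\linearExtensions(I_2)$ are disjoint. Restricting to the acyclic ones preserves this pairwise disjointness, so the union above is in fact a disjoint union, giving the claimed identity.

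There is no real obstacle here: the proof is a one-line assembly of \cref{def:linearExtensionsSubwordComplexes}, \cref{lem:linearExtensionsAcyclic}, and \cref{thm:linearExtensionsPartitionSubwordComplexA}~\eqref{item:partition}. The only thing to be slightly careful about is phrasing so that the reader sees that \emph{(i)} emptiness of $\linearExtensions(I)$ for cyclic $I$ lets us drop those indices from the union, and \emph{(ii)} the partition statement of \cref{thm:linearExtensionsPartitionSubwordComplexA} gives pairwise disjointness rather than merely non-equality, which is what makes the union on the right a genuine disjoint union.
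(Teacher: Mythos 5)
Your proof is correct and follows essentially the same route as the paper, which assembles the definition of $\linearExtensions(Q,\omega)$, \cref{lem:linearExtensionsAcyclic}, and \cref{thm:linearExtensionsPartitionSubwordComplexA}~\eqref{item:partition} in exactly this way. Your closing remark that part \eqref{item:partition} supplies genuine disjointness (not mere inequality of the sets) is a careful observation; the paper's in-line justification is phrased as ``$\linearExtensions(I) \ne \linearExtensions(J)$'' but relies on the same disjointness from the theorem.
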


As pointed out in \cref{rem:linearExtensionsPartitionSubwordComplexA}, there are subword complexes for which $[e,\omega] \neq \linearExtensions(Q,\omega)$.
Our second fundamental theorem describes a large family of cases where equality holds.
We say that a word~$Q$ is \defn{sorting} if it contains a reduced expression of $\wo$.
Equivalently~$Q$ contains a reduced expression for any element~$w \in W$.
Still equivalently, $\DemazureProduct(Q) = \wo$ where~$\DemazureProduct(Q)$ denotes the \defn{Demazure product} of~$Q$, defined by~$\DemazureProduct(\varepsilon) = e$ and~$\DemazureProduct(Qs) = \max(\DemazureProduct(Q), \DemazureProduct(Q)s)$ (where the $\max$ is in weak order).

\begin{theorem}
\label{thm:linearExtensionsPartitionSubwordComplexB}
If the word $Q$ is sorting, then the linear extensions of acyclic facets of $\subwordComplex(Q,\omega)$ form a partition of the interval $[e,\omega]$, that is
\[
[e,\omega] = \bigsqcup_{I\in\subwordAcyclicFacets(Q,\omega)} \linearExtensions(I).
\]
\end{theorem}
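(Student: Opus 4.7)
The strategy is to reduce the theorem to establishing the missing inclusion $\linearExtensions(Q,\omega) \subseteq [e,\omega]$ when $Q$ is sorting. Combining this with \cref{thm:linearExtensionsPartitionSubwordComplexA}~\eqref{item:cover} yields the equality $\linearExtensions(Q,\omega) = [e,\omega]$, and \cref{coro:linearExtensionsPartitionSubwordComplexes} then delivers the desired disjoint partition indexed by acyclic facets. So the remaining task is to show: for any facet $I$ of $\subwordComplex(Q,\omega)$ with $Q$ sorting and any $\pi \in \linearExtensions(I)$, $\pi \leq \omega$.

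First I would translate $\pi \leq \omega$ into the inversion-set inclusion $\Ninv(\omega) \subseteq \Ninv(\pi) = \Phi^+ \cap \pi(\Phi^+)$. From $\Roots(I) \subseteq \pi(\Phi^+)$ we get $\cone(\Roots(I)) \subseteq \cone(\pi(\Phi^+))$, and the pointed simplicial cone $\cone(\pi(\Phi^+)) = \cone(\pi(\Delta))$ meets $\Phi$ exactly in $\pi(\Phi^+)$ (a standard feature of finite root systems, since $-\pi(\alpha)$ with $\alpha \in \Phi^+$ cannot be a non-negative combination of $\pi(\Delta)$). Hence every root in $\cone(\Roots(I))$ already lies in $\pi(\Phi^+)$, and the theorem reduces to the following key lemma, which is the direct subword-complex analogue of \cref{lem:consequenceRectangle1} used in the proof of \cref{lem:interval} for pipe dreams: \emph{if $Q$ is sorting, then $\Ninv(\omega) \subseteq \cone(\Roots(I))$ for every facet $I$ of $\subwordComplex(Q,\omega)$}.

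My plan for this key lemma is induction on the distance from the greedy facet $\greedyFacet$ in the increasing flip graph. For the base case $I = \greedyFacet$, all values $\rootFunction{\greedyFacet}{k}$ lie in $\Phi^+$ (since every flip out of $\greedyFacet$ is increasing, \cref{lem:rootFunctionFlips} forces positivity), so the task reduces to expanding each $\beta \in \Ninv(\omega)$ as a non-negative combination of $\Roots(\greedyFacet)$; the sorting hypothesis $\DemazureProduct(Q) = \wo$ provides, via the sweeping algorithm of the next subsection, enough letters in $Q$ to produce such an expansion. For the inductive step, an increasing flip $I \to I'$ with directing root $\beta_0 \in \Phi^+$ modifies only the roots at positions strictly between the flipped ones by applying the reflection $s_{\beta_0}$ (\cref{lem:rootFunctionFlips}~(3)); substituting this into a non-negative decomposition over $\Roots(I)$ produces a decomposition over $\Roots(I')$ up to a residual multiple of $\beta_0$, whose sign must match that of $\rootFunction{I'}{j} \in \{\pm \beta_0\}$. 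Verifying this sign compatibility is the principal obstacle, as the type-$A$ argument (\cref{lem:rectangle,lem:consequenceRectangle1}) relies on the planar rectangle geometry of pipe dreams without a direct Coxeter-theoretic analogue; an alternative route is to invoke the Bruhat-cone characterization of $\linearExtensions(Q,\omega)$ established by Jahn and Stump in~\cite[Prop.~4.12]{JahnStump}, which relates $\linearExtensions(I)$ directly to Bruhat intervals and under the sorting assumption forces $\linearExtensions(I) \subseteq [e,\omega]$.
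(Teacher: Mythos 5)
Your proposal is correct and follows essentially the same route as the paper: reduce $\pi \le \omega$ to $\Ninv(\omega) \subseteq \Ninv(\pi)$, which follows once one knows $\Ninv(\omega) \subseteq \cone\Roots(I)$ for every facet $I$, and both you and the paper ultimately outsource this key cone inclusion to Jahn--Stump (the paper cites their Thm.~3.1 and Cor.~3.3, packaged as \cref{prop:intersectionRootConfigurations}, while you cite their Prop.~4.12). Your attempted elementary induction on flip distance does have the gap you flag in the inductive step (the sign-compatibility of the residual $\beta_0$ term is genuinely nontrivial), so the Jahn--Stump fallback is load-bearing in your argument, just as it is in the paper's.
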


The proof is based on the following statement, which follows from~\cite[Thm.~3.1 \& Coro.~3.3]{JahnStump}.

\begin{proposition}[{\cite{JahnStump}}]
\label{prop:intersectionRootConfigurations}
If the word $Q$ is sorting, then 
\[
\Big( \bigcap_{I\in \subwordFacets(Q,\omega)} \cone \Roots(I) \Big) \cap \Phi^+ = \Ninv(\omega).
\]
\end{proposition}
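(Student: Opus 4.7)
I would decompose the proposition into two separate containments and handle them by quite different means.

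\smallskip

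\noindent\emph{The easy direction} $\bigl(\bigcap_{I}\cone\Roots(I)\bigr)\cap\Phi^{+}\subseteq\Ninv(\omega)$ can be extracted from \cref{thm:linearExtensionsPartitionSubwordComplexA}, without using the sorting hypothesis. By definition, for any facet $I$ and any $\pi\in\linearExtensions(I)$ one has $\Roots(I)\subseteq\pi(\Phi^{+})$, hence $\cone\Roots(I)\subseteq\pi(\cone\Phi^{+})$. Since \cref{thm:linearExtensionsPartitionSubwordComplexA}~\eqref{item:cover} gives $[e,\omega]\subseteq\linearExtensions(Q,\omega)$, we may take the intersection over facets to obtain
\[
\bigcap_{I}\cone\Roots(I)\;\subseteq\;\bigcap_{\pi\in\linearExtensions(Q,\omega)}\pi(\cone\Phi^{+})\;\subseteq\;\bigcap_{\pi\le\omega}\pi(\cone\Phi^{+}).
\]
Because $\cone\Phi^{+}\cap\Phi=\Phi^{+}$, intersecting with $\Phi^{+}$ turns each $\pi(\cone\Phi^{+})$ into $\pi(\Phi^{+})\cap\Phi^{+}=\Ninv(\pi)$; and $\pi\le\omega\Leftrightarrow\Ninv(\omega)\subseteq\Ninv(\pi)$ makes the rightmost intersection equal to $\Ninv(\omega)$.

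\smallskip

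\noindent\emph{The hard direction} $\Ninv(\omega)\subseteq\cone\Roots(I)$ for every facet $I$ is where the sortedness of $Q$ must be used, and is the main obstacle. I would attack it by induction on the flip-distance from the antigreedy facet $\antiGreedyFacet$. The natural base is $I=\antiGreedyFacet$: since $\omega\in\linearExtensions(\antiGreedyFacet)$ by \cref{lem:linearExtensionsGreedyFacets}, we know $\Roots(\antiGreedyFacet)\subseteq\omega(\Phi^{+})=\Ninv(\omega)\sqcup(-\Inv(\omega))$, and (by the characterization of flippable positions in \cref{lem:rootFunctionFlips}) this splits cleanly into roots in $\Ninv(\omega)$ at non-flippable positions and roots in $-\Inv(\omega)$ at flippable positions. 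What needs to be proved is that these generators, taken together, already span all of $\Ninv(\omega)$ in the cone sense: for every $\alpha\in\Ninv(\omega)$, one can find non-negative coefficients realizing $\alpha$ as $\sum c_{a}a-\sum d_{b}b'$ with $a\in\Ninv(\omega)\cap\Roots(\antiGreedyFacet)$, $b'\in\Inv(\omega)$ appearing as $-b'\in\Roots(\antiGreedyFacet)$. The sortedness of $Q$ should feed in here by guaranteeing that the reduced expression $Q_{[m]\ssm\antiGreedyFacet}$ for $\omega$ extends, inside $Q$, to a reduced expression for $\wo$; I would use this to reconstruct each non-inversion of $\omega$ as a \emph{positive} linear combination of roots carried by the portion of $\antiGreedyFacet$ visible to the reduced word completing $\omega$ into $\wo$.

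\smallskip

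For the inductive step, I would apply \cref{lem:rootFunctionFlips}~\eqref{lem:rootFunctionFlips3}: when $I$ and $J$ are related by a flip with $i<j$ and $\beta=\rootFunction{I}{i}$, the root at position $i$ changes from $\beta$ (in $\Roots(I)$) to $-\beta$ (in $\Roots(J)$ at position $j$), while intermediate roots are replaced by their $s_{\beta}$-reflections. Given a decomposition $\alpha=c_{i}\beta+\sum_{k\in I\ssm\{i\}}c_{k}\rootFunction{I}{k}$ with $c_{i},c_{k}\ge0$, one may write $\beta=-(-\beta)$ and use the identity $\rootFunction{I}{k}=s_{\beta}(\rootFunction{J}{k})=\rootFunction{J}{k}-\langle\rootFunction{J}{k},\beta^{\vee}\rangle\beta$ at intermediate positions to replay $\alpha$ as a combination of roots in $\Roots(J)$ modulo a correction proportional to $\beta$; that correction is absorbable as a non-negative multiple of $-(-\beta)=\beta$ whenever the resulting coefficient on $-\beta$ is non-positive, which one has to check by separating cases according to the sign of $\langle\alpha,\beta^{\vee}\rangle$ and whether $\alpha\in\Ninv(\omega)\cap\beta^{\perp}$. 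This bookkeeping is routine once the base case is in hand.

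\smallskip

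The principal difficulty is thus isolating precisely how sortedness forces the antigreedy root configuration to be ``rich enough''; \cref{fig:subwordComplex5} shows that the statement genuinely fails without sortedness, so this step must use the hypothesis decisively. I expect the cleanest formulation to be an auxiliary combinatorial lemma extracting the structure of $Q_{[m]\ssm\antiGreedyFacet}$ as a prefix of some reduced expression of $\wo$ contained in $Q$, and then to read off the required non-negative combinations from that extension.
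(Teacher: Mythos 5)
Your easy inclusion $\bigl(\bigcap_{I}\cone\Roots(I)\bigr)\cap\Phi^{+}\subseteq\Ninv(\omega)$ is correct (it already follows from $\omega\in\linearExtensions(\antiGreedyFacet)$, i.e.\ \cref{lem:linearExtensionsGreedyFacets}, without the sorting hypothesis), but the proposition's actual content is the reverse inclusion $\Ninv(\omega)\subseteq\cone\Roots(I)$ for \emph{every} facet $I$, and that part of your proposal is not a proof: both of its two steps are left as hopes. For the base case you assert that sortedness should let one ``reconstruct each non-inversion of $\omega$ as a positive linear combination'' of roots of $\Roots(\antiGreedyFacet)$, but you give no mechanism for producing these coefficients; note that sorting only says $Q$ contains \emph{some} reduced word for $\wo$, not that the reduced word $Q_{[m]\ssm\antiGreedyFacet}$ for $\omega$ extends inside $Q$ in any way you control, and even the weaker statement that every \emph{simple} root of $\Ninv(\omega)$ lies in $\Roots(I)$ already requires a nontrivial result (\cite[Prop.~3.14]{JahnStump}, quoted later as \cref{prop:JSDemazureCone}); the non-simple roots of $\Ninv(\omega)$ are exactly where the Bruhat-cone machinery is needed. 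For the inductive step, transporting a decomposition $\alpha=c_i\beta+\sum_k c_k\rootFunction{I}{k}$ across a flip via $\rootFunction{I}{k}=\rootFunction{J}{k}-\langle\rootFunction{J}{k},\beta^\vee\rangle\beta$ leaves a residual coefficient $C$ on $\beta$, and when $C>0$ you are stuck, since $\beta\notin\Roots(J)$ in general and nothing guarantees that a different nonnegative decomposition with $C\le 0$ exists; the ``routine bookkeeping'' you defer is precisely the point at issue, and no case analysis in terms of $\langle\alpha,\beta^\vee\rangle$ is supplied or obviously sufficient. So the hard direction remains unproved.

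For comparison, the paper does not prove this inclusion from scratch either: its proof is a two-line derivation from \cite[Thm.~3.1]{JahnStump}, which identifies $\bigcap_{I}\cone\Roots(I)$ with the Bruhat cone $C^{+}(\omega,\DemazureProduct(Q))$, together with the observation that $Q$ sorting means $\DemazureProduct(Q)=\wo$ and \cite[Cor.~3.3]{JahnStump}, giving $C^{+}(\omega,\wo)\cap\Phi^{+}=\Inv(\omega\wo)=\Ninv(\omega)$. If you want a self-contained argument along your lines, the missing ingredient is essentially a proof of (a special case of) those Bruhat-cone results, and that is substantially harder than the flip-propagation sketch suggests.
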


\begin{proof}
We include a short proof here for self containment.
We refer to~\cite{JahnStump} for the description of the notation $C^+(\omega,\cdot)$.
By~\cite[Theorem~3.1]{JahnStump} we have
\[
\bigcap_{I\in \subwordFacets(Q,\pi)} \cone \Roots(I) = C^+(\omega,\DemazureProduct(Q)).
\]
The word $Q$ contains a reduced expression of $\wo$ if and only if $\DemazureProduct(Q)=\wo$.
Furthermore, by~\cite[Corollary~3.3]{JahnStump} we have
\[
C^+(\omega,\wo)\cap \Phi^+ = \Inv(\omega \wo) = \Ninv(\omega).
\qedhere
\]
\end{proof}

\begin{proof}[Proof of \cref{thm:linearExtensionsPartitionSubwordComplexB}]
If $\pi \in \linearExtensions(I)$ for some facet~$I$ of~$\subwordComplex(Q, \omega)$, then by \cref{prop:intersectionRootConfigurations} we have 
\[
\Ninv(\omega) \subseteq \cone \Roots(I) \cap \Phi^+ \subseteq \cone \pi(\Phi^+) \cap \Phi^+  = \Ninv(\pi).
\]
Thus $\pi \le \omega$ as desired.
\end{proof}


\subsection{Five conjectures on linear extensions of facets}
\label{subsec:fiveConjectures}

In this section, we present conjectural generalizations of the results of \cref{subsec:pipeDreamCongruence,subsec:pipeDreamQuotient}.
By \cref{coro:linearExtensionsPartitionSubwordComplexes}, we have
\[
\linearExtensions(Q,\omega) = \bigsqcup_{I \in \subwordAcyclicFacets(Q, \omega)} \linearExtensions(I)
\]
which naturally defines an equivalence relation on~$\linearExtensions(Q,\omega)$.
However, this equivalence relation is in general not a lattice congruence for two obvious reasons:
\begin{enumerate}[(i)]
\item while~$\linearExtensions(Q,\omega)$ is a lower set of the weak order containing~$[e, \omega]$ by \cref{thm:linearExtensionsPartitionSubwordComplexA}, it does not always coincides with~$[e, \omega]$ by \cref{rem:linearExtensionsPartitionSubwordComplexA}, and in fact it does not necessarily have a maximal element, hence it is not necessarily a lattice,
\item while the sets~$\linearExtensions(I)$ are always order convex in the weak order by \cref{thm:linearExtensionsPartitionSubwordComplexA}~\eqref{item:convex}, they are not necessarily intervals in the weak order by \cref{rem:linearExtensionsPartitionSubwordComplexA}.
\end{enumerate}
To bypass Issue~(i), we could restrict our attention to the situation when the word~$Q$ is sorting by~\cref{thm:linearExtensionsPartitionSubwordComplexB} (we will do that in \cref{conj:latticeQuotientSortingAlternating}).
Here, we want to be slightly more general, so we will instead consider general words~$Q$, but restrict our attention to the interval~$[e, \omega]$ as follows.
For a facet~$I$ of~$\subwordComplex(Q, \omega)$, we define~$\strongLinearExtensions(I) \eqdef \linearExtensions(I) \cap [e, \omega]$.
We say that~$I$ is \defn{strongly acyclic} if~$\strongLinearExtensions(I) \ne \varnothing$, and we denote by~$\subwordStronglyAcyclicFacets(Q, \omega)$ the set of strongly acyclic facets of~$\subwordComplex(Q, \omega)$.
Note that by \cref{thm:linearExtensionsPartitionSubwordComplexA}~\eqref{item:partition}, we have
\[
[e, \omega] = \bigsqcup_{I\in\subwordStronglyAcyclicFacets(Q,\omega)} \strongLinearExtensions(I).
\]
We can thus now define the analogue of the pipe dream congruence of \cref{def:pipeDreamCongruence} for subword complexes as follows.

\begin{definition}
\label{def:pipeDreamCongruenceSubwordComplexes}
For a non-empty subword complex $\subwordComplex(Q,\omega)$, the \defn{subword complex equivalence} is the equivalence relation~$\equiv_{Q,\omega}$ on the interval~$[e, \omega]$  whose equivalence classes are the sets~$\strongLinearExtensions(I)$ for all strongly acyclic facets~$I$ of~$\subwordStronglyAcyclicFacets(Q, \omega)$.
In other words, $\pi \equiv_{Q,\omega} \pi'$ if and only if $\pi$ and $\pi'$ are linear extensions of the same facet.  
\end{definition}

\begin{example}
Observe for instance that:
\begin{itemize}
\item the subword complex equivalence is a lattice congruence of the weak order in \cref{fig:subwordComplex1} but not in \cref{fig:subwordComplex2,fig:subwordComplex3},
\item the increasing flip poset is a lattice in \cref{fig:subwordComplex1,fig:subwordComplex2} but not in \cref{fig:subwordComplex3}.
\end{itemize}
\end{example}

The subword complex equivalence is not always a congruence because of Issue~(ii) above.
To fix it, we now assume that the word~$Q$ is \defn{alternating}, meaning that all non-commuting pairs $s, t\in S$ alternate within $Q$ (this notion was already considered in \cite{PilaudSantos-brickPolytope, CeballosLabbeStump}).
This enables us to state our first conjecture, intended to extend~\cref{thm:pipeDreamCongruence}.

\begin{conjecture}
\label{conj:latticeCongruenceAlternating}
For a non-empty subword complex $\subwordComplex(Q,\omega)$ where~$Q$ is alternating, the subword complex equivalence~$\equiv_{Q,\omega}$ is a lattice congruence of the interval~$[e,\omega]$ of the weak order.
\end{conjecture}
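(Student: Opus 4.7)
The plan is to follow the strategy used in \cref{sec:latticeAcyclicPipeDreams}, invoking the characterization of lattice congruences in \cref{prop:characterizationCongruences}: we need to verify that (i) every $\equiv_{Q,\omega}$-class is an interval of $[e,\omega]$, and (ii) the corresponding up- and down-projections are order-preserving.

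The first and main step is to show that, for alternating $Q$ and any strongly acyclic facet $I$, the set $\strongLinearExtensions(I)$ is an interval of the weak order. By \cref{thm:linearExtensionsPartitionSubwordComplexA}~\eqref{item:convex}, $\linearExtensions(I)$ is order convex, hence so is $\strongLinearExtensions(I)$. It therefore suffices to produce a unique minimum $u_I$ and a unique maximum $v_I$, and the natural candidates are the elements of $W$ whose inversion sets are
\[
\Inv(u_I) = \set{\beta \in \Phi^+}{-\beta \in \Roots(I)}
\qquad\text{and}\qquad
\Inv(v_I) = \Phi^+ \ssm \set{\beta \in \Phi^+}{\beta \in \Roots(I)}.
\]
The content is that these subsets of $\Phi^+$ are genuine inversion sets, \ie biclosed. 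I would prove this by induction on the distance in the increasing flip graph from $I$ to the greedy facet $\greedyFacet$, tracking the evolution of $\Roots(I)$ under flips by means of \cref{lem:rootFunctionFlips}~\eqref{lem:rootFunctionFlips3}. The base case $I = \greedyFacet$ is immediate from \cref{lem:linearExtensionsGreedyFacets}: there $u_{\greedyFacet} = e$ and $\Inv(v_{\greedyFacet}) = \Phi^+ \ssm \Roots(\greedyFacet)$ is biclosed since $\Roots(\greedyFacet) \subseteq \Phi^+$ and $\greedyFacet$ is acyclic. The inductive step exchanges one root with its negative under an increasing flip, modifying $u_I$ and $v_I$ by simple multiplications. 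The alternating hypothesis on $Q$ enters here in controlling the local behavior around pairs of non-commuting generators, which is precisely what ensures that biclosedness is preserved by flips (this fails for non-alternating $Q$, as witnessed by \cref{fig:subwordComplex2,fig:subwordComplex4}).

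The second step is the analog of \cref{prop:orderPreserving}: if $\sigma \le \sigma'$ in $[e,\omega]$ with classes $C, C'$, then $\min(C) \le \min(C')$ and $\max(C) \le \max(C')$. I would adapt the induction from \cref{prop:orderPreserving}, reducing to the case where $\sigma'$ covers $\sigma$ and performing a local case analysis on the interaction of two consecutive simple reflections $s$ and $t$ in a reduced word for $\sigma'$. In place of the five positional cases of the type~$A$ proof, one needs a dichotomy between commuting and non-commuting pairs $(s,t)$, the non-commuting case involving braid moves of length $m_{st}$. The alternating hypothesis plays its role again here: when $s$ and $t$ do not commute, the alternation of their occurrences in $Q$ guarantees that the corresponding flip sequence in $\subwordComplex(Q,\omega)$ coherently realizes the braid move, so that the relevant analog of \cref{lem:lowerSetPipeDreams} applies and the argument closes by induction on the weak order distance to $\max(C)$.

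The main obstacle is clearly the intervality of $\strongLinearExtensions(I)$: the alternating hypothesis must be used in an essential way, since without it the sets of linear extensions genuinely fail to be intervals. The crux is a purely root-theoretic claim that, for alternating $Q$, the set $\Roots(I) \cap \Phi^-$ (resp. the complement of $\Roots(I) \cap \Phi^+$ in $\Phi^+$) becomes biclosed after negation. Once this is established, the structure of the argument follows that of \cref{thm:pipeDreamCongruence} rather closely, and both conditions of \cref{prop:characterizationCongruences} are verified.
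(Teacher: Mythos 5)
First, a framing point: the statement you are proving is explicitly labelled a \emph{conjecture} in the paper (\cref{conj:latticeCongruenceAlternating}). The authors report a proof of the type~$A_n$ case in~\cite{Cartier} and only computer verification for small ranks in types~$B$, $D$, $H$; there is no general proof in the paper against which to compare your attempt. Any ``blind proof'' here must therefore be evaluated on its own merits.

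With that understood, your proposal correctly mirrors the structure of the pipe dream argument --- verify both conditions of \cref{prop:characterizationCongruences} --- but both ingredients are left as sketches, and the sketches contain concrete flaws exactly where the conjecture is hard. For intervality of $\strongLinearExtensions(I)$ you reduce to biclosedness of $\set{\beta \in \Phi^+}{-\beta \in \Roots(I)}$ and $\Phi^+ \ssm \set{\beta \in \Phi^+}{\beta \in \Roots(I)}$ (note you should intersect the latter with $\Inv(\omega)$, since the conjecture concerns $\strongLinearExtensions(I)$, not $\linearExtensions(I)$). Your proposed induction along flips from $\greedyFacet$ is unsound at both ends. The base case is not immediate: positivity and pointedness of $\Roots(\greedyFacet)$ do \emph{not} imply biclosedness of $\Phi^+ \ssm \Roots(\greedyFacet)$. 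For instance, in type~$B_2$ with $\alpha_1$ the short simple root, so $\Phi^+ = \{\alpha_1, \alpha_2, \alpha_1{+}\alpha_2, 2\alpha_1{+}\alpha_2\}$, the positive pointed set $\{\alpha_1, \alpha_1{+}\alpha_2\}$ is not closed (since $\alpha_1 + (\alpha_1+\alpha_2) = 2\alpha_1+\alpha_2$ is a root not in the set), so $\Phi^+ \ssm \{\alpha_1, \alpha_1{+}\alpha_2\} = \{\alpha_2, 2\alpha_1{+}\alpha_2\}$ is not biclosed and hence not an inversion set. The alternating hypothesis must already enter at the base case, and your argument does not invoke it. The inductive step is also misdescribed: by \cref{lem:rootFunctionFlips}~\eqref{lem:rootFunctionFlips3}, a flip exchanging $i\in I$ for $j\in J$ does not merely swap one root for its negative; it applies $s_\beta$ to every $\rootFunction{I}{k}$ with $k \in I \cap J$ and $i < k \le j$, so $\Roots(J)$ can differ from $\Roots(I)$ at many positions, and the derived inversion sets are not modified ``by simple multiplications.'' The central assertion --- that alternation forces biclosedness to be preserved along flips --- is the conjecture itself and is restated rather than proved. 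The order-preservation step is similarly gestured at: you do not supply the Coxeter analogues of \cref{lem:lowerSetPipeDreams,lem:consequenceRectangle2} that drive the type-$A$ case analysis, and the commuting/braid dichotomy you invoke requires exactly such a lemma to close. The outline is sensible and aligned with how one might hope to prove the conjecture, but the substantive lemmas are exactly what remains open.
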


We now intend to understand the quotient~$[e, \omega]/\equiv_{Q, \omega}$.
First, its elements correspond to the congruence classes of~$\equiv_{Q, \omega}$, hence to the strongly acyclic facets in~$\subwordStronglyAcyclicFacets(Q, \omega)$.
The cover relations are certain increasing flips between the facets in~$\subwordStronglyAcyclicFacets(Q, \omega)$.
However, in contrast to \cref{thm:pipeDreamQuotient}, not all increasing flips between two facets in~$\subwordStronglyAcyclicFacets(Q, \omega)$ yields a cover relation of~$\subwordStronglyAcyclicFacets(Q, \omega)$, as illustrated by the following example.

\begin{example}
\label{exm:acyclicFlipvsWeakOrder}
Consider the type~$A_2$ Coxeter system, the word~$Q = \tau_1 \tau_2 \tau_1 \tau_2 \tau_1 \tau_2$ and the longest element~$\wo = \tau_1 \tau_2 \tau_1 = \tau_2 \tau_1 \tau_2$.
The subword complex~$\subwordComplex(Q, \wo)$ has eight facets, six of which are acyclic.
All the fibers of $\equiv_{Q, \wo}$ are singletons and the lattice quotient~$[e, \wo]/\equiv_{Q, \wo}$ coincides with the weak order of type~$A_2$.
Note that the two acyclic facets~$\{1,3,4\}$ and $\{3,4,6\}$ are connected by a flip but the corresponding classes do not form a cover relation in~$[e, \wo]/\equiv_{Q, \wo}$.
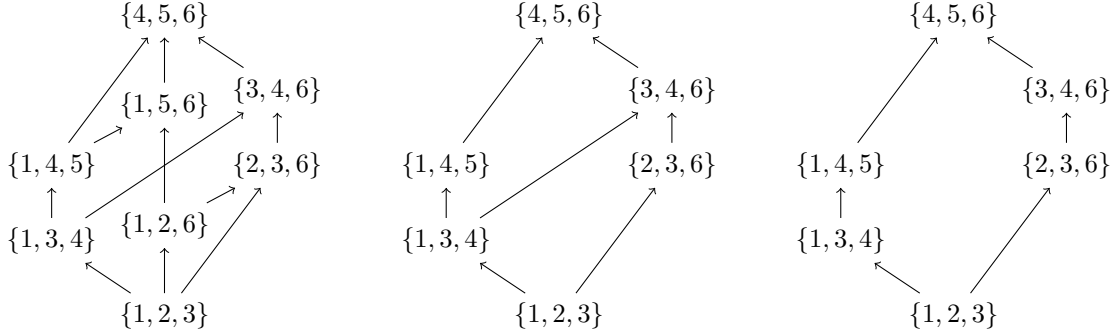
\begin{figure}[h]
	\begin{tikzpicture}[xscale=1.5,->]
	\node (123) at (0,-2) {$\{1,2,3\}$};
	\node (134) at (-1,-1) {$\{1,3,4\}$};
	\node (126) at (0,-.8) {$\{1,2,6\}$};
	\node (236) at (1,0) {$\{2,3,6\}$};
	\node (145) at (-1,0) {$\{1,4,5\}$};
	\node (156) at (0,.8) {$\{1,5,6\}$};
	\node (346) at (1,1) {$\{3,4,6\}$};
	\node (456) at (0,2) {$\{4,5,6\}$};
	\draw (123)--(134);
	\draw (123)--(126);
	\draw (123)--(236);
	\draw (134)--(145);
	\draw (134)--(346);
	\draw (126)--(156);
	\draw (126)--(236);
	\draw (236)--(346);
	\draw (145)--(456);
	\draw (145)--(156);
	\draw (156)--(456);
	\draw (346)--(456);
	\end{tikzpicture}
	\qquad
	\begin{tikzpicture}[xscale=1.5,->]
	\node (123) at (0,-2) {$\{1,2,3\}$};
	\node (134) at (-1,-1) {$\{1,3,4\}$};
	\node (236) at (1,0) {$\{2,3,6\}$};
	\node (145) at (-1,0) {$\{1,4,5\}$};
	\node (346) at (1,1) {$\{3,4,6\}$};
	\node (456) at (0,2) {$\{4,5,6\}$};
	\draw (123)--(134);
	\draw (123)--(236);
	\draw (134)--(145);
	\draw (134)--(346);
	\draw (236)--(346);
	\draw (145)--(456);
	\draw (346)--(456);
	\end{tikzpicture}
	\qquad
	\begin{tikzpicture}[xscale=1.5,->]
	\node (123) at (0,-2) {$\{1,2,3\}$};
	\node (134) at (-1,-1) {$\{1,3,4\}$};
	\node (236) at (1,0) {$\{2,3,6\}$};
	\node (145) at (-1,0) {$\{1,4,5\}$};
	\node (346) at (1,1) {$\{3,4,6\}$};
	\node (456) at (0,2) {$\{4,5,6\}$};
	\draw (123)--(134);
	\draw (123)--(236);
	\draw (134)--(145);
	\draw (236)--(346);
	\draw (145)--(456);
	\draw (346)--(456);
	\end{tikzpicture}
	\caption{The increasing flip graph on~$\subwordComplex(Q, \wo)$ (left), its restriction to acyclic facets (middle) and the Hasse diagram of the quotient~$[e, \wo]/\equiv_{Q, \wo}$ where each class is labeled by its corresponding facet of~$\subwordComplex(Q, \wo)$ (right), for the subword complex~$\subwordComplex(Q, \wo)$ where~$Q = \tau_1 \tau_2 \tau_1 \tau_2 \tau_1 \tau_2$ and~$\wo = \tau_1 \tau_2 \tau_1 = \tau_2 \tau_1 \tau_2$. Note that the increasing flip~$\{1,3,4\} \to \{3,4,6\}$ is not a cover relation of~$[e, \wo]/\equiv_{Q, \wo}$.}
	\label{fig:acyclicFlipvsWeakOrder}
\end{figure}
\end{example}

We say that a flip between two facets~$I, J \in \subwordStronglyAcyclicFacets(Q, \omega)$ with~$I \ssm \{i\} = J \ssm \{j\}$ is \defn{extremal} if the root~$\rootFunction{I}{i}$ is a ray of the root configuration~$\Roots(I)$ (or equivalently, $\rootFunction{J}{j}$ is a ray~$\Roots(J)$).
This enables us to state our second conjecture, intended to extend~\cref{thm:pipeDreamQuotient}.

\begin{conjecture}
\label{conj:latticeQuotientAlternating}
For a non-empty subword complex $\subwordComplex(Q,\omega)$ where~$Q$ is alternating, the Hasse diagram of the lattice quotient~$[e, \omega]/\equiv_{Q, \omega}$ is isomorphic to the graph of extremal increasing flips between strongly acyclic facets of~$\subwordStronglyAcyclicFacets(Q, \omega)$.
\end{conjecture}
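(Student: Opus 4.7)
The plan is to deduce \cref{conj:latticeQuotientAlternating} from \cref{conj:latticeCongruenceAlternating} by explicitly identifying the cover relations of the lattice quotient $[e,\omega]/{\equiv_{Q,\omega}}$. Once the subword complex equivalence is known to be a lattice congruence, the elements of the quotient are the classes $\strongLinearExtensions(I)$ for $I\in \subwordStronglyAcyclicFacets(Q,\omega)$, and its cover relations $C_I \lessdot C_J$ are precisely those pairs for which there exist representatives $\pi \in \strongLinearExtensions(I)$ and $\pi' \in \strongLinearExtensions(J)$ with $\pi$ covered by $\pi'$ in weak order. The goal is thus to show that these cover relations are in bijection with extremal increasing flips between strongly acyclic facets.

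For the direct implication, suppose $\pi \in \strongLinearExtensions(I)$ is covered in weak order by $\pi' \eqdef \pi s \in \strongLinearExtensions(J)$ for some $s\in S$, with $I\ne J$, and set $\beta \eqdef \pi(\alpha_s)\in \Phi^+$. A case analysis symmetric to the one used in the proof of \cref{thm:linearExtensionsPartitionSubwordComplexA}~\eqref{item:lowerSet} shows that $\beta$ must belong to $\Roots(I)$ and that $J$ is obtained from $I$ by a flip whose direction is $\beta$. Now $\beta=\pi(\alpha_s)$ is a simple root of the positive system $\pi(\Phi^+)$ and therefore spans an extreme ray of $\cone\pi(\Phi^+)$; since $\Roots(I)\subseteq \pi(\Phi^+)$, the root $\beta$ is an extreme ray of $\cone\Roots(I)$ as well, so the flip is extremal.

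For the reverse implication, start from an extremal increasing flip between $I, J \in \subwordStronglyAcyclicFacets(Q,\omega)$ with flipped position $i\in I\ssm J$ and direction $\beta \eqdef \rootFunction{I}{i}$ a ray of $\cone\Roots(I)$. Since $\beta$ is an extreme ray, a chamber-perturbation argument on a separating linear functional should produce an element $\pi \in W$ with $\Roots(I)\subseteq \pi(\Phi^+)$ and $\beta \in \pi(\Delta)$. Writing $\beta = \pi(\alpha_s)$ and setting $\pi' \eqdef \pi s$, we get that $\pi$ is covered by $\pi'$ in weak order. It then remains to verify that $\pi'\in \linearExtensions(J)$: since $\pi'(\Phi^+) = (\pi(\Phi^+)\ssm\{\beta\})\cup\{-\beta\}$, the explicit formula for the root function of $J$ from \cref{lem:rootFunctionFlips}~\eqref{lem:rootFunctionFlips3} together with the fact that $s_\beta$ preserves $\pi(\Phi^+) \ssm \{\beta\}$ (because $\beta$ is simple for $\pi(\Phi^+)$) reduces this inclusion to checking that $\beta$ does not reappear as the root at any position of $J$.

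The main obstacle lies in this last point: if $\beta$ were the root at several positions of $I$, the single flip $i\to j$ would leave a residual $\beta$-position in $J$ and the argument would collapse. Ruling this out should require the combination of the extremality of $\beta$ and the alternating hypothesis on $Q$, and we expect this to be the most delicate step. A secondary difficulty is ensuring that $\pi$ and $\pi'$ can be chosen inside $[e,\omega]$ and not merely in $W$: this will amount to refining the chamber-perturbation argument so that the resulting $\pi$ lies in $[e,\omega]$, using crucially that $I$ and $J$ are strongly acyclic. Finally, since the whole approach is conditional on \cref{conj:latticeCongruenceAlternating}, a complete proof would likely establish both conjectures simultaneously, perhaps organized around the geometric picture provided by the brick polyhedron $\brickPolyhedron(Q,\omega)$ and the Bruhat-cone characterization of linear extensions from~\cite{JahnStump}.
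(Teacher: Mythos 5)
The statement you have been asked to prove is Conjecture~B, which the paper leaves \emph{open}: the authors only report that it has been verified in type $A_n$ (citing a separate work of Cartier) and checked by computer in types $B_2$, $B_3$, $D_4$ and $H_3$. There is therefore no proof in the paper to compare against, and your sketch does not constitute one either, since it is explicitly conditional on Conjecture~A (\cref{conj:latticeCongruenceAlternating}), which is equally open, and since you yourself flag two unresolved steps. So the honest verdict is: reasonable strategy, but not a proof, and not comparable to a paper proof that does not exist.

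On the substance of the sketch: the direct implication (covers give extremal flips) is plausible and its key observation --- that $\beta = \pi(\alpha_s)$ is a simple root of the positive system $\pi(\Phi^+)$, hence an extreme ray of $\cone\pi(\Phi^+)$, hence of any subcone $\cone\Roots(I) \subseteq \cone\pi(\Phi^+)$ containing it --- is sound. The reverse implication has the genuine gaps you point to. The ``chamber-perturbation'' step is not a mere perturbation: a root $\beta$ being a ray of $\cone\Roots(I)$ does \emph{not} imply the existence of a positive system $\pi(\Phi^+)$ containing $\Roots(I)$ for which $\beta$ is \emph{simple} (extreme rays of $\cone\Roots(I)$ need not be reachable as walls of Weyl chambers containing the configuration), and even granting such a $\pi$ in $W$ you still have to land inside $[e,\omega]$. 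The multiplicity issue (several positions of $I$ carrying root $\beta$) is also a real obstruction, and it is precisely where the alternating hypothesis is expected to bite, though you offer no mechanism for how. As you note, a complete argument would almost certainly have to establish Conjectures~A and~B jointly, likely via the brick-polyhedron geometry of~\cite{JahnStump}, rather than deriving one from the other.
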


We now specialize \cref{conj:latticeCongruenceAlternating,conj:latticeQuotientAlternating} to the case of sorting and alternating words.
In this case, all acyclic facets are strongly acyclic by \cref{thm:linearExtensionsPartitionSubwordComplexB}.
Note that this is precisely the situation we had in~\cref{sec:latticeAcyclicPipeDreams}.

\begin{conjecture}
\label{conj:latticeQuotientSortingAlternating}
If~$Q$ is sorting and alternating, then the Hasse diagram of the lattice quotient~$[e, \omega]/\equiv_{Q, \omega}$ is isomorphic to the graph of extremal increasing flips between acyclic facets of~$\subwordAcyclicFacets(Q, \omega)$.
\end{conjecture}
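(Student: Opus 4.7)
The plan is to deduce this conjecture as a direct corollary of \cref{conj:latticeQuotientAlternating} combined with \cref{thm:linearExtensionsPartitionSubwordComplexB}. Since \cref{conj:latticeQuotientAlternating} already asserts---under the alternating hypothesis alone---that the Hasse diagram of the quotient $[e,\omega]/\equiv_{Q,\omega}$ is isomorphic to the graph of extremal increasing flips between \emph{strongly} acyclic facets of $\subwordComplex(Q,\omega)$, the only additional work required is to verify that under the extra sorting hypothesis, strong acyclicity coincides with acyclicity.

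This identification follows immediately from \cref{thm:linearExtensionsPartitionSubwordComplexB}: when $Q$ is sorting, the partition identity ensures that $\linearExtensions(I) \subseteq [e,\omega]$ for every acyclic facet $I$, hence $\strongLinearExtensions(I) = \linearExtensions(I) \cap [e,\omega] = \linearExtensions(I)$. Combined with \cref{lem:linearExtensionsAcyclic}, which characterizes acyclic facets as those with non-empty $\linearExtensions(I)$, this yields $\strongLinearExtensions(I) \ne \varnothing$ for every acyclic $I$, so that $\subwordStronglyAcyclicFacets(Q,\omega) = \subwordAcyclicFacets(Q,\omega)$. Since extremality of a flip depends only on the root configurations of the incident facets and not on whether those facets are strongly acyclic or merely acyclic, the two corresponding flip graphs coincide, and the statement follows.

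The real obstacle is of course that the invoked \cref{conj:latticeQuotientAlternating} is itself open, and rests in turn on the more basic \cref{conj:latticeCongruenceAlternating} asserting that $\equiv_{Q,\omega}$ is a lattice congruence. Any direct attack on these would need to extend the pipe dream techniques of \cref{sec:latticeAcyclicPipeDreams} to the general subword complex setting. In particular, the proofs of \cref{prop:intervals,prop:orderPreserving} rely crucially on the planar rectangle argument of \cref{lem:rectangle} and on the case analyses of \cref{lem:consequenceRectangle1,lem:consequenceRectangle2}. In the general setting these would need to be replaced by arguments phrased purely in terms of the root function~$\rootFunction{I}{\cdot}$ and the root configuration~$\Roots(I)$, using the alternating property of $Q$ to control the interactions between non-commuting simple reflections along a reduced word. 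The brick polyhedron perspective of~\cite{JahnStump}, which underlies the geometric reformulation in \cref{conj:D}, looks like a promising source of tools for identifying the correct analogue of the flip rectangles and for isolating the extremal flips as bounded edges of the polyhedron oriented by a linear functional.
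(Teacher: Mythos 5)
Your reduction is exactly the one the paper itself uses when framing this conjecture: under the sorting hypothesis, \cref{thm:linearExtensionsPartitionSubwordComplexB} gives $\linearExtensions(I) \subseteq [e,\omega]$ for every acyclic facet, hence $\strongLinearExtensions(I) = \linearExtensions(I) \ne \varnothing$ by \cref{lem:linearExtensionsAcyclic}, so $\subwordStronglyAcyclicFacets(Q,\omega) = \subwordAcyclicFacets(Q,\omega)$ and the statement collapses to \cref{conj:latticeQuotientAlternating}. Since that prior conjecture is open, the paper (correctly) leaves this statement as a conjecture rather than a theorem, and you are right to flag this; your argument is a valid conditional reduction and matches the paper's reasoning, not a proof.
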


Moreover, there is a close connection with the brick polyhedra introduced in~\cite{JahnStump} as generalizations of the brick polytopes of~\cite{PilaudSantos-brickPolytope, PilaudStump-brickPolytope}.
We refer to the original papers~\cite{PilaudSantos-brickPolytope, PilaudStump-brickPolytope, JahnStump} for a definition of these polyhedra.
We just need to know here that the brick polyhedron~$\brickPolyhedron(Q, \omega)$~has
\begin{itemize}
\item a vertex for each acyclic facet of~$\subwordAcyclicFacets(Q, \omega)$, and
\item an edge for each extremal flip between two acyclic facets,
\end{itemize}
and that the graph of extremal increasing flips on acyclic facets is isomorphic to the bounded graph (meaning forgetting the unbounded rays) of the brick polyhedron~$\brickPolyhedron(Q, \omega)$ oriented in a suitable direction~$\delta$.
This can be derived from \cite[Thm.~4.4]{JahnStump}.
\cref{conj:latticeQuotientSortingAlternating} can thus be translated geometrically as follows.

\begin{conjecture}
\label{conj:latticeQuotientSortingAlternatingBrickPolyhedra}
If~$Q$ is sorting and alternating, then the bounded oriented graph of the brick polyhedron~$\brickPolyhedron(Q, \omega)$ is isomorphic to the Hasse diagram of the lattice quotient~$[e, \omega]/\equiv_{Q, \omega}$.
\end{conjecture}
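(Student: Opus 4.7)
The plan is to derive the statement directly from Conjecture~\ref{conj:latticeQuotientSortingAlternating}, together with the polyhedral description of flips that is recalled in the paragraph preceding the statement. The key geometric identification has in fact already been indicated in the text: by~\cite[Thm.~4.4]{JahnStump}, the vertices of $\brickPolyhedron(Q, \omega)$ are in bijection with the acyclic facets of $\subwordComplex(Q, \omega)$, and its bounded edges correspond precisely to extremal flips between pairs of acyclic facets. Choosing a generic linear functional $\delta$ (for instance, the gradient of the natural weight function on brick configurations), each bounded edge gets oriented in the direction of the corresponding increasing flip, so that the bounded oriented graph of $\brickPolyhedron(Q, \omega)$ is isomorphic, as a directed graph, to the graph of extremal increasing flips on $\subwordAcyclicFacets(Q, \omega)$. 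This step does not require the sorting or alternating hypothesis.

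With this reformulation in hand, the conjecture reduces immediately to Conjecture~\ref{conj:latticeQuotientSortingAlternating}, which asserts exactly that, under the sorting and alternating hypotheses, the Hasse diagram of $[e, \omega]/\equiv_{Q, \omega}$ is isomorphic to the same graph of extremal increasing flips between acyclic facets. Composing the two isomorphisms yields the desired statement.

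The main obstacle is therefore Conjecture~\ref{conj:latticeQuotientSortingAlternating} itself, which in turn depends on Conjectures~\ref{conj:latticeCongruenceAlternating} and~\ref{conj:latticeQuotientAlternating}. The core challenge is to extend \cref{thm:pipeDreamCongruence} beyond the pipe dream setting, namely to show that when $Q$ is alternating, the equivalence $\equiv_{Q, \omega}$ is a lattice congruence of the interval $[e, \omega]$. A viable route is to translate the planar-geometric arguments of \cref{lem:rectangle,lem:consequenceRectangle1,lem:consequenceRectangle2,prop:intervals,prop:orderPreserving} into root-theoretic statements about the root function $\rootFunction{I}{\cdot}$, using the alternating property to constrain how roots evolve along $Q$ under flips; the sorting hypothesis would then, via \cref{thm:linearExtensionsPartitionSubwordComplexB}, guarantee that every acyclic facet is strongly acyclic, so that the classes of $\equiv_{Q, \omega}$ account for \emph{all} acyclic facets. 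A second subtle point is to show that cover relations of the quotient correspond precisely to \emph{extremal} increasing flips, rather than to arbitrary flips between acyclic facets, which is where the ray structure of the root configuration $\Roots(I)$ must enter, as already suggested by the phenomenon observed in \cref{exm:acyclicFlipvsWeakOrder}. A careful analysis of these two steps, generalizing respectively \cref{prop:orderPreserving} and the proof of \cref{thm:pipeDreamQuotient}, is what is required to close the argument.
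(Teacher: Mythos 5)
Your proposal matches the paper's own reasoning exactly: Conjecture~\ref{conj:latticeQuotientSortingAlternatingBrickPolyhedra} is not proved in the paper but is presented as the geometric translation of Conjecture~\ref{conj:latticeQuotientSortingAlternating} via the identification, derived from \cite[Thm.~4.4]{JahnStump}, of the bounded oriented graph of~$\brickPolyhedron(Q,\omega)$ with the graph of extremal increasing flips on acyclic facets. You correctly recognize that the only genuine mathematical content left is the open Conjectures~\ref{conj:latticeCongruenceAlternating} and~\ref{conj:latticeQuotientAlternating} (hence~\ref{conj:latticeQuotientSortingAlternating}), and your sketch of a possible route to those is reasonable but, as you acknowledge, not a proof.
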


In particular, specializing this conjecture to the brick polytopes~\cite{PilaudSantos-brickPolytope, PilaudStump-brickPolytope} for which~$\omega = \wo$, we obtain our last conjecture, intended to extend the results of~\cite{Pilaud-brickAlgebra}.

\begin{conjecture}
\label{conj:latticeQuotientSortingAlternatingBrickPolytopes}
If~$Q$ is sorting and alternating, then the oriented graph of the brick polytope~$\brickPolyhedron(Q, \wo)$ is isomorphic to the Hasse diagram of a lattice quotient of the weak order.
\end{conjecture}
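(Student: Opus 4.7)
The approach is to derive \cref{conj:latticeQuotientSortingAlternatingBrickPolytopes} as the~$\omega = \wo$ specialization of \cref{conj:latticeCongruenceAlternating,conj:latticeQuotientSortingAlternatingBrickPolyhedra}. Three reductions make this specialization immediate once the ambient conjectures are established. First, when~$\omega = \wo$ the interval~$[e, \wo]$ equals the entire group~$W$, so any congruence of~$[e, \wo]$ is a congruence of the weak order on~$W$. Second, a facet~$I$ of~$\subwordComplex(Q, \wo)$ is strongly acyclic if and only if it is acyclic, since~$\strongLinearExtensions(I) = \linearExtensions(I) \cap W = \linearExtensions(I)$. Third, the brick polyhedron~$\brickPolyhedron(Q, \wo)$ is in fact the bounded brick polytope of~\cite{PilaudSantos-brickPolytope, PilaudStump-brickPolytope}, so its bounded oriented graph coincides with its full oriented graph. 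With these reductions in hand, the natural candidate for the lattice quotient in the statement is~$W/{\equiv_{Q, \wo}}$.

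The real work therefore splits into two tasks. The first task is to establish that~$\equiv_{Q, \wo}$ is a lattice congruence of the weak order on~$W$ under the sorting and alternating hypotheses on~$Q$. Following the template of \cref{sec:latticeAcyclicPipeDreams} and applying \cref{prop:characterizationCongruences}, one must show that each class~$\linearExtensions(I)$ is an interval of the weak order (its order convexity is already provided by \cref{thm:linearExtensionsPartitionSubwordComplexA}~\eqref{item:convex}), and that the projections to minimum and maximum elements of classes are order-preserving. The sweeping algorithm of~\cref{subsec:sweepingAlgorithmSubwordComplexes} should provide an explicit construction of these extremal linear extensions, playing the role that the insertion algorithm plays for pipe dreams. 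The second task is to match the quotient with the brick polytope: by \cref{coro:linearExtensionsPartitionSubwordComplexes} and the first task, the classes of~$\equiv_{Q, \wo}$ are indexed by the acyclic facets of~$\subwordComplex(Q, \wo)$, which in turn index the vertices of~$\brickPolyhedron(Q, \wo)$. One must then verify that the extremal increasing flips between acyclic facets are precisely the cover relations of~$W/{\equiv_{Q, \wo}}$, with matching orientations under a generic linear functional~$\delta$.

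The main obstacle will be the order-preservation of the projection maps. In the pipe dream case (\cref{prop:orderPreserving}), this relied crucially on \cref{lem:consequenceRectangle1,lem:consequenceRectangle2}, which are very specific to the geometry of contact graphs inside a triangular shape. A generalization requires a structural statement about root configurations of acyclic facets: roughly, for three positive roots~$\alpha, \beta, \gamma$ with~$\beta$ lying in the positive cone spanned by~$\alpha$ and~$\gamma$, if~$\alpha, \gamma \in \Roots(I)$ for an acyclic facet~$I$ then one expects~$\pm\beta$ to lie in~$\Roots(I)$, under additional combinatorial conditions ensured by the alternating hypothesis on~$Q$. Establishing such a structural lemma, presumably by induction along the word~$Q$ using the flip/sweep description of facets, seems to be the crux of the problem, and would also resolve \cref{conj:latticeCongruenceAlternating} in full generality.
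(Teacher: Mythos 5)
The statement you were asked to prove is Conjecture~\ref{conj:latticeQuotientSortingAlternatingBrickPolytopes} (Conjecture~E of the introduction), and the paper does \emph{not} prove it --- it is an open conjecture, presented together with Conjectures~\ref{conj:latticeCongruenceAlternating}--\ref{conj:latticeQuotientSortingAlternatingBrickPolyhedra} as the natural extension of Theorem~\ref{thm:A} to subword complexes. There is therefore no paper proof to compare yours against, and your proposal is not (and does not claim to be) a complete argument; it is a correct identification of the logical structure together with an honest diagnosis of where the difficulty lies.

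Your three reduction observations are all correct and match exactly how the paper itself derives Conjecture~E from Conjecture~\ref{conj:latticeQuotientSortingAlternatingBrickPolyhedra}: when~$\omega = \wo$ the interval~$[e,\wo]$ is all of~$W$, strong acyclicity coincides with acyclicity because $\strongLinearExtensions(I) = \linearExtensions(I) \cap [e,\wo] = \linearExtensions(I)$, and the brick polyhedron~$\brickPolyhedron(Q,\wo)$ is bounded (it is the brick polytope of~\cite{PilaudSantos-brickPolytope,PilaudStump-brickPolytope}), so the bounded oriented graph is the full oriented graph. These are precisely the specialization remarks the authors make implicitly when passing from Conjecture~D to Conjecture~E. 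You also correctly locate the genuine obstacle: the interval and order-preservation properties for the subword-complex equivalence~$\equiv_{Q,\wo}$ (the content of Conjectures~\ref{conj:latticeCongruenceAlternating} and~\ref{conj:latticeQuotientAlternating}) do not follow from what is proved, because Lemmas~\ref{lem:consequenceRectangle1} and~\ref{lem:consequenceRectangle2}, on which the pipe dream proof of Propositions~\ref{prop:intervals} and~\ref{prop:orderPreserving} depends, are planar-geometric statements about elbows in a triangular shape and have no known analogue for root configurations in general~$W$. Your sketch of a candidate structural lemma (one of~$\pm\beta$ should lie in~$\Roots(I)$ whenever~$\alpha,\gamma \in \Roots(I)$ and~$\beta$ lies in their positive cone, under the alternating hypothesis) is a reasonable root-theoretic reformulation of Lemma~\ref{lem:consequenceRectangle2}, but you correctly flag that proving it is the crux and is currently open. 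For the record, the paper reports the conjectures as proved in type~$A$ for arbitrary alternating~$Q$ in the separate work~\cite{Cartier}, and verified computationally in types~$B_2, B_3, D_4, H_3$, but no uniform Coxeter-theoretic argument is given.
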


\begin{remark}
\cref{conj:latticeCongruenceAlternating,conj:latticeQuotientAlternating,conj:latticeQuotientSortingAlternating,conj:latticeQuotientSortingAlternatingBrickPolyhedra,conj:latticeQuotientSortingAlternatingBrickPolytopes} holds in type~$A_n$: our specific proof of~\cref{thm:pipeDreamCongruence,thm:pipeDreamQuotient} can be extended to arbitrary alternating words in type~$A_n$ as shown in~\cite{Cartier}.
They are also supported by computer experiments: we verified \cref{conj:latticeCongruenceAlternating,conj:latticeQuotientAlternating} for all alternating words of length at most~$\ell(\wo)$ (hence \cref{conj:latticeQuotientSortingAlternating,conj:latticeQuotientSortingAlternatingBrickPolyhedra,conj:latticeQuotientSortingAlternatingBrickPolytopes} for all alternating reduced expressions of~$\wo$) in types~$B_2, B_3, D_4$ and~$H_3$.
\end{remark}

\begin{example}
We have illustrated in \cref{fig:brickPolytope1,fig:brickPolytope2,fig:brickPolytope3} the brick polytopes of the subword complexes represented in \cref{fig:subwordComplex1,fig:subwordComplex2,fig:subwordComplex3}.
Note that the oriented graph (from bottom to top) defines a lattice in \cref{fig:brickPolytope1,fig:brickPolytope2} but not in \cref{fig:brickPolytope3}.
\begin{figure}[p]
	\centerline{\begin{tikzpicture}%
	[x={(-0.366215cm, -0.789554cm)},
	y={(0.235950cm, -0.590693cm)},
	z={(0.900119cm, -0.166391cm)},
	scale=.5300000,
	back/.style={very thin, opacity=0.5},
	edge/.style={color=blue, thick},
	facet/.style={fill=blue,fill opacity=0},
	vertex/.style={}]
%
%

\coordinate (1.33333, -3.77124, 3.26599) at (1.33333, -3.77124, 3.26599);
\coordinate (1.33333, -3.77124, 6.53197) at (1.33333, -3.77124, 6.53197);
\coordinate (1.33333, 1.88562, 9.79796) at (1.33333, 1.88562, 9.79796);
\coordinate (1.33333, 7.54247, -3.26599) at (1.33333, 7.54247, -3.26599);
\coordinate (1.33333, 7.54247, 6.53197) at (1.33333, 7.54247, 6.53197);
\coordinate (4.00000, -5.65685, 3.26599) at (4.00000, -5.65685, 3.26599);
\coordinate (4.00000, -5.65685, 6.53197) at (4.00000, -5.65685, 6.53197);
\coordinate (6.66667, -4.71405, 8.16497) at (6.66667, -4.71405, 8.16497);
\coordinate (6.66667, -1.88562, 9.79796) at (6.66667, -1.88562, 9.79796);
\coordinate (9.33333, -3.77124, 0.00000) at (9.33333, -3.77124, 0.00000);
\coordinate (9.33333, -3.77124, 6.53197) at (9.33333, -3.77124, 6.53197);
\coordinate (9.33333, -0.94281, 8.16497) at (9.33333, -0.94281, 8.16497);
\coordinate (9.33333, 1.88562, -3.26599) at (9.33333, 1.88562, -3.26599);
\coordinate (9.33333, 1.88562, 6.53197) at (9.33333, 1.88562, 6.53197);
\draw[edge,back] (1.33333, -3.77124, 3.26599) -- (1.33333, 7.54247, -3.26599);
\draw[edge,back] (1.33333, 7.54247, -3.26599) -- (1.33333, 7.54247, 6.53197);
\draw[edge,back] (1.33333, 7.54247, -3.26599) -- (9.33333, 1.88562, -3.26599);
\node[vertex] at (1.33333, 7.54247, -3.26599)     {};
\fill[facet] (9.33333, -3.77124, 6.53197) -- (6.66667, -4.71405, 8.16497) -- (4.00000, -5.65685, 6.53197) -- (4.00000, -5.65685, 3.26599) -- (9.33333, -3.77124, 0.00000) -- cycle {};
\fill[facet] (6.66667, -1.88562, 9.79796) -- (1.33333, 1.88562, 9.79796) -- (1.33333, -3.77124, 6.53197) -- (4.00000, -5.65685, 6.53197) -- (6.66667, -4.71405, 8.16497) -- cycle {};
\fill[facet] (4.00000, -5.65685, 6.53197) -- (1.33333, -3.77124, 6.53197) -- (1.33333, -3.77124, 3.26599) -- (4.00000, -5.65685, 3.26599) -- cycle {};
\fill[facet] (9.33333, -0.94281, 8.16497) -- (6.66667, -1.88562, 9.79796) -- (6.66667, -4.71405, 8.16497) -- (9.33333, -3.77124, 6.53197) -- cycle {};
\fill[facet] (9.33333, 1.88562, 6.53197) -- (1.33333, 7.54247, 6.53197) -- (1.33333, 1.88562, 9.79796) -- (6.66667, -1.88562, 9.79796) -- (9.33333, -0.94281, 8.16497) -- cycle {};
\fill[facet] (9.33333, 1.88562, 6.53197) -- (9.33333, -0.94281, 8.16497) -- (9.33333, -3.77124, 6.53197) -- (9.33333, -3.77124, 0.00000) -- (9.33333, 1.88562, -3.26599) -- cycle {};
\draw[edge] (1.33333, -3.77124, 3.26599) -- (1.33333, -3.77124, 6.53197);
\draw[edge] (1.33333, -3.77124, 3.26599) -- (4.00000, -5.65685, 3.26599);
\draw[edge] (1.33333, -3.77124, 6.53197) -- (1.33333, 1.88562, 9.79796);
\draw[edge] (1.33333, -3.77124, 6.53197) -- (4.00000, -5.65685, 6.53197);
\draw[edge] (1.33333, 1.88562, 9.79796) -- (1.33333, 7.54247, 6.53197);
\draw[edge] (1.33333, 1.88562, 9.79796) -- (6.66667, -1.88562, 9.79796);
\draw[edge] (1.33333, 7.54247, 6.53197) -- (9.33333, 1.88562, 6.53197);
\draw[edge] (4.00000, -5.65685, 3.26599) -- (4.00000, -5.65685, 6.53197);
\draw[edge] (4.00000, -5.65685, 3.26599) -- (9.33333, -3.77124, 0.00000);
\draw[edge] (4.00000, -5.65685, 6.53197) -- (6.66667, -4.71405, 8.16497);
\draw[edge] (6.66667, -4.71405, 8.16497) -- (6.66667, -1.88562, 9.79796);
\draw[edge] (6.66667, -4.71405, 8.16497) -- (9.33333, -3.77124, 6.53197);
\draw[edge] (6.66667, -1.88562, 9.79796) -- (9.33333, -0.94281, 8.16497);
\draw[edge] (9.33333, -3.77124, 0.00000) -- (9.33333, -3.77124, 6.53197);
\draw[edge] (9.33333, -3.77124, 0.00000) -- (9.33333, 1.88562, -3.26599);
\draw[edge] (9.33333, -3.77124, 6.53197) -- (9.33333, -0.94281, 8.16497);
\draw[edge] (9.33333, -0.94281, 8.16497) -- (9.33333, 1.88562, 6.53197);
\draw[edge] (9.33333, 1.88562, -3.26599) -- (9.33333, 1.88562, 6.53197);
\node[vertex] at (1.33333, -3.77124, 3.26599)     {};
\node[vertex] at (1.33333, -3.77124, 6.53197)     {};
\node[vertex] at (1.33333, 1.88562, 9.79796)     {};
\node[vertex] at (1.33333, 7.54247, 6.53197)     {};
\node[vertex] at (4.00000, -5.65685, 3.26599)     {};
\node[vertex] at (4.00000, -5.65685, 6.53197)     {};
\node[vertex] at (6.66667, -4.71405, 8.16497)     {};
\node[vertex] at (6.66667, -1.88562, 9.79796)     {};
\node[vertex] at (9.33333, -3.77124, 0.00000)     {};
\node[vertex] at (9.33333, -3.77124, 6.53197)     {};
\node[vertex] at (9.33333, -0.94281, 8.16497)     {};
\node[vertex] at (9.33333, 1.88562, -3.26599)     {};
\node[vertex] at (9.33333, 1.88562, 6.53197)     {};
\end{tikzpicture}}
	\caption{The brick polytope of~$\subwordComplex(Q,\wo)$ where~$Q = \tau_2 \tau_1 \tau_3 \tau_2 \tau_1 \tau_3 \tau_2 \tau_1 \tau_3$ in type~$A_3$. Its graph, oriented from bottom to top is the Hasse diagram of the lattice of \cref{fig:increasingFlipPosets}\,(left) and \cref{fig:subwordComplex1}. This is actually the $c$-associahedron and the $c$-Cambrian lattice for the Coxeter element~$c = \tau_2\tau_1\tau_3$ of type~$A_3$.}
	\label{fig:brickPolytope1}
\end{figure}
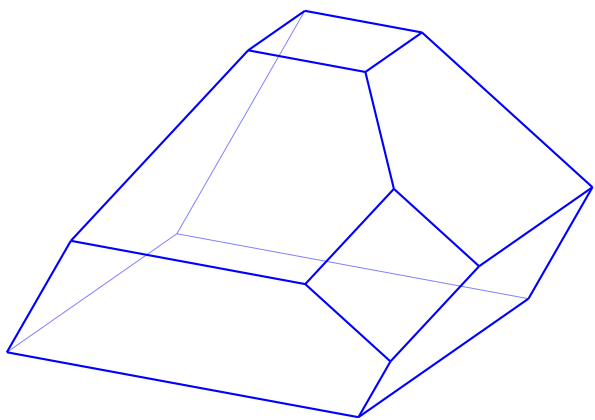
\begin{figure}[p]
	\centerline{\begin{tikzpicture}%
	[x={(-0.366215cm, -0.789554cm)},
	y={(0.235950cm, -0.590693cm)},
	z={(0.900119cm, -0.166391cm)},
	scale=.5300000,
	back/.style={very thin, opacity=0.5},
	edge/.style={color=blue, thick},
	facet/.style={fill=blue,fill opacity=0},
	vertex/.style={}]
%
%

\coordinate (4.00000, -5.65685, 3.26599) at (4.00000, -5.65685, 3.26599);
\coordinate (4.00000, -5.65685, 6.53197) at (4.00000, -5.65685, 6.53197);
\coordinate (4.00000, -2.82843, 4.89898) at (4.00000, -2.82843, 4.89898);
\coordinate (4.00000, -2.82843, 8.16497) at (4.00000, -2.82843, 8.16497);
\coordinate (6.66667, -4.71405, 8.16497) at (6.66667, -4.71405, 8.16497);
\coordinate (6.66667, -1.88562, 9.79796) at (6.66667, -1.88562, 9.79796);
\coordinate (9.33333, -3.77124, 0.00000) at (9.33333, -3.77124, 0.00000);
\coordinate (9.33333, -3.77124, 6.53197) at (9.33333, -3.77124, 6.53197);
\coordinate (9.33333, -0.94281, 1.63299) at (9.33333, -0.94281, 1.63299);
\coordinate (9.33333, -0.94281, 8.16497) at (9.33333, -0.94281, 8.16497);
\draw[edge,back] (4.00000, -5.65685, 3.26599) -- (4.00000, -2.82843, 4.89898);
\draw[edge,back] (4.00000, -2.82843, 4.89898) -- (4.00000, -2.82843, 8.16497);
\draw[edge,back] (4.00000, -2.82843, 4.89898) -- (9.33333, -0.94281, 1.63299);
\node[vertex] at (4.00000, -2.82843, 4.89898)     {};
\fill[facet] (6.66667, -1.88562, 9.79796) -- (4.00000, -2.82843, 8.16497) -- (4.00000, -5.65685, 6.53197) -- (6.66667, -4.71405, 8.16497) -- cycle {};
\fill[facet] (9.33333, -3.77124, 6.53197) -- (6.66667, -4.71405, 8.16497) -- (4.00000, -5.65685, 6.53197) -- (4.00000, -5.65685, 3.26599) -- (9.33333, -3.77124, 0.00000) -- cycle {};
\fill[facet] (9.33333, -0.94281, 8.16497) -- (6.66667, -1.88562, 9.79796) -- (6.66667, -4.71405, 8.16497) -- (9.33333, -3.77124, 6.53197) -- cycle {};
\fill[facet] (9.33333, -0.94281, 8.16497) -- (9.33333, -3.77124, 6.53197) -- (9.33333, -3.77124, 0.00000) -- (9.33333, -0.94281, 1.63299) -- cycle {};
\draw[edge] (4.00000, -5.65685, 3.26599) -- (4.00000, -5.65685, 6.53197);
\draw[edge] (4.00000, -5.65685, 3.26599) -- (9.33333, -3.77124, 0.00000);
\draw[edge] (4.00000, -5.65685, 6.53197) -- (4.00000, -2.82843, 8.16497);
\draw[edge] (4.00000, -5.65685, 6.53197) -- (6.66667, -4.71405, 8.16497);
\draw[edge] (4.00000, -2.82843, 8.16497) -- (6.66667, -1.88562, 9.79796);
\draw[edge] (6.66667, -4.71405, 8.16497) -- (6.66667, -1.88562, 9.79796);
\draw[edge] (6.66667, -4.71405, 8.16497) -- (9.33333, -3.77124, 6.53197);
\draw[edge] (6.66667, -1.88562, 9.79796) -- (9.33333, -0.94281, 8.16497);
\draw[edge] (9.33333, -3.77124, 0.00000) -- (9.33333, -3.77124, 6.53197);
\draw[edge] (9.33333, -3.77124, 0.00000) -- (9.33333, -0.94281, 1.63299);
\draw[edge] (9.33333, -3.77124, 6.53197) -- (9.33333, -0.94281, 8.16497);
\draw[edge] (9.33333, -0.94281, 1.63299) -- (9.33333, -0.94281, 8.16497);
\node[vertex] at (4.00000, -5.65685, 3.26599)     {};
\node[vertex] at (4.00000, -5.65685, 6.53197)     {};
\node[vertex] at (4.00000, -2.82843, 8.16497)     {};
\node[vertex] at (6.66667, -4.71405, 8.16497)     {};
\node[vertex] at (6.66667, -1.88562, 9.79796)     {};
\node[vertex] at (9.33333, -3.77124, 0.00000)     {};
\node[vertex] at (9.33333, -3.77124, 6.53197)     {};
\node[vertex] at (9.33333, -0.94281, 1.63299)     {};
\node[vertex] at (9.33333, -0.94281, 8.16497)     {};
\end{tikzpicture}}
	\caption{The brick polytope of~$\subwordComplex(Q,\wo)$ where~$Q = \tau_2 \tau_3 \tau_1 \tau_3 \tau_2 \tau_1 \tau_2 \tau_3 \tau_1$ in type~$A_3$. Its graph, oriented from bottom to top is the Hasse diagram of the lattice of \cref{fig:subwordComplex2}, but it is not obtained as a lattice quotient of the weak order.}
	\label{fig:brickPolytope2}
\end{figure}
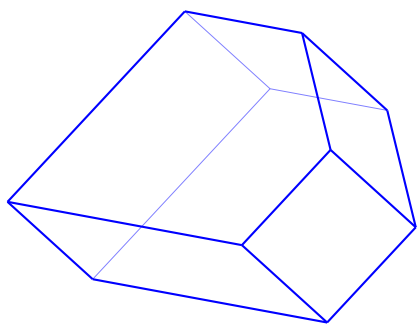
\begin{figure}[p]
	\centerline{\begin{tikzpicture}%
	[x={(-0.366215cm, -0.789554cm)},
	y={(0.235950cm, -0.590693cm)},
	z={(0.900119cm, -0.166391cm)},
	scale=.530000,
	back/.style={very thin, opacity=0.5},
	edge/.style={color=blue, thick},
	facet/.style={fill=blue,fill opacity=0},
	vertex/.style={}]
%
%

\coordinate (-3.33333, 3.77124, 0.00000) at (-3.33333, 3.77124, 0.00000);
\coordinate (-3.33333, 3.77124, 6.53197) at (-3.33333, 3.77124, 6.53197);
\coordinate (-3.33333, 6.59966, -1.63299) at (-3.33333, 6.59966, -1.63299);
\coordinate (-3.33333, 6.59966, 8.16497) at (-3.33333, 6.59966, 8.16497);
\coordinate (-0.66667, 7.54247, 9.79796) at (-0.66667, 7.54247, 9.79796);
\coordinate (2.00000, 8.48528, -4.89898) at (2.00000, 8.48528, -4.89898);
\coordinate (2.00000, 8.48528, 8.16497) at (2.00000, 8.48528, 8.16497);
\coordinate (7.33333, -3.77124, 0.00000) at (7.33333, -3.77124, 0.00000);
\coordinate (7.33333, -3.77124, 6.53197) at (7.33333, -3.77124, 6.53197);
\coordinate (7.33333, 1.88562, 9.79796) at (7.33333, 1.88562, 9.79796);
\coordinate (7.33333, 4.71405, -4.89898) at (7.33333, 4.71405, -4.89898);
\coordinate (7.33333, 4.71405, 8.16497) at (7.33333, 4.71405, 8.16497);
\draw[edge,back] (-3.33333, 3.77124, 0.00000) -- (-3.33333, 6.59966, -1.63299);
\draw[edge,back] (-3.33333, 6.59966, -1.63299) -- (-3.33333, 6.59966, 8.16497);
\draw[edge,back] (-3.33333, 6.59966, -1.63299) -- (2.00000, 8.48528, -4.89898);
\draw[edge,back] (2.00000, 8.48528, -4.89898) -- (2.00000, 8.48528, 8.16497);
\draw[edge,back] (2.00000, 8.48528, -4.89898) -- (7.33333, 4.71405, -4.89898);
\node[vertex] at (-3.33333, 6.59966, -1.63299)     {\color{red} $\bullet$};
\node[vertex] at (2.00000, 8.48528, -4.89898)     {};
\fill[facet] (7.33333, -3.77124, 6.53197) -- (-3.33333, 3.77124, 6.53197) -- (-3.33333, 3.77124, 0.00000) -- (7.33333, -3.77124, 0.00000) -- cycle {};
\fill[facet] (7.33333, 1.88562, 9.79796) -- (-0.66667, 7.54247, 9.79796) -- (-3.33333, 6.59966, 8.16497) -- (-3.33333, 3.77124, 6.53197) -- (7.33333, -3.77124, 6.53197) -- cycle {};
\fill[facet] (7.33333, 4.71405, 8.16497) -- (2.00000, 8.48528, 8.16497) -- (-0.66667, 7.54247, 9.79796) -- (7.33333, 1.88562, 9.79796) -- cycle {};
\fill[facet] (7.33333, 4.71405, 8.16497) -- (7.33333, 1.88562, 9.79796) -- (7.33333, -3.77124, 6.53197) -- (7.33333, -3.77124, 0.00000) -- (7.33333, 4.71405, -4.89898) -- cycle {};
\draw[edge] (-3.33333, 3.77124, 0.00000) -- (-3.33333, 3.77124, 6.53197);
\draw[edge] (-3.33333, 3.77124, 0.00000) -- (7.33333, -3.77124, 0.00000);
\draw[edge] (-3.33333, 3.77124, 6.53197) -- (-3.33333, 6.59966, 8.16497);
\draw[edge] (-3.33333, 3.77124, 6.53197) -- (7.33333, -3.77124, 6.53197);
\draw[edge] (-3.33333, 6.59966, 8.16497) -- (-0.66667, 7.54247, 9.79796);
\draw[edge] (-0.66667, 7.54247, 9.79796) -- (2.00000, 8.48528, 8.16497);
\draw[edge] (-0.66667, 7.54247, 9.79796) -- (7.33333, 1.88562, 9.79796);
\draw[edge] (2.00000, 8.48528, 8.16497) -- (7.33333, 4.71405, 8.16497);
\draw[edge] (7.33333, -3.77124, 0.00000) -- (7.33333, -3.77124, 6.53197);
\draw[edge] (7.33333, -3.77124, 0.00000) -- (7.33333, 4.71405, -4.89898);
\draw[edge] (7.33333, -3.77124, 6.53197) -- (7.33333, 1.88562, 9.79796);
\draw[edge] (7.33333, 1.88562, 9.79796) -- (7.33333, 4.71405, 8.16497);
\draw[edge] (7.33333, 4.71405, -4.89898) -- (7.33333, 4.71405, 8.16497);
\node[vertex] at (-3.33333, 3.77124, 0.00000)     {};
\node[vertex] at (-3.33333, 3.77124, 6.53197)     {};
\node[vertex] at (-3.33333, 6.59966, 8.16497)     {};
\node[vertex] at (-0.66667, 7.54247, 9.79796)     {};
\node[vertex] at (2.00000, 8.48528, 8.16497)     {};
\node[vertex] at (7.33333, -3.77124, 0.00000)     {\color{red} $\bullet$};
\node[vertex] at (7.33333, -3.77124, 6.53197)     {};
\node[vertex] at (7.33333, 1.88562, 9.79796)     {\color{blue} $\bullet$};
\node[vertex] at (7.33333, 4.71405, -4.89898)     {\color{blue} $\bullet$};
\node[vertex] at (7.33333, 4.71405, 8.16497)     {};
\end{tikzpicture}}
	\caption{The brick polytope of~$\subwordComplex(Q,\wo)$ where~$Q = \tau_1 \tau_2 \tau_3 \tau_2 \tau_1 \tau_2 \tau_3 \tau_2 \tau_1$ in type~$A_3$. Its graph, oriented from bottom to top is the Hasse diagram of the poset of \cref{fig:increasingFlipPosets}\,(right) and \cref{fig:subwordComplex3}, which is not a lattice (the two blue vertices have no join while the two red vertices have no meet).}
	\label{fig:brickPolytope3}
\end{figure}
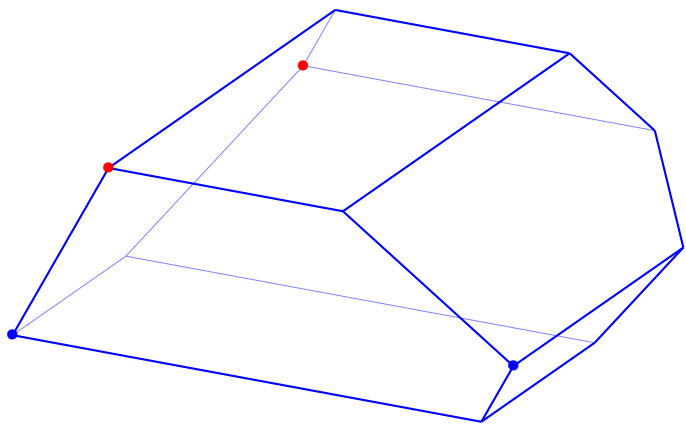
\end{example}


\subsection{Sweeping algorithm}
\label{subsec:sweepingAlgorithmSubwordComplexes}

We now extend the sweeping algorithm of \cref{subsec:sweepingAlgorithm} to construct, from a linear extension $\pi \in \linearExtensions(Q,\omega)$, the unique acyclic facet~$I$ such that $\pi \in \linearExtensions(I)$.
We note that this algorithm is related to an algorithm that was independently described in~\cite{JahnStump}, see~\cref{rem:sweepJahnStump}.

We start from a subword complex~$\subwordComplex(Q, \omega)$ and any element~$\pi \in W$.
The sweeping algorithm outcomes a facet for every element~$\pi \in W$, and this will be the desired facet for each $\pi \in \linearExtensions(Q,\omega)$.

It starts by setting $I^0=\varnothing$ to be the empty set.
It then scans the word $Q=(q_1,\dots ,q_m)$ from left to right.
At position $j\in [m]$ it produces a new set $I^j$ obtained from the previous one~$I^{j-1}$ by either adding $j$ or not, according to certain rules.
Roughly speaking, the goal of the algorithm is to insert a reduced expression of $\omega$ in $Q$ by sweeping the word from left to right, while deciding whether we use a letter or not by looking at the root $\beta=\rootFunction{I^j}{j}$ that it produces.
If~$\beta \in \Ninv(\omega)$ then it can not be used (Case~\ref{sweeping_case1}), otherwise we would not have a reduced expression for $\omega$.
If~$\beta \in \Inv(\omega)$ then it takes it when $\beta \in \Inv(\pi)$ (Case~\ref{sweeping_case2}), or delays it as much as possible when $\beta \in \Ninv(\pi)$ (Case~\ref{sweeping_case3}).
If $\beta \in -\Inv(\omega)$ it can simply not take it (Case~\ref{sweeping_case4}).

More precisely, let $Q^j=Q_{[j]\ssm I^j}$ and denote by
\[
\rootFunction{I^j}{k} = \prod Q_{[k-1]\ssm I^j}(\alpha_{q_k}).
\]
the partial root function for $k\leq j$.  
The rules are the following:

\begin{enumerate}[{Case} 1:]
    \item If $\rootFunction{I^j}{j}\in \Ninv(\omega)$ then 
    \(
    I^j=I^{j-1}\cup\{j\}.
    \) 
    \label{sweeping_case1}
    \item If $\rootFunction{I^j}{j}\in \Inv(\omega) \cap \Inv(\pi)$ then 
    \(
    I^j=I^{j-1}.
    \)
    \label{sweeping_case2}
    \item If $\rootFunction{I^j}{j}\in \Inv(\omega) \cap \Ninv(\pi)$ then we consider two cases: 
    \label{sweeping_case3} 
    \begin{enumerate}[\hspace{-1cm} (a)]
        \item If $Q_{[m]\ssm I^{j-1}\ssm \{j\}}$ contains a reduced expression of $\omega$ with prefix $Q^{j-1}$ then 
        \(
        {I^j=I^{j-1}\cup \{j\}}.
        \)~
        \label{sweeping_case3a}
        \item Otherwise
        \(
        I^j=I^{j-1}.
        \)
        \label{sweeping_case3b}
    \end{enumerate}
    \item If $\rootFunction{I^j}{j}\in -\Inv(\omega)$ then
    \(
    I^j=I^{j-1}\cup \{j\}.
    \)
    \label{sweeping_case4}
\end{enumerate}

We denote by $\sweepingAlgorithm(Q,\omega,\pi)$ the resulting set $I^m$ obtained at the last step of the algorithm.
The objective of this section is the following statement.

\begin{proposition}
\label{prop:sweeping}
Let $\subwordComplex(Q,\omega)$ be a non-empty subword complex. Then, for every $\pi \in W$, 
\begin{enumerate}
\item the set $\sweepingAlgorithm(Q,\omega,\pi)$ is a facet of $\subwordComplex(Q,\omega)$, \label{item:sweeping1}
\item if $\pi\in \linearExtensions(I)$ for some facet $I\in \subwordComplex(Q,\omega)$, then $\sweepingAlgorithm(Q,\omega,\pi) = I$. \label{item:sweeping2}
\end{enumerate}
\end{proposition}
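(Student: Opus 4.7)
The plan is to prove both parts by induction on $j \in \{0, 1, \dots, m\}$.

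For part~\eqref{item:sweeping1}, the invariant to maintain is that $Q_{[m]\ssm I^j}$ contains a reduced expression for $\omega$ with prefix $Q^j$. The base case $j=0$ is immediate from the non-emptiness of $\subwordComplex(Q,\omega)$. This invariant yields $\prod Q^{j-1} \leq \omega$, which rules out the root $\beta := \rootFunction{I^{j-1}}{j}$ lying in $-\Ninv(\omega)$ (as this would force $\alpha_{q_j}$ to be a negative root), so the four algorithmic cases exhaust all possibilities. The inductive step relies on the strong exchange property: whenever $\beta \in \Inv(\omega)$, the prefix $Q^{j-1} q_j$ also extends to a reduced expression of $\omega$ within $Q_{[m]\ssm I^{j-1}}$. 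With this lemma, Cases~2 and~3b (which include $q_j$) preserve the invariant; Cases~1 and~4 (which skip $q_j$) trivially preserve it, since no reduced expression of $\omega$ with prefix $Q^{j-1}$ uses position $j$ in those cases; and Case~3a preserves it by its explicit feasibility check. At $j=m$, the invariant forces $Q^m$ itself to be a reduced expression for $\omega$, so $I^m$ is a facet.

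For part~\eqref{item:sweeping2}, let $I^\star$ denote the unique facet of $\subwordComplex(Q,\omega)$ with $\pi \in \linearExtensions(I^\star)$, whose existence and uniqueness follow from~\cref{coro:linearExtensionsPartitionSubwordComplexes}. I will show that $I^j = I^\star \cap [j]$ by induction on $j$. Since $I^{j-1} = I^\star \cap [j-1]$ by the inductive hypothesis, the root functions agree at position $j$: $\rootFunction{I^{j-1}}{j} = \rootFunction{I^\star}{j}$, so the algorithm enters the same case that $\rootFunction{I^\star}{j}$ determines. Cases~1 and~4 force $j \in I^\star$, since otherwise $\rootFunction{I^\star}{j}$ would belong to $\Inv(\omega)$ (the set of roots indexed by positions outside $I^\star$), contradicting the case hypothesis. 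Case~2 forces $j \notin I^\star$, since otherwise $\rootFunction{I^\star}{j} \in \Roots(I^\star) \subseteq \pi(\Phi^+)$ would contradict $\rootFunction{I^\star}{j} \in \Inv(\pi)$. Case~3b forces $j \notin I^\star$, since otherwise $Q_{[m]\ssm I^\star}$ would witness the Case~3a feasibility condition.

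The hard part will be Case~3a, where the condition $\beta \in \Inv(\omega) \cap \Ninv(\pi)$ is locally compatible both with $j \in I^\star$ and $j \notin I^\star$. To show that the algorithm's choice $j \in I^\star$ is correct, I would assume $j \notin I^\star$ and derive a contradiction using the ladder structure of positions with roots $\pm\beta$ (as developed in the proof of~\cref{thm:linearExtensionsPartitionSubwordComplexA}\,\eqref{item:lowerSet}). Combining $\pi \in \linearExtensions(I^\star)$ with $\beta \in \Ninv(\pi)$ forbids $-\beta$ from appearing as a root at any position of $I^\star$, which severely constrains where the $\pm\beta$-ladder can place elements in and out of $I^\star$ relative to $j$. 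The Case~3a feasibility then supplies an alternative facet $\tilde{I}$ with $\tilde{I} \cap [j] = I^{j-1} \cup \{j\}$, and the flip relations of~\cref{lem:rootFunctionFlips}\,\eqref{lem:rootFunctionFlips3} allow transferring the linear-extension property from $I^\star$ to $\tilde{I}$, contradicting the uniqueness of $I^\star$.
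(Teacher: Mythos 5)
Your argument for part~\eqref{item:sweeping1} and for Cases~1, 2, 3b, 4 of part~\eqref{item:sweeping2} is essentially the paper's argument (the paper packages the part~\eqref{item:sweeping1} invariant as Lemmas preceding \cref{prop:sweeping}, and your ``strong exchange'' step is the same as the paper's use of \cref{lem:rootFunctionFlips}~\eqref{lem:rootFunctionFlips2} to flip a $\beta$-position out of a facet and make $q_j$ appear in the reduced subword). Your parenthetical justification for ruling out $-\Ninv(\omega)$ is slightly off (the point is not that $\alpha_{q_j}$ becomes negative, but that a negative root at a reduced prefix lies in $-\Inv(\prod Q^{j-1}) \subseteq -\Inv(\omega)$), though this is a cosmetic issue.

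The genuine gap is Case~3a of part~\eqref{item:sweeping2}, exactly where you flag it as ``the hard part.'' You propose to transfer the linear-extension property from $I^\star$ to some feasibility witness $\tilde{I}$ by tracking the $\{\beta,-\beta\}$-ladder and the flip relations of \cref{lem:rootFunctionFlips}~\eqref{lem:rootFunctionFlips3}. This does not close: $\tilde{I}$ and $I^\star$ only agree on $[j]$ and may differ at many positions beyond $j$, so there is no obvious flip sequence between them along which $\pi$ remains a linear extension, and the ladder for $\beta$ constrains only positions carrying $\pm\beta$, while $\linearExtensions(\tilde{I}) \ni \pi$ is a constraint on the full root configuration. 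What the paper actually proves and uses is a stronger global statement (its auxiliary lemma, stated just before \cref{prop:sweeping}): whenever two facets agree on $[j-1]$ and differ at $j$ (with $j$ in the first but not the second), then $-\beta$ lies in the \emph{cone} of the entire root configuration of the second facet. This is derived from \cref{prop:JSDemazureCone} (Jahn--Stump's result on Demazure products and simple-root membership). Once $-\beta \in \cone\Roots(I^\star)$ is known, the contradiction is immediate, since $\cone\Roots(I^\star) \subseteq \cone(\pi(\Phi^+))$ is a pointed halfspace that cannot contain both $\beta$ and $-\beta$. Your sketch does not reach any statement of that strength, and I do not see how to get there from the ladder alone; you would need to (re)prove something equivalent to the paper's auxiliary lemma to finish Case~3a.
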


\begin{remark}
\label{rem:sweepJahnStump}
In~\cite[Sect.~3.3]{JahnStump}, D.~Jahn and C.~Stump describe an algorithm to compute the $f$-antigreedy facet of a subword complex $\subwordComplex(Q,\omega)$ associated to a linear functional $f$.  
The output of our algorithm coincides with the output of their algorithm whenever $f$ is positive on~$\pi(\Phi^+)$ and negative on $\pi(\Phi^-)$, see~\cite[Prop.~4.12]{JahnStump}.
Also, compare \cref{prop:sweeping}~\eqref{item:sweeping1} with~\cite[Thm.~3.17 (a)]{JahnStump}, and \cref{prop:sweeping}~\eqref{item:sweeping2} with the second part of~\cite[Prop.~4.12]{JahnStump}.

Although the algorithm presented in~\cite{JahnStump} is more general, we highlight that our sweeping algorithm is conceptually simpler, since we skip the step of translating the conditions on the sign of $f(\rootFunction{I^j}{j})$ in the algorithm of~\cite{JahnStump}.
We remark that these two algorithms were developed independently, while approaching different problems.
This shows that the sweeping algorithm has a significant importance in the combinatorial and geometric understanding of subword complexes.  
\end{remark}

In order to prove \cref{prop:sweeping}, we first need to argue that the algorithm terminates, that is, that any position $j$ falls into one of the four cases above.

\begin{lemma}
\label{lem:sweeping1}
At step $j$ of the sweeping algorithm, the root~$\rootFunction{I^j}{j}$ belongs to exactly one of~the~sets
\[
\Ninv(\omega),
\qquad
\Inv(\omega) \cap \Inv(\pi),
\qquad
\Inv(\omega) \cap \Ninv(\pi)
\qquad\text{or}\qquad
-\Inv(\omega).
\]
\end{lemma}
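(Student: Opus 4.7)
The plan is to establish simultaneously that the four listed sets are pairwise disjoint and that their union contains $\rootFunction{I^j}{j}$. Disjointness is immediate from the decompositions $\Phi^+ = \Inv(\omega) \sqcup \Ninv(\omega) = \Inv(\pi) \sqcup \Ninv(\pi)$ together with $\Phi^+ \cap \Phi^- = \varnothing$. Their union equals $\Phi^+ \sqcup -\Inv(\omega)$, so the nontrivial content is to show that $\rootFunction{I^j}{j} \notin -\Ninv(\omega)$, or equivalently that whenever $\rootFunction{I^j}{j}$ is a negative root it actually lies in $-\Inv(\omega)$.

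The central step will be to prove, by induction on $j$, the invariant that the subword $Q^{j-1} \eqdef Q_{[j-1] \ssm I^{j-1}}$ built so far by the algorithm is a reduced expression for an element $v^{j-1} \in W$ satisfying $v^{j-1} \le \omega$ in weak order. The base case $j = 1$ is trivial with $v^0 = e$. For the inductive step, I would first observe that $I^j \cap [j-1] = I^{j-1}$, so that $\rootFunction{I^j}{j} = v^{j-1}(\alpha_{q_j})$, and then examine each of the four cases of the algorithm. In Cases~1, 3(a), and~4, position $j$ is added to $I^j$, so $v^j = v^{j-1}$ and the invariant transfers unchanged. In Case~2 and in Case~3(b), the letter $q_j$ is appended and $v^j = v^{j-1} q_j$; here the hypothesis $\rootFunction{I^j}{j} \in \Inv(\omega) \subseteq \Phi^+$ combined with the standard Coxeter identity $\ell(vs) > \ell(v) \Leftrightarrow v(\alpha_s) \in \Phi^+$ ensures both that $Q^j$ is still reduced and that $\Inv(v^j) = \Inv(v^{j-1}) \sqcup \{v^{j-1}(\alpha_{q_j})\} \subseteq \Inv(\omega)$, so $v^j \le \omega$.

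Once the invariant is established, the conclusion is a short calculation. If $\rootFunction{I^j}{j} = v^{j-1}(\alpha_{q_j})$ is positive, it sits in $\Phi^+ = \Ninv(\omega) \sqcup (\Inv(\omega) \cap \Inv(\pi)) \sqcup (\Inv(\omega) \cap \Ninv(\pi))$. If it is negative, then it belongs to $v^{j-1}(\Phi^+) \cap \Phi^- = -\Inv(v^{j-1})$, and the invariant $v^{j-1} \le \omega$ forces $\Inv(v^{j-1}) \subseteq \Inv(\omega)$, so $\rootFunction{I^j}{j} \in -\Inv(\omega)$ as required. The main conceptual obstacle will be the inductive verification of the invariant in Case~3(a), whose action is governed by a non-local condition on the suffix of $Q$; however, since that case leaves $v^{j-1}$ unchanged, the global hypothesis does not interfere with the invariant itself, and the local check reduces to the easy cases above.
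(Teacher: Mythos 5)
Your proof is correct. It fills in, via a self-contained induction, what the paper's own proof of this lemma treats informally. The paper's argument is a one-liner: it notes that the only excluded possibility is $-\Ninv(\omega)$ and asserts this case never occurs ``since we are inserting a reduced expression of $\omega$ in $Q$''; the rigorous content behind that assertion is really the subsequent \cref{lem:sweeping2}, which establishes by induction that $Q^j$ is always a prefix of a reduced expression of $\omega$ sitting inside $Q_{[m]\ssm I^j}$. Your proposal instead carries a slightly weaker invariant (that $Q^{j-1}$ is reduced and $v^{j-1}\le\omega$ in weak order) directly inside the proof of the lemma, and derives the conclusion from the standard identity $v^{j-1}(\Phi^+)\cap\Phi^-=-\Inv(v^{j-1})\subseteq-\Inv(\omega)$. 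This buys you a cleaner logical structure: the paper's two lemmas implicitly depend on one another (the cases in the proof of \cref{lem:sweeping2} presuppose the exhaustiveness asserted in \cref{lem:sweeping1}), and your version of the induction establishes both facts simultaneously and avoids that tangle. The price is that the weaker invariant $v^{j-1}\le\omega$ does not by itself feed into the later claim that the output of the algorithm is an actual facet (\cref{prop:sweeping}), for which the stronger ``prefix of a reduced expression in the remaining positions'' invariant of \cref{lem:sweeping2} is still needed; but for the lemma at hand, your argument is complete and correct, including the observation that the non-local condition in Case~3(a) is irrelevant because that case leaves $v^{j-1}$ unchanged.
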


\begin{proof}
The root $\rootFunction{I^j}{j}$ belongs to~$\Phi^+ \sqcup \Phi^-$ and we know that
\[
\Phi^+ = \Inv(\omega) \sqcup \Ninv(\omega) \qquad \qquad
\Phi^- = -\Inv(\omega) \sqcup -\Ninv(\omega)  
\]
Since we are inserting a reduced expression of $\omega$ in $Q$, the case $-\Ninv(\omega)$ never occurs.
\end{proof}

We now observe the main invariant of the sweeping algorithm.

\begin{lemma}
\label{lem:sweeping2}
At any step~$j$ of the algorithm, $Q^j$ is a reduced expression which is the prefix of a reduced expression of $\omega$ in $Q_{[m]\ssm I^j}$.
\end{lemma}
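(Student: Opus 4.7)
The plan is to proceed by induction on $j$. The base case $j=0$ is immediate: $Q^0$ is the empty word, $I^0 = \varnothing$, and the non-emptiness of $\subwordComplex(Q,\omega)$ guarantees that $Q = Q_{[m]\ssm I^0}$ contains a reduced expression of $\omega$ extending the empty prefix. For the inductive step, let $u^{j-1} \in W$ denote the element represented by $Q^{j-1}$. By the definition of the root function, $\rootFunction{I^j}{j} = u^{j-1}(\alpha_{q_j})$, and I will invoke two standard Coxeter-theoretic facts: (i) $Q^{j-1}q_j$ is reduced if and only if $\rootFunction{I^j}{j} \in \Phi^+$; and (ii) given $u^{j-1} \leq \omega$, one has $u^{j-1}q_j \leq \omega$ if and only if $\rootFunction{I^j}{j} \in \Inv(\omega)$.

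The two straightforward cases are Case 1 and Case 4. In Case 1, $\rootFunction{I^j}{j} \in \Ninv(\omega)$ yields $u^{j-1}q_j \not\leq \omega$, so any reduced expression of $\omega$ extending $Q^{j-1}$ inside $Q_{[m]\ssm I^{j-1}}$ cannot use position $j$ (otherwise its first letter after $Q^{j-1}$ would be $q_j$, forcing $u^{j-1}q_j \leq \omega$); hence that expression already lives inside $Q_{[m]\ssm I^j}$ with prefix $Q^j = Q^{j-1}$. Case 4 is analogous: $\rootFunction{I^j}{j} \in -\Inv(\omega) \subseteq \Phi^-$ makes $Q^{j-1}q_j$ non-reduced, so position $j$ again cannot be used in any extending reduced expression of $\omega$.

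The interesting cases occur when $\rootFunction{I^j}{j} \in \Inv(\omega)$. Case 3a supplies the required extending expression by its own defining hypothesis. In Cases 2 and 3b the algorithm sets $Q^j = Q^{j-1}q_j$, and one must produce a reduced expression of $\omega$ in $Q_{[m]\ssm I^{j-1}}$ with this prefix. Starting from the extending expression $Q^{j-1}r_{j_1}\cdots r_{j_k}$ provided by induction, with $j \leq j_1 < \cdots < j_k$, and setting $w = (u^{j-1})^{-1}\omega$, the assumption $\rootFunction{I^j}{j} \in \Inv(\omega)$ translates into $q_j$ being a left descent of $w$. The Strong Exchange Condition applied to the reduced expression $r_{j_1}\cdots r_{j_k}$ of $w$ then produces an index $i$ with $q_j w = r_{j_1}\cdots \widehat{r_{j_i}}\cdots r_{j_k}$, so $Q^{j-1}q_j r_{j_1}\cdots \widehat{r_{j_i}}\cdots r_{j_k}$ is the required reduced expression in $Q_{[m]\ssm I^j}$ (position $j$ is available since $j \notin I^{j-1}$). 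In Case 3b no exchange is even needed: the negation of Case 3a's hypothesis forces every extending reduced expression to use position $j$, so already $j_1 = j$ and $r_{j_1} = q_j$.

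The main obstacle is the exchange argument in Case 2: one has to certify that the reduced word produced by the Strong Exchange Condition, together with the freshly prepended letter $q_j$ from the available position $j$, really gives a subword of $Q$ whose positions lie entirely in $Q_{[m]\ssm I^{j-1}}$. This is essentially bookkeeping once the exchange step is in place, but it is the delicate point that drives the whole proof.
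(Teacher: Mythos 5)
Your proposal is correct, and it differs from the paper's proof in the key Cases 2 and 3(b). The paper's argument picks an extending facet $\widetilde I$ realizing the inductive hypothesis and, when $j\in\widetilde I$, invokes the subword-complex flip lemma (\cref{lem:rootFunctionFlips}, part~\eqref{lem:rootFunctionFlips2}) to flip position~$j$ out to a later position~$j'>j$, obtaining a new facet whose reduced word uses~$q_j$ immediately after~$Q^{j-1}$. You instead work on the element level: writing $w=(u^{j-1})^{-1}\omega$, translate $\rootFunction{I^j}{j}\in\Inv(\omega)$ into $q_j$ being a left descent of~$w$, and apply the Strong Exchange Condition to the reduced suffix $r_{j_1}\cdots r_{j_k}$ to delete one letter~$r_{j_i}$ and prepend~$q_j$. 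These are two faces of the same phenomenon --- the subword-complex flip lemma is essentially the Exchange Condition repackaged via the root function --- but yours uses only classical Coxeter combinatorics, whereas the paper stays inside the root-function machinery it has already set up. The "bookkeeping" you flag is genuinely fine: if $j_1>j$ the resulting positions are distinct and all lie in $[m]\ssm I^j$; if $j_1=j$, a length count forces the deleted index to be $i=1$, so the exchange simply reproduces the original subword and there is no double use of position~$j$. Your treatment of Case 3(b) --- observing that the negation of 3(a)'s hypothesis forces any extending reduced expression to use position~$j$, so no exchange is needed at all --- is in fact slightly cleaner than the paper's bare "similar to Case~2". Cases 1, 3(a), and 4 match the paper's proof line by line.
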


\begin{proof}
The proof works by induction on $j$.

For $j=0$, $Q^0$ is the empty word which is reduced by definition.
Furthermore, $Q_{[m]\ssm I^0 }=Q$ contains a reduced expression of $\omega$ because the subword complex is non-empty.
The empty word is a prefix of this reduced expression. 

We assume now that the claim holds for $j-1$ and we prove it for $j$.
Note that 
\[
Q^j = 
\begin{cases}
Q^{j-1}, & \text{if } I^j=I^{j-1}\cup \{j\} ,\\
Q^{j-1} q_j, & \text{if } I^j=I^{j-1}.
\end{cases}
\]
We analyze the different cases of the sweeping algorithm.
\begin{enumerate}
    \item If $\rootFunction{I^j}{j}\in \Ninv(\omega)$ then 
    $Q^j=Q^{j-1}$, which is reduced by induction hypothesis.
    Moreover, it is a prefix of a reduced expression of $\omega$ in $Q_{[m]\ssm I^{j-1}}$.
    Since $\rootFunction{I^j}{j}\in \Ninv(\omega)$, this reduced expression can not use the letter $q_j$; so, it is a reduced expression of $\omega$ in $Q_{[m]\ssm I^{j}}$ as desired.
    \item If $\rootFunction{I^j}{j}\in \Inv(\omega) \cap \Inv(\pi)$ then $Q^j= Q^{j-1}\circ (q_j)$, which is a reduced expression because~$Q^{j-1}$ is reduced and $\rootFunction{I^j}{j}\in \Inv(\omega)$.
    
    Now let $\widetilde Q$ be a subword of $Q_{[m]\ssm I^{j-1}}=Q_{[m]\ssm I^{j}}$ which is a reduced expression of $\omega$ with prefix $Q^{j-1}$, and let $\widetilde I$ be the corresponding facet.
    
    If $j\notin \widetilde I$, then $q_j$ is used in $\widetilde Q$ and we are done because $\widetilde Q$ has $Q^j$ as a prefix.

    If $j\in \widetilde I$, then we can flip it to a position $j'>j$ (by \cref{lem:rootFunctionFlips}~\eqref{lem:rootFunctionFlips2}), creating a new reduced expression $\widetilde Q'$ of $\omega$ which uses $q_j$, and thus has $Q^j$ as a prefix.

    \item[(3)(a)] If $\rootFunction{I^j}{j}\in \Inv(\omega) \cap \Ninv(\pi)$ and $Q_{[m]\ssm I^{j-1}\ssm \{j\}}$ contains a reduced expression of $\omega$ with prefix $Q^{j-1}$, then $Q^j=Q^{j-1}$.
    Therefore, $Q^j$ is reduced and it is a prefix of a reduced expression of $\omega$ in 
    $Q_{[m]\ssm I^{j-1}\ssm \{j\}}=Q_{[m]\ssm I^j}$.

    \item[(3)(b)] In this case, $\rootFunction{I^j}{j}\in \Inv(\omega) \cap \Ninv(\pi)$ but $Q^j= Q^{j-1}\circ (q_j)$. The proof is similar to the proof of (2).

    \item[(4)] If $\rootFunction{I^j}{j}\in -\Inv(\omega)$ then $Q^j=Q^{j-1}$. The argument is similar to that of Case~(1).
    \qedhere
    \end{enumerate}
\end{proof}

Finally, we need the following technical statement.

\begin{lemma}
\label{lem:sweeping3}
Let $I_1,I_2\in \subwordComplex(Q,\omega)$ be two different facets, and $j\in [m]$ be the first position where they differ.
Without loss of generality assume 
\[
I_2\cap [j] = I_1\cap [j] \ssm \{j\}
\]
with $j\in I_1$.
Let $\beta=\rootFunction{I_1}{j}=\rootFunction{I_2}{j}$, then 
\[
-\beta \in \cone \Roots(I_2).
\]
\end{lemma}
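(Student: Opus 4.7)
The plan is to prove the lemma by induction on the length $\ell(\omega)$ of the target element, after first establishing the key fact that $\beta \in \Inv(\omega) \cap \Phi^+$. Indeed, since $j \notin I_2$, the letter $q_j$ appears in the reduced expression $Q_{[m]\ssm I_2}$ of $\omega$. Writing $\sigma = \prod Q_{[j-1]\ssm I_1} = \prod Q_{[j-1]\ssm I_2}$ (equal since the facets agree on $[j-1]$), the word $\sigma s_{q_j}$ is a reduced prefix of $\omega$, so $\beta = \sigma(\alpha_{q_j}) \in \Phi^+$ and $\beta \in \Inv(\omega)$. The base case $\ell(\omega) = 0$ is vacuous since it forces $I_1 = I_2 = [m]$.

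For the inductive step, I would set $r \eqdef \min([m]\ssm I_2)$ and $r' \eqdef \min([m]\ssm I_1)$, which record the positions of the first letters of the two reduced expressions. Since $I_1, I_2$ agree on $[j-1]$ with $j \in I_1 \ssm I_2$, the only possibilities are $r = r' < j$ or $r = j < r'$.

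In the easier case $r = r' < j$, positions $[r-1]$ lie in both facets, and $q_r$ is the common first letter. I would pass to the sub-subword complex $\subwordComplex(Q^*, \omega^*)$ with $Q^* = Q_{[r+1,m]}$ and $\omega^* = s_{q_r}\omega$, and define $I_i^* \eqdef I_i \cap [r+1,m]$. A direct computation using the definition of the root function gives $\rootFunction{I_i^*}{k} = s_{q_r}^{-1}\rootFunction{I_i}{k}$ for $k > r$, so $I_1^*, I_2^*$ are facets of $\subwordComplex(Q^*, \omega^*)$ whose first disagreement is still at $j$, with shared root $\beta_* = s_{q_r}(\beta)$. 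By induction, $-\beta_* \in \cone\Roots(I_2^*)$, and applying $s_{q_r}$ combined with the decomposition $\Roots(I_2) = \{\alpha_{q_i} : i < r\} \cup s_{q_r}(\Roots(I_2^*))$ yields $-\beta \in \cone\Roots(I_2)$.

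The main obstacle is the boundary case $r = j < r'$, where $\beta = \alpha_{q_j}$ is a simple root and the two reduced expressions begin with different letters, so the naive reduction leaves $I_1^*$ as a facet for $\omega$ while $I_2^*$ is a facet for $\omega^* = s_{q_j}\omega$ inside the same word $Q^*$. To handle this, I would flip position $j$ in $I_1$ (which is flippable since $\beta \in \Inv(\omega) \cap \Phi^+$) to a position $p > j$, obtaining a new facet $I_1^{(1)}$ that now agrees with $I_2$ at position $j$; by \cref{lem:rootFunctionFlips}~\eqref{lem:rootFunctionFlips3} the root function of $I_1^{(1)}$ differs from that of $I_1$ only on $(j, p]$ by the reflection $s_\beta$. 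Iterating such flips, one reaches the setting of case~(a), and the main task is to verify that the extra root contributions introduced by the flips combine with $s_{q_j}(\alpha_{q_i})$ (for $i < j$ in $[j-1] \subseteq I_2$) to express $-\beta$ as a non-negative combination in $\cone\Roots(I_2)$, using that $s_{q_j}(\alpha_{q_i}) = \alpha_{q_i} + c_i\alpha_{q_j}$ with $c_i \ge 0$ whenever $q_i \ne q_j$.
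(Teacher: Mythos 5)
The key observation you make — that $\beta \in \Inv(\omega) \cap \Phi^+$ — is correct, and your "easy case" reduction $r = r' < j$ (removing common leading letters, conjugating by $s_{q_r}$) is valid and can be iterated. But this iteration terminates exactly at the "hard case" $r = j < r'$, which is therefore the entire content of the lemma, and your treatment of it has a genuine gap.

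The crux is that in the hard case you want $-\alpha_{q_j} \in \cone\Roots(I_2)$ with $I_2$ \emph{fixed}, yet the strategy you describe only flips $I_1$. Flipping $I_1$ to $I_1^{(1)}$ changes $\Roots(I_1)$ but leaves $\Roots(I_2)$ untouched, so it produces no new information about the cone you must land in. Moreover, after the flip the first disagreement between $I_1^{(1)}$ and $I_2$ moves to some $j' > j$; applying the lemma there would conclude something about $-\rootFunction{I_2}{j'}$ (or $-\rootFunction{I_1^{(1)}}{j'}$), not about $-\alpha_{q_j}$. Your final sentence — that "the extra root contributions introduced by the flips combine with $s_{q_j}(\alpha_{q_i})$ to express $-\beta$" — is exactly the statement that needs proof and is not obtained from the flip bookkeeping; the bound $c_i \ge 0$ for the Cartan-matrix entries does not by itself produce a nonnegative combination equal to $-\alpha_{q_j}$.

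What the paper does instead (without any case split on $r,r'$) is to restrict both facets to the suffix $Q' = Q_{[j+1,m]}$, observe that $I_1' := I_1 \cap [j+1,m]$ is a facet of $\subwordComplex(Q', v)$ while $I_2' := I_2 \cap [j+1,m]$ is a facet of $\subwordComplex(Q', sv)$ with $sv \prec_B v \leqslant_B \DemazureProduct(Q')$, and then invoke \cref{prop:JSDemazureCone} (Jahn--Stump, Prop.~3.14) to conclude directly that the simple root $\alpha = \alpha_{q_j}$ lies in $\Roots(I_2')$, hence $-\beta = us(\alpha) \in \Roots(I_2)$. That proposition is a substantial external input; it is precisely what your elementary argument would need to replace, and the replacement is not supplied.
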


In the proof of \cref{lem:sweeping3}, let us write~$\leqslant_B$ the strong Bruhat order, defined by~$x \leqslant_B y$ if and only if a reduced expression of~$y$ has a reduced expression of~$x$ as a subword, and~$x \prec_B y$ the covers of this order. We note that for any word~$Q$ and any~$x\in W$, we can find a reduced expression of~$x$ as a subword of~$Q$ if and only if~$x \leqslant_B \DemazureProduct(Q)$ the Demazure product of~$Q$.
We will use \cite[Proposition 3.14]{JahnStump}, reformulated as follows.

\begin{proposition}[\cite{JahnStump}]\label{prop:JSDemazureCone}
Let~$\subwordComplex(Q,w)$ be a non-empty subword complex. Then for any facet~$I \in \subwordComplex(Q,w)$ and for any simple root~$\alpha$ such that~$w \prec_B s_\alpha w \leqslant_B \DemazureProduct(Q)$, we have~$\alpha \in \Roots(I)$.
\end{proposition}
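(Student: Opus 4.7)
The plan is to establish that $\alpha \in \cone \Roots(I)$ for every facet $I$ of $\subwordComplex(Q,w)$, which is the natural ``cone'' reading of the inclusion $\alpha \in \Roots(I)$ (this is consistent with the name ``DemazureCone'' and with how the proposition is invoked in the proof of Lemma~\ref{lem:sweeping3}, where the analogous conclusion is written as $-\beta \in \cone \Roots(I_2)$). The argument splits into producing one distinguished facet whose root configuration contains $\alpha$ itself, and then propagating this along the flip graph.

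For the seed facet, the idea is that $s_\alpha w \leqslant_B \DemazureProduct(Q)$ guarantees that $\subwordComplex(Q, s_\alpha w)$ has a facet $J$. Then $Q_{[m]\setminus J}$ is a reduced expression for $s_\alpha w$, and the cover $w \prec_B s_\alpha w$ gives, via the standard subword characterization of Bruhat covers, a position $j \in [m]\setminus J$ whose deletion yields a reduced expression of $w$. Setting $I_0 \eqdef J \cup \{j\}$, one gets a facet of $\subwordComplex(Q,w)$. Writing $Q_{[m]\setminus J} = s_{i_1}\cdots s_{i_k}$ with $s_{i_r}$ at position $j$, the root at position $j$ in $I_0$ is
\[
\rootFunction{I_0}{j} = s_{i_1}\cdots s_{i_{r-1}}(\alpha_{s_{i_r}}),
\]
which is positive (being the $r$-th inversion root of a reduced expression). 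Comparing the two reduced expressions for $w$ and $s_\alpha w$ forces the associated reflection $s_{i_1}\cdots s_{i_r}s_{i_{r-1}}^{-1}\cdots s_{i_1}^{-1}$ to equal $s_\alpha$; since $\rootFunction{I_0}{j}$ is positive and $\alpha$ is positive, this yields $\rootFunction{I_0}{j} = \alpha$ and thus $\alpha \in \Roots(I_0) \subseteq \cone \Roots(I_0)$.

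To propagate to all facets, I would use the connectedness of the flip graph of $\subwordComplex(Q,w)$ (which holds since the subword complex is a pseudomanifold) and show that the property $\alpha \in \cone \Roots(I)$ is invariant under any single flip. For a flip $I \to I'$ with flipped positions $i < i'$ and $\beta \eqdef \rootFunction{I}{i}$, Lemma~\ref{lem:rootFunctionFlips}(3) describes exactly how the root function changes: roots at positions outside $(i,i']$ are unchanged, those at positions in $(i,i']$ get reflected by $s_\beta$, and $\beta$ is swapped with $\pm\beta$ between positions $i$ and $i'$. Given a non-negative decomposition $\alpha = \sum_{k \in I} c_k \rootFunction{I}{k}$, substituting $\rootFunction{I}{k} = s_\beta(\rootFunction{I'}{k})$ for middle-range $k$ produces a decomposition of $s_\beta(\alpha) = \alpha - \langle \alpha, \beta^\vee\rangle \beta$ in terms of $\Roots(I')$, which is then rewritten using the extra root $\pm\beta \in \Roots(I')$ at the new flipped position to extract a non-negative decomposition of $\alpha$ itself.

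The main obstacle is this propagation step: the cone $\cone \Roots(I)$ does not transform in a simple linear way under flips, so controlling the sign of the residual $\beta$-coefficient after the substitution is delicate and genuinely uses both the simplicity of $\alpha$ and finer information about the flip. A conceptually cleaner alternative, which I would prefer if workable, is to bypass the flip-by-flip argument entirely by appealing to the Jahn--Stump intersection identity $\bigcap_{I \in \subwordFacets(Q,w)} \cone \Roots(I) = C^+(w, \DemazureProduct(Q))$ from~\cite[Thm.~3.1]{JahnStump}, and to verify directly by induction on $\ell(\DemazureProduct(Q)) - \ell(w)$, using the Bruhat-interval description of $C^+$, that any simple root $\alpha$ satisfying $w \prec_B s_\alpha w \leqslant_B \DemazureProduct(Q)$ lies in $C^+(w, \DemazureProduct(Q))$.
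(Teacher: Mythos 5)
First, a point of comparison: the paper does not prove this proposition at all — it is imported as a reformulation of \cite[Prop.~3.14]{JahnStump} — so your attempt has to stand on its own, and on its own it has a genuine gap. The seed construction is correct and clean: non-emptiness of $\subwordComplex(Q, s_\alpha w)$ gives a facet $J$, the Bruhat cover $w \prec_B s_\alpha w$ lets you delete one letter of the reduced word $Q_{[m]\ssm J}$ to obtain a reduced word for $w$, the reflection attached to the deleted position equals $s_\alpha$, and positivity forces $\rootFunction{I_0}{j} = \alpha$, so the single facet $I_0 = J \cup \{j\}$ satisfies $\alpha \in \Roots(I_0)$. But the proposition asserts this for \emph{every} facet of $\subwordComplex(Q,w)$, and that is precisely what you do not establish. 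The flip-propagation step is only sketched, and you yourself flag the sign control on the residual $\beta$-coefficient as the ``main obstacle''; indeed, preservation of $\alpha \in \cone\Roots(I)$ under a single flip is not a formal consequence of \cref{lem:rootFunctionFlips}, since the cones of adjacent facets genuinely differ, and the global hypothesis $w \prec_B s_\alpha w \leqslant_B \DemazureProduct(Q)$ must enter the local argument somewhere. Your fallback route — citing the intersection identity of \cite[Thm.~3.1]{JahnStump} and checking $\alpha \in C^+(w,\DemazureProduct(Q))$ — would suffice, but that check is again only announced (``if workable''), not carried out. Either way the extension from one facet to all facets, which is the actual content of the statement, is missing.

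A secondary issue: you deliberately weaken the conclusion to $\alpha \in \cone\Roots(I)$, whereas the proposition asserts $\alpha \in \Roots(I)$, i.e.\ membership in the finite set $\set{\rootFunction{I}{i}}{i \in I}$. In the paper's only application (the proof of \cref{lem:sweeping3}) the literal conclusion $\alpha \in \Roots(I'_2)$ is what is drawn and then transported to $-\beta \in \Roots(I_2)$; the cone version would in fact still suffice there, since the linear map $us$ carries $\cone\Roots(I'_2)$ into $\cone\Roots(I_2)$, but it is a strictly weaker statement than the one posed and should be flagged as such rather than presented as the natural reading. Note also that your seed argument already yields the stronger, literal membership for $I_0$, so the weakening buys you nothing except in the (unfinished) propagation step.
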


\begin{proof}[Proof of \cref{lem:sweeping3}]
Let us write~$w = uv$ with~$u$ the prefix of~$w$ defined by~$I_1$ and~$I_2$ on~$Q$ until index~$j-1$ and~$v$ the suffix of~$w$ defined by the same facets from index~$j$ and onward. Let us also write~$s$ the~$j^{\text{th}}$ letter of~$Q$ and~$\alpha$ the simple root associated with~$s$. We note that~$\beta = \rootFunction{I_1}{j} = u(\alpha)$, and that since~$j \notin I_2$, we know that~$sv \prec_B v$ (this is also a cover of the left weak order).

Let us call~$Q'$ the suffix of~$Q$ starting at index~$j+1$, and~$I'_1,I'_2$ the restrictions of~$I_1$ and~$I_2$ to~$Q'$. The subwords induced by~$I'_1$ and~$I'_2$ on~$Q'$ are suffixes of reduced subwords of~$Q$, so they are also reduced. Moreover, since~$j \in I_1$, we know that~$I'_1$ is a facet of~$\subwordComplex(Q',v)$; this means that this subword complex is not empty and that~$v \leqslant_B \DemazureProduct(Q')$. Similarly, since~$j \notin I_2$, we know that~$I'_2$ is a facet of~$\subwordComplex(Q',sv)$. By combining the previous statements, we know that~$sv \prec_B v \leqslant_B \DemazureProduct(Q')$ (or equivalently and with~$v' = sv$, that~$v' \prec_B sv' \leqslant_B \DemazureProduct(Q')$). We can thus apply \cref{prop:JSDemazureCone} to obtain that~$\alpha \in \Roots(I'_2)$.

Going back to our facets on~$Q$, we know that the prefix of~$w$ written by~$I_2$ on~$Q \cap [j]$ is~$us$, and thus~$us(\Roots(I'_2)) \subseteq \Roots(I_2)$. Therefore, we have~$us(\alpha) = u(-\alpha) = -u(\alpha) = -\beta \in \Roots(I_2)$, thus concluding the proof.
\end{proof}

\begin{proof}[Proof of \cref{prop:sweeping}]
Since the sweeping algorithm terminates by \cref{lem:sweeping1}, Point~\eqref{item:sweeping1} follows directly from the invariant of the sweeping algorithm of \cref{lem:sweeping2} applied when~$j = m$.

For Point~\eqref{item:sweeping2}, assume $\pi \in \linearExtensions(I)$ and let $I^j=I\cap [j]$.
We will show that the partial root function~$\rootFunction{I^j}{\cdot}$ agrees with the decisions taken in the sweeping algorithm.
Indeed, we will see that those decisions are forced.

Recall that 
\begin{center}
\begin{tabular}{ccl}
    $\pi \in \linearExtensions(I)$ & $\longleftrightarrow$ & $R(I)\subseteq \pi(\Phi^+)$\\
     & $\longleftrightarrow$ & $\cone R(I)\subseteq \pi(\Phi^+)$
\end{tabular}    
\end{center}
and 
\[
\pi(\Phi^+) = -\Inv(\pi) \sqcup \Ninv(\pi).
\]
We analyze the possible cases in the sweeping algorithm.

\begin{enumerate}
    \item If $\rootFunction{I^j}{j}\in \Ninv(\omega)$ then clearly $j\in I$ is forced. Otherwise $Q_{[m]\ssm I}$ would not be a reduced expression of $\omega$.
    \item If $\rootFunction{I^j}{j}\in \Inv(\omega) \cap \Inv(\pi)$ then $j\notin I$ is forced. 
    Otherwise we would have an inversion of $\pi$ in the root configuration, which contradicts $\pi \in \linearExtensions(I)$.
    \item[(3)(a)] If $\rootFunction{I^j}{j}\in \Inv(\omega) \cap \Ninv(\pi)$ and $Q_{[m]\ssm I^{j-1}\ssm \{j\}}$ contains a reduced expression of $\omega$ with prefix $Q^{j-1}$ then $j\in I^j$ is forced. 

    We argue this by contradiction. 
    Assume $j\notin I^j$ ($j\notin I$). Let $I_1=\sweepingAlgorithm(Q,\omega,\pi)$ and $I_2=I$. Applying \cref{lem:sweeping3}, we deduce that $\beta=\rootFunction{I^j}{j}=\rootFunction{I}{j}$ satisfies 
    \[
    -\beta \in \cone \Roots(I).
    \]
    But $\beta \in \Ninv(\pi)$. This contradicts $\pi\in \linearExtensions(I)$.
    \item[(3)(b)]  If $\rootFunction{I^j}{j}\in \Inv(\omega) \cap \Ninv(\pi)$ and $Q_{[m]\ssm I^{j-1}\ssm \{j\}}$ does not contain a reduced expression of $\omega$ with prefix $Q^{j-1}$ then $j\notin I^j$ is forced. Otherwise, the complement of $I$ would not be a reduced expression of $\omega$.
    \item[(4)] If $\rootFunction{I^j}{j}\in -\Inv(\omega)$ then $j\in I^j$ is clearly forced. Otherwise, the complement of $I$ would not be a reduced expression.
    \qedhere
\end{enumerate}
\end{proof}

\begin{remark}
    Although the sweeping algorithm produces a facet $I=\sweepingAlgorithm(Q,\omega,\pi)$ for every~${\omega\in W}$, in some cases we have $\pi \notin \linearExtensions(I)$. 
    This happens because of Case~\eqref{sweeping_case1}, when a non-inversion~$\beta\in \Ninv(\omega)$ of $\omega$ is added to the root configuration $\Roots(I)$, such that~$\beta \notin \Ninv(\pi)$. This is only potentially possible when $\pi \notin [e,\omega]$.
\end{remark}


\section*{Acknowledgments}

We thank Florent Hivert for various discussions, Martin Rubey for his comments which led us to think about~\cref{prob:nuAcyclicProperty}, Lucas Gagnon for sharing his independent proof of the backward direction of \cref{prob:nuAcyclicProperty}, allowing us to greatly simplify our argument, and Nathan Reading for pointing out a serious mistake in the statement and proof of a previous version of \cref{lem:rectangleInsertionAlgorithm}.
We are grateful to an anonymous referee for comments and suggestions which greatly improved the presentation of this paper.


\bibliographystyle{alpha}
\bibliography{latticePipeDreams}
\label{sec:biblio}


\end{document}